\newcommand{\comment}[1]{}
\newtheorem{Example}{Example}
\newtheorem{Lemma}{Lemma}
\newtheorem{Corollary}{Corollary}
\newcommand{\norm}[1]{\left\|{#1}\right\|}
\newtheorem{Definition}{Definition}
\newtheorem{Remark}{Remark}
\newtheorem{Theorem}{Theorem}
\newtheorem{Observation}{Observation}
\newcommand{\R}{{\mathbb R}}
\title{Autonomous and non-autonomous unbounded attractors in evolutionary problems}
\author{Jakub Bana\'{s}kiewicz$^1$}
\address{$^1$Faculty of Mathematics and Computer Science, Jagiellonian University, ul. \L{}ojasiewicza 6, 30-348 Krakow, Poland}
\author{Alexandre N. Carvalho$^2$}
\address{$^2$Departamento de Matem\'{a}tica,
	Instituto de Ci\^{e}ncias Matem\'{a}ticas e de Computa\c{c}\~{a}o, 
	Universidade de S\~{a}o Paulo - Campus de S\~{a}o Carlos,
	Caixa Postal 668,
	13560-970  S\~{a}o Carlos   SP,
	Brazil}
\author{Juan Garcia-Fuentes$^3$}
\author{Piotr Kalita$^{1,3}$}
\address{$^{3}$Departamento de Ecuaciones Diferenciales y An\'{a}lisis Num\'{e}rico, Universidad de Sevilla, Campus Reina Mercedes, 41012, Sevilla, Spain}
\email{jakub.banaskiewicz@doctoral.uj.edu.pl}
\email{andcarva@icmc.usp.br}
\email{jgfuentes@us.es}
\email{piotr.kalita@ii.uj.edu.pl}
\begin{document}

\begin{abstract}
If the semigroup is slowly non-dissipative, i.e., its {\color{black} solutions} can diverge to infinity as time tends to infinity, one still can study its dynamics via the approach by the unbounded attractors - the counterpart of the classical notion of global attractors. We continue the development of this theory started by Chepyzhov and Goritskii \cite{chepyzhov}. We provide the abstract results on the unbouded attractor existence, and we study the properties of these attractors, as well as of unbounded $\omega$-limit sets in slowly non-dissipative setting. We also develop the pullback non-autonomous counterpart   of the unbounded attractor theory. The abstract theory that we develop  is illustrated by the analysis of the autonomous problem governed by the equation $u_t = Au + f(u)$. In particular, using the inertial manifold approach, we provide the criteria under which the unbounded attractor coincides with the graph of the Lipschitz function, or becomes close to the graph of the Lipschitz function for large argument.   
\end{abstract}
\maketitle\section{Introduction and summary of concepts}
Typically, to build the theory of the global attractors (and their non-autonomous counterparts) one needs the dynamical system to be dissipative, i.e. there should exist a bounded absorbing set \cite{BCL, Cholewa-Dlotko, robinson, Carvalho-Langa-Robinson-13, Henry}. However, starting from the work of Chepyzhov and Goritskii \cite{chepyzhov}, the concept of the global attractor has been generalized to the case of slowly-nondissipative systems, i.e. such that the orbits are not absorbed by one given bounded set, and they possibly diverge to infinity when time tends to infinity, but there is no blow-up in finite time. We remark, that in recent years, stemming from \cite{chepyzhov}, there appeared significant work in the framework of unbounded attractors, cf., \cite{Pimentel,PimentelRocha,bengal,carvalhopimentel,carvalhopimentel2,hell}. To illustrate the underlying concept we start from two simple motivating examples. 
\begin{Example}\label{ex1}
	Consider the ODE
	$$
	\begin{cases}
		x' = x,\\
		y' = -y.
	\end{cases}
	$$
	The point $(0,0)$ is the equilibrium. {\color{black} Solutions} starting from $(x,0)$ tends to (plus of minus) infinity in $x$ as time tends to infinity, and tend to $(0,0)$ as time tends to $-\infty$. Any {\color{black} solution} starting from $(0,y)$ goes to $(0,0)$ as time tends to infinity. All other {\color{black} solutions} are unbounded both in past and in future. The set $\{ (x,0)\,:\ x\in \R \}$ is invariant, attracting, and unbounded. Moreover every {\color{black} solution} in this set is bounded in the past. 
\end{Example}

\begin{Example}\label{ex2}
	Consider another ODE
	$$
	\begin{cases}
		x' = y,\\
		y' = -x,\\
		z'=-z.
	\end{cases}
	$$
    	Again, the point $(0,0,0)$ is the equilibrium. All other {\color{black} solutions} with $z=0$ are periodic - they are circles in $(x,y)$ centered at zero. In fact every $(x,y)$ circle centered at zero is a periodic {\color{black} solution}. The set $\{ (x,y,0)\, :\ (x,y)\in \R^2 \}$ is invariant, unbounded and attracting. Every {\color{black} solution} in this set is {\color{black} bounded} both in the past and in the future. {\color{black} All other {\color{black} solutions} are unbounded in the past and bounded in the future.}
\end{Example}

Above two examples show that it is natural to consider the points in the phase space through which there exists a {\color{black} solution} defined for all $t\in \mathbb{R}$ which is bounded in the past, and one can expect that the set of such points should be invariant, and, in appropriate sense, attracting. This observation allowed Chepyzhov and Goritskii \cite{chepyzhov} to define the concept of unbounded attractor for the problem governed by the PDE $u'=Au + f(u)$ as a the set of initial conditions of {\color{black} solutions} which bounded in the past. In this paper we follow the same concept. 

The key contribution of the present article is the refinement and extention in several directions of the results from \cite{chepyzhov}.  We pass to the detailed description of the contribution and novelty of our work in comparison with the previous research on unbounded attractors.
First of all we provide the new criteria for the semigroup of operators $\{S(t)\}_{t\geq 0}$ on a Banach space $X$ to have the unbounded attractor. This semigroup does not have necessarily to be governed by the differential equation, be it PDE or ODE. Hence, the result on the existence of the unbounded attractor, Theorem \ref{th:autonomus1}, the first important theorem of the present article, is formulated and proved for an abstract semigroup. We stress, that while the argument to prove this theorem mostly follows the lines of the corresponding proofs in \cite{chepyzhov}, the present contribution lies in finding the abstract criteria, cf., (H1)--(H3) in the sequel, which guarantee its existence. 

The important property needed in the proof is to be able to split the phase space of the problem $X$ as the sum of two spaces $X=E^+\oplus E^-$ such that the projection $P$ on $E^+$ of the {\color{black} solution} can possibly diverge to infinity, and its projection on $E^-$ denoted by $I-P$ enjoys the dissipative properties.
For the general setup of semigroups,  following \cite{chepyzhov}, under our criteria, the unbounded attractor $\mathcal{J}$ attracts bounded sets $B$ only in bounded sets in $E^+$, that is 
$$
\lim_{t\to\infty}\textrm{dist}\, (S(t)B \cap \{ \|Px\|\leq R\}, \mathcal{J}) = 0,
$$
as long as the sets $S(t)B \cap \{ \|Px\|\leq R\}$ stay nonempty. A natural question that appears is, when one can remove the set $\{ \|Px\|\leq R\}$ from the above attracting property. While in \cite{chepyzhov} the authors prove such convergence  
if the linear manifold $E^+$ is attracting for the {\color{black} solutions} that diverge to infinity, we provide more general criterion {\color{black}[See condition (A1) in Section \ref{attraction_all}}], and show such attraction in Theorem \ref{attracts:all}. The criterion says that the attraction in the whole space occurs  if there exists the attracting set with the  thickness in space $E^-$ tending to zero as $\|Pu\|$ tends to infinity. This result later becomes useful in the part on inertial manifolds for the problem governed by PDEs, see Section \ref{sec:outside}. 

The next series of our results is inspired by the works of \cite{bengal, hell} later extended and refined in \cite{PimentelRocha, Pimentel, carvalhopimentel2}. Namely, for the case of the semigroup governed by the following PDE in one space dimension
$$
u_t = u_{xx} + bu +f(u,u_x,x),
$$ 
on the space interval, with appropriate (typically Neumann) boundary data, and sufficiently large constant $b>0$ the authors there study the detailed structure of the unbounded attractor which consists of equilibria, their heteroclinic connections,  approprietely understood `equilibiria at infinity' and their connections, as well as heteroclinic connections from the equilibria to `equilibria at infinity'. We present the new abstract results on the global attractor structure for the general, not necessarily gradient case. Clearly, for such general case it is not possible to fully describe the structure of the  attractor. However, we prove in Section \ref{sec:structure}, that the unbounded attractor must consist of an invariant set $\mathcal{J}_b$, built from points through which passes a bounded, both in the past and in the future, {\color{black} solution} and the remainder, $\mathcal{J}\setminus\mathcal{J}_b$ which consists of the heteroclinics to infinity, namely of the points for which the alpha limits belong to $\mathcal{J}_b$ and for which the dynamics diverges to infinity as time tends to infinity.  We study the properties of these sets, and in particular we show that $\mathcal{J}_b$ enjoys  properties of the global attractor: {\color{black} it is compact under additional natural assumption (H4), which, however excludes the cases like Example \ref{ex2} above, cf. Theorem \ref{compact},} and by taking the set of points in the phase space  which upon evolution converge to it, one obtains a complete metric space on which it is a classical global attractor, cf. Lemma \ref{lem:8}.  

We conclude the abstract results on slowly-nondissipative autonomous problems with several observations, which are, to our knowledge, new. The first one is, that the unbounded attractor  coincides with a multivalued graph over $E^+$, which we call a multivalued inertial manifold in spirit of \cite{debusshetemam, jcrobinson}, cf., our Section \ref{sec:inertial}. The second observation concerns the unbounded $\omega$-limit sets, whose properties we study in Section \ref{sec:omega}, and which, to our knowledge, have not been studied before. Interestingly, while we prove the unbounded counterparts of the classical properties of $\omega$-limit sets, i.e. invariance, compactness, and attraction, we were not able to prove their connectedness, and we leave, for now, the question of their connectedness open. Finally, in Section \ref{sec:infty}, we discus, on an abstract level, the dynamics at infinity, which corresponds to the equilibria at infinity and their connections in \cite{bengal, hell, PimentelRocha, Pimentel, carvalhopimentel2}, and we prove, using the Hopf lemma, and homotopy invariance of the degree, that the attractor at infinity always coincides with the whole unit sphere in $E^+$.

The study on unbounded non-autonomous attractors has been initiated by Carvalho and Pimentel in \cite{carvalhopimentel}. The authors study there the non-autonomous problem governed by the equation
$$
u_t = u_{xx} + b(t)u + f(u).
$$ 
For this problem they formulate the definition of the unbounded pullback attractor, being the natural extension of the autonomous definition, and provide the result on its existence. We continue the study on pullback unbounded attractors: we give the general definition of such object and prove the result on its existence in the abstract framework, cf., Theorem \ref{thm:pullback},  for slowly non-dissipative processes. The unbounded pullback attractor is the family of sets $\{\mathcal{J}(t) \}_{t\in \mathbb{R}}$, each of them being unbounded, we obtain the result on its structure similar as in the autonomous situation. $\mathcal{J}(t) = \mathcal{J}_b(t) \cup (\mathcal{J}(t)\setminus \mathcal{J}_b(t))$, where $\mathcal{J}_b(t)$ shares the properties of classical pullback attractors, and the {\color{black} solutions} through points in $\mathcal{J}(t)\setminus \mathcal{J}_b(t)$ (at time $t$) that diverge to infinity forward in time. Note that the construction of sets $\mathcal{J}_b(t)$ is not the simple transition to non-autonomous case of the autonomous one. In the autonomous case the corresponding set $\mathcal{J}(t)$ is constructed using the concept of $\alpha$-limit sets, while in the non-autonomous situation we need to study the forward behavior of the points from the unbounded pullback attractor. Finally, we provide the construction of pullback $\omega$-limit sets in the unbounded non-autonomous framework. As we prove, there naturally appear two universes of sets for which one can define such $\omega$-limits: the backward bounded ones, and the ones whose forward {\color{black} solutions} stay bounded. We prove that for the first universe non-autonomous $\omega$-limit sets are attracting and invariant in $\mathcal{J}(t)$, while for the second one they are attracting and  invariant  in $\mathcal{J}_b(t)$. 

The second part of this paper is devoted to the study of the unbounded attractors for the dynamical system governed by the following differential equation in the Banach space $X$
$$
u_t = Au + f(u),
$$
where $A$ is the linear, closed and densely defined operator, which is sectorial, has compact resolvent and no eigenvalues on the imaginary axis. Denoting by $P$ its spectral projector on the space associated with $\{ \textrm{Re}\, \lambda > 0 \}$, which is our space $E^+$, we have
\begin{equation}\label{eq:hyper_0}
	\begin{split}
		&\|e^{A(t-s)}\| \leq M e^{\gamma_0(t-s)}\quad\textrm{for}\ \ \ t\geq s,\\
		&\|e^{A(t-s)}(I-P)\|\leq M e^{-\gamma_2(t-s)}\quad\textrm{for}\ \ \ \ t\geq s,\\
		&\|e^{A(t-s)}P\|\leq M e^{\gamma_1(t-s)}\quad\textrm{for}\ \ \ \ t\leq s,
	\end{split}
\end{equation}
where $\gamma_0, \gamma_1, \gamma_2, M$ are positive constants. Thus, our analysis is based on the general concept of the mild {\color{black} solutions} and the Duhamel formula, rather then on weak {\color{black} solutions}, and energy estimates as in \cite{chepyzhov}. Our first interest lies in showing, in Theorem \ref{thm:decay}, that if $f(u)$ tends to zero, with appropriate rate, as $\|Pu\|$ tends to infinity, then the thickness of the unbounded attractor also tends to zero. In such case one can remove the restriction in the definition of the unbounded attractor that the attraction takes place in balls.  Similar result has already been obtained in \cite{chepyzhov}, where, however,  the polynomial decay of $f$ is assumed, namely, $\|f(u)\| \leq \frac{C}{\|Pu\|^\alpha}$. We recover and improve this result in the framework of mild {\color{black} solutions}, by allowing for very slow decay, such as $\|f(u)\| \leq \frac{C}{\ln(\|Pu\|)}$ for large $\|Pu\|$. 

We continue the analysis by trying to answer the question whether the thickness of the unbounded attractor can tend to zero (and thus the attraction by the unbounded attractor can occur in the whole space) without the decay of $f$. We give the positive answer to this question with the help of the theory of  inertial manifolds. Inertial manifolds in the framework of the unbounded attractors have already been studied by Ben-Gal \cite{bengal}. In this classical approach the spectral gap can be taken between any two (sufficiently large) eigenvalues of $-A$, and leads to the inertial manifold which contains the unbounded attractor. We follow the different path: namely we choose the spectral gap exactly between the spaces $E^+$ and $E^-$. While this gives strong restriction on the Lipschitz constant of the nonlinearity, interestingly, as we show, such approach leads to the inertial manifold which exactly coincides with the unbounded attractor. We construct this manifold by the two methods: the Hadamard graph transform method, where we base our approach on the classical article \cite{mallet} (note that the nonautonomous version of the construction of inertial manifolds by the graph transform method valid in context of the unbounded attractors but based on the energy estimates rather than the Duhamel formula has been realized in \cite{chepyzhov_goritskii_2}), and the Lyapunov--Perron method, based on its modern and non-autonomous rendition in framework of inertial manifolds \cite{CLMO-S}. We find out, that while the graph transform method works only for the case $M=1$, from the numerical comparison of the restrictions for $L_f$ - the Lipschitz constants of $f$ - found in both methods, that the graph transform approach gives the better (i.e. larger, although still expected to be non-sharp, cf. \cite{Zelik, romanov}) bound on $L_f$ then the Lyapunov--Perron method. In our further analysis we demonstrate that it is in fact sufficient to assume that the nonlinearity has the small Lipschitz constant only for large $\|Pu\|$, while for small $\|Pu\|$ it does not have to be Lipschitz at all. While in such a case the unbounded attractor does not have to be a Lipschitz manifold, we demonstrate by the appropriate modification  of the nonlinearity $f$, that for large $\|Pu\|$ it has to stay close to a Lipschitz manifold, whence its thickness, although it may be nonzero contrary to the case of small global Lipschitz constant, tends to zero with increasing $\|Pu\|$.

In the last result of this article we discuss the asymptotic behavior at infinity, which is the more general version of the results from \cite{hell, carvalhopimentel}. Namely, we show that the rescaled problem at infinity is asymptotically autonomous, and hence its $\omega$-limits concide with the $\omega$-limits of the finite dimensional autonomous problem at infinity whose dynamics can be explicitly constructed. 

  We stress that while we endeavour on building the comprehensive theory of unbounded attractors, our approach still has limitations. While we consider only the possibility of grow-up {\color{black} solutions}, there are other possibilities of asymptotic behaviors which we could encompass in our theory (see \cite{czaja-dlotko} for a recent and comprehensive overview). Moreover, in most of the results we consider only directions that are dissipative (those in $E^-$) and expanding (those in $E^+$). It would be interesting to extend the analysis of the case of \textit{neutral} dimensions, in which the {\color{black} solutions} are not absorbed in finite time by a given bounded set,  but still stay bounded, such as in Example \ref{ex2}, especially that such problems has recently arisen a significant interest in fluid mechanics, cf. \cite{foiaslariosbiswas}. 
  
  The plan of this article is the following:  Section \ref{sec:autonomous} is devoted to the autonomous abstract theory of unbounded attractors. In the next Section \ref{nonauto} we study, on abstract level, the nonautonomous unbounded attractors in the abstract framework. Finally, in Section \ref{PDES} we present the results on unbounded attractors for the problem governed by the equation $u_t = Au + f(u)$.

\section{Unbounded attractors and $\omega$-limit sets. Autonomous case.}\label{sec:autonomous}

\subsection{Non-dissipative semigroups and their invariant sets}
If $X$ is a metric space ($\mathcal{P}(X)$ represents the subsets of $X$ and $\mathcal{P}_0(X)$ represents the non-empty subsets of $X$) then $\mathcal{P}(X)$ are its nonempty subsets and $\mathcal{B}(X)$ are its nonempty and bounded subsets. 

{\color{black}
\begin{Definition}
Let $X$ be a metric space. A family of mappings $\{ S(t):D(S(t)) \subset X \to X \}_{t\geq 0}$ is a \textbf{{\color{black} solution} operator family} if
\begin{itemize}
    \item $D(S(0))=X$ and $S(0) = I$ (identity on $X$),
    \item For each $x\in X$ there exists a $\tau_x\in (0,\infty]$ such that $x\in D(S(t))$ for all $t\in [0,\tau_x)$. 
    \item For every $x\in X$ and $t,s\in \R^+$ such that $t+s\in [0,\tau_x)$ we have $S(t+s)x = S(t)S(s)x$,
    \item If $E=\{(t,x)\in \R^+\times X: t\in [0,\tau_x)\}$, the mapping $E (t,x)\mapsto S(t)x\in X$ is continuous.
\end{itemize}

If $\tau_x=+\infty$ for all $x\in X$, $D(S(t))=X$ for all $t\geq 0$ and $E=\R^+\times X$. In this case we say that the family $\{ S(t)\}_{t\geq 0}$ is a \textbf{continuous semigroup}.
\end{Definition}
}

When we will apply the above definition to differential equations, the mapping $S(t)$ will assign to the initial data the value of the {\color{black} solution} at the time $t$. 

{\color{black}
\begin{Definition}
The function $[0,\tau_x)\ni t \mapsto S(t)x\in X$ is called a {\color{black} {\color{black} solution}}. 
\end{Definition}

\begin{Definition}
The function $\gamma:\mathbb{R}\to X$ is a {\color{black}global {\color{black} solution}} if $S(t)\gamma(s) = \gamma(s+t)$ for every $s\in \mathbb{R}$ and $t\geq 0$. 
\end{Definition}

\begin{Definition}
The {\color{black}global {\color{black} solution}}  $\gamma$ is bounded in the past if the set $\gamma(\mathbb{R}^-)=\{\gamma(t)\in X:t\leq 0\}$ is bounded. 
\end{Definition}

\begin{Definition}
The {\color{black}global {\color{black} solution}} $\gamma$ is bounded  if the set $\gamma(\mathbb{R})=\{\gamma(t)\in X:t\in \mathbb{R}\}$ is bounded. If $x=\gamma(0)$ we say that $\gamma:\R\to X$ is a global {\color{black} solution} through $x$.
\end{Definition}

The non-dissipativity of the problem can be manifested threefold:
\begin{itemize}
	\item[(A)] {\color{black} solutions} may cease to exist (e.g.\!\! \textit{blow up} to infinity) in finite time, that is, for some $x\!\in\! X$,\! $\tau_x\!<\!\infty$. 
	
    \item[(B)] {\color{black} solutions} may \textit{grow-up}, that is, it could happen that $\tau_x=\infty$ for some $x\in X$, with the set $\{ S(t)x: t\geq 0\}$ being unbounded (this is the case in Example \ref{ex1}),
    \item[(C)] all {\color{black} solutions} may stay bounded but, even so, there may not exist a bounded set which absorbs all of them (this is the case in Example \ref{ex2}).
\end{itemize}

We will assume that $\tau_x=\infty$ for all $x\in X$, that is, that $\{S(t)\}_{t\geq 0}$ is a semigroup.  Hence we exclude the situation of item (A) above.

Systems which exhibit the behaviors (B) or (C) for some {\color{black} solutions} are called \textit{slowly non-dissipative}, cf. \cite{bengal}.
In case of grow-up, i.e. in the case (B),  it is still possible to distinguish between two situations: either for some $x\in X$ we have that $\lim_{t\to \infty}d(S(t)x,x) = \infty$ ({\color{black} solution} diverge) or $\limsup_{t\to \infty}d(S(t)x,x)=+\infty$ but the {\color{black} solution} do not diverge (oscillate).} To see that the latter situation is possible, a simple non-autonomous example is 
$$
x'(t) = \sin(t) - t\cos(t),
$$
then if $x(0) = 0$ the {\color{black} solution} is $x(t) = t\sin(t)$, so we have the oscillations with the  amplitude growing to infinity as $t\to \infty$. Later we will introduce the assumption on the semigroup which exclude such situation. 

We define two notions of invariant sets for non-dissipative semigropups. The first definition follows \cite{chepyzhov} where it is called the maximal invariant set.
\begin{Definition}
Let $\{ S(t) \}_{t\geq 0} $ be a continuous semigroup. The set $\mathcal{I}$ is the maximal invariant set if
$$
\mathcal{I} = \{ x\in X\,:\textrm{ there is a global {\color{black} solution} } \gamma:\R\to X \ \textrm{through } x \textrm{ which is bounded in the past}\}  
$$
\end{Definition}
It is also possible to define another notion, which coincides with the classical concept in the theory of global attractors. 
\begin{Definition}
Let $\{ S(t) \}_{t\geq 0} $ be a continuous semigroup. The set $\mathcal{I}_b$ maximal invariant set of bounded {\color{black} solutions} if
$$
\mathcal{I}_b = \{ x\in X\,:\textrm{ there is a global bounded {\color{black} solution} } \gamma:\R\to X \textrm{through } x \}  
$$
\end{Definition}
We make a simple observation.

\begin{Observation}
	We have $\mathcal{I}_b \subset \mathcal{I}$ with the possibility of strict inclusion. {\color{black} $\mathcal{I}$ is unbounded and $\mathcal{I}_b$ may be unbounded}. Both sets are invariant, i.e. $S(t) \mathcal{I} = \mathcal{I}$ and $S(t) \mathcal{I}_b = \mathcal{I}_b$ for all $t\geq 0$. Furthermore,   
	$$
	x \in \mathcal{I}\setminus \mathcal{I}_b \Longleftrightarrow x \in \mathcal{I}\ \ \textrm{and} \ \ \{ S(t)x\}_{t\geq 0}\ \ \textrm{is unbounded}.  
	$$
\end{Observation}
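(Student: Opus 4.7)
The containment $\mathcal{I}_b\subset\mathcal{I}$ is immediate from the definitions: a globally bounded global solution is in particular bounded on $\mathbb{R}^-$. For the strict inclusion and the ``may be unbounded'' claims, my plan is simply to appeal to the two motivating examples. In Example \ref{ex1} the backward trajectory of $(x_0,0)$ equals $\{(x_0 e^{t},0):t\le 0\}$ and is bounded, so the whole $x$-axis lies in $\mathcal{I}$, which is therefore unbounded, while only the origin belongs to $\mathcal{I}_b$ (any orbit off the $y$-axis has unbounded backward trajectory in the $y$-component). In Example \ref{ex2} every orbit in the $(x,y)$-plane is periodic, hence globally bounded, so $\mathcal{I}=\mathcal{I}_b$ coincides with the whole plane and is unbounded. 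Together these witness strict inclusion as well as both unboundedness possibilities.

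For the invariance statements I would use the standard time-shift argument. Given $x\in\mathcal{I}$ and a past-bounded global solution $\gamma$ through $x$, for any $t\ge 0$ the translate $\tilde\gamma(s):=\gamma(s+t)$ is again a global solution with $\tilde\gamma(0)=S(t)x$, and $\tilde\gamma(\mathbb{R}^-)\subset \gamma(\mathbb{R}^-)\cup\gamma([0,t])$ is bounded (the second piece being compact by continuity of $\gamma$). Hence $S(t)x\in\mathcal{I}$, giving $S(t)\mathcal{I}\subset\mathcal{I}$. For the reverse inclusion, set $y:=\gamma(-t)$: the map $s\mapsto\gamma(s-t)$ is a past-bounded global solution through $y$, so $y\in\mathcal{I}$, and $S(t)y=\gamma(0)=x$. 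Replacing ``past-bounded'' by ``bounded'' throughout the same argument yields $S(t)\mathcal{I}_b=\mathcal{I}_b$.

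For the final equivalence characterizing $\mathcal{I}\setminus\mathcal{I}_b$, the key observation is that along any global solution $\gamma$ through $x$ the forward part is forced by the semigroup identity: $\gamma(t)=S(t)\gamma(0)=S(t)x$ for $t\ge 0$. Hence if $x\in\mathcal{I}$ with $\{S(t)x\}_{t\ge 0}$ unbounded, no global solution through $x$ can be globally bounded, and $x\notin\mathcal{I}_b$. Conversely, if $x\in\mathcal{I}\setminus\mathcal{I}_b$ and $\gamma$ is any past-bounded global solution through $x$, a bounded forward orbit $\{S(t)x\}_{t\ge 0}$ would glue with $\gamma(\mathbb{R}^-)$ to produce a globally bounded global solution, placing $x$ into $\mathcal{I}_b$---a contradiction. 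I do not anticipate any serious obstacle in this proof; every step is a direct unfolding of the definitions and the semigroup identity, and the only mild care needed is the bookkeeping in the invariance argument when absorbing a compact ``fill-in'' piece of the shifted orbit into the past-bounded part.
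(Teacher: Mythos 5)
Your proof is correct and takes essentially the route the paper intends: the paper states this as an unproved Observation, relying on exactly the definitional unfolding and the two motivating examples that you spell out, so your argument simply fills in the (routine) details. One small slip: in your discussion of Example \ref{ex1} the parenthetical ``any orbit off the $y$-axis has unbounded backward trajectory in the $y$-component'' has the axes crossed --- points with nonzero $x$-coordinate have unbounded \emph{forward} orbits, while points with nonzero $y$-coordinate have unbounded backward orbits; for strictness of the inclusion it suffices to note that $(1,0)\in\mathcal{I}$ has unbounded forward orbit. (Note also that the paper's own line ``$\mathcal{I}=\{0\}\times\mathbf{R}$'' for Example \ref{ex1} appears to be a typo for $\mathbb{R}\times\{0\}$; your identification of $\mathcal{I}$ with the $x$-axis agrees with the description inside the example itself.)
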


In Example \ref{ex2} above $\mathcal{I} = \mathcal{I}_b = \mathbb{R}^2$. In Example \ref{ex1} above $\mathcal{I} = \{0\}\times \mathbf{R}$ and $\mathcal{I}_b =\{ (0,0) \}$. Note that in Example \ref{ex1} $\mathcal{I}\setminus \mathcal{I}_b$ consists of heteroclinics to infinity as defined by \cite{bengal}, see also \cite{hell} and the articles  \cite{PimentelRocha, Pimentel, carvalhopimentel, carvalhopimentel2}.

\subsection{Existence of unbounded attractors}
The content of this chapter relies in putting to the abstract setting the results of Chepyzhov and Goritskii \cite{chepyzhov} obtained there for the particular initial and boundary value problem governed by the PDE $u' = \Delta u + \beta u + f(u)$ on a bounded domain with the homogeneous Dirichlet boundary data. We will frequently use in the proofs the following result which appears in \cite[Lemma 2.1]{chepyzhov}.
\begin{Lemma}\label{lemma:212}
	Let $\{L_n\}$ be a sequence of nested sets in a Banach space $X$, i.e. 
	$$
	L_1 \supset L_2 \supset L_3 \supset \ldots,$$
	such that every set $L_n$ lies in the $\epsilon_n$-neighborhood of some compact subset $K_n$, namely $L_n\subset \mathcal{O}_{\epsilon_n}(K_n)$, with $\lim_{n\to \infty}\epsilon_n= 0$. Then from any sequence of points $\{y_n\}$, where $y_n\in L_n$, we can choose a convergent subsequence. Moreover the set $L = \bigcap_{n=1}^\infty \overline{L}_n$ is compact.
\end{Lemma}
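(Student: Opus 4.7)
The strategy is to prove that the sequence $\{y_n\}$ is totally bounded, and likewise that $L$ is closed and totally bounded, then invoke completeness of the Banach space $X$ to upgrade totally bounded to precompact and conclude compactness of $L$.

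The crucial structural observation is that, by nestedness, for any fixed $n$ and any $m\geq n$ one has $y_m\in L_m\subset L_n\subset \mathcal{O}_{\epsilon_n}(K_n)$. Thus, although the approximating compact sets $K_n$ change with $n$, the entire tail of the sequence from index $n$ onward is uniformly $\epsilon_n$-close to the single compact set $K_n$. From here total boundedness is immediate: given $\epsilon>0$, first choose $N$ with $\epsilon_N<\epsilon/2$, then cover the compact set $K_N$ by finitely many balls of radius $\epsilon/2$. Any $y_n$ with $n\geq N$ lies within $\epsilon_N<\epsilon/2$ of one of the centers, so it lies in the $\epsilon$-ball around that center. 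Adding (at most) $N-1$ extra singleton balls to handle $y_1,\dots,y_{N-1}$ yields a finite $\epsilon$-net for $\{y_n\}_{n\in \mathbb N}$. Totally bounded sequences in complete metric spaces admit Cauchy (hence convergent) subsequences, which establishes the first assertion.

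For compactness of $L$, note first that $L=\bigcap_n \overline{L}_n$ is closed as an intersection of closed sets. To extend the $\epsilon_n$-approximation to the closures, I use that each $K_n$ is compact and hence closed: the inclusion $L_n\subset \mathcal{O}_{\epsilon_n}(K_n)$ means $d(x,K_n)<\epsilon_n$ for every $x\in L_n$, and passing to limits gives $d(x,K_n)\le \epsilon_n$ for every $x\in \overline{L}_n$. Therefore $L\subset \overline{L}_n\subset\{x\in X:d(x,K_n)\leq \epsilon_n\}$ for every $n$, and the same finite-covering argument as above shows $L$ is totally bounded. Closed plus totally bounded in a Banach space gives compactness.

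The argument is essentially routine; the only subtle point is the transition from $L_n$ to $\overline{L}_n$ in the approximation, which is why the hypothesis that each $K_n$ is compact (hence closed) matters. The nestedness of the $L_n$ is the other indispensable ingredient, since it is what allows a \emph{single} compact set $K_n$ to control an entire tail of the sequence and to control $L$ itself.
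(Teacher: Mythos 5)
Your proof is correct and complete. The paper itself does not prove this lemma --- it is quoted from Chepyzhov and Goritskii \cite[Lemma 2.1]{chepyzhov} and used as a black box --- so there is no in-paper argument to compare against; your total-boundedness-plus-completeness route is the standard one for this statement. The two points that actually need care are both handled properly: you use nestedness so that the single compact set $K_N$ controls the whole tail $\{y_m\}_{m\geq N}$ (without nestedness the statement is false, e.g.\ $L_n=\{n e\}$ with $K_n=L_n$ and $\epsilon_n=1/n$), and you pass from $L_n$ to $\overline{L}_n$ via the continuity of $x\mapsto d(x,K_n)$, obtaining the non-strict bound $d(x,K_n)\leq\epsilon_n$ on the closure, which still yields a finite $\epsilon$-net since $K_n$ is totally bounded. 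One could quibble that when only $d(x,K_n)\leq\epsilon_n$ is available you should take the covering balls of $K_n$ slightly larger (or invoke that the distance to a compact set is attained) to place $x$ in an $\epsilon$-ball, but this is a trivial adjustment of constants and does not affect the validity of the argument.
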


\begin{Definition}\label{def:gen_ass}
The semigroup $\{S(t)\}_{t\geq 0}$ is generalized asymptotically compact if for every $B\in \mathcal{B}(X)$ and every $t>0$ there exists a compact set $K(t,B)\subset X$ and  $\varepsilon(t,B)\to 0$ as $t\to \infty$ such that 
$$
S(t)B\subset \mathcal{O}_\varepsilon(K(t,B)).
$$
\end{Definition}

%


Now let $X$ be a Banach space and let $E^+$ and $E^-$ its subspaces. We assume that $E^+$ is finite dimensional and that $E^-$ is closed. Moreover, $X= E^+ \oplus E^-$, i.e. we can uniquely represent any $x\in X$ as $x=p+q$ with $p\in E^+$ and $q\in E^-$. Then we will denote $p = Px$ and $q=(I-P)x$. The closed graph theorem implies that the projections $P$ and $I-P$ are continuous. 
We will use the shorthand notation 
 $\{ \|Px\| \leq R\} = \{  x\in X\;:\  \|Px\|\leq R \}$  and $\{ \|(I-P)x\|\leq D\} = \{ x\in X\;:\  \|(I-P)x\|\leq D \}$.  
We will impose the following assumptions to the semigroup $\{S(t)\}_{t\geq 0}$, in order to prove the existence of an unbounded attractor 
\begin{itemize}
	\item[(H1)] There exist $D_1, D_2 > 0$ and a closed set $Q$ with 
	$\{ \|(I-P)x\|\leq D_1\} \subset Q \subset \{ \|(I-P)x\|\leq D_2\}$
	such that $Q$ absorbs bounded sets, and is positively invariant, i.e. for every $B\in \mathcal{B}(X)$ there exists $t_0(B) > 0$ such that $\bigcup_{t\geq t_0}S(t)B \subset Q$ and $S(t)Q \subset Q$ for every $t\geq 0$.
	\item[(H2)] There exist the constants  $R_0$ and $R_1$ with $0<R_0\leq R_1$ and an ascending family of closed and bounded sets $\{H_R\}_{R\geq R_0}$ with $H_R \subset Q$, such that
	\begin{enumerate}
		\item  for every $R\geq R_1$ we can find $S(R)\geq R_0$ such that  $\{ \|Px\|\leq S(R)\} \cap Q \subset H_R$ and moreover $\lim_{R\to \infty} S(R) = \infty$,
		\item for every $R\geq R_1$ we have $ H_{R}  \subset \{ \|Px\|\leq R\}$,
		\item and $S(t)(Q \setminus H_R) \subset Q \setminus H_R$ for every $t\geq 0$.
	\end{enumerate}
	\item[(H3)] The semigroup $\{S(t)\}_{t\geq 0}$ is generalized asymptotically compact.
\end{itemize}
Define the unbounded attractor as 
\begin{equation}\label{jdef}
\mathcal{J} = \bigcap_{t\geq 0}\overline{S(t)Q}
\end{equation}
It is clear that the unbounded attractor is a closed set. We begin with establishing the relation between the maximal invariant set $\mathcal{I}$ and the unbounded attractor $\mathcal{J}$. The argument follows the lines of the proof of \cite[Proposition 2.7 and Proposition 2.10 c)]{chepyzhov}. 
\begin{Theorem}\label{coincide}
	If (H1)--(H3) hold, then  $\mathcal{I} = \mathcal{J}$. 
	\end{Theorem}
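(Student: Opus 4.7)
The plan is to prove both inclusions separately, with the $\mathcal{I}\subset\mathcal{J}$ direction following from absorption into $Q$ applied to the backward orbit, and the $\mathcal{J}\subset\mathcal{I}$ direction requiring a diagonal-compactness construction of a bounded backward global solution through $x$.

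For $\mathcal{I}\subset \mathcal{J}$, take $x\in\mathcal{I}$ and a global solution $\gamma:\R\to X$ through $x$ with $B:=\gamma(\R^-)$ bounded. By (H1) there exists $t_0$ such that $S(\tau)B\subset Q$ for all $\tau\geq t_0$. For an arbitrary $t\geq 0$ I would write $\gamma(-t)=S(t_0)\gamma(-t-t_0)\in S(t_0)B\subset Q$, so $x=S(t)\gamma(-t)\in S(t)Q$. Taking the intersection over $t\geq 0$ gives $x\in\bigcap_{t\geq 0}S(t)Q\subset\mathcal{J}$.

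For $\mathcal{J}\subset\mathcal{I}$, take $x\in\mathcal{J}$. Since $\mathcal{J}\subset\overline{S(0)Q}=Q$, I have $x\in Q$. Using $\lim_{R\to\infty}S(R)=\infty$ in (H2)(1), I pick $R'\geq R_1$ such that $S(R')>\|Px\|+1$, so the neighborhood $\{u\in Q:\|P(u-x)\|<1\}$ is contained in $H_{R'}$. For each $n\in\N$, since $x\in\overline{S(n)Q}$, choose $z_n\in Q$ with $\|S(n)z_n-x\|<1/n$; then for large $n$ we have $y_n:=S(n)z_n\in H_{R'}$. The key use of (H2)(3) is its contrapositive: if $z_n\in Q\setminus H_{R'}$ then the whole forward orbit stays outside $H_{R'}$, contradicting $y_n\in H_{R'}$; applying the same reasoning at intermediate times $s\in[0,n]$ shows that the backward curves $\gamma_n:[-n,0]\to X$, $\gamma_n(s):=S(s+n)z_n$, lie entirely in the bounded set $H_{R'}$.

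The main obstacle is extracting a limiting global solution from $\{\gamma_n\}$ despite lack of classical asymptotic compactness. I would invoke (H3) and Lemma \ref{lemma:212}: for each fixed $j\in\N$, the tail $L_k:=\{\gamma_n(-j):n\geq k+j\}=S(k)\{\gamma_n(-j-k):n\geq k+j\}\subset S(k)H_{R'}$ sits in the $\varepsilon(k,H_{R'})$-neighborhood of a compact set with $\varepsilon(k,H_{R'})\to 0$, so Lemma \ref{lemma:212} yields a convergent subsequence of $\{\gamma_n(-j)\}$. A diagonal extraction then gives a single subsequence along which $\gamma_n(-j)\to\gamma(-j)$ for every $j\in\N$, and the semigroup identity $\gamma_n(-j)=S(1)\gamma_n(-j-1)$ combined with continuity of $S(1)$ forces the compatibility $\gamma(-j)=S(1)\gamma(-j-1)$. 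Extending to all negative reals by $\gamma(s):=S(s+j)\gamma(-j)$ for $s\in[-j,0]$ and to positive times by $\gamma(s):=S(s)x$ yields a global solution through $x$. Since $\gamma(\R^-)\subset H_{R'}$ is bounded, we obtain $x\in\mathcal{I}$.
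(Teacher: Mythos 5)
Your proposal is correct and follows essentially the same route as the paper's proof: the inclusion $\mathcal{I}\subset\mathcal{J}$ via absorption of the backward orbit into $Q$, and the converse by trapping the approximating orbits in some $H_{R'}$ through the contrapositive of (H2)(3), then using (H3) together with Lemma \ref{lemma:212} to pass to a limit and assemble a backward-bounded global solution. The only difference is organizational — you extract limits at all integer times by a single diagonal argument where the paper proceeds recursively one step at a time — which does not change the substance.
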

 \begin{proof}  
 	We first prove that $\mathcal{I}\subset \mathcal{J}$. To this end assume that $u \in \mathcal{I}$ and denote by $\gamma$ the bounded in the past {\color{black} solution} such that $u = \gamma(0)$. Moreover let $T$ be such that $S(T)\{ \gamma(s)\,:\ s\leq 0 \} \subset Q$. If $t\geq 0$ then  $S(T+t)\{ \gamma(s)\,:\ s\leq 0 \} \subset S(t)Q$. But, since $u = S(T+t)\gamma(-T-t)$, then $u\in S(t)Q$ and, consequently $u\in \mathcal{J}$. 
 
%
%

 	We establish the converse inclusion. To this end {\color{black}take} $u\in \mathcal{J}$. Then there exists a sequence $\{y_n\} \subset Q$ such that $S(n)y_n \to u$ as $n\to \infty$. Since $\{ S(n)y_n \}$ is a convergent sequence, it is bounded, it follows that $\{ S(n)y_n \} \subset H_R$ for some $R\geq R_0$ and $u\in H_R$. By item 3 of (H2) this means that for every $n$ and for every $t\in [0,n]$ we have $S(t)y_n \in H_R$. As $S(n)y_n = S(1)S(n-1)y_n$ we deduce that $S(n-1)y_n \in H_R\cap S(n-1)H_R = H_R \cap S(n-1)Q$. By (H3) and by Lemma \ref{lemma:212}, it follows that, for a subsequence, $S(n-1)y_n \to z$, whence $S(1)z = u$ and $z\in H_R$.  Picking $t\geq 0$, for $n$ large enough $S(t)Q \ni S(t)S(- t+n-1)y_n =  S(t - t+n-1)y_n \to z$ and hence $z\in \mathcal{J}$. Proceeding recursively, we can construct the {\color{black}{\color{black} solution}} $\{ \gamma(t)\,:\ t\leq 0\} \subset Q$ such that $\gamma(0) = u$. As $u\in H_R$, by item 3 of (H2) it must be that $\gamma(t) \in H_R$ for every $t\leq 0$ and the proof is complete. 
 	\end{proof}

We formulate a simple lemma on the possible behavior of bounded sets upon evolution.
\begin{Lemma}\label{lemma:possibilities}
	Assume (H1)-(H2) and let $R\geq R_0$. If $B\in \mathcal{B}(X)$ then exactly one of three cases holds:
	\begin{enumerate}
		\item there exists $t_1 > 0$ such that for every $t\geq t_1$ 
		$$ S(t)B \subset Q \setminus H_R.$$ 
			\item there exists $t_1 > 0$ such that for every $t\geq t_1$ 
		$$ S(t)B  \cap H_R  \neq \emptyset\ \ \textrm{and}\ \  S(t)B  \cap (Q \setminus H_R)  \neq \emptyset.$$
			\item there exists $t_1 > 0$ such that for every $t\geq t_1$ 
		$$ S(t)B  \subset H_R .$$
	\end{enumerate}
\end{Lemma}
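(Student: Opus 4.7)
The plan is to observe that, once $S(t)B$ has entered $Q$, the three alternatives in the statement form a disjoint exhaustive trichotomy at every fixed time, so the lemma reduces to showing two forward-monotonicity properties that make the trichotomy stable in time. First, by (H1) there exists $t_0 > 0$ with $S(t)B \subset Q$ for all $t \geq t_0$. Since $Q$ is the disjoint union of $H_R$ and $Q \setminus H_R$, at each such $t$ exactly one of three configurations holds: $S(t)B \subset Q \setminus H_R$, or $S(t)B \subset H_R$, or $S(t)B$ meets both sets.

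The only dynamical input beyond this is the positive invariance $S(t)(Q \setminus H_R) \subset Q \setminus H_R$ from item (3) of (H2). From it I would extract two stability facts. First, for any $x \in B$ with $S(t_1)x \in Q \setminus H_R$, one has $S(t_1+s)x = S(s)(S(t_1)x) \in Q \setminus H_R$ for all $s \geq 0$; hence the condition $S(t)B \cap (Q \setminus H_R) \neq \emptyset$, once true, persists for all later $t$. Second, applying the invariance to the entire image set, if $S(t_1)B \subset Q \setminus H_R$ then $S(t_1+s)B \subset Q \setminus H_R$ for every $s \geq 0$.

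The case analysis is then immediate. If $S(t_1)B \subset Q \setminus H_R$ for some $t_1 \geq t_0$, the second fact gives case (1). Otherwise $S(t)B \cap H_R \neq \emptyset$ for every $t \geq t_0$, and one has two sub-cases: either $S(t)B \cap (Q \setminus H_R) = \emptyset$ for all $t \geq t_0$, forcing $S(t)B \subset H_R$ throughout and yielding case (3); or there exists $t_2 \geq t_0$ with $S(t_2)B \cap (Q \setminus H_R) \neq \emptyset$, and the first stability fact then keeps both intersections nonempty for every $t \geq t_2$, which is case (2). Mutual exclusivity follows from the decomposition of $Q$, so exactly one conclusion applies. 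There is no real obstacle; the only technical subtlety is that forward persistence of the intersection condition must be tracked along a single orbit, since one cannot apply the invariance of $Q \setminus H_R$ to a set straddling $H_R$ and its complement.
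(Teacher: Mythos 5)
Your proof is correct and follows essentially the same route as the paper's: absorption into $Q$ via (H1), the decomposition of $Q$ into $H_R$ and $Q\setminus H_R$, and the positive invariance of $Q\setminus H_R$ from item (3) of (H2) to make each alternative persist forward in time. The only difference is cosmetic — the paper organizes the case analysis by first negating case (3), while you first test for case (1) — so the two arguments are interchangeable.
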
 
\begin{proof}
	Suppose that (3) does not hold. This means that there exists $t_1 > t_0(B)$ such that $S(t_1)B \cap (Q\setminus H_R) \neq \emptyset$. By (H2) we deduce that $S(t)B \cap (Q\setminus H_R) \neq \emptyset$ for every $t\geq t_1$. If for some $t_2\geq t_1$ there holds $S(t_2) B \cap H_R = \emptyset$ then $S(t_2)B\subset Q\setminus H_R$ and (H2) implies that $S(t)B\subset Q\setminus H_R$ for every $t\geq t_2$, which completes the proof.
\end{proof}

The proof of the following result is established in \cite[Lemma 2.8]{chepyzhov}, here we present the proof that makes the explicit use of the Brouwer degree. This result, in particular, implies that the set $\mathcal{I} = \mathcal{J}$ is nonempty.  
\begin{Lemma}\label{lemma:211}
	Assume (H1)-(H3). For every $p \in E^+$ there exists $q\in E^-$ such that $p+q \in \mathcal{J}$.
\end{Lemma}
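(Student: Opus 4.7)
The plan is to fix $p\in E^+$ and use a Brouwer degree argument on a finite-dimensional closed ball inside $E^+$: for each $t\geq 0$ we produce a point $p'_t\in E^+$ whose orbit at time $t$ lands on the affine subspace $\{p\}+E^-$, and then pass to a limit along $t=n\to\infty$ via (H3) and Lemma \ref{lemma:212}.

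First, using $\lim_{R\to\infty}S(R)=\infty$ from (H2)(1), I choose $R\geq R_1$ so large that $S(R)>\|p\|$, and then I fix $R'>\max(R,\|p\|)$. Identifying each $p'\in E^+$ with $p'+0\in X$, where $0\in E^-$, we have $\|(I-P)(p'+0)\|=0\leq D_1$, so $p'+0\in Q$ by (H1). Hence the map $\Phi\colon[0,\infty)\times\overline{B}_{R'}\to E^+$ defined by $\Phi_t(p')=PS(t)(p'+0)$ is well defined and jointly continuous, where $\overline{B}_{R'}=\{p'\in E^+:\|p'\|\leq R'\}$.

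The crucial boundary-avoidance step is to show that $p\notin\Phi_t(\partial\overline{B}_{R'})$ for every $t\geq 0$. If $\|p'\|=R'$, then $\|P(p'+0)\|=R'>R$, so (H2)(2) gives $p'+0\notin H_R$, hence $p'+0\in Q\setminus H_R$; the positive invariance in (H2)(3) forces $S(t)(p'+0)\in Q\setminus H_R$, and then the contrapositive of (H2)(1) yields $\|PS(t)(p'+0)\|>S(R)>\|p\|$, so $\Phi_t(p')\neq p$. Now $\Phi_0$ is the identity on $\overline{B}_{R'}$ and $\|p\|<R'$, so $\deg(\Phi_0,B_{R'},p)=1$; by homotopy invariance of the Brouwer degree on $E^+$ we get $\deg(\Phi_t,B_{R'},p)=1$ for all $t\geq 0$. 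Consequently, for each $n\in\mathbb{N}$ there is $p'_n\in B_{R'}$ with $PS(n)(p'_n+0)=p$, and we write $S(n)(p'_n+0)=p+q_n$ with $q_n\in E^-$.

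Finally, $\{p'_n+0\}\subset\overline{B}_{R'}$ is bounded in $X$, so (H3) combined with Lemma \ref{lemma:212} produces a subsequence along which $p+q_{n_j}\to p+q^*$, and $q^*\in E^-$ by continuity of $I-P$. To check $p+q^*\in\mathcal{J}$, fix $t\geq 0$; for $n_j\geq t$, the positive invariance $S(s)Q\subset Q$ in (H1) gives $S(n_j-t)(p'_{n_j}+0)\in Q$, hence $p+q_{n_j}=S(t)\,S(n_j-t)(p'_{n_j}+0)\in S(t)Q$, and passage to the limit yields $p+q^*\in\overline{S(t)Q}$. Since $t\geq 0$ is arbitrary, $p+q^*\in\mathcal{J}$ by \eqref{jdef}. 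The main obstacle is the boundary-avoidance step: it requires chaining (H2)(1)--(H2)(3) in just the right way so that the positive invariance of $Q\setminus H_R$ transports the large $P$-norm on $\partial\overline{B}_{R'}$ into a uniform lower bound on $\|PS(t)(p'+0)\|$ strictly exceeding $\|p\|$ for every $t\geq 0$, which is what makes the homotopy admissible and the degree computable.
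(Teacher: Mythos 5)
Your construction is essentially the paper's: both proofs identify $E^+$ with a subset of $Q$, run a Brouwer degree/homotopy-invariance argument on a large ball in $E^+$ to produce preimages $p'_n$ with $PS(n)(p'_n+0)=p$, and then pass to the limit using (H3) and Lemma \ref{lemma:212}. Your boundary-avoidance step is correct and in fact spelled out more carefully than in the paper (the chain (H2)(2) $\Rightarrow$ $\partial\overline{B}_{R'}\subset Q\setminus H_R$ $\Rightarrow$ (H2)(3) $\Rightarrow$ contrapositive of (H2)(1) is exactly the right one), and the verification that the limit lies in every $\overline{S(t)Q}$ is fine.

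There is, however, a gap in the compactness step. You write that since $\{p'_n+0\}\subset\overline{B}_{R'}$ is bounded, ``(H3) combined with Lemma \ref{lemma:212}'' yields a convergent subsequence of $y_n:=p+q_n=S(n)(p'_n+0)$. But Lemma \ref{lemma:212} applies only to a \emph{nested} family $L_1\supset L_2\supset\cdots$ with $L_n\subset\mathcal{O}_{\epsilon_n}(K_n)$, and the sets $S(n)\overline{B}_{R'}$ furnished by (H3) are not nested; without nestedness the conclusion fails (the compact sets $K_n$ could drift, and boundedness of $\{y_n\}$ alone gives nothing in infinite dimensions). The repair is short and is what the paper does: since $y_n\in Q$ and $\|Py_n\|=\|p\|$, item 1 of (H2) places $y_n$ in a fixed $H_{R''}$; positive invariance of $Q\setminus H_{R''}$ then forces $p'_n+0\in H_{R''}$, so $y_n\in S(n)H_{R''}\cap H_{R''}=S(n)Q\cap H_{R''}$, which \emph{is} a nested family of subsets of $\mathcal{O}_{\epsilon_n}(K(n,H_{R''}))$ with $\epsilon_n\to 0$. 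Only then does Lemma \ref{lemma:212} give the convergent subsequence. You should insert this intermediate localization before invoking the lemma.
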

\begin{proof}
	Let $p\in E^+$. By item 1 of (H2) there exists $R\geq R_0$ such that $\{x\in E^+ \,:\ \|x\|\leq \|p\|+1 \} \subset H_R$. Define $B = \{ x\in E^+\, :\ \|x\| < R + 1\}$. Since, by item 2 of (H2), $p \in B$ it is clear that the Brouwer degree $\textrm{deg}(I,B,p)$ is equal to one, cf. \cite[Theorem 1.2.6 (1)]{Agarwal}. Pick $t>0$ and define the mapping 
	$[0,1]\times \overline{B} \ni (\theta,p) \mapsto P S(\theta t) p \in E^+$. This mapping is continuous. Moreover, as, by item 3 of (H2), $\partial B = \{ x\in E^+\, :\ \|x\| = R + 1 \} \subset Q\setminus H_R$ we deduce that $p \not\in PS(\theta t)\partial B$ for every $\theta \in [0,1]$. Hence, by the homotopy invariance of the Brouwer degree, cf. \cite[Theorem 1.2.6 (3)]{Agarwal} we deduce that $\textrm{deg}(P S(t),B,p) = 1$, whence, cf. \cite[Theorem 1.2.6 (2)]{Agarwal}, there exists $p_t\in B$ such that $PS(t) p_t = p$. Let $t_n\to \infty$. There exists a sequence $p_n \in B$ such that $PS(t_n)p_n = p$. Hence we can find $q_n \in E^-$ such that $y_n = p+q_n = S(t_n)p_n \in S(t_n) Q$.  From the fact that $p\in B$ we deduce by item 1 of (H2) that there exists $R' \geq R_1$ such that $y_n \in H_{R'}$ for every $n$. This means that $y_n \in S(t_n)H_{R'} \cap H_{R'}$. As $H_{R'}$ is bounded and the sequence of sets $S(t_n)H_{R'} \cap H_{R'}$ is nested, we can use (H3) and Lemma \ref{lemma:212} to deduce that, for a subsequence, $y_n \to y$. Since for every $t\geq 0$ we have $y_n \in S(t)Q$ for every $n$ such that $t\leq t_n$ it follows that $y\in \mathcal{J}$. The continuity of $P$ imples that $Py = p$ and the proof is complete. 
\end{proof}

The proof of the following result uses the argument of \cite[Proposition 3.4]{chepyzhov}
\begin{Theorem}\label{attraction}
	Let (H1)-(H3) hold. Let $R\geq R_1$ and let $B\in \mathcal{B}(X)$. Then either there exist $t_1>0$ such that for every $t\geq t_1$ we have $S(t)B\subset Q\setminus H_{R}$ or 
	$$\lim_{t\to\infty}\mathrm{dist}(S(t)B\cap \{\|Px\|\leq R\},\mathcal{J}) = 0.$$
\end{Theorem}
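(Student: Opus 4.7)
The plan is to apply Lemma \ref{lemma:possibilities} and argue by contradiction. That trichotomy says that either $S(t)B \subset Q \setminus H_R$ for all large $t$ (which is exactly the first alternative of the theorem), or for all large $t$ the iterate $S(t)B$ meets $H_R$. Assuming the second alternative of the theorem fails in the remaining situation, I would extract $\varepsilon > 0$, a monotonically increasing sequence $t_n \to \infty$, and points $x_n \in S(t_n)B \cap \{\|Px\| \leq R\}$ with $d(x_n, \mathcal{J}) \geq \varepsilon$. Writing $x_n = S(t_n) y_n$ with $y_n \in B$, the goal is to produce a subsequence converging to some $x_\ast \in \mathcal{J}$, contradicting the lower bound on the distance.

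My first step is to trap all $x_n$ inside a single bounded slab. By (H1), $x_n \in Q$ for $n$ large; since $\|Px_n\| \leq R$, item 1 of (H2) together with $\lim_{R' \to \infty} S(R') = \infty$ lets me fix $R' \geq R_1$ with $S(R') \geq R$, so that $\{\|Px\| \leq R\} \cap Q \subset H_{R'}$ and hence $x_n \in H_{R'}$ for all large $n$. The key observation then is a backward-trace argument via the contrapositive of item 3 of (H2): for any integer $k \geq 0$ with $t_n - k \geq t_0(B)$, the point $S(t_n - k) y_n$ lies in $Q$, and if it lay in $Q \setminus H_{R'}$ then forward invariance would force $x_n = S(k)\bigl(S(t_n - k)y_n\bigr) \in Q \setminus H_{R'}$, contradicting $x_n \in H_{R'}$. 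Hence $S(t_n - k) y_n \in H_{R'}$, which yields $x_n \in S(k) H_{R'} \cap H_{R'}$ for every $k$ and every sufficiently large $n$.

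Set $L_k := S(k) H_{R'} \cap H_{R'}$. The same contrapositive argument shows that $\{L_k\}$ is nested in $k$: if $x = S(k) h \in L_k$ with $h \in H_{R'}$ and $j \leq k$, then $S(k-j) h \in Q$ and $S(j)\bigl(S(k-j)h\bigr) = x \in H_{R'}$ force $S(k-j) h \in H_{R'}$, so $x \in L_j$. I then diagonalize: choose $n_k$ strictly increasing with $x_{n_k} \in L_k$. Applying (H3) to the bounded set $H_{R'}$ gives $L_k \subset S(k) H_{R'} \subset \mathcal{O}_{\varepsilon_k}(K_k)$ with $K_k$ compact and $\varepsilon_k \to 0$, so Lemma \ref{lemma:212} produces a convergent subsequence $x_{n_{k_\ell}} \to x_\ast$. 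To see $x_\ast \in \mathcal{J}$, fix $t \geq 0$; once $k_\ell$ is large enough that $t_{n_{k_\ell}} - t \geq t_0(B)$, I get $x_{n_{k_\ell}} = S(t)\bigl(S(t_{n_{k_\ell}} - t) y_{n_{k_\ell}}\bigr) \in S(t) Q$, so passing to the limit $x_\ast \in \overline{S(t) Q}$ for every $t \geq 0$ and hence $x_\ast \in \mathcal{J}$, the desired contradiction.

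The main obstacle I expect is the dual use of the contrapositive of item 3 of (H2): first to pull $x_n$ back along the orbit $\{S(t_n - k) y_n\}$ while keeping it inside $H_{R'}$, and second to establish the nesting of $L_k$ so that Lemma \ref{lemma:212} actually applies. Without the nesting the diagonal extraction produces no limit, because the compact sets $K_k$ and the radii $\varepsilon_k$ from (H3) genuinely depend on $k$; the positive invariance of $Q\setminus H_{R'}$ is exactly what bridges this gap.
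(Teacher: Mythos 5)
Your proposal is correct and follows essentially the same route as the paper: reduce to the case where $S(t)B$ meets $H_R$ for large $t$ via Lemma \ref{lemma:possibilities}, trap the offending points in a fixed $H_{R'}$ using item 1 of (H2), pull them back into nested sets of the form $S(\cdot)H_{R'}\cap H_{R'}$ via the contrapositive of item 3 of (H2), and extract a limit in $\mathcal{J}$ with (H3) and Lemma \ref{lemma:212}. The only cosmetic difference is that the paper indexes the nested sets directly by the times $t_n$ (after assuming $B\subset Q$), whereas you index by integers $k$ and diagonalize; both are sound.
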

\begin{proof}
We can assume that there exists $t_1>0$ such that  $S(t)B \cap H_{R}\neq \emptyset$ for every $t\geq t_1(B)$ (i.e. 2. or 3. Lemma \ref{lemma:possibilities} hold). Then for $t\geq t_1$ the sets $S(t)B \cap H_{R} \subset S(t)B \cap \{ \|Px\| \leq R\}$ are nonempty. 
	For the proof by contradiction let us take the sequences $\{x_n\}\subset B$ and $t_n \to \infty$ such that $S(t_n)x_n \in S(t_n)B \cap \{ \|Px\|\leq R \}$ for every $n$, and
	$$\mathrm{dist} (S(t_n)x_n,\mathcal{J})>\varepsilon$$
	for some $\varepsilon>0$. By (H1) we can assume without loss of generality that $B\subset Q$. Denote $y_n=S(t_n)x_n$. Then  by item 1 of (H2) we can find $\overline{R}\geq R_0$ such that $\{y_n\}\subset H_{\overline{R}}$ and it is a bounded sequence. By item 3 of assumption (H2) we know that $\{x_n\}\subset H_{\overline{R}}$. We define the nested sequence of sets $L_n=S(t_n)Q\cap H_{\overline{R}}=S(t_n)H_{\overline{R}}\cap H_{\overline{R}}$, and by  assumption (H3) we know that for every $n$ there exist a compact set $K_n$ such that $L_n\subset \mathcal{O}_{\epsilon_n}(K_n)$, and $\varepsilon_n\rightarrow 0$ when $n\rightarrow \infty$. By Lemma \ref{lemma:212} there exists $y$ such that $y_n\rightarrow y$, so by definition, $y\in \mathcal{J}$, and then, we  arrive to a contradiction. 
\end{proof}
The above result guarantees only the attraction in the bounded sets. The next example of an ODE in 2D shows that under our assumptions the 'classical' notion of attraction, in the sense  $\lim_{t\to\infty}\mathrm{dist}(S(t)B,\mathcal{J}) = 0$, cannot hold. 
\begin{Example}
The vector field for the next system of two ODEs is defined only in the quadrant $\{ x\geq 0, y\geq 0\}$. In the remaining three quadrants it can be extended by the symmetry.
\begin{align*}
	& x'(t) = x(t),\\
	& y'(t) = \begin{cases}
		-y(t) + \frac{x(t)y(t)}{1+x(t)}\ \ \textrm{if}\ \ y(t)\in [0,1],\\
		-y(t) + (2-y(t))\frac{x(t)y(t)}{1+x(t)}\ \ \textrm{if}\ \ y(t)\in [1,2],\\
		-y(t)\ \ \textrm{if}\ \ y(t)\geq 2.
	\end{cases}
\end{align*}
Then 
\begin{itemize}
	\item the maximal invariant set is given by $\mathcal{J} = \{(x,0)\,:\ x\in \mathbb{R}\}$,
	\item the set $Q = \{ (x,y)\in \mathbb{R}^2\,:\ |y|\leq 2 \}$ is absorbing and positively invariant,
	\item the sets $\{  (x,y)\in \mathbb{R}^2\,:\ |y|\leq 2,|x|>a \}$ are positively invariant for every $a>0$,
	\item the image of a bounded  set by $S(t)$ is relatively compact, and we have the continuous dependance on initial data,
	\item the set $\mathcal{J}$ is not attracting as one can construct {\color{black} {\color{black} solutions}} satisfying $x(t)\to \infty$ and $y(t) \to c$ for every $c\in (0,1)$.
\end{itemize}
\end{Example}

Finally, the following result on $\sigma$-compactness is a counterpart of \cite[Proposition 2.6]{chepyzhov}
\begin{Lemma}\label{lem:compact}
	For every bounded and closed set  $B$ the set $\mathcal{J}\cap B$ is compact. Hence,    $\mathcal{J}$ is a countable sum of compact sets.
\end{Lemma}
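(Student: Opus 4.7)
The plan is to reduce compactness of $\mathcal{J}\cap B$ to that of a slice $\mathcal{J}\cap H_R$ for a sufficiently large $R$, and then exploit (H3) together with the invariance $\mathcal{J}=\mathcal{I}$ from Theorem \ref{coincide} and the positive invariance of $Q\setminus H_R$ from item 3 of (H2) to apply Lemma \ref{lemma:212}.

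I first observe that $\mathcal{J}\subset Q$: by (H1), $S(t)Q\subset Q$ and $Q$ is closed, so $\overline{S(t)Q}\subset Q$, hence $\mathcal{J}\subset Q$. Given a bounded closed set $B$, I set $M=\sup_{x\in B}\|Px\|<\infty$ (finite by continuity of $P$) and use item 1 of (H2) to pick $R\geq R_1$ with $S(R)\geq M$; then $\mathcal{J}\cap B\subset \{\|Px\|\leq M\}\cap Q\subset H_R$. Since $\mathcal{J}\cap B$ is closed, it suffices to prove $\mathcal{J}\cap H_R$ is compact.

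For the core step, I argue that $\mathcal{J}\cap H_R$ is negatively invariant: for any $x\in \mathcal{J}\cap H_R$ and any $n\in \mathbb{N}$, since $\mathcal{J}=\mathcal{I}$ is invariant, there is $z\in \mathcal{J}$ with $S(n)z=x$. If $z$ lay in $Q\setminus H_R$, item 3 of (H2) would force $x=S(n)z\in Q\setminus H_R$, contradicting $x\in H_R$; hence $z\in H_R$ and consequently $\mathcal{J}\cap H_R\subset S(n)H_R$ for every $n$. I then define the nested decreasing family $L_n=H_R\cap \bigcap_{k=1}^n S(k)H_R$, which satisfies $\mathcal{J}\cap H_R\subset L_n$ for every $n$. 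By (H3) applied to the bounded set $H_R$, there exists a compact set $K_n$ with $S(n)H_R\subset \mathcal{O}_{\varepsilon_n}(K_n)$ and $\varepsilon_n\to 0$, so $L_n\subset \mathcal{O}_{\varepsilon_n}(K_n)$. For any sequence $\{x_k\}\subset \mathcal{J}\cap H_R$, one has $x_k\in L_k$, and Lemma \ref{lemma:212} yields a convergent subsequence whose limit lies in the closed set $\mathcal{J}\cap H_R$, proving sequential (hence topological) compactness.

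The countable sum assertion then follows by covering: every $x\in \mathcal{J}\subset Q$ has $\|Px\|$ finite, so item 1 of (H2) places it in some $H_R$, giving $\mathcal{J}=\bigcup_{n\in \mathbb{N}}(\mathcal{J}\cap H_{R_1+n})$, a countable union of compact sets. The main obstacle, which the above construction is designed to resolve, is that the natural candidates $S(n)H_R\cap H_R$ are not obviously nested under the hypotheses (unlike what is tacitly used in some earlier arguments of the paper); replacing them by the partial intersections $L_n$ restores genuine nestedness without losing the $\mathcal{O}_{\varepsilon_n}(K_n)$-containment required by Lemma \ref{lemma:212}.
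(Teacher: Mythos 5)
Your proof is correct and follows essentially the same route as the paper's: reduce to $\mathcal{J}\cap H_R$, use invariance of $\mathcal{J}$ together with item 3 of (H2) to pull a sequence back into $S(n)H_R\cap H_R$, and conclude via (H3) and Lemma \ref{lemma:212}. The only difference is cosmetic: where you build the nested family $L_n=H_R\cap\bigcap_{k=1}^n S(k)H_R$ to guarantee monotonicity, the paper instead observes the identity $S(n)H_R\cap H_R=S(n)Q\cap H_R$, and the latter sets are automatically nested by the positive invariance of $Q$ from (H1).
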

\begin{proof}
	Since $\mathcal{J}\cap B$ is closed it is enough to show that it is relatively compact. To this end, it is sufficient to prove that for every $R\geq R_0$ the set $\mathcal{J}\cap H_R $ is relatively compact. Assume that $\{ u_n\} \subset \mathcal{J}\cap H_R$ is a sequence. By invariance of $\mathcal{J}$ there exists $x_n\in \mathcal{J}$ such that $u_n = S(n) x_n$. It must be that $x_n\in H_R$ and hence $u_n\in S(n)H_R \cap H_R = S(n)Q \cap H_R$. Using (H3) and Lemma  \ref{lemma:212} we obtain the assertion. 
\end{proof}
The last result is on the unbouded attractor minimality
\begin{Lemma}
	Assume (H1)-(H3). If $C$ is a closed set such that  for every $B\in \mathcal{B}(X)$ for which there exists $t_1>0$ and $R\geq R_1$ such that $S(t)B\cap \{\|Px\|\geq R\}$ is nonempty for $t\geq t_1$ we have $\lim_{t\to\infty}\mathrm{dist}(S(t)B\cap \{\|Px\|\leq R\},C) = 0,$ then $\mathcal{J}\subset C$. 
\end{Lemma}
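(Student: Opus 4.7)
My plan is to establish minimality by taking an arbitrary $u\in\mathcal{J}$ and constructing a specific bounded set $B$ to which the hypothesis on $C$ applies, then extracting the conclusion $u\in C$ from the attraction property. The construction must simultaneously arrange two things: $u$ should lie in $S(t)B\cap\{\|Px\|\leq R\}$ for all sufficiently large $t$, and $S(t)B$ must have points reaching out into $\{\|Px\|\geq R\}$ so that the hypothesis on $C$ applies.

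Concretely, I would proceed as follows. Fix $u\in\mathcal{J}$. By Theorem \ref{coincide} we have $\mathcal{J}=\mathcal{I}$, so there is a global {\color{black} solution} $\gamma\colon\mathbb{R}\to X$ through $u$ which is bounded in the past; hence $B_1:=\gamma((-\infty,0])$ is bounded. Next, pick $R\geq R_1$ with $R\geq \|Pu\|$ and, using Lemma \ref{lemma:211}, select $y=p+q\in\mathcal{J}$ with $\|p\|=\|Py\|\geq R$. Again by Theorem \ref{coincide}, $y$ lies on a global solution $\gamma_y$ bounded in the past, so $B_2:=\gamma_y((-\infty,0])$ is bounded as well. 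Set $B:=\overline{B_1\cup B_2}\in\mathcal{B}(X)$.

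The key computation is that, by the translation property of global solutions,
\[
S(t)B\ \supset\ \gamma((-\infty,t])\ \cup\ \gamma_y((-\infty,t])\qquad\text{for every }t\geq 0.
\]
In particular $y=\gamma_y(0)\in S(t)B$ for every $t\geq 0$, and since $\|Py\|\geq R$, the set $S(t)B\cap\{\|Px\|\geq R\}$ is nonempty for all $t\geq 0$. Thus $B$ satisfies the hypothesis placed on $C$, with $t_1=0$ and the chosen $R$, so $\lim_{t\to\infty}\mathrm{dist}(S(t)B\cap\{\|Px\|\leq R\},C)=0$. On the other hand, $u=\gamma(0)\in\gamma((-\infty,t])\subset S(t)B$ for every $t\geq 0$, and $\|Pu\|\leq R$, so $u\in S(t)B\cap\{\|Px\|\leq R\}$ for every $t\geq 0$. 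Therefore
\[
d(u,C)\ \leq\ \mathrm{dist}(S(t)B\cap\{\|Px\|\leq R\},C)\ \xrightarrow[t\to\infty]{}\ 0,
\]
which together with closedness of $C$ forces $u\in C$. Since $u\in\mathcal{J}$ was arbitrary, $\mathcal{J}\subset C$.

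The only delicate point is ensuring the hypothesis on $C$ is genuinely applicable to the constructed $B$; this is why I augment the natural choice $\gamma((-\infty,0])$ with the orbit $\gamma_y((-\infty,0])$ of a far-out point $y\in\mathcal{J}$ provided by Lemma \ref{lemma:211}. Without this augmentation, if the given solution $\gamma$ happens to be globally bounded in the $P$-direction (in particular if $u\in\mathcal{I}_b$), then $B_1$ alone would fail to satisfy the premise of the lemma for any $R\geq R_1$ with $R\geq\|Pu\|$. The rest of the argument is a straightforward combination of invariance and the triangle inequality for the Hausdorff semidistance.
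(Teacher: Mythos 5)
Your proof is correct, and its core is the same as the paper's: take the backward orbit $\gamma((-\infty,0])$ of the given point $u\in\mathcal{J}=\mathcal{I}$ as the bounded set $B$, observe that $u=S(t)\gamma(-t)\in S(t)B\cap\{\|Px\|\leq R\}$ for all $t\geq 0$, and conclude $d(u,C)\leq \mathrm{dist}(S(t)B\cap\{\|Px\|\leq R\},C)\to 0$, hence $u\in C$ by closedness. The one place you genuinely diverge is the augmentation of $B$ by the backward orbit of a point $y\in\mathcal{J}$ with $\|Py\|\geq R$ (supplied by Lemma \ref{lemma:211} plus Theorem \ref{coincide}). You are right that this is needed if one reads the premise literally, with the nonemptiness condition imposed on $S(t)B\cap\{\|Px\|\geq R\}$: for $u\in\mathcal{I}_b$ the unaugmented orbit may never meet that region, and the hypothesis on $C$ would then simply not apply. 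The paper's own proof ignores this and verifies only that $S(t)B\cap\{\|Px\|\leq R\}$ is nonempty, which is consistent with the minimality formulation in Theorem \ref{th:autonomus1}; the ``$\geq$'' in the lemma's statement is almost certainly a typo for ``$\leq$''. So your extra step buys robustness against that reading (and is harmless under the intended one), at the cost of invoking Lemma \ref{lemma:211} where the paper does not. All the supporting details you use (the translation identity $S(t)\gamma((-\infty,0])=\gamma((-\infty,t])$, membership of $u$ and $y$ in $S(t)B$ for every $t\geq 0$, and the semidistance inequality) check out.
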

\begin{proof}
	Suppose, for contradiction, that $v \in \mathcal{J} \setminus C$. There exists the global {\color{black} solution} $\gamma$ with $\gamma(0) = 0$ such that $\gamma((-\infty,0])$ is a bounded set. Moreover there exists $R\geq R_1$ such that $v = S(t)\gamma(-t) \in S(t) \gamma((-\infty,0]) \cap \{\|Px\|\leq R \}$. Hence
	$$\text{dist}(v,C) \leq  \lim_{t\rightarrow\infty} \text{dist}(S(t) \gamma((-\infty,0]) \cap \{\|Px\|\leq R \},C) =0,$$
	whence it must be that $v\in C$ which concludes the proof. 
\end{proof}
We summarize the results of this section in the following result.
\begin{Theorem}\label{th:autonomus1}
	Assume (H1)-(H3). Then the set $\mathcal{J}$ given by \eqref{jdef} is the unbounded attractor for the semiflow $\{ S(t) \}_{t\geq 0}$, that is:
	\begin{itemize}
		\item $\mathcal{J}$ is nonempty, closed, and invariant,
		\item intersection of $\mathcal{J}$ with any bounded set is relatively compact,
		\item if for some $R\geq R_1$, $B\in \mathcal{B}(X)$, and $t_1>0$ the sets $S(t)B \cap \{ \|Px\|\leq R \}$ are nonempty for every $t\geq t_1$ then 
		$$
		\lim_{t\to \infty} \mathrm{dist}(S(t)B\cap \{\|Px\|\leq R\},\mathcal{J}) = 0,
		$$
		and $\mathcal{J}$ is the minimal closed set with the above property.  
	\end{itemize}
Moreover $P\mathcal{J}=E^+$ and $\mathcal{J}$ is the maximal invariant set for $\{ S(t) \}_{t\geq 0}$, that is it consists of those points through which there exists a global {\color{black} solution} bounded in the past. 
	\end{Theorem}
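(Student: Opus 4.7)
The plan is to assemble the theorem by invoking the lemmas already proved in this section; essentially no new argument is required beyond bookkeeping, and the interesting work has already been done.

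First I would dispatch the four structural claims about $\mathcal{J}$ itself. Non-emptiness follows from Lemma \ref{lemma:211}, which shows that for any $p\in E^+$ there is $q\in E^-$ with $p+q\in\mathcal{J}$; this same lemma also yields $P\mathcal{J}=E^+$. Closedness is immediate from the defining formula \eqref{jdef} as an intersection of the closed sets $\overline{S(t)Q}$. For invariance, by Theorem \ref{coincide} we have $\mathcal{J}=\mathcal{I}$, and $\mathcal{I}$ is invariant as recorded in the Observation at the start of the section (one simply shifts the defining past-bounded global solution). Compactness of $\mathcal{J}\cap B$ for bounded closed $B$ is exactly the content of Lemma \ref{lem:compact}, and the maximal-invariant-set characterization is Theorem \ref{coincide}.

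For the attraction property, I would apply Theorem \ref{attraction}: for $R\geq R_1$ and $B\in\mathcal{B}(X)$ either there is $t_1$ with $S(t)B\subset Q\setminus H_R$ for $t\geq t_1$, or $\mathrm{dist}(S(t)B\cap\{\|Px\|\leq R\},\mathcal{J})\to 0$. The hypothesis of the theorem—that $S(t)B\cap\{\|Px\|\leq R\}$ is nonempty for every $t\geq t_1$—needs to be translated into a statement about $H_R$ in order to rule out the first alternative. This is the main (minor) interface issue, and is resolved by item 1 of (H2): after using (H1) to push $B$ into $Q$ past an absorbing time, $\{\|Px\|\leq R\}\cap Q$ is contained in $H_{R'}$ for some $R'$ with $S(R')\geq R$, so after inflating $R$ we may apply Lemma \ref{lemma:possibilities} and exclude case (1); the attraction conclusion then follows (and the inflated $R'$ only strengthens the conclusion because $\{\|Px\|\leq R\}\subset\{\|Px\|\leq R'\}$).

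Minimality of $\mathcal{J}$ among closed sets with the attraction property is the unnamed lemma immediately preceding the statement, applied verbatim. I do not expect any genuine obstacle; the only point that requires a line of care is the passage between the sublevel-set formulation $\{\|Px\|\leq R\}$ used in the theorem statement and the sets $H_R$ used in the hypotheses (H2) and in Theorem \ref{attraction}, as outlined above.
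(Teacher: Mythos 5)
Your proposal is correct and matches the paper's intent exactly: the theorem is stated as a summary of the section, with each bullet point supplied by Theorem \ref{coincide}, Lemma \ref{lemma:211}, Lemma \ref{lem:compact}, Theorem \ref{attraction}, and the minimality lemma, precisely as you assemble them. Your extra care in reconciling the $\{\|Px\|\leq R\}$ hypothesis with the $H_R$-based alternative in Theorem \ref{attraction} via item 1 of (H2) is a legitimate and correctly resolved interface point that the paper leaves implicit.
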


\subsection{Attraction of all bounded sets}\label{attraction_all}
We will impose additional assumption.
\begin{enumerate}
    \item[(A1)] There exists a closed set  $Q_1\subset Q$ such that:
    \begin{enumerate}
        \item For every $\varepsilon>0$ there exist $r>0$ such that for every $p\in E^+$ for which $\norm{p}\geq r$ there holds
        $\mathrm{diam }( Q_1\cap\{x\in X: Px=p\} )\leq \varepsilon$.
        \item The set $Q_1$ is attracting, that is for every $B\in \mathcal{B}(x)$  we have 
        $\lim_{t\to\infty}\mathrm{dist}(S(t) B, Q_1) = 0.$
    \end{enumerate}
    
\end{enumerate}
We will prove following theorem.
\begin{Remark}\label{rem:1}
If the conditions (H1)-(H3) and (A1) hold, then $\mathcal{J}\subset Q_1.$
{\color{black} In fact, if $u\in \mathcal{J}=\mathcal{I}$, there is a global {\color{black} solution} $\gamma:\mathbb{R}\to X$ through $u$ ($\gamma(0)=u$) which is bounded in the past. As $\gamma((-\infty,0]) \subset S(t)\gamma((-\infty,0])$ then $\textrm{dist}(\gamma((-\infty,0]),Q_1) \leq \textrm{dist}(S(t)\gamma((-\infty,0]),Q_1)\to 0$ as $t\to \infty$. Hence, as $Q_1$ is closed $u\in Q_1$. }
\end{Remark}

\begin{Theorem}\label{attracts:all}
Assume that (H1)-(H3) and (A1) hold. Then for every bounded set $B$ we have
\begin{equation*}
  \lim_{t\to\infty}
  \mathrm{dist}(S(t)B,\mathcal{J})= 0.  
\end{equation*}
\end{Theorem}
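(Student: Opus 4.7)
The plan is to combine Theorem \ref{attraction}, which already handles attraction inside the ``slab'' $\{\|Px\|\leq R\}$, with assumption (A1), which controls the behavior when $\|Px\|$ is large. Given a bounded set $B$ and $\varepsilon>0$, I will split every orbit point $u\in S(t)B$ (for $t$ large) according to whether $\|Pu\|$ lies below or above a suitable threshold $R$, and show that each piece is eventually $\varepsilon$-close to $\mathcal{J}$.

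First I would fix $\varepsilon>0$ and use (A1)(a) to choose $r>0$ so that $\mathrm{diam}(Q_1\cap\{Px=p\})\leq \varepsilon/3$ whenever $\|p\|\geq r$. Set $R=\max(r+1,R_1)$, and let $\pi$ denote the (finite) operator norm of $P$ (finiteness follows from the continuity of $P$ noted before (H1)). Using (A1)(b), pick $T_1$ such that for all $t\geq T_1$ every point of $S(t)B$ lies within distance $\delta:=\min(\varepsilon/3,(R-r)/\pi)$ of $Q_1$. Using Theorem \ref{attraction} applied with this $R$, pick $T_2\geq T_1$ such that for $t\geq T_2$ we have $S(t)B\cap\{\|Px\|\leq R\}\subset\mathcal{O}_{\varepsilon/3}(\mathcal{J})$ (recall that if the first alternative of Theorem \ref{attraction} occurs, this intersection is eventually empty, in which case the statement is vacuous).

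Now take any $u\in S(t)B$ with $t\geq T_2$. If $\|Pu\|\leq R$, then $u$ is within $\varepsilon/3$ of $\mathcal{J}$ by the choice of $T_2$. If $\|Pu\|> R$, pick $v\in Q_1$ with $\|u-v\|\leq\delta$; then $\|Pv\|\geq\|Pu\|-\pi\delta>R-(R-r)=r$, so by (A1)(a) the fiber $Q_1\cap\{Px=Pv\}$ has diameter at most $\varepsilon/3$. By Theorem \ref{th:autonomus1} the projection satisfies $P\mathcal{J}=E^+$, and by Remark \ref{rem:1} we have $\mathcal{J}\subset Q_1$, so there exists $w\in \mathcal{J}\subset Q_1$ with $Pw=Pv$. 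Both $v$ and $w$ belong to the same thin fiber, giving $\|v-w\|\leq\varepsilon/3$, and hence $\mathrm{dist}(u,\mathcal{J})\leq\|u-v\|+\|v-w\|\leq\delta+\varepsilon/3\leq 2\varepsilon/3$. In either case $u\in\mathcal{O}_\varepsilon(\mathcal{J})$, which gives the desired conclusion.

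The main obstacle is conceptual rather than computational: one must bridge ``close to $Q_1$'' (which is what (A1)(b) delivers) and ``close to $\mathcal{J}$'' (which is what we want), and the only tool available is the fiberwise thinness of $Q_1$ at infinity together with the fact that $\mathcal{J}\subset Q_1$ has full projection onto $E^+$. The bookkeeping step is choosing $\delta$ small enough so that a point $u$ with $\|Pu\|>R$ cannot have its nearest representative in $Q_1$ slip back into the region $\|Pv\|\leq r$ where the diameter estimate from (A1)(a) is no longer available.
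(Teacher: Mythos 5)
Your overall strategy is the same as the paper's: the paper argues by contradiction, but it uses exactly the same three ingredients you do — attraction inside a slab $\{\|Px\|\leq R\}$ from Theorem \ref{attraction}, the thinness of the fibers of $Q_1$ at infinity from (A1)(a) combined with $\mathcal{J}\subset Q_1$ from Remark \ref{rem:1}, and the surjectivity $P\mathcal{J}=E^+$ from Lemma \ref{lemma:211}. Your second branch — perturbing $u$ to $v\in Q_1$, checking $\|Pv\|>r$ via the operator norm of $P$, and comparing $v$ with a point $w\in\mathcal{J}$ in the same fiber — is correct, and in fact slightly more careful than the paper's version, which tacitly uses $\|Px\|\leq \|x\|$ at the analogous step.

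The gap is the parenthetical claim that if the first alternative of Theorem \ref{attraction} occurs then $S(t)B\cap\{\|Px\|\leq R\}$ is eventually empty. That alternative only gives $S(t)B\subset Q\setminus H_R$, and (H2) does not force $Q\cap\{\|Px\|\leq R\}\subset H_R$: item 1 of (H2) only guarantees $Q\cap\{\|Px\|\leq S(R)\}\subset H_R$, where $S(R)$ may be much smaller than $R$ (in the PDE application of Section \ref{PDES}, $H_R$ is the intersection of $Q$ with an ellipsoid strictly inside the cylinder $\{\|Px\|\leq R\}$). So in that case $S(t)B$ may still contain points $u$ with $S(R)<\|Pu\|\leq R$, and these are covered by neither of your two branches: the first branch has no attraction statement to invoke, and the second requires $\|Pu\|>R$. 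The fix is cheap. Using $\lim_{R\to\infty}S(R)=\infty$ from item 1 of (H2), enlarge $R$ so that additionally $S(R)\geq r+\pi\delta$; then in the first alternative every point of $S(t)B\subset Q\setminus H_R$ automatically satisfies $\|Pu\|>S(R)$, so your (A1)-branch computation $\|Pv\|\geq\|Pu\|-\pi\delta>r$ applies to all of $S(t)B$, while in the second alternative your original dichotomy at $\|Pu\|=R$ works as written. This is essentially how the paper's contradiction argument sidesteps the issue: it first deduces that any sequence of points remaining $\varepsilon$-far from $\mathcal{J}$ must satisfy $\|PS(t_n)u_n\|\to\infty$, and only then invokes (A1).
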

\begin{proof}
Assume that there are sequences $t_n\to \infty$ and $u_n\in B$ such that $\inf_{j\in \mathcal{J}}\norm{ S(t_n)u_n- j}>\varepsilon,$ for some $\varepsilon<1.$
By Theorem \ref{th:autonomus1} it follows that $\norm{P S(t_n)u_n}\to \infty.$ By item (a) of (A1) 
we can pick $r>0$ such that  for every $x_0\in X$ for which $\norm{Px_0}>r$
we have $\mathrm{diam }(Q \cap \{x\in X: Px= Px_0\}) \leq \frac{\varepsilon}{4}.$ Then we pick $n_0$ such that $S(t_{n_0})u_{n_0} \in S(t_{n_0}) B \subset \mathcal{O}_{\frac{\varepsilon}{8}}(Q_1)$ and 
$\norm{ PS(t_{n_0})u_{n_0}}\geq r+1.$ So we find $v\in Q_1 $ such that 
$\norm{ S(t_{n_0})u_{n_0}-v}\leq \frac{\varepsilon}{4}.$
 Observe that $\norm{Pv} \geq r+\frac{1}{2}.$ Indeed if $\norm{Pv}$ would be less than $r+\frac{1}{2}$ then we would have that 
$\norm{ S(t_{n_0})u_{n_0}-v}\geq 
\norm{P(S(t_{n_0})u_{n_0}-v)}\geq \frac{1}{2}.$
By Lemma \ref{lemma:211} we can find $w\in\mathcal{J}$ such that $Pv = Pw.$ Observe that $w,v\in Q_1 \cap \{x\in X: Pw = Px\}$ and $\norm{Pv}>r.$ We deduce that $\norm{w-v}\leq \frac{\varepsilon}{4}.$
We observe that
$$\norm{S(t_{n_0})u_{n_0}-w}\leq 
\norm{S(t_{n_0})u_{n_0}-v}+\norm{v-w} 
\leq \frac{\varepsilon}{2}.
$$
which is contradiction.  
\end{proof}

\subsection{Structure of the unbounded attractor}\label{sec:structure}
Since the unbounded attractor is an invariant set, there exists a global (eternal) {\color{black} solution} bounded in the past in the attractor through each point in it. Some of these {\color{black} solutions} may be bounded and others may be unbounded in the future, and, due to the forward uniqueness, for a given point in $\mathcal{J}$ only one of these two possibilities may occur.  We remember, that a bounded invariant set $\mathcal{I}_b$ consisted of those points through which there exist the complete bounded {\color{black} solutions}. This gives us the decomposition of the unbounded attractor  into two disjoint sets $\mathcal{J} = \mathcal{I}_b \cup (\mathcal{J} \setminus \mathcal{I}_b)$. We will provide the properties of this decomposition. Observe that $\mathcal{J}$, as a nonempty and closed subset of a Banach space, is a complete metric space. We denote by $\mathcal{B}(\mathcal{J})$ the family of nonempty and bounded subsets of $\mathcal{J}$. We shall start from the definition of the $\alpha$-limit set.
\begin{Definition}
	Let $B\in \mathcal{B}(\mathcal{J})$. We define the $\alpha$-limit set of $B$ as
	$$
	\alpha(B) = \{  u\in \mathcal{J}\,:\ \textrm{there exists}\ t_n\to \infty\  \textrm{and} \ \ u_n \in \mathcal{J}\ \ \textrm{such that} \ \ S(t_n)u_n\in B\ \ \textrm{and}\ \ u_n\to u\}.
	$$
\end{Definition}
In the next result we establish the properties of the $\alpha$-limit set.
\begin{Theorem}\label{thm3}
	Let $B\in \mathcal{B}(\mathcal{J})$ and assume (H1)--(H3). Then $\alpha(B)$ is nonempty, compact, invariant, and attracts $B$ in the past, i.e. 
	$$
	\lim_{t\to \infty} \mathrm{dist}(S(t)^{-1}B,\alpha(B)) = 0. 
	$$
\end{Theorem}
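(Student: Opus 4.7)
The plan is to exploit three recurring ingredients: (i) the invariance $S(t)\mathcal{J}=\mathcal{J}$, which forces $\mathcal{J}\subset S(t)Q$ for every $t\geq 0$; (ii) item 3 of (H2), whose ``trap'' character prevents an orbit whose forward image lies in $H_R$ from lying outside $H_R$; and (iii) the asymptotic compactness (H3) combined with Lemma \ref{lemma:212} on nested sets. For the preliminary setup, fix $B\in\mathcal{B}(\mathcal{J})$ and, using item 1 of (H2), pick $R\geq R_0$ with $B\subset H_R$. If $u\in\mathcal{J}\subset Q$ satisfies $S(t)u\in B\subset H_R$ for some $t\geq 0$, then item 3 of (H2) forbids $u\in Q\setminus H_R$, hence $u\in H_R$; combined with $\mathcal{J}=S(t)\mathcal{J}\subset S(t)Q$, this places $u$ in $S(t)Q\cap H_R$, and in fact in $S(t)H_R\cap H_R$ by running the trap argument on the preimage. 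This last set is directly controlled by (H3).

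\textbf{Nonemptiness and compactness.} Choose any $b\in B$ and, by invariance of $\mathcal{J}$, select $u_n\in\mathcal{J}$ with $S(n)u_n=b$. Positive invariance of $Q$ gives $S(n+1)Q\subset S(n)Q$, so the sets $L_n:=S(n)Q\cap H_R$ are nested; by the preliminary observation $u_n\in L_n\subset S(n)H_R\cap H_R$, and (H3) together with Lemma \ref{lemma:212} extracts a convergent subsequence $u_{n_k}\to u^\ast\in\mathcal{J}$, so $(u_{n_k},n_k)$ witnesses $u^\ast\in\alpha(B)$. Closedness of $\alpha(B)$ follows by a diagonal argument: given $v^{(k)}\in\alpha(B)$ with $v^{(k)}\to v$, pick $w_k\in\mathcal{J}$ with $\|w_k-v^{(k)}\|<1/k$ and $S(s_k)w_k\in B$ for some $s_k\geq k$; then $(w_k,s_k)$ witnesses $v\in\alpha(B)$. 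Since $\alpha(B)\subset\mathcal{J}\cap H_R$ and the latter is compact by Lemma \ref{lem:compact}, $\alpha(B)$ is compact.

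\textbf{Invariance.} Let $u\in\alpha(B)$ with witnessing sequences $u_n\in\mathcal{J}$, $u_n\to u$, $t_n\to\infty$, $S(t_n)u_n\in B$, and fix $t\geq 0$. Forward invariance is immediate: $S(t)u_n\in\mathcal{J}$, $S(t)u_n\to S(t)u$ by continuity, and $S(t_n-t)(S(t)u_n)=S(t_n)u_n\in B$ for large $n$, so $S(t)u\in\alpha(B)$. For backward invariance, use invariance of $\mathcal{J}$ to pick $w_n\in\mathcal{J}$ with $S(t)w_n=u_n$; since $(u_n)$ is bounded, it sits in some $H_{R'}$, and the trap argument places $w_n\in H_{R'}$. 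Combining with $\mathcal{J}\subset S(m)Q$ for every $m$ puts $w_n\in S(m)Q\cap H_{R'}$ for every $m$, and a diagonal application of Lemma \ref{lemma:212} extracts $w_{n_k}\to w\in\mathcal{J}$ with $S(t)w=u$ by continuity. Since $(w_{n_k},t_{n_k}+t)$ witnesses $w\in\alpha(B)$, we obtain $u\in S(t)\alpha(B)$.

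\textbf{Attraction.} Suppose the attraction conclusion fails: there exist $\varepsilon>0$, $t_n\to\infty$, and $u_n\in\mathcal{J}$ with $S(t_n)u_n\in B$ and $\mathrm{dist}(u_n,\alpha(B))\geq\varepsilon$. The preliminary observation places $u_n\in S(t_n)Q\cap H_R\subset S(t_n)H_R\cap H_R$, so (H3) and Lemma \ref{lemma:212} yield $u_{n_k}\to u^\ast\in\mathcal{J}$; but $(u_{n_k},t_{n_k})$ witnesses $u^\ast\in\alpha(B)$, contradicting the distance bound. The most delicate step is the backward invariance, where one must combine three layers—lifting the witnessing sequence through the invariance of $\mathcal{J}$, confining the lifted sequence to a bounded $H_{R'}$ via item 3 of (H2), and running a diagonal Lemma \ref{lemma:212} extraction—while preserving the witnessing times so that $w$ remains in $\alpha(B)$.
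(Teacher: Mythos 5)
Your proposal is correct and follows essentially the same route as the paper's proof: confine the witnessing sequences to $H_R$ via item 3 of (H2), extract limits by compactness, prove closedness by a diagonal argument, forward invariance by continuity, backward invariance by lifting through the invariance of $\mathcal{J}$, and attraction by contradiction. The only cosmetic difference is that you invoke Lemma \ref{lemma:212} on the nested sets $S(n)Q\cap H_R$ directly where the paper cites the compactness of $\mathcal{J}\cap H_R$ from Lemma \ref{lem:compact} (itself proved the same way), and you spell out the trap argument that the paper leaves implicit.
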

\begin{proof}
	Pick $B\in \mathcal{B}(\mathcal{J})$. Let $u_n \in \mathcal{J}$ and $t_n\to \infty$ be such that $S(t_n)u_n \in B$. As $B$ is bounded, then $B\in H_R$ for some $R$ and it must be that $u_n \in H_R$. As $H_R\cap \mathcal{J}$ is a compact set it must be that, for a subsequence,  $u_n \to u$ for some $u\in \mathcal{J}\cap H_R$ and hence $\alpha(B)$ is nonempty. Moreover $\alpha(B) \subset \mathcal{J} \cap H_R$ so it must be relatively compact. To prove that it is closed pick a sequence $\{v_n\} \subset \alpha(B)$ with $v_n\to v$. There exist sequences $t_n^k\to \infty$ as $k\to \infty$ and $\{ v^n_k\}_{k=1}^\infty \subset \mathcal{J}$ with $S(t^n_k)v^n_k\in B$ and $v^n_k \to v_n$ as $k\to \infty$. It is enough to use a diagonal argument to get that $v\in \alpha(B)$. Hence $\alpha(B)$ is a compact set. To establish the invariance let us first prove that $S(t)\alpha(B) \subset \alpha(B)$. To this end let $u\in \alpha(B)$ and $t>0$. By continuity, $S(t)u_n \to S(t)u$, and $S(t_n-t)S(t)u_n = S(t_n)u_n \in B$, whence $S(t)u \in \alpha(B)$. To establish negative invariance also take $u\in \alpha(B)$ and $t>0$. There exist $v_n \subset \mathcal{J}\cap H_R$ such that $S(t)v_n = u_n$. By compactness, for a subsequence, $v_n\to v$ and by continuity $S(t)v_n \to S(t)v$, whence $S(t)v = u$. Moreover $S(t_n+t)v_n = S(t_n)u_n \in B$, whence $v\in \alpha(B)$, which ends the proof of invariance. The proof of attraction is also standard. For contradiction assume that there exists $t_n\to \infty$ and $u_n\in \mathcal{J}$ with $S(t_n)u_n\in B$ such that $\mathrm{dist}(u_n,\alpha(B)) \geq \varepsilon$ for every $n$ and some $\varepsilon > 0$. As $u_n \in \mathcal{J}\cap H_R$, by compactness we obtain that $u_n\to u$, for a subsequence and it must be $u\in\alpha(B)$ which is a contradiction.
\end{proof}
 An alternative approach to prove the above result would be to inverse time, which is possible on the unbounded attractor, and consider the $\omega$-limit set of inversed in time dynamical system, which can be possibly multivalued, as we do not assume the backward uniqueness. 
 
 As a consequence of the above result we obtain the following corollary
 \begin{Corollary}\label{cor1}
 	For every $B \in \mathcal{B}(\mathcal{J})$ there holds $\alpha(B)\subset \mathcal{I}_b$. In consequence, $\mathcal{I}_b$ is nonempty. 
 \end{Corollary}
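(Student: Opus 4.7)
The plan is to exploit the invariance of $\alpha(B)$ established in Theorem \ref{thm3}, which asserts both forward invariance ($S(t)\alpha(B)\subset\alpha(B)$) and the preimage property (for every $u\in\alpha(B)$ and $t>0$ there exists $v\in\alpha(B)$ with $S(t)v=u$). The key observation is that a set with both properties supports, through each of its points, a complete global solution that remains inside the set. Since Theorem \ref{thm3} also guarantees that $\alpha(B)$ is compact, such a global solution is automatically bounded, and its starting point therefore lies in $\mathcal{I}_b$.

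Concretely, fix $u\in\alpha(B)$. I would build a global solution $\gamma\colon\R\to\alpha(B)$ in two pieces. On $[0,\infty)$ set $\gamma(t)=S(t)u$; this stays in $\alpha(B)$ by forward invariance. For the backward piece, apply the preimage property inductively: pick $v_{-1}\in\alpha(B)$ with $S(1)v_{-1}=u$, then $v_{-2}\in\alpha(B)$ with $S(1)v_{-2}=v_{-1}$, and so on. For $n\geq 0$ and $s\in[0,1]$ define $\gamma(-n-s)=S(1-s)v_{-n-1}$. The semigroup property gives consistency at the integer nodes (each piece terminates at $v_{-n}=\gamma(-n)$), and forward invariance ensures $\gamma(-n-s)\in\alpha(B)$ throughout. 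Thus $\gamma$ is a global solution through $u$ whose image is contained in the bounded set $\alpha(B)$, so $u\in\mathcal{I}_b$. Since $u$ was arbitrary, $\alpha(B)\subset\mathcal{I}_b$.

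For the second assertion I would instantiate this with a concrete bounded set. Theorem \ref{th:autonomus1} guarantees $\mathcal{J}\neq\emptyset$; choosing any $u_0\in\mathcal{J}$, the singleton $B=\{u_0\}$ belongs to $\mathcal{B}(\mathcal{J})$. Theorem \ref{thm3} then yields $\alpha(B)\neq\emptyset$, and the inclusion just proved gives $\mathcal{I}_b\supset\alpha(B)\neq\emptyset$.

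The main obstacle I anticipate is the construction of the backward trajectory inside $\alpha(B)$: because backward uniqueness is not assumed, preimages under $S(t)$ need not be unique, so one must make arbitrary choices at each step and then paste the pieces together compatibly. The inductive patching using $S(1-s)v_{-n-1}$ sidesteps this issue by invoking only the forward semigroup property, which is available throughout; everything else in the argument is a direct consequence of the invariance and compactness statements already proved in Theorem \ref{thm3}.
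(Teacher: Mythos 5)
Your proposal is correct and follows essentially the same route the paper intends (the paper leaves the proof implicit, but its later argument for $\mathcal{J}_b=\mathcal{I}_b$ uses exactly this reasoning): $\alpha(B)$ is nonempty, compact and invariant by Theorem \ref{thm3}, and an invariant compact set carries a complete bounded solution through each of its points, so $\alpha(B)\subset\mathcal{I}_b\neq\emptyset$. Your explicit backward-gluing construction of the global solution is the standard way to make the invariance-implies-complete-orbit step rigorous in the absence of backward uniqueness, and it is carried out correctly.
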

 Define the set 
 $$
 \mathcal{J}_b = \bigcup_{R\geq R_0}\alpha(\mathcal{J}\cap H_R). 
 $$
For elemets of $\mathcal{J}$ we define $S(t)^{-1}$ as the inverse in $\mathcal{J}$ which, as we do not assume backward uniqueness, can be possibly multivalued. 
 \begin{Theorem}
 	There holds $\mathcal{J}_b = \mathcal{I}_b$. Moreover for every $B\in \mathcal{B}(\mathcal{J})$ there holds
 	 	$$
 	 \lim_{t\to \infty} \mathrm{dist}(S(t)^{-1}B, \mathcal{J}_b) = 0. 
 	 $$
 \end{Theorem}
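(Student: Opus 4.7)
The proof plan has two pieces: establishing the set equality $\mathcal{J}_b = \mathcal{I}_b$, and then deriving the attraction statement from Theorem~\ref{thm3}. Both parts reduce, in the end, to monotonicity of the $\alpha$-limit operator and the fact that any bounded subset of $\mathcal{J}$ can be trapped inside some $\mathcal{J}\cap H_R$.

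For the easy inclusion $\mathcal{J}_b\subset \mathcal{I}_b$, I would observe that each set $\mathcal{J}\cap H_R$ is bounded (item~2 of (H2) gives $H_R\subset\{\|Px\|\le R\}$, and $H_R\subset Q\subset\{\|(I-P)x\|\le D_2\}$), so it lies in $\mathcal{B}(\mathcal{J})$, and Corollary~\ref{cor1} applies to give $\alpha(\mathcal{J}\cap H_R)\subset \mathcal{I}_b$; taking the union over $R\ge R_0$ yields the claim. For the reverse inclusion, I would pick $u\in\mathcal{I}_b$ with a bounded global solution $\gamma$, $\gamma(0)=u$. Since $S(t)\gamma(\R)=\gamma(\R)$ and $\gamma(\R)$ is bounded, (H1) forces $\gamma(\R)\subset Q$; combining the bound on $\|P\gamma(\cdot)\|$ with item~1 of (H2) shows $\gamma(\R)\subset H_R$ for some $R\ge R_1$. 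Now take the constant sequence $u_n=u$ and $t_n=n$: then $u_n\in\mathcal{I}_b\subset\mathcal{J}$, $u_n\to u$, and $S(n)u=\gamma(n)\in \mathcal{J}\cap H_R$, so $u\in\alpha(\mathcal{J}\cap H_R)\subset\mathcal{J}_b$ directly from the definition of the $\alpha$-limit set.

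For the attraction statement, take $B\in\mathcal{B}(\mathcal{J})$. Because $Q$ is closed and positively invariant, $\mathcal{J}=\bigcap_{t\ge0}\overline{S(t)Q}\subset Q$, so $B\subset Q$; using boundedness of $PB$ and item~1 of (H2) once again, I can find $R\ge R_1$ with $B\subset \mathcal{J}\cap H_R$. Monotonicity of the $\alpha$-limit set (immediate from its definition) then gives $\alpha(B)\subset \alpha(\mathcal{J}\cap H_R)\subset\mathcal{J}_b$, and the conclusion follows from Theorem~\ref{thm3}, since
\[
\mathrm{dist}(S(t)^{-1}B,\mathcal{J}_b)\le \mathrm{dist}(S(t)^{-1}B,\alpha(B))\xrightarrow[t\to\infty]{}0.
\]

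I do not expect a serious obstacle here, since Theorem~\ref{thm3} and Corollary~\ref{cor1} do the essential work. The main care points are (i) verifying that a bounded global solution's entire orbit sits in some $H_R$, which uses only the fact that the orbit is forward invariant under $S(t)$ together with (H1)--(H2); and (ii) handling the possibly multivalued inverse $S(t)^{-1}$ on $\mathcal{J}$, which enters only through the standing interpretation $S(t)^{-1}B=\{u\in\mathcal{J}:S(t)u\in B\}$ used in Theorem~\ref{thm3}, so no additional backward-uniqueness assumption is needed.
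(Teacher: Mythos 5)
Your proposal is correct and follows essentially the same route as the paper's proof: both directions of $\mathcal{J}_b=\mathcal{I}_b$ are obtained by trapping the relevant bounded orbit in some $\mathcal{J}\cap H_R$ and using the definition of the $\alpha$-limit set together with Corollary \ref{cor1}, and the attraction claim follows from $B\subset\mathcal{J}\cap H_R$, monotonicity of $\alpha(\cdot)$, and Theorem \ref{thm3}. Your version merely spells out a few steps (why $\gamma(\mathbb{R})\subset Q$ and then $\subset H_R$) that the paper leaves implicit.
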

\begin{proof}
	If $u \in \mathcal{J}_b$ then $u\in \alpha(\mathcal{J}\cap H_R)$ for some $R$, an invariant and compact set. Hence there exists a bounded global {\color{black} solution} through $u$ in $\mathcal{J}$ and thus $u\in \mathcal{I}_b$. To get the opposite inclusion assume that $u\in \mathcal{I}_b$ whence $\{S(t)u\}_{t\geq 0}$ is bounded and there exists $R>0$ such that $S(t)u \in \mathcal{J}\cap H_R$ for every $t\geq 0$. It means that $u\in \alpha (\mathcal{J}\cap H_R)$.
	
	To get the backward attraction observe that 
	$B\subset \mathcal{J}\cap H_R$ for some $R$ and hence $\alpha(B) \subset \alpha(J\cap H_R)$ which yields the assertion. 
\end{proof}

It is easy to see that if $\mathcal{J}_b$ is bounded then it has to be closed. In general, however, $\mathcal{J}_b$ can be unbounded and then it does not have to be a closed set as seen in the following example. 
\begin{Example}
	The vector field is defined only in the quadrant $\{ x\geq 0, y\geq 0 \}$. It can be extended to other quadrants by the symmetry.
Namely, we consider the following system of two ODEs,
\begin{align*}
&	y' = \begin{cases}
		&-y + 1 \ \ \textrm{for}\ \ y\geq 1,\\
		& 0 \ \ \textrm{for}\ \ y\in [0,1].
		\end{cases}\\
& \textrm{if}\ \ y>0\ \ \textrm{then}\ \	x' = \begin{cases}
		& \max\{3y,1\}x\ \ \textrm{for}\ \ x\in \left[0,\min\left\{\frac{1}{3y},1\right\}\right],\\
		& 1\ \ \textrm{for}\ \ x\in \left[\min\left\{\frac{1}{3y},1\right\},\frac{2}{3y}\right],\\
		& -3yx+3\ \ \textrm{for}\ \ x\in \left[\frac{2}{3y},\frac{1}{y}\right],\\
		& x-\frac{1}{y}\ \ \textrm{for}\ \ x\geq \frac{1}{y}.
	\end{cases} \ \ \textrm{and} \ \ x'=\max\{x,1\}\  \textrm{for}\ \ y=0.
\end{align*}
The unbounded attractor is given by $\mathcal{J} = \R\times [-1,1]$, and the bounded invariant set is given by 
$$\mathcal{J}_b = \{ (0,0)\} \cup \bigcup_{|y|\in (0,1]} \left[-\frac{1}{|y|},\frac{1}{|y|}\right]\times \{ y\}.$$\end{Example} We have proved that $\mathcal{J}_b$ attracts in the past the bounded sets in $\mathcal{J}$. We will next show that it attracts in the future such bounded sets from $X$, which upon evolution stay in a bounded set, namely we prove the following result.
    \begin{Theorem}\label{thm:omega}
    	If $B\in \mathcal{B}(X)$ is such that there exist $t_1>0$ and $R\geq R_0$ with $S(t)B\subset H_R$ for $t\geq t_1$, i.e. the case (3) from Lemma \ref{lemma:possibilities} holds, then
    	$$
    	\lim_{t\to \infty}\mathrm{dist}(S(t)B,\mathcal{J}_b) = 0.
    	$$
    	 \end{Theorem}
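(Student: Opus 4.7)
The plan is to introduce the standard $\omega$-limit set of $B$, show it is nonempty and attracts $B$ using generalized asymptotic compactness, and then show it is actually contained in $\mathcal{J}_b$, from which the claimed attraction to $\mathcal{J}_b$ follows immediately.

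First I would define, in the usual way,
\[
\omega(B) = \{ u\in X\,:\ \exists\, t_n\to\infty,\ \exists\, x_n\in B\ \text{such that}\ S(t_n)x_n\to u\}.
\]
The hypothesis gives $S(t)B\subset H_R$ for $t\geq t_1$, and $H_R$ is bounded and closed, so any sequence $y_n=S(t_n)x_n$ with $x_n\in B$ and $t_n\to\infty$ lies eventually in $H_R$. Writing $y_n=S(1)S(t_n-1)x_n$ and using the semigroup property, the sets $L_n=\overline{\{y_k:k\geq n\}}$ form a nested family contained in $H_R$; by (H3) each is within $\varepsilon_n$ of a compact set with $\varepsilon_n\to 0$, so Lemma \ref{lemma:212} yields a convergent subsequence. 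Hence $\omega(B)\neq\emptyset$. The attraction property $\mathrm{dist}(S(t)B,\omega(B))\to 0$ is then obtained by the classical contradiction argument: if not, one extracts $t_n\to\infty$ and $x_n\in B$ with $\mathrm{dist}(S(t_n)x_n,\omega(B))\geq\delta$, then uses the above compactness to pass to a convergent subsequence $S(t_n)x_n\to u\in\omega(B)$, which contradicts $\mathrm{dist}(S(t_n)x_n,\omega(B))\geq\delta$.

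Next I would show $\omega(B)\subset \mathcal{J}_b$. For $u=\lim S(t_n)x_n\in\omega(B)$ and any fixed $s\geq 0$, by (H1) we have $S(t_n-s)x_n\in Q$ for all $n$ large enough, so $S(t_n)x_n=S(s)S(t_n-s)x_n\in S(s)Q$. Passing to the limit gives $u\in\overline{S(s)Q}$ for every $s\geq 0$, hence $u\in\mathcal{J}$. By Theorem \ref{coincide}, $u\in\mathcal{I}$, so there is a global {\color{black} solution} $\gamma:\mathbb{R}\to X$ through $u$ with $\gamma((-\infty,0])$ bounded. Continuity of $S(s)$ and the hypothesis $S(t)B\subset H_R$ for $t\geq t_1$ imply that for each $s\geq 0$, $\gamma(s)=S(s)u=\lim_n S(s+t_n)x_n\in H_R$ since $H_R$ is closed. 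Thus $\gamma(\mathbb{R})=\gamma((-\infty,0])\cup\gamma([0,\infty))$ is bounded, so $u\in\mathcal{I}_b=\mathcal{J}_b$ by the theorem identifying $\mathcal{J}_b$ and $\mathcal{I}_b$.

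Combining the two steps, $\mathrm{dist}(S(t)B,\mathcal{J}_b)\leq\mathrm{dist}(S(t)B,\omega(B))\to 0$ as $t\to\infty$, which is the claim.

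The only delicate point is the second step, namely upgrading $\omega(B)\subset\mathcal{J}$ to $\omega(B)\subset\mathcal{J}_b$; the key is that the forward orbit of $u$ remains trapped in $H_R$ because orbits through points arbitrarily close to $u$ are in $H_R$ for all large enough times, together with continuity of the semigroup and closedness of $H_R$. Everything else is a standard asymptotic-compactness argument adapted to the setting of (H3) and Lemma \ref{lemma:212}.
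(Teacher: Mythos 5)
Your proof is correct, and it shares the paper's overall skeleton (introduce $\omega(B)$, get non-emptiness and attraction from (H3) together with Lemma \ref{lemma:212}), but it reaches the inclusion $\omega(B)\subset\mathcal{J}_b$ by a genuinely different mechanism. The paper proves that $\omega(B)$ is compact \emph{and invariant} (the negative-invariance step requiring the compactness just established) and then concludes that, being a bounded invariant set, it lies in $\mathcal{I}_b=\mathcal{J}_b$. You avoid proving invariance altogether: you first place each limit point $u$ in $\mathcal{J}=\mathcal{I}$ via the characterization $\mathcal{J}=\bigcap_{s\geq 0}\overline{S(s)Q}$ and Theorem \ref{coincide}, obtaining a solution bounded in the past, and then trap the forward orbit by writing $S(s)u=\lim_n S(s+t_n)x_n\in H_R$ (continuity of $S(s)$ plus closedness of $H_R$), so the complete solution through $u$ is bounded and $u\in\mathcal{I}_b=\mathcal{J}_b$. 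Your route is slightly more economical for the stated theorem, since it needs neither compactness nor invariance of $\omega(B)$ as intermediate facts; the paper's route yields those extra properties of $\omega(B)$, which it reuses later (e.g.\ in Corollary \ref{corom} and in the characterization of $\mathcal{J}_b$ as a union of $\omega$-limit sets). One small point worth tightening: in your non-emptiness argument the tail $\{y_k:k\geq n\}$ should be enclosed in $S(\tau_n)H_R$ with an offset $\tau_n=t_n-t_1\to\infty$ (not a fixed offset of $1$), so that (H3) applied to the bounded set $H_R$ supplies compact sets $K_n$ with $\varepsilon_n\to 0$ and Lemma \ref{lemma:212} applies; this is exactly the manipulation $S(t)B\subset S(t-t_1)H_R\cap H_R$ used in the paper.
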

     \begin{proof}
Define the $\omega$-limit set $\omega(B)=\bigcap_{s\geq t_1}\overline{\bigcup_{t\geq s}S(t)B}$. We will show that this set is compact, nonempty, invariant, and it attracts $B$, whence it must be that $\omega(B) \subset \mathcal{J}_b$ and the assertion will follow. Observe that the family of sets 
$$
\left\{ \overline{\bigcup_{t\geq s}S(t)B} \right\}_{s\geq t_1}
$$
is decreasing. Moreover
$$
S(t)B = S(t_1 + t-t_1) B \subset S(t-t_1)H_R \cap H_R = S(t-t_1)Q \cap H_R.
$$
Hence 
$$
\overline{\bigcup_{t\geq s}S(t)B} \subset \overline{\bigcup_{t\geq s}S(t-t_1)Q} \cap H_R = \overline{S(s-t_1)Q} \cap H_R.
$$
If $x\in \overline{S(s-t_1)Q} \cap H_R$ then $x\in H_R$ and $x = \lim_{n\to \infty} x_n$, where $x_n \in S(s-t_1)Q$. We deduce that, for suficiently large $n$, there holds $x_n \in H_{\overline{R}}$, where $\overline{R}$ depends only on $R$. This means that $x_n\in S(s-t_1)H_{\overline{R}} \cap H_{\overline{R}}$, whence $x\in \overline{S(s-t_1)H_{\overline{R}}} \cap H_{\overline{R}}$. We are in position to use (H3) and Lemma \ref{lemma:212} to deduce that $\omega(B)$ is nonempty and compact. The proof that $\omega(B)$ attracts $B$ is standard and follows the lines of the argument in Theorem \ref{attraction}. Also the proof of invariance of $\omega(B)$ is classical, once we have the compactness, and follows the lines of the proof in Theorem \ref{thm3}.   
     \end{proof}
 In a straightforward way, by taking as $B$ the sets $\alpha(\mathcal{J}\cap H_R)$, we obtain the following characterization of $\mathcal{J}_b$
 $$
 \mathcal{J}_b = \bigcup_{\stackrel{B\in \mathcal{B}(X)}{ B \ \textrm{is type 3.}}} \omega(B),
 $$
 where the summation is made over all bounded sets which for some $R\geq R_0$ and some $t_1 > 0$ satisfy the assertion 3. of Lemma \ref{lemma:possibilities}. 
 
Remembering the decomposition $\mathcal{J} = \mathcal{J}_b \cup (\mathcal{J}\setminus \mathcal{J}_b)$ we know from Corollary \ref{cor1} that the set  $\mathcal{J}_b$ is nonempty. The set $\mathcal{J}\setminus \mathcal{J}_b$ can, however, be empty as the following example shows.
\begin{Example}Consider the following system of two ODEs
\begin{align*}
&	y' = \begin{cases}
		&-y+1\ \ \textrm{as}\ y>1,\\
		& 0\ \ \textrm{as}\ \ |y|\leq 1,\\
		& y + 1 \ \ \textrm{as}\ y<-1.
	\end{cases},\\
&x'=0.
\end{align*} 
Clearly $\mathcal{J} = \mathcal{J}_b = \mathbb{R}\times [-1,1]$. 
\end{Example}
We can make, however, the following easy observation which says that if $u\in \mathcal{J}\setminus\mathcal{J}_b$ then it must be $\lim_{t\to \infty}\|S(t)u\| = \infty$, i.e. it excludes the situation when $\lim_{t\to \infty}\|S(t)u\|$ does not exist, but $\|S(t)u\|$ is unbounded. This means that the unbounded attractor consists of the set $\mathcal{J}_b$, a global-attractor-like object, and the {\color{black} solutions} which are backward in time attracted to $\mathcal{J}_b$ and whose norm, forward in time, has to go to infinity.   
\begin{Lemma}\label{lemma:inf}
	If $u \in \mathcal{J}\setminus \mathcal{J}_b$ then $\lim_{t\to\infty} \|S(t)u\| = \infty$.
	\end{Lemma}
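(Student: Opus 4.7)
The plan is to use Lemma \ref{lemma:possibilities} applied to the singleton $B=\{u\}$, together with item 1 of (H2) which quantifies how far in $E^{+}$ one must be in order to lie outside $H_R$.

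First I would recall the characterization noted in the observation: $u\in \mathcal{J}\setminus \mathcal{J}_b = \mathcal{I}\setminus \mathcal{I}_b$ is equivalent to $u\in \mathcal{I}$ together with $\{S(t)u\}_{t\geq 0}$ being unbounded. By (H1) there exists $t_0>0$ with $S(t)u\in Q$ for all $t\geq t_0$, so from this point on the orbit lives in $Q$.

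Next, I would fix an arbitrary $R\geq R_1$ and apply Lemma \ref{lemma:possibilities} to the bounded set $B=\{u\}$. Since the orbit at any instant is a single point, the intermediate case (2) of the lemma cannot occur (a single point cannot simultaneously lie in $H_R$ and in $Q\setminus H_R$, which are disjoint by construction). Case (3) would mean $S(t)u\in H_R$ for all $t$ large; as $H_R$ is bounded, the whole forward orbit would then be bounded, contradicting the unboundedness of $\{S(t)u\}_{t\geq 0}$. Hence case (1) holds: there exists $t_1(R)>0$ such that $S(t)u\in Q\setminus H_R$ for every $t\geq t_1(R)$.

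Then I would invoke item 1 of (H2) contrapositively: since $S(t)u\in Q$ and $S(t)u\notin H_R$ for $t\geq t_1(R)$, we must have $\|P S(t)u\|>S(R)$ for all such $t$. Because $\lim_{R\to\infty}S(R)=\infty$, this shows $\|P S(t)u\|\to \infty$ as $t\to\infty$. Finally, the continuity of the projection $P$ gives $\|S(t)u\|\geq \|P S(t)u\|/\|P\|\to\infty$, completing the proof. I do not expect any real obstacle here; the delicate point is simply recognizing that case (2) of Lemma \ref{lemma:possibilities} is automatically excluded for a singleton orbit, which is what forces the dichotomy between ``bounded forward orbit'' (ruled out by $u\notin \mathcal{J}_b$) and ``eventual escape from every $H_R$'' (which, via (H2)(1), is precisely divergence to infinity).
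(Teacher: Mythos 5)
Your proof is correct and follows essentially the same route as the paper: the paper argues directly that if $S(t_n)u\in H_R$ along a sequence $t_n\to\infty$ then positive invariance of $Q\setminus H_R$ forces the whole orbit into $H_R$, contradicting unboundedness, which is exactly the dichotomy you extract from Lemma \ref{lemma:possibilities} applied to the singleton $\{u\}$. Your write-up is merely more explicit about the final step (item 1 of (H2) giving $\|PS(t)u\|>S(R)\to\infty$), which the paper leaves implicit.
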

\begin{proof}
We know that $S(t)u$ is unbounded. If for some sequence $t_n\to \infty$ there holds $S(t_n)u\in H_R$ for some $R\geq R_0$ then it has to be $S(t)u\in H_R$ for every $t$ and we have the contradiction. 
\end{proof}
A similar simple argument allows us to split all points of $X$ into two sets, those points whose $\omega$-limits are well defined compact and invariant subsets of $\mathcal{J}_b$ and those points, whose {\color{black} solutions} are unbounded in the future.
  \begin{Lemma}
  	If $u \in X$ then either there exists $t_1>0$ and $R\geq R_1$such that $S(t)u \in H_R$ for $t\geq t_1$ and then $\omega(u) \subset \mathcal{J}_b$ is a compact and invariant set which attracts $u$, or $\|S(t)u\|\to \infty$ as $t\to \infty$.
  \end{Lemma}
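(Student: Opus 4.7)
The plan is to apply Lemma \ref{lemma:possibilities} to the (bounded) singleton $B=\{u\}$. The key initial observation is that $S(t)B=\{S(t)u\}$ is a single point, which cannot lie simultaneously in the disjoint sets $H_R$ and $Q\setminus H_R$ (recall $H_R\subset Q$ by (H2)); hence case (2) of that lemma is vacuous for singletons, and for each $R\geq R_1$ exactly one of case (1) or case (3) occurs. The dichotomy in the statement of our lemma will then correspond to whether case (3) holds for some $R\geq R_1$, or case (1) holds for every such $R$.

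Suppose first that there exist $R\geq R_1$ and $t_1>0$ with $S(t)u\in H_R$ for all $t\geq t_1$. Then $\{u\}$ satisfies the hypothesis of Theorem \ref{thm:omega}, and so $\mathrm{dist}(S(t)u,\mathcal{J}_b)\to 0$. Inspection of the proof of that theorem actually yields that the $\omega$-limit set $\omega(u)=\bigcap_{s\geq t_1}\overline{\bigcup_{t\geq s}\{S(t)u\}}$ is nonempty and compact, and invariance together with forward attraction of $u$ by $\omega(u)$ will be obtained by the standard arguments used in the proofs of Theorems \ref{thm3} and \ref{thm:omega}, relying on continuity of $S(t)$ and assumption (H3). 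To upgrade this to $\omega(u)\subset \mathcal{J}_b$, I will exploit that $\omega(u)$ is bounded and invariant, so through each $v\in \omega(u)$ the orbit can be extended backwards inside $\omega(u)$ to produce a bounded global {\color{black} solution} through $v$; this places $v$ in $\mathcal{I}_b=\mathcal{J}_b$.

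In the complementary case, for every $R\geq R_1$ there exists $t_1(R)>0$ with $S(t)u\in Q\setminus H_R$ for all $t\geq t_1(R)$. Item (1) of (H2) yields $Q\setminus H_R\subset \{\|Px\|>S(R)\}$, so $\|PS(t)u\|>S(R)$ for $t\geq t_1(R)$; since $S(R)\to \infty$ as $R\to \infty$ by the same assumption, one concludes $\|PS(t)u\|\to \infty$, and continuity of the projection $P$ (via the bound $\|x\|\geq \|Px\|/\|P\|$) then forces $\|S(t)u\|\to \infty$. The only delicate step in the whole argument is the verification that $\omega(u)\subset \mathcal{J}_b$ in the first case; once invariance of $\omega(u)$ is in hand, this is a direct consequence of the earlier identification $\mathcal{J}_b=\mathcal{I}_b$ and the forward extendability of bounded orbits inside a compact invariant set.
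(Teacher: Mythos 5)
Your proposal is correct and follows essentially the same route as the paper: the paper likewise splits on whether case (3) of Lemma \ref{lemma:possibilities} holds for some $R$ (invoking Theorem \ref{thm:omega}, whose proof already establishes that $\omega(u)$ is nonempty, compact, invariant, attracting and contained in $\mathcal{J}_b$) or case (1) holds for every $R$, in which event item 1 of (H2) and $S(R)\to\infty$ force $\|PS(t)u\|\to\infty$. Your explicit remarks that case (2) is vacuous for singletons and that $\|S(t)u\|\geq \|PS(t)u\|/\|P\|$ merely spell out steps the paper leaves implicit.
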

  \begin{proof}
  	If there exists $R\geq R_0$ and $t\geq t_1$ such that $S(t)u \in H_R$ for every $t\geq t_1$ then the result holds by Theorem \ref{thm:omega}. Otherwise, by Lemma  \ref{lemma:possibilities} for every $R\geq R_0$ there exists $t_1$ such that $S(t)u\in Q\setminus H_R$ for $t\geq t_1$ and the proof is complete.  
  \end{proof}
 If we reinforce the item 3. of the assumption (H2) to its stronger version which states that the sets $Q\setminus H_R$ are not only positively invariant, but the evolution in them is expanding to infinity, then we can guarantee compactness (and hence closedness) and the simpler characterization of $\mathcal{J}_b$. We make the following assumption. 
 \begin{itemize}
 	\item[(H4)] For every $ R \geq R_0$  there exists $t=t(R)>0$ such that $S(t)(Q\setminus H_{R_0}) \subset Q\setminus H_{R}$.  
 \end{itemize}
\begin{Theorem}\label{compact}
	Assuming (H1)-(H4), the set $\mathcal{J}_b = \alpha(\mathcal{J}\cap H_{R_0})$ is compact. Moreover the set $\mathcal{J}\setminus \mathcal{J}_b$ in the decomposition of the unbounded attractor $\mathcal{J}_b \cup (\mathcal{J}\setminus \mathcal{J}_b)$ is nonempty. 
\end{Theorem}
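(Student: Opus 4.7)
The plan is to reduce everything to a single $\alpha$-limit by showing $\mathcal{J}_b = \alpha(\mathcal{J}\cap H_{R_0})$. The inclusion $\alpha(\mathcal{J}\cap H_{R_0})\subset\mathcal{J}_b$ is immediate from the definition $\mathcal{J}_b = \bigcup_{R\geq R_0}\alpha(\mathcal{J}\cap H_R)$. For the reverse inclusion, I would fix $u\in\mathcal{J}_b$ and an $R\geq R_0$ with $u\in\alpha(\mathcal{J}\cap H_R)$, pick the defining sequences $u_n\in\mathcal{J}$ with $u_n\to u$ and $t_n\to\infty$ such that $S(t_n)u_n\in\mathcal{J}\cap H_R$, and then use (H4) to shorten the time. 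Let $t(R)>0$ be the time provided by (H4) and set $t_n' = t_n - t(R)$; for $n$ large, $t_n'>0$ and $t_n'\to\infty$. Since $u_n\in\mathcal{J}\subset Q$ and $\mathcal{J}$, $Q$ are positively invariant, $S(t_n')u_n\in\mathcal{J}\subset Q$. If we had $S(t_n')u_n\in Q\setminus H_{R_0}$, then (H4) would give $S(t_n)u_n = S(t(R))S(t_n')u_n\in Q\setminus H_R$, contradicting $S(t_n)u_n\in H_R$. Hence $S(t_n')u_n\in\mathcal{J}\cap H_{R_0}$ and, with $u_n\to u$, this exhibits $u$ as an element of $\alpha(\mathcal{J}\cap H_{R_0})$.

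With the identity $\mathcal{J}_b = \alpha(\mathcal{J}\cap H_{R_0})$ in hand, compactness is immediate: $\mathcal{J}\cap H_{R_0}$ is a bounded subset of the complete metric space $\mathcal{J}$ (since $H_{R_0}$ is bounded by its definition), so Theorem \ref{thm3} guarantees that $\alpha(\mathcal{J}\cap H_{R_0})$ is compact.

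For the second statement I would argue by a simple size comparison. The set $\mathcal{J}_b$ just shown to be compact is, in particular, bounded. On the other hand, Theorem \ref{th:autonomus1} asserts $P\mathcal{J}=E^+$, and $E^+$ is an unbounded linear subspace, so $\mathcal{J}$ itself is unbounded. Consequently $\mathcal{J}_b\subsetneq\mathcal{J}$ and $\mathcal{J}\setminus\mathcal{J}_b$ is nonempty; by Lemma \ref{lemma:inf} the points in this difference are exactly the heteroclinics to infinity.

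The main obstacle is the contrapositive application of (H4) in the first paragraph: one needs to know that the backward-shifted points $S(t_n')u_n$ genuinely lie in $Q$ (so that (H4) is applicable to them), and that the backward shift is a single fixed amount $t(R)$ depending only on $R$, not on $n$. Both are handled by the invariance of $\mathcal{J}$ and $Q$ and by the uniformity built into the statement of (H4); no quantitative estimate on the dynamics is required.
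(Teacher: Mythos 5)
Your proof is correct and follows essentially the same route as the paper: the paper also reduces everything to the single set $\alpha(\mathcal{J}\cap H_{R_0})$ by showing $\alpha(\mathcal{J}\cap H_R)=\alpha(\mathcal{J}\cap H_{R_0})$ via the same contrapositive use of (H4) on the backward-shifted points (the paper shifts by $t(R)+1$ rather than $t(R)$, an inessential difference), and then invokes the compactness of $\alpha$-limit sets of bounded subsets of $\mathcal{J}$. Your explicit argument for the nonemptiness of $\mathcal{J}\setminus\mathcal{J}_b$ (compact hence bounded $\mathcal{J}_b$ versus unbounded $\mathcal{J}$ since $P\mathcal{J}=E^+$) is exactly the step the paper leaves implicit.
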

\begin{proof}
It is sufficient to prove that if $R\geq R_0$ then $\alpha(\mathcal{J}\cap H_R) = \alpha(\mathcal{J}\cap H_{R_0})$. To see that this is true, take $u\in \alpha(\mathcal{J}\cap H_R)$. This means that there exists $u_n\to u$ in $\mathcal{J}$ and $t_n\to \infty$ such that $S(t_n)u_n \in \mathcal{J}\cap H_R$. Now (H4) implies that $S(t_n-t(R)-1)u_n \in \mathcal{J}\cap H_{R_0}$ and the assertion follows.  
	\end{proof}
Assumption (H4) also implies that the set of those points in $X$ whose $\omega$-limit sets are well defined and compact is a closed subset of the space $X$. Hence the set of those points constitutes a complete metric space, and then $\mathcal{J}_b$ is the global attractor of $\{ S(t) \}_{t\geq 0}$ on that space. 
\begin{Lemma}\label{lem:8}
	The set of points $u\in X$ which have the compact and invariant $\omega$-limit set $\omega(u)\subset \mathcal{J}_b$ is a closed set in $X$. 
	\end{Lemma}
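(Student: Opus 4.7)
The plan is to prove closedness by showing that the complement is open. Denote by $\mathcal{A}$ the set in the statement. By the preceding Lemma every $u\in X$ satisfies exactly one of: (i) $\omega(u)$ is a compact invariant subset of $\mathcal{J}_b$, or (ii) $\|S(t)u\|\to\infty$ as $t\to\infty$. Hence $X\setminus\mathcal{A}=\{u\in X:\|S(t)u\|\to\infty\}$, and it suffices to show this latter set is open.

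Fix $u\in X\setminus\mathcal{A}$ and choose a bounded open neighborhood $V$ of $u$ (e.g.\ an open ball). By (H1) there exists $T_0=t_0(V)\geq 0$ with $S(t)V\subset Q$ for all $t\geq T_0$. Since $\|S(t)u\|\to\infty$ and $H_{R_0}$ is bounded, I may pick $T\geq T_0$ with $S(T)u\notin H_{R_0}$, so $S(T)u\in Q\setminus H_{R_0}$. The set $X\setminus H_{R_0}$ is open, so by continuity of $S(T)$ there is a smaller neighborhood $V'\subset V$ of $u$ with $S(T)V'\subset X\setminus H_{R_0}$. Combined with $S(T)V'\subset Q$ (from the choice $T\geq T_0$), this gives $S(T)V'\subset Q\setminus H_{R_0}$.

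Item 3 of (H2) renders $Q\setminus H_{R_0}$ positively invariant, so $S(t)V'\subset Q\setminus H_{R_0}$ for every $t\geq T$. Invoking (H4), for each $R\geq R_1$ there exists $t(R)$ with $S(T+t(R))V'\subset Q\setminus H_R$. The contrapositive of item 1 of (H2) then yields $\|PS(T+t(R))v\|>S(R)$ for every $v\in V'$, and as $S(R)\to\infty$, this forces $\|PS(t)v\|\to\infty$, hence $\|S(t)v\|\to\infty$, for every $v\in V'$. Thus $V'\subset X\setminus\mathcal{A}$, so $X\setminus\mathcal{A}$ is open and $\mathcal{A}$ is closed. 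The only subtle step is ensuring the continuity argument produces a neighborhood landing in $Q\setminus H_{R_0}$ rather than merely in the slightly larger open set $X\setminus H_{R_0}$, which is handled by first absorbing the bounded neighborhood $V$ into $Q$ via (H1) before applying continuity of $S(T)$.
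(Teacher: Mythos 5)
Your proof is correct and rests on the same mechanism as the paper's: continuity of $S(T)$ at a single time $T$ where the diverging trajectory has escaped $H_{R_0}$, followed by the positive invariance of $Q\setminus H_{R_0}$ and (H4) to force divergence of all nearby trajectories. The paper phrases this as a sequential contradiction ($u_n\to u$ with $u_n\in\mathcal{A}$, $u\notin\mathcal{A}$) while you show the complement is open, but these are the same argument in different clothing.
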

\begin{proof}
	Assume for contradiction that $u_n\to u$ with $\lim_{t\to \infty}\mathrm{dist}(S(t)u_n,\mathcal{J}_b) = 0$ and $\lim_{t\to \infty}\|S(t)u\| = \infty$. For every large $R$ there exists $t_0>0$ such that for every $t\geq t_0$ we can find $n_0$ such that for every $n\geq n_0$ we have $\|S(t)u_n\|\geq R$. But we can find sufficiently large $R$ and $t_0$ such that this means that for some $t$ and for sufficiently large $n$ there holds $S(t)u_n\in Q\setminus H_{R_0}$ which means that $\|S(t)u_n\|\to \infty$, a contradiction. 
\end{proof}

\subsection{Multivalued inertial manifold}\label{sec:inertial}
The concept of multivalued inertial manifolds has been introduced in \cite{debusshetemam} and further developed in \cite{jcrobinson}.  While, classically, inertial manifolds require the so called spectral gap condition to exist, their multivalued counterpart, as shown in \cite{debusshetemam,jcrobinson}, does not require such condition, at the price of 'multivaluedness'. Motivated by Lemma \ref{lemma:211}, we show that the concept of multivalued inertial mainfold is compatible with unbounded attractors. 
Namely, Lemma \ref{lemma:211} states that for every $p\in E^+$ there exists at least one $q\in E^-$ such that $p+q\in \mathcal{J}$. This makes it possible to define the multivalued mapping $\Phi:E^+ \to \mathcal{P}(E^-)$ by
$$
\Phi (p) = \{q\in E^-\,:\ p+q\in \mathcal{J}  \}.
$$
Then Lemma \ref{lemma:211} states that for every $p\in E^+$ the set $\Phi(p)$ is nonempty. 
We continue by the analysis of $\Phi$. 
\begin{Lemma}\label{multi1}
	Under assumptions (H1)--(H3) the multifuction $\Phi$ has nonempty, compact values, closed graph, and is upper-semicontinuous. 
	\end{Lemma}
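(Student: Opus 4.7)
The plan is to derive all four properties from a single observation: for any bounded set $B \subset E^+$, the set $\{p+q : p \in B,\ q \in \Phi(p)\}$ is relatively compact in $X$. Once this is in place, nonemptiness is precisely Lemma \ref{lemma:211}, while compactness of values, closedness of graph and upper semicontinuity follow by standard multifunction arguments.

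To establish the observation, I first note that $\mathcal{J} \subset Q$. Indeed, $Q$ is closed and positively invariant by (H1), so $\overline{S(0)Q} = Q$ already appears in the intersection \eqref{jdef}. Given bounded $B \subset E^+$, set $M = \sup_{p \in B}\|p\|$; by item 1 of (H2) there exists $R \geq R_1$ with $\{\|Px\| \leq M\} \cap Q \subset H_R$. Any $p+q$ with $p \in B$ and $q \in \Phi(p)$ satisfies $\|P(p+q)\| = \|p\| \leq M$ and lies in $\mathcal{J} \subset Q$, hence in $\mathcal{J} \cap H_R$. By Lemma \ref{lem:compact} this set is compact.

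Taking $B = \{p\}$ shows that $\{p\} + \Phi(p)$ is a closed subset (by closedness of $\mathcal{J}$) of a compact set, hence compact; translating by $-p$ yields compactness of $\Phi(p)$. For the combined closed-graph and upper-semicontinuity assertions, I use the sequential criterion for multifunctions with compact values: it suffices to show that whenever $p_n \to p$ in $E^+$ and $q_n \in \Phi(p_n)$, the sequence $(q_n)$ admits a subsequence converging to some element of $\Phi(p)$. Applying the key observation to $B = \{p_n : n \in \N\} \cup \{p\}$ places the sequence $\{p_n + q_n\}$ in the compact set $\mathcal{J} \cap H_R$, so along a subsequence $p_n + q_n \to y$. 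Since $\mathcal{J}$ is closed, $y \in \mathcal{J}$, whence $q_n = (p_n + q_n) - p_n \to y - p \in \Phi(p)$, giving simultaneously closedness of graph and upper semicontinuity.

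I do not expect any substantive obstacle; the lemma essentially packages Lemma \ref{lem:compact} into the multifunction language. The only point that requires a small verification is the inclusion $\mathcal{J} \subset Q$, but this is immediate from the definition \eqref{jdef} and the closedness of $Q$ postulated in (H1).
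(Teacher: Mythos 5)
Your proposal is correct and follows essentially the same route as the paper: the graph of $\Phi$ is the closed set $\mathcal{J}$, and all compactness claims reduce to Lemma \ref{lem:compact} applied to $\mathcal{J}\cap H_R$. The only difference is cosmetic: where the paper invokes Proposition 4.1.16 of \cite{DMP1} to pass from local compactness of the graph to upper semicontinuity, you carry out the standard sequential argument directly, which is a perfectly valid (and self-contained) substitute.
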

\begin{proof}
	The graph of $\Phi$ is $\mathcal{J}$, a closed set. Moreover $\Phi(p)$ is closed for every $p\in E^+$, an since by Lemma  \ref{lem:compact} it must be relatively compact, it is also compact. Since, by the same Lemma, for every bounded set $B \in \mathcal{B}(E^+)$ the set $\overline{\Phi(B)}$ is compact, using Proposition 4.1.16 from \cite{DMP1} we deduce that $\Phi$ is upper-semicontinuous, and hence also Hausdorff upper-semicontinuous, i.e. if $p_n\to p$ in $E^+$, then $\lim_{n\to \infty}\textrm{dist}(\Phi(p_n),\Phi(p)) \to 0$. 
\end{proof}
If, in addition to (H1)--(H3), we assume (A1), we arrive at the next result which follows directly from Remark \ref{rem:1}.
\begin{Remark}
Assume (H1)--(H3) and (A1). Then $$\lim_{\|p\|\to \infty} \mathrm{diam}\,\Phi(p) = 0.$$ 
\end{Remark}

\subsection{Unbounded $\omega$-limit sets and their properties}\label{sec:omega}
Following the classical definition we can define the $\omega$-limit sets for slowly non-dissipative dynamical systems 
\begin{Definition}
	An $\omega$-limit set of the set $B\subset \mathcal{B}(X)$ is the set
	$$
	\omega(B) = \{ x\in X\;:\ \textrm{there exists}\ \ t_n\to \infty\ \ \textrm{and}\ \ u_n\in B\ \ \textrm{such that}\ \ S(t_n)u_n\to x   \}
	$$
\end{Definition}

Using Lemma \ref{lemma:possibilities} and Theorem \ref{thm:omega} we can formulate the following result on $\omega$-limit sets.
\begin{Corollary}\label{corom}
	Assume (H1)--(H3). Let $B\in \mathcal{B}(X)$. 
If there exists $R\geq R_0$ and $t_1>0$ such that for every $t\geq t_1$ we have $S(t)B \subset H_{R}$ then $\omega(B) \subset \mathcal{J}_B$ is a nonempty, compact and invariant set which attracts $B$. If for every $R\geq R_0$ there exists $t_1>0$ such that for every $t\geq t_1$ we have $S(t)B \in Q\setminus H_{R}$ holds then $\omega(B)$ is empty.  
\end{Corollary}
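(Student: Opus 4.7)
The plan is to deduce both assertions directly from material already in place: the trichotomy of Lemma \ref{lemma:possibilities}, the construction carried out inside the proof of Theorem \ref{thm:omega}, and the geometric content of item (1) of (H2). I expect no real obstacle here; the corollary is essentially a bookkeeping statement.

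For the first assertion, the hypothesis $S(t)B\subset H_R$ for $t\geq t_1$ is exactly alternative (3) of Lemma \ref{lemma:possibilities}, so I would simply invoke the proof of Theorem \ref{thm:omega}. That proof defines $\omega(B)=\bigcap_{s\geq t_1}\overline{\bigcup_{t\geq s}S(t)B}$ and establishes, via (H3) and Lemma \ref{lemma:212}, that it is nonempty and compact; the standard argument based on continuity of $\{S(t)\}_{t\geq 0}$ yields invariance and attraction of $B$; and the containment $\omega(B)\subset\mathcal{J}_b$ is recorded there as well. The corollary merely extracts these conclusions.

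For the second assertion I would argue by contradiction: suppose $x\in\omega(B)$, with sequences $t_n\to\infty$ and $u_n\in B$ satisfying $S(t_n)u_n\to x$. Fix an arbitrary $R\geq R_0$. By the hypothesis, there is $n_0=n_0(R)$ such that $S(t_n)u_n\in Q\setminus H_R$ for every $n\geq n_0$. But item (1) of (H2) gives $\{\|Px\|\leq S(R)\}\cap Q\subset H_R$, so every point of $Q\setminus H_R$ satisfies $\|Px\|>S(R)$. Hence $\|P\,S(t_n)u_n\|>S(R)$ for all $n\geq n_0$, which yields $\liminf_{n\to\infty}\|P\,S(t_n)u_n\|\geq S(R)$. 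Since the inequality holds for every $R\geq R_0$ and $S(R)\to\infty$ as $R\to\infty$, this forces $\|P\,S(t_n)u_n\|\to\infty$. On the other hand, continuity of the projection $P$ together with $S(t_n)u_n\to x$ implies that $\{P\,S(t_n)u_n\}$ converges to $Px$, and in particular is bounded, a contradiction. Therefore $\omega(B)=\emptyset$.
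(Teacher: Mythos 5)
Your proof is correct and follows the route the paper intends: the first assertion is read off from Theorem \ref{thm:omega} via case (3) of Lemma \ref{lemma:possibilities}, and the second is the natural escape-to-infinity contradiction using item (1) of (H2). One cosmetic point: item (1) of (H2) is stated only for $R\geq R_1$, so you should fix $R\geq R_1$ rather than $R\geq R_0$ in the second argument (harmless, since the hypothesis holds for all $R\geq R_0\,$ and in particular for all $R\geq R_1$, and $S(R)\to\infty$ along that range).
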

	We continue by the analysis of the situation when both $S(t)B \cap H_R$ and $S(t)B \cap (Q\setminus H_R)$ are nonempty. The next result is a complement to Theorem \ref{thm:omega}. 
	\begin{Lemma}
		Assume (H1)--(H3) and let $B\in \mathcal{B}(X)$. If there exists $R\geq R_0$ and $t_1 > 0$ such that for every $t\geq t_1$ both sets $S(t)B \cap H_R$ and $S(t)B \cap (Q\setminus H_R)$ are nonempty (i.e. case (2) of Lemma \ref{lemma:possibilities} holds), then $\omega(B) \subset \mathcal{J}$ is a nonempty, closed, and invariant set which attracts $B$ in the bounded sets in $E^+$, i.e. 
		$$
		\lim_{t\to \infty}\mathrm{dist}(S(t)B\cap \{ \|Px\|\leq S\},\omega(B)) = 0\ \ \textrm{for every}\ \ S\geq 0.
		$$
	\end{Lemma}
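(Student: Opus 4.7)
The plan is to imitate the standard structure used in Theorem \ref{thm:omega}, but adapted to the fact that in case (2) the orbit $S(t)B$ is not contained in any $H_R$. After replacing $B$ with $S(t_0(B))B$, I may assume without loss of generality that $B\subset Q$ by (H1). The attraction statement will then be obtained against the bounded slab $\{\|Px\|\leq S\}\cap Q$, which by item 1 of (H2) lies in some $H_{\tilde R}$, so once we localize in $Px$ we are working inside a bounded piece of phase space, exactly where the generalized asymptotic compactness of (H3) combined with Lemma \ref{lemma:212} bites.

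For nonemptiness, since case (2) forces $S(n)B\cap H_R\neq \emptyset$ for $n\geq t_1$, I would pick $v_n=S(n)u_n\in H_R\cap S(n)Q$; the sets $L_n:=H_R\cap S(n)Q$ are nested (by positive invariance of $Q$) and each $L_n$ lies in the $\varepsilon_n$-neighborhood of a compact $K_n$ with $\varepsilon_n\to 0$ from (H3). Lemma \ref{lemma:212} yields a convergent subsequence $v_n\to x$; and writing, for arbitrary fixed $T$, $v_n=S(T)S(n-T)u_n\in S(T)Q$ for $n$ large, we get $x\in\bigcap_{T\geq 0}\overline{S(T)Q}=\mathcal{J}$. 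Closedness of $\omega(B)$ is the classical diagonal extraction. Positive invariance $S(s)\omega(B)\subset\omega(B)$ follows from continuity of $S(s)$.

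The main obstacle is negative invariance $\omega(B)\subset S(s)\omega(B)$, since in case (2) we lack any global compactness and cannot just sweep preimages into a prescribed compact set. The key observation is that if $x=\lim S(t_n)u_n$, then the sequence $\{S(t_n)u_n\}$ is bounded and thus, for $R'$ large enough, contained in $H_{R'}$; by item 3 of (H2) (the invariance of $Q\setminus H_{R'}$) the earlier iterates $y_n:=S(t_n-s)u_n$ must themselves lie in $H_{R'}$. Consequently $y_n\in H_{R'}\cap S(t_n-s)Q$, a nested family to which (H3) and Lemma \ref{lemma:212} apply; a subsequential limit $y$ lies in $\omega(B)$ by definition, and continuity gives $S(s)y=x$.

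Finally, for the attraction statement, I would argue by contradiction: suppose there exist $t_n\to\infty$, $u_n\in B$ with $x_n:=S(t_n)u_n$ satisfying $\|Px_n\|\leq S$ and $\mathrm{dist}(x_n,\omega(B))\geq\varepsilon$. Using item 1 of (H2), $\{\|Px\|\leq S\}\cap Q\subset H_{\tilde R}$ for some $\tilde R$ depending only on $S$, so $x_n\in H_{\tilde R}\cap S(t_n)Q$, again a nested family. Lemma \ref{lemma:212} together with (H3) extracts a convergent subsequence $x_n\to x^\ast\in\omega(B)$, producing the contradiction. The inclusion $\omega(B)\subset\mathcal{J}$ follows, as above, from the fact that any accumulation point of $\{S(t_n)u_n\}$ with $u_n\in B\subset Q$ and $t_n\to\infty$ lies in every $\overline{S(T)Q}$.
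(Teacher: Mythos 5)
Your proof is correct and follows essentially the same route as the paper's own (which is itself only a sketch deferring to Theorem \ref{attraction} and Theorem \ref{thm3}): nested sets $H_R\cap S(t_n)Q$ handled via (H3) and Lemma \ref{lemma:212} for nonemptiness and attraction, item 3 of (H2) to pull the preimages $S(t_n-s)u_n$ back into some $H_{R'}$ for negative invariance, and the standard diagonal and contradiction arguments elsewhere. The only cosmetic point is that to apply (H3) to $H_R\cap S(n)Q$ you should first rewrite it as $H_R\cap S(n)H_R$ using item 3 of (H2), since $Q$ itself is unbounded and (H3) only covers images of bounded sets --- exactly the identity the paper makes explicit in the proof of Theorem \ref{attraction}.
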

	\begin{proof}Take $t_n\to \infty$. There exists $u_n \in B$ such that $S(t_n)u_n \in H_R$, whence, arguing as in the proof of Theorem \ref{attraction}, $S(t_n)u_n$ is relatively compact and hence $\omega(B)$ is nonempty. Its closedness follows by a diagonal argument, while the fact that $\omega(B) \subset \mathcal{J}$ follows from the definition of $\mathcal{J}$ and the fact that $Q$ is absorbing and positively invariant. The proof of invariance of $\omega(B)$ is straightforward and follows from the fact that we can enclose the convergent sequence $S(t_n)u_n$ in some $H_{\overline{R}}$. Finally the attraction in bounded sets is obtained in the same way as in Theorem  \ref{attraction}.
	\end{proof}

\subsection{The dynamics at infinity}\label{sec:infty} In \cite{bengal, PimentelRocha, carvalhopimentel} the authors define the dynamics at infinity by means of the Poincar\'{e} projection. Observe, however, that for the {\color{black} solutions} $u(t)$ that diverge to infinity, the projection $(I-P)u(t)$ remains bounded, and $\|Pu(t)\|$ tends to infinity. Hence, when we rescale those {\color{black} solutions} by $\|P u(t)\|$, i.e. we consider
$$
\frac{u(t)}{\|Pu(t)\|} = \frac{Pu(t)}{\|Pu(t)\|} + \frac{(I-P)u(t)}{\|Pu(t)\|},
$$
the infinite dimensional component, which belongs to $E^-$ tends to zero, while the term $\frac{Pu(t)}{\|Pu(t)\|}$ evolves on the unit sphere. We recover the dynamics at infinity by the analysis of the asymptotic behavior of this, rescaled, $Pu(t)$. Denoting the unit sphere in $E^+$ by $\mathbb{S}_{E^+} = \{ x\in E^+\; :\ \|x\| = 1\}$, we have the next result. Note that this result actually describes the dynamics at infinity if we assume  (H4) in addition to (H1)-(H3) but it is valid in the more general case.  
\begin{Lemma}\label{lem_infty} Assume (H1)--(H3). Then for every $R\geq R_0$
	$$
	\bigcap_{t\geq 0} 
	\overline{
		\bigcup_{x\in S(t)(Q\setminus H_R)}%
		\left\{ \frac{Px}{\|Px\|}
		\right\}
	} 
	= \mathbb{S}_{E^+}.
	$$
	Moreover  $\mathbb{S}_{E^+}$ is the smallest closed set such that for every $B\in \mathcal{B}(X)$ satisfying  1. of Lemma \ref{lemma:possibilities} for sufficiently large $R$, there holds
	$$\lim_{t\to \infty}\mathrm{dist}\left(\left\{\frac{Px}{\|Px\|}\,:\ x\in S(t)B\right\},\mathbb{S}_{E^+}\right) = 0. 
	$$ 
\end{Lemma}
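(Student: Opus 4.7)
The plan is to deduce both claims from a single topological device: a continuous self-map of the finite-dimensional sphere $\mathbb{S}_{E^+}$ obtained by transporting a sphere of radius $L$ in $E^+$ forward under $PS(t)$ and renormalizing. The key ingredients are the forward invariance $S(t)(Q\setminus H_R)\subset Q\setminus H_R$ from item $3$ of (H2), the quantitative lower bound $\|Px\|>S(R)$ available on $Q\setminus H_R$ by item $1$ of (H2), and the homotopy invariance of the Brouwer degree for self-maps of spheres (Hopf's theorem).

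Without loss of generality assume $R\geq R_1$ (if $R<R_1$, replace $R$ by $R_1$: ascent of $\{H_R\}$ gives $Q\setminus H_{R_1}\subset Q\setminus H_R$, so any conclusion obtained for the larger $R$ propagates to the smaller). Fix any $L>R$ and define
$$
\Phi_t:\mathbb{S}_{E^+}\to \mathbb{S}_{E^+},\qquad \Phi_t(x):=\frac{PS(t)(Lx)}{\|PS(t)(Lx)\|}.
$$
Well-definedness: $Lx\in E^+\subset Q$ (since $\|(I-P)(Lx)\|=0\leq D_1$) satisfies $\|Lx\|=L>R$, so by item $2$ of (H2) one has $Lx\in Q\setminus H_R$; item $3$ of (H2) then keeps $S(\sigma)(Lx)\in Q\setminus H_R$ for every $\sigma\geq 0$, and item $1$ of (H2) forces $\|PS(\sigma)(Lx)\|>S(R)>0$. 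The continuous homotopy $(\theta,x)\mapsto \Phi_{\theta t}(x)$ on $[0,1]\times \mathbb{S}_{E^+}$ connects $\Phi_t$ to $\Phi_0=\mathrm{id}_{\mathbb{S}_{E^+}}$, so $\Phi_t$ has Brouwer degree $1$ and hence is surjective (in case $\dim E^+=1$ the same conclusion follows since $\Phi_\theta(x)$ is continuous in $\theta$ into the discrete set $\mathbb{S}_{E^+}$, hence constant, so $\Phi_t=\mathrm{id}$).

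Part 1 is then immediate: the inclusion $\subset$ is trivial since every $Px/\|Px\|$ is a unit vector of $E^+$, while for $\supset$, given $p\in \mathbb{S}_{E^+}$ and $t\geq 0$, surjectivity of $\Phi_t$ furnishes $x\in \mathbb{S}_{E^+}$ with $\Phi_t(x)=p$; then $Lx\in Q\setminus H_R$, so $S(t)(Lx)\in S(t)(Q\setminus H_R)$ satisfies $PS(t)(Lx)/\|PS(t)(Lx)\|=p$, placing $p$ already in the unclosed set. For the minimality in Part 2, the attraction by $\mathbb{S}_{E^+}$ itself is trivial. To see that any closed $C$ for which $\mathrm{dist}(\{Px/\|Px\|:x\in S(t)B\},C)\to 0$ holds for every bounded $B$ of type $(1)$ for sufficiently large $R$ must contain $\mathbb{S}_{E^+}$, fix $p\in\mathbb{S}_{E^+}$ and, for each $n\in\mathbb{N}$, use surjectivity of $\Phi_n$ to pick $x_n\in\mathbb{S}_{E^+}$ with $\Phi_n(x_n)=p$; set $y_n:=Lx_n$. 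Then $B:=\{y_n\}_{n\in\mathbb{N}}$ is bounded (each $\|y_n\|=L$), contained in $Q\setminus H_R$, and by item $3$ of (H2) satisfies $S(t)B\subset Q\setminus H_R$ for every $t\geq 0$, so it falls into case $(1)$ of Lemma \ref{lemma:possibilities} for the fixed sufficiently large $R$. The attracting property of $C$ applied to $B$ gives $\mathrm{dist}(\{Px/\|Px\|:x\in S(t)B\},C)\to 0$; but for each $n$ the point $p=PS(n)y_n/\|PS(n)y_n\|$ belongs to $\{Px/\|Px\|:x\in S(n)B\}$, so $d(p,C)\to 0$ and closedness of $C$ forces $p\in C$.

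The principal delicacy is keeping the homotopy $\theta\mapsto \Phi_{\theta t}$ non-degenerate throughout $[0,1]$, namely ensuring $PS(\theta t)(Lx)\neq 0$ for every $\theta\in[0,1]$ and $x\in \mathbb{S}_{E^+}$; this is precisely where the combination of items $1$ and $3$ of (H2) is used, as the forward invariance prevents the trajectory of $Lx$ from ever re-entering $H_R$ while the lower bound $\|Py\|>S(R)$ available off of $H_R$ guarantees non-vanishing of the normalization.
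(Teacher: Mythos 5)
Your proof is correct and rests on the same topological device as the paper's: a homotopy of the renormalized map $PS(\theta t)(\cdot)/\|PS(\theta t)(\cdot)\|$ on a large sphere in $E^+$ to the identity, kept non-degenerate by items 1 and 3 of (H2), followed by the Hopf/degree argument for surjectivity. The only (harmless) difference is that you run this single degree computation for both assertions, whereas the paper obtains the first identity more directly from $P\mathcal{J}=E^+$ (Lemma \ref{lemma:211}) together with boundedness of $PS(t)H_R$, reserving the homotopy argument for the minimality part.
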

\begin{proof}
	We will prove that 
	$$
	\bigcup_{x\in S(t)(Q\setminus H_R)}%
	\left\{ \frac{Px}{\|Px\|}
	\right\} = \mathbb{S}_{E^+}.
	$$
	By Lemma \ref{lemma:211}, the fact that $\mathcal{J}\subset Q$ and the invariance of $\mathcal{J}$  it follows that $
	\{ Px: x\in S(t)Q\} = E^+.
	$ 
	Now 
	$$
	E^+ = P S(t)Q = PS(t) H_R \cup PS(t)(Q\setminus H_R). 
	$$
	Since by (H3) the set $PS(t) H_R$ is bounded, there exists $R>0$ such that $\{ x\in E^+\,:\ \|x\|=R \} \subset PS(t)(Q\setminus H_R)$ and the assertion follows.
	
	To get the second assertion pick $R\geq R_0$ and choose $B = \{ x\in E^+ \, :\ \|x\| = \overline{R} \}$, where $\overline{R}$ is sufficiently large to get $S(t) B \subset Q\setminus H_R$ for every $t\geq 0$ (and hence $B$ satisfies 1. of Lemma \ref{lemma:possibilities}). Such choice of $\overline{R}$ is possible by (H2).  
	We will show that 
	$$
	\left\{\frac{Px}{\|Px\|}\,:\ x\in S(t)B \right\} = \mathbb{S}_{E^+}. 
	$$
	To this end choose $t > 0$ and consider the mapping $H:[0,1]\times \mathbb{S}_{E^+}\to \mathbb{S}_{E^+}$ defined as 
	$$
	H(\theta,x) = \frac{P S(\theta t) (\overline{R}x) }{\|P S(\theta t) (\overline{R}x)\|}.
	$$ 
	Clearly $H(0,\cdot)$ is an identity, and $H$ is continuous. Hence, by the Hopf lemma, the degree of the mapping $H(1,x)$ is equal to the degree if the identity of the sphere, that is, one. On the other hand assume for contradiction that the image of $H(1,\cdot)$ is not equal to the whole $\mathbb{S}_{E^+}$. Then, as the image  of $H(1,\cdot)$ is compact, it is included in  $\mathbb{S}_{E^+} \setminus U$ for some nonempty open set $U$. This set is contractible to one point, which in turn means that $H(1,\cdot)$ must be homotopic to a constant map, which has the degree zero, a contradiction.   
\end{proof}

We can also define $\omega$-limits in $\mathbb{S}_{E^+}$ of nonempty bounded sets $B \subset Q\setminus H_{R_0}$. 

$$
\omega_\infty(B) = \left\{ x\in E^+\,:\ \textrm{there exists}\ \ u_n\in B \ \ \textrm{and}\ \ t_n\to \infty \ \ \textrm{such that}\ \ \frac{PS(t_n)u_n}{\|PS(t_n)u_n\|}\to x\right\}
$$
\begin{Lemma}\label{lem:infty}
	Let $B \subset Q\setminus H_{R_0}$ be bounded. Then $\omega_\infty(B)$ is nonempty, compact, and attracts $B$ in the sense 
		$$\lim_{t\to \infty}\mathrm{dist}\left(\left\{\frac{Px}{\|Px\|}\,:\ x\in S(t)B\right\},\omega_\infty(B) \right) = 0. 
	$$ 
	\end{Lemma}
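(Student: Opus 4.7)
\medskip

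\noindent\textbf{Proof plan.} The proof is short and essentially routine because the whole argument takes place on the compact set $\mathbb{S}_{E^+}$, since $\dim E^+<\infty$. The only preliminary point is to check that the normalization $x\mapsto Px/\|Px\|$ is well-defined on $S(t)B$ for every $t\geq 0$. By item (1) of (H2), if $x\in Q$ and $\|Px\|\leq S(R_0)$, then $x\in H_{R_0}$. The contrapositive gives $Q\setminus H_{R_0}\subset\{\|Px\|>S(R_0)\}$, and since $S(R_0)\geq R_0>0$, the projection is nonzero on $Q\setminus H_{R_0}$. Combined with item (3) of (H2), which guarantees $S(t)(Q\setminus H_{R_0})\subset Q\setminus H_{R_0}$, we conclude $\|PS(t)u\|>S(R_0)$ for every $u\in B$ and every $t\geq 0$.

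\medskip

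\noindent\emph{Nonemptiness.} Fix any $u_0\in B$ and any sequence $t_n\to\infty$. The points $PS(t_n)u_0/\|PS(t_n)u_0\|$ all lie in the unit sphere $\mathbb{S}_{E^+}$ of the finite-dimensional space $E^+$, which is compact. Passing to a subsequence, these points converge to some $x\in\mathbb{S}_{E^+}$, and $x\in\omega_\infty(B)$ by definition.

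\medskip

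\noindent\emph{Compactness.} Since $\omega_\infty(B)\subset\mathbb{S}_{E^+}$, it is automatically bounded and relatively compact. It remains to show closedness: if $x_k\in\omega_\infty(B)$ with $x_k\to x$, pick for each $k$ a sequence $u^k_n\in B$ and $t^k_n\to\infty$ (as $n\to\infty$) with $PS(t^k_n)u^k_n/\|PS(t^k_n)u^k_n\|\to x_k$. A standard diagonal selection yields indices $n(k)$ such that $t^k_{n(k)}\to\infty$ and $PS(t^k_{n(k)})u^k_{n(k)}/\|PS(t^k_{n(k)})u^k_{n(k)}\|\to x$, so $x\in\omega_\infty(B)$.

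\medskip

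\noindent\emph{Attraction.} Argue by contradiction. If the attraction fails, there exist $\varepsilon>0$, $t_n\to\infty$, and $u_n\in B$ such that
\[
\mathrm{dist}\!\left(\frac{PS(t_n)u_n}{\|PS(t_n)u_n\|},\,\omega_\infty(B)\right)\geq\varepsilon.
\]
By compactness of $\mathbb{S}_{E^+}$, a subsequence of $PS(t_n)u_n/\|PS(t_n)u_n\|$ converges to some $y\in\mathbb{S}_{E^+}$. By the definition of $\omega_\infty(B)$, $y\in\omega_\infty(B)$, which contradicts $\mathrm{dist}(y,\omega_\infty(B))\geq\varepsilon$.

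\medskip

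\noindent The main point to keep in mind is that, in contrast to the analogous results for $\omega$-limits in $X$ (e.g.\ Theorem \ref{thm:omega}), here the generalized asymptotic compactness assumption (H3) plays no role at all: finite-dimensionality of $E^+$ supplies all needed compactness for free, and the only substantive ingredient drawn from the hypotheses (H1)--(H3) is the positive invariance of $Q\setminus H_{R_0}$ to ensure the normalization is defined along the forward orbit.
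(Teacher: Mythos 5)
Your proof is correct and follows essentially the same route as the paper: nonemptiness from compactness of $\mathbb{S}_{E^+}$, closedness via a diagonal argument, and attraction by contradiction using compactness of the sphere, with (H3) indeed playing no role. Your preliminary check that the normalization is well-defined is a welcome addition the paper omits, though note that item (1) of (H2) is only stated for $R\geq R_1$, so the contrapositive directly yields $Q\setminus H_{R_1}\subset\{\|Px\|>S(R_1)\}$ rather than the corresponding statement for $H_{R_0}$; this does not affect the main argument.
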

\begin{proof}
Non-emptiness follows from the compactness of $\mathbb{S}_{E^+}$ and we can obtain closedness (and hence compactness) of $\omega_\infty(B)$ from a diagonal argument, the argument closely follows the classical proof of closedness of $\omega$-limit sets. Also the proof of the attraction is standard: for contradiction assume that for some $B$ there exists the sequence $\{x_n \} \subset B$ and $t_n\to \infty$ such that 
		$$\mathrm{dist}\left(\frac{PS(t_n)x_n}{\|PS(t_n)x_n\|},\omega_\infty(B) \right) \geq \varepsilon. 
$$ 
But for a subsequence, still denoted by $n$ 
$$
\frac{PS(t_n)x_n}{\|PS(t_n)x_n\|} \to y,
$$
where, by definition $y\in \omega_\infty(B),$ a contradiction. 
\end{proof}
For particular slowly non-dissipative semigroups it is possible to construct the dynamical system on $E^+$ such that $\omega_\infty(B)$ are invariant and whole $\mathbb{S}_{E^+}$ is the global attractor. This dynamical system reflects the dynamics at infinity of the original problem. We will present such construction in the part of the paper devoted to the example of a slowly non-dissipative problem governed by a PDE. 

%
%
%

\section{Nonautonomous unbounded attractors and their properties.}\label{nonauto}
The nonautonomous version of the definition of the pullback attractor was proposed in \cite[Definition 3.1]{carvalhopimentel} together with a result on its existence for the problem governed by the following PDE
$$
u_t = u_{xx} + b(t)u + g(u),
$$
where the space domain is the one-dimensional (an interval) with homogeneous Dirichlet boundary conditions. The function $b(t)$ is allowed to oscillate between two gaps of the spectrum of the leading elliptic operator. In this section we propose a systematic abstract approach to non-autonomous unbounded pullback attractors. 

\subsection{Definition of pullback unbounded attractor.}
We first note that, if $X$ is a normed space, than any family of nonempty sets $\{ \mathcal{A}(t) \}_{t\in \mathbb{R}}$ in $X$ will be called a non-autonomous set. We will also use the notation $\mathcal{A}(\cdot)$ for such sets. 
Exactly as in the autonomous case we will assume that the phase space $X$ is Banach, and can be represented as $X=E^+\oplus E^-$, with $E^+$ being finite dimensional. Using the same notation as in Section \ref{sec:autonomous} we can represent any $x\in X$ as $x=p+q$.
We remind some definitions concerning the dissipative non-autonomous dynamical systems, beginning with  the definition of a process and its pullback attractor, see \cite{Carvalho-Langa-Robinson-13} for the systematic treatise on the theory. In the same way as in the autonomous case, in our requirement for the attraction we impose that it takes place only in the bounded sets in $E^+$.

\begin{Definition}
	Let $X$ be a metric space. A family of mappings $\{ S(t,s)\}_{t\geq s}$, where $S(t,s):X\to X$ is a continuous process if
	\begin{itemize}
		\item $S(t,t) = I$ (identity on $X$) for every $t\in \mathbb{R}$,
		\item for every $t\geq \tau \geq s$, and for every $x\in X$ we have $S(t,s)x = S(t,\tau)S(\tau,s)x$,
		\item the mapping $(t,s,x)\mapsto S(t,s)x$ is continuous for every $t\geq s$, $x\in X$.
	\end{itemize}
	
\end{Definition}

\begin{Definition}\label{def:set}
	A family of sets $\{\mathcal{A}(t)\}_{t\in \mathbb{R}}$ is the unbounded pullback attractor for a process $S(\cdot,\cdot)$ if 
	\begin{enumerate}
		\item $\mathcal{A}(\cdot)$ are nonempty and closed,
		\item $\mathcal{A}(\cdot)$ is invariant with respect to $S(\cdot,\cdot)$, that is 
		$$S(t,\tau)\mathcal{A}(\tau)=\mathcal{A}(t) \ \text{  for every  } \  t,\tau\in \mathbb{R} \ \text{  with  } \ t\geq \tau.$$
		\item for every $t\in \mathbb{R}$ the set $\mathcal{A}(t)$ is pullback attracting in the bounded sets in $E^+$ at time $t$, that is
		$$
		\lim_{s\rightarrow-\infty} \mathrm{dist} (S(t,s)B\cap \{ \|Px\|\leq R \},\mathcal{A}(t))=0
		$$
		 for every $B\in \mathcal{B}(X)$ and every $R>0$ for which there exists $t_1\leq t$ such that $S(t,s)B\cap \{ \|Px\|\leq R \}$ is nonempty for every $s\leq t_1$.  
		\item $\mathcal{A}(\cdot)$ is the minimal family of closed sets with property (3).
	\end{enumerate}
\end{Definition}

We will also need the notions of pullback absorbtion and pullback positive invariance. 
\begin{Definition}
	A set $B\subset X$ pullback absorbs bounded sets at time $t\in \mathbb{R}$ if, for each bounded subset $D$ of $X$, there exists $T=T(t,D)\leq t$ such that
	$$S(t,s)D\subset B \text{ for all  } s\leq T.$$
\end{Definition}

\begin{Definition}
	A non-autonomous set $\{Q(t)\}_{t\in \mathbb{R}}$ is positively invariant if
	$$
	S(t,s)Q(s)\subset Q(t) \ \ \ \text{ for every $s\leq t$}.
	$$
\end{Definition}

Finally, we propose the concept of generalized pullback asymptotic compactness, as a non-autonomous variant of  the notion given in Definition \ref{def:gen_ass}, 
\begin{Definition}
	A process $S(\cdot,\cdot)$ is generalized pullback asymptotically compact if for every $B\in \mathcal{B}(X)$ and for every $t,s\in \mathbb{R}$, $s\leq t,$ there exists a $K(t,s,B)\subset X$ and $\varepsilon(t,s,B)\rightarrow 0$ when $s\rightarrow -\infty$ such that
	$$S(t,s)B\subset \mathcal{O}_{\varepsilon}(K).$$
\end{Definition}

\subsection{Existence of unbounded pullback attractors}

The following assumptions which are the non-autonomous versions of (H1)--(H3) from Section \ref{sec:autonomous} will guarantee the existence of the unbounded pullback attractor. 

\begin{enumerate}
	\item[(H1)$_{\mathrm{NA}}$] There exist $D_1,D_2>0$ and the closed sets $\{Q(\cdot)\}_{t\in \mathbb{R}}$ such that for every $t\in \mathbb{R}$,
	$$
	\{ \|(I-P)x\|\leq D_1 \} \subset  Q(t) \subset \{ \|(I-P)x\|\leq D_2 \}
	$$
	such that $Q(t)$ pullback absorbs bounded sets at time $t$ and family $Q(\cdot)$ is positively invariant.
	\item[(H2)$_{\mathrm{NA}}$] There exists the constants $R_0$ and $R_1$ with $0< R_0\leq R_1$ and an ascending family of closed and bounded sets $\{\{H_R(\cdot)\}_{t\in \mathbb{R}}\}_{R\geq R_0}$ with $H_R(t)\subset Q(t)$ for every $t\in \mathbb{R}$ such that
	\begin{enumerate}
		\item[1.] for every $R\geq R_1$ we can find $S(R)\geq R_0$ such that $\{\|Px\|\leq S(R)\}\cap Q(t)\subset H_R(t)$ for every $t\in \mathbb{R}$, and moreover $\lim_{R\rightarrow \infty} S(R)=\infty$,
		\item[2.] for every $R\geq R_1$ we have $H_R(t)\subset \{\|Px\|\leq R\}$ for every $t\in \mathbb{R},$
		\item[3.] for every $R\geq R_0$ and $t\in \mathbb{R}$, $S(t,s)Q(s)\setminus H_R(s)\subset Q(t)\setminus H_R(t)$, for every $s\leq t$.
	\end{enumerate}
	\item[(H3)$_{\mathrm{NA}}$] The process $S(\cdot,\cdot)$ is generalized pullback asymptotically compact. 
\end{enumerate}

The  candidate for the unbounded pullback attractor is the non-autonomous set  $\mathcal{J}(\cdot)$ given by
$$\mathcal{J}(t)=\bigcap_{s\leq t}\overline{S(t,s)Q(s)}.$$
The rest of this section is devoted to the proof that assuming (H1)$_{\mathrm{NA}}$--(H3)$_{\mathrm{NA}}$ this set satisfies the requirements of Definition \ref{def:set}. We also establish its relation with the time dependent maximal kernel and its section. To this end we first define the time dependent maximal kernel as the family of those complete non-autonomous {\color{black} solutions} $u:\mathbb{R}\to X$ which are bounded in the past. 

\begin{Definition}
	The set $\mathcal{K}$ is called a maximal kernel if 
	$$
	\mathcal{K} = \{ u(\cdot)\,:\ \textrm{there exists}\ \ T\in \mathbb{R}\ \, \textrm{such that}\ \, \sup_{s\in (-\infty,T]}\|u(s)\|\leq C_u\ \, \textrm{and}\, \ S(t,s)u(s) = u(t)\, \ \textrm{for every}\, \ t\geq s\}
	$$
\end{Definition}
It is clear that if the assertion of the above definition holds for some $T\in\mathbb{R}$ then it also holds for every $\overline{T} < T$. The continuity of the process $S$ with respect to time also implies that this assertion also holds for every $\overline{T} > T$. We continue with the definition of the maximal kernel section.
\begin{Definition}
	The non-autonomous set $\{\mathcal{K}(t) \}_{t\in \mathbb{R}}$ is called a maximal kernel section  if 
	$$
	\mathcal{K}(t) = \{ u(t) \,:\ u\in \mathcal{K}   \}\ \ \textrm{for every}\ \ t\in \mathbb{R}.
	$$
\end{Definition}
Clearly, for a continuous process this is an invariant set, namely
\begin{Observation}
	If $S(\cdot,\cdot)$ is a continuous process, then  $\mathcal{K}(t) = S(t,s)\mathcal{K}(s)$ for every $s\in \mathbb{R}$ and $t\geq s$. 
\end{Observation}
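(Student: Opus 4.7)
The plan is to prove the two inclusions $S(t,s)\mathcal{K}(s) \subset \mathcal{K}(t)$ and $\mathcal{K}(t) \subset S(t,s)\mathcal{K}(s)$ separately, both of which should follow quite directly from the very definition of the maximal kernel and the cocycle identity $S(t,s) = S(t,\tau)S(\tau,s)$. There should be no serious obstacle; the result is essentially a bookkeeping consequence of working with globally defined solutions rather than forward orbits.

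For the inclusion $S(t,s)\mathcal{K}(s) \subset \mathcal{K}(t)$, I would take $x \in \mathcal{K}(s)$ and unpack the definition: there exists $u\in \mathcal{K}$ with $u(s)=x$, i.e.\ a complete solution bounded in the past. Applying the process, $S(t,s)x = S(t,s)u(s) = u(t)$, which by definition lies in $\mathcal{K}(t)$. Thus $S(t,s)\mathcal{K}(s) \subset \mathcal{K}(t)$.

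For the reverse inclusion, I would take $y \in \mathcal{K}(t)$ and again produce $u \in \mathcal{K}$ with $u(t) = y$. The crucial point is that $u$ is defined on all of $\mathbb{R}$ (this is built into membership in $\mathcal{K}$), so in particular $u(s)$ makes sense for the given $s \leq t$ and belongs to $\mathcal{K}(s)$ by the very same $u$. The cocycle property yields $S(t,s)u(s) = u(t) = y$, whence $y \in S(t,s)\mathcal{K}(s)$. This establishes $\mathcal{K}(t) \subset S(t,s)\mathcal{K}(s)$ and completes the proof.

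The only mild subtlety I would flag is ensuring that the boundedness-in-the-past condition is inherited correctly when we shift the base time. Since $u \in \mathcal{K}$ means there is some $T$ with $\sup_{r\leq T}\|u(r)\| \leq C_u$, and this condition is intrinsic to the solution $u$ (not to the evaluation time), the same $u$ witnesses membership of $u(s)$ in $\mathcal{K}(s)$ and of $u(t)$ in $\mathcal{K}(t)$ simultaneously; no extra argument is required. Hence both inclusions hold and $\mathcal{K}(t) = S(t,s)\mathcal{K}(s)$.
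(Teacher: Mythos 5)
Your proof is correct and is exactly the argument the paper has in mind: the Observation is stated without proof precisely because both inclusions follow immediately from the definition of $\mathcal{K}$ (a single global solution $u$ witnesses membership of $u(r)$ in $\mathcal{K}(r)$ for every $r$, and $S(t,s)u(s)=u(t)$ is built into membership in $\mathcal{K}$). Your remark that the boundedness-in-the-past condition is intrinsic to the solution rather than to the evaluation time is the right point to flag, and nothing further is needed.
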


At the moment we do not yet know if either of the sets  $\mathcal{J}(t)$ or $\mathcal{K}(t)$ are nonempty. We establish, however, then they must coincide. This is a non-autonomous version of Theorem \ref{coincide}. 
\begin{Theorem}\label{thm:eqker}
	Assume $(H1)_{\mathrm{NA}}-(H3)_{\mathrm{NA}}$. Then  for every $t\in \mathbb{R}$ $\mathcal{J}(t)=\mathcal{K}(t)$.
\end{Theorem}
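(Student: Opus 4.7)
The plan is to mirror the structure of the autonomous Theorem \ref{coincide}, proving the two inclusions $\mathcal{K}(t)\subset \mathcal{J}(t)$ and $\mathcal{J}(t)\subset \mathcal{K}(t)$ separately. For the inclusion $\mathcal{K}(t)\subset \mathcal{J}(t)$, I would take $u\in \mathcal{K}(t)$ with a global solution $u(\cdot)$ through $u(t)=u$ satisfying $\sup_{s\leq T}\|u(s)\|\leq C_u$ for some $T$. The bounded set $D=\{u(s):s\leq T\}$ is pullback absorbed by $Q(\cdot)$ by (H1)$_{\mathrm{NA}}$. Given any $\tau\leq t$, I would first choose $\tau_0\leq \min\{\tau,T,T(\tau_0,D)\}$ sufficiently small so that $u(\tau_0)=S(\tau_0,\sigma)u(\sigma)\in Q(\tau_0)$ for $\sigma\leq \tau_0$ (since $u(\sigma)\in D$). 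Then the positive invariance of $Q(\cdot)$ yields $u(\tau)=S(\tau,\tau_0)u(\tau_0)\in Q(\tau)$, whence $u=S(t,\tau)u(\tau)\in S(t,\tau)Q(\tau)$. As this holds for every $\tau\leq t$, we obtain $u\in \mathcal{J}(t)$.

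For the reverse inclusion $\mathcal{J}(t)\subset \mathcal{K}(t)$, I would take $u\in \mathcal{J}(t)$ and extract, for each $n\in\mathbb{N}$, a point $y_n\in Q(t-n)$ with $S(t,t-n)y_n\to u$. The convergent sequence lies in some $H_R(t)$ by item 1 of (H2)$_{\mathrm{NA}}$, and by item 3 of (H2)$_{\mathrm{NA}}$ together with $y_n\in Q(t-n)$ it follows that $S(\tau,t-n)y_n\in H_R(\tau)$ for every $\tau\in[t-n,t]$. The central step is then a recursive construction: for each $k\geq 1$ I would consider the sequence $\{S(t-k,t-n)y_n\}_{n\geq k}$, which lies in $H_R(t-k)$ and in the nested family $\{S(t-k,t-n)Q(t-n)\}_n$ (nestedness follows from positive invariance: $S(t-k,t-n)Q(t-n)\subset S(t-k,t-m)Q(t-m)$ for $n\geq m$, since $S(t-m,t-n)Q(t-n)\subset Q(t-m)$). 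Applying (H3)$_{\mathrm{NA}}$ and Lemma \ref{lemma:212} along a (diagonally extracted) subsequence yields $z_k\in H_R(t-k)$, with the continuity of the process providing $S(t-k+1,t-k)z_k=z_{k-1}$ (setting $z_0=u$), and an argument analogous to the one used for the first inclusion (using positive invariance again) showing $z_k\in \mathcal{J}(t-k)$.

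Finally, I would glue the pieces together by defining $\gamma(t-k)=z_k$ for integers $k\geq 0$ and extending via $\gamma(\tau)=S(\tau,t-k)z_k$ for $\tau\in[t-k,t-k+1]$; consistency follows from the cocycle property, and extension forward beyond $t$ is done using the process itself. Since $H_R(\tau)\subset\{\|Px\|\leq R\}\cap\{\|(I-P)x\|\leq D_2\}$ by items 2 of (H2)$_{\mathrm{NA}}$ and (H1)$_{\mathrm{NA}}$, this bound is uniform in $\tau\leq t$, so $\gamma$ is bounded in the past and belongs to $\mathcal{K}$, giving $u=\gamma(t)\in \mathcal{K}(t)$. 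The principal technical obstacle is verifying the nestedness of the sets needed to apply Lemma \ref{lemma:212} in the non-autonomous setting and performing the diagonal subsequence extraction cleanly so that the limits $z_k$ satisfy the required process relation; unlike the autonomous case where the same semigroup operator $S(1)$ is reused at every step, here each transition is governed by a different operator $S(t-k+1,t-k)$, but the argument goes through because continuity of the process and the pullback-asymptotic-compactness apply at each time slice.
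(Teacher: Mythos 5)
Your proposal is correct and follows essentially the same route as the paper's proof: the forward inclusion via pullback absorption and positive invariance of $Q(\cdot)$, and the reverse inclusion via the nested sets $H_R(t-k)\cap S(t-k,t-n)H_R(t-n)$, Lemma \ref{lemma:212} with $(H3)_{\mathrm{NA}}$, and a diagonal extraction producing the backward-bounded solution. The only cosmetic difference is that you phrase the nested family as $\{S(t-k,t-n)Q(t-n)\}_n$; since $Q(t-n)$ is unbounded one should (as you in effect do, having shown $y_n\in H_R(t-n)$) replace it by $S(t-k,t-n)H_R(t-n)$ before invoking the generalized pullback asymptotic compactness.
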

\begin{proof}
	The proof is analogous to the proof of Theorem \ref{coincide} - the autonomous counterpart of the result. Take $t\in \mathbb{R}$. First we prove the inclusion $\mathcal{K}(t)\subset \mathcal{J}(t)$. We suppose that $u(t)\in \mathcal{K}(t)$, then we can find  $C\in \mathbb{R}^+$ such that $\|u(s)\|\leq C$ for every $s\leq t$. Let $B=\{x\in X\, : \  \|x\|\leq C\}$. By $(H1)_{\mathrm{NA}}$ we know that for every $\tau \in \mathbb{R}$ there exists $s\leq \tau$ such that $S(\tau,s)B\subset Q(\tau)$. Then, taking any $\tau \leq t$ we obtain 
	$$u(t)=S(t,\tau)u(\tau) = S(t,\tau)S(\tau,s)u(s)\in S(t,\tau)S(\tau,s)B\subset S(t,\tau)Q(\tau).$$
	So, for every $\tau \leq t$, $u(t)\in S(t,\tau)Q(\tau)$, and hence $u(t)\in \mathcal{J}(t)$.
	
	To prove the equality, take $u_0\in \mathcal{J}(t)$. Then there exists a sequence $\{y_n\}$ such that $y_n\in Q(t-n)$ and  
	$S(t,t-n)y_n \to u_0$ as $n\to \infty$. 
	The convergent sequence $\{S(t,t-n)y_n\}$ is contained in $Q(t)$, whence there exists $R\geq R_0$ such that $\{S(t,t-n)y_n\}\subset H_R(t)$. 
	Notice that for $s\in [t-n,t]$ also $S(s,t-n)y_n\in H_R(s)$, because if  $S(s,t-n)y_n\in Q(s)\setminus H_R(s)$, then it would be $S(t,t-n)y_n=S(t,s)S(s,t-n)y_n\in Q(t)\setminus H_R)(t)$. In particular it must be $y_n\in H_R(t-n)$.  So $S(s,t-n)y_n\in H_R(s)\cap S(s,t-n)H_R(t-n)=L_n(s)$, and, for each $s\leq t$ this sequence of sets is nested. By Lemma \ref{lemma:212} and $(H3)_{\mathrm{NA}}$, for each $s$ one can take a subsequence of indexes, still denoted by $n$ such that $S(s,t-n)y_n$ is convergent and the limit is always in $H_R(s)$. In particular, for $s=t-1$ we have $S(t-1,t-n)y_n \to u_1$ and $S(t,t-n)y_n = S(t,t-1)S(t-1,t-n)y_n \to S(t,t - 1)u_1$ and hence $S(t,t-1)u_1 = u$ with $u_1\in H_R(t-n)$. Passing to the subsequence in each step of the iterative procedure we are able to construct the sequence $u_n \in H_R(t-n)$ such that $S(t-n+1,t-n)u_n = u_{n-1}$, which allows us to define the {\color{black} solution} $u(\cdot)$ by taking $u(r) = S(r,t-n)u_n$ for $t\in [t-n,t-n+1]$. As $u(r) \in H_R(r)$ for every $r\leq t$ it follows that there exists a constant $C$ such that $\sup_{s\in (-\infty, t]}\|u(s)\|\leq C$ and the proof is complete.
\end{proof}

Now, we state the non-autonomous version of Lemma \ref{lemma:211}. This result also implies that $P\mathcal{J}(t) = E^+$ for every $t\in \mathbb{R}$ and hence sets $\mathcal{J}(t)$ are nonempty. 
\begin{Lemma}
	Assume $(H1)_{\mathrm{NA}}-(H3)_{\mathrm{NA}}$, for every $t\in \mathbb{R}$ it holds that for every $p\in E^+$, there exists a $q\in E^-$, such that $p+q\in \mathcal{J}(t)$.
\end{Lemma}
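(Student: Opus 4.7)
The plan is to mimic the autonomous proof of Lemma \ref{lemma:211}, replacing the semigroup homotopy $\theta \mapsto S(\theta t)$ by a time-rescaled process homotopy linking $S(s,s) = I$ with $S(t,s)$, and replacing the limit $t \to \infty$ with the pullback limit $s \to -\infty$.

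Fix $t \in \mathbb{R}$ and $p \in E^+$. First I would use item 1 of (H2)$_{\mathrm{NA}}$ to pick $R \geq R_1$ large enough that $S(R) \geq \|p\|+1$, and then define the open set $B = \{x \in E^+ : \|x\| < R+1\}$ in $E^+$, so that $p \in B$ and hence $\deg(I,B,p) = 1$. For any $s \leq t$, I would then introduce the homotopy
\[
H_s : [0,1] \times \overline{B} \to E^+, \qquad H_s(\theta, x) = P\, S(s + \theta(t-s), s)\, x,
\]
which is continuous by the joint continuity of the process and satisfies $H_s(0, \cdot) = I$. The key point is that on $\partial B$ the degree equation $H_s(\theta, x) = p$ has no solutions: any $x \in \partial B \subset E^+$ lies in $Q(s)$ (because $(I-P)x = 0 \in \{\|(I-P)x\| \leq D_1\}$) and satisfies $\|Px\| = R+1 > R$, so by item 2 of (H2)$_{\mathrm{NA}}$ it does not belong to $H_R(s)$; positive invariance of $Q(\cdot)\setminus H_R(\cdot)$ (item 3) together with the contrapositive of item 1 then gives $\|PS(s+\theta(t-s),s)x\| > S(R) \geq \|p\|+1$. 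Homotopy invariance of the Brouwer degree yields $\deg(PS(t,s), B, p) = 1$, hence the existence of $p_s \in B$ with $PS(t,s)p_s = p$.

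Next I would choose a sequence $s_n \to -\infty$, obtain points $p_n \in B$ with $PS(t,s_n)p_n = p$, and consider $y_n := S(t,s_n)p_n = p + q_n$ with $q_n \in E^-$. Since $B$ is bounded, pullback absorption in (H1)$_{\mathrm{NA}}$ ensures $y_n \in Q(t)$ for $n$ large, and $\|Py_n\| = \|p\|$ then places $y_n$ in some $H_{R'}(t)$ via item 1 of (H2)$_{\mathrm{NA}}$. Running item 3 of (H2)$_{\mathrm{NA}}$ backwards shows $p_n \in H_{R'}(s_n)$, so
\[
y_n \in S(t,s_n)H_{R'}(s_n) \cap H_{R'}(t),
\]
and this is a nested sequence of sets. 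Applying the generalized pullback asymptotic compactness (H3)$_{\mathrm{NA}}$ together with Lemma \ref{lemma:212}, I extract a subsequence $y_n \to y$. For every fixed $s \leq t$, eventually $s_n \leq s$ and $y_n = S(t,s)S(s,s_n)p_n \in S(t,s)Q(s)$ by (H1)$_{\mathrm{NA}}$, so $y \in \overline{S(t,s)Q(s)}$; since $s$ is arbitrary, $y \in \mathcal{J}(t)$. Continuity of $P$ gives $Py = p$, so the required $q = (I-P)y \in E^-$ with $p+q \in \mathcal{J}(t)$ has been produced.

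The main obstacle I expect is the bookkeeping around the homotopy: one must verify that the degree argument, originally made along the one-parameter family $\{S(\theta t)\}_{\theta \in [0,1]}$, carries over when the time parameter $s$ is driven to $-\infty$ rather than the final time $t$ being driven to $+\infty$. The reparametrization $\theta \mapsto s + \theta(t-s)$ is the natural fix, but ensuring that the three conditions of (H2)$_{\mathrm{NA}}$ act at all intermediate times simultaneously — so that $p \notin H_s(\theta, \partial B)$ uniformly in $\theta$ — is the subtle step, and it is precisely here that the choice $S(R) \geq \|p\|+1$ and the non-autonomous positive invariance of $Q(\cdot) \setminus H_R(\cdot)$ combine. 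Once that is in place, the compactness and passage to the limit are entirely analogous to the autonomous proof.
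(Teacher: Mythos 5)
Your proposal is correct and follows essentially the same route as the paper: a Brouwer degree/homotopy-invariance argument to produce preimages $p_n$ with $PS(t,s_n)p_n = p$, followed by extraction of a convergent subsequence from the nested sets $S(t,s_n)H_{R'}(s_n)\cap H_{R'}(t)$ via (H3)$_{\mathrm{NA}}$ and Lemma \ref{lemma:212}. The only (immaterial) difference is that you homotope the final time from $s$ up to $t$ with the initial time fixed, whereas the paper fixes the final time $t$ and homotopes the initial time backwards; your verification that $\partial B$ stays in $Q(\cdot)\setminus H_R(\cdot)$ and that $S(R)\geq\|p\|+1$ rules out boundary solutions is exactly the intended argument.
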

\begin{proof}
	Analogously as in autonomous case we pick $p\in E^+$ and define $B=\{x\in E^+\, :\, \|x\|<R+1\}$ for appropriately large $R$ such that $p\in H_R(\tau) \subset B$ for every $\tau\in \mathbb{R}$, which is possible by items 1 and 2 of $(H2)_{NA}$. Then $\textrm{deg}(I,B,p)=1$. Now we pick $t\in \mathbb{R}$ and  we choose $s>0$. We define continuous mapping
	\begin{align*}
		[0,1]\times \overline{B}\ni (\theta,p)&\longmapsto P(S(t,t-\theta s) p) \in E^+.
	\end{align*}
	By item 2. of $(H2)_{NA}$, since 
	$\delta B=\{x\in E^+\,:\  \|x\|=R+1\} \subset Q(\tau)\setminus H_R(\tau),$ for every $\tau \in \mathbb{R}$,
	then
	$S(t,t-\theta s)\delta B\subset Q(t)\setminus H_R(t)$
	so $p\notin P(S(t,t-\theta s)\delta B)$ for every $\theta \in [0,1]$. By the homotopy invariance of the degree it follows that $\textrm{deg}(PS(t,t-s),B,p)=1$ whence for every sequence $s_n\to -\infty$ we can find $p_n \in E^+$ and $q_n\in E^-$ such $y_n=p+q_n=S(t,t-s_n)p_n\in S(t,t-s_n)Q(t-s_n).$ As in the autonomous case we can find $R'$ such that $y_n \in L_n=S(t,t-s_n)H_{R'}\cap H_{R'} := L_n$, where $L_n$ is a decreasing sequence of sets. We use Lemma \ref{lemma:212} and $(H3)_{\textrm{NA}}$, to deduce that  $y_n\rightarrow y$ for a subsequence with $Py=p$. Since $y_n\in S(t,t-s_n)Q(t-s_n)$, it follows that $y\in \mathcal{J}(t).$ 
\end{proof}

We provide the result that the nonautonomous set $\mathcal{J}(\cdot)$ is attracting in every ball.

\begin{Theorem}\label{theo:attract}
	Let $(H1)_{\mathrm{NA}}-(H3)_{\mathrm{NA}}$. Let $t\in \mathbb{R}$ and $R\geq R_1.$ We take $B\in B(X)$ such that there exists a $t_1\leq t$ that for every $s\leq t_1$ there holds $S(t,s)B\cap \{\|Px\|\leq R\} \not= \emptyset$. Then we have
	$$\lim_{s\rightarrow-\infty} \mathrm{dist}(S(t,s)B\cap \{\|Px\|\leq R\},\mathcal{J}(t))=0$$
\end{Theorem}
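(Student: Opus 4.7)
The plan is to adapt the proof of the autonomous Theorem \ref{attraction} to the pullback setting, carefully handling the time-dependence of the family $Q(\cdot)$ and of the sublevel sets $H_R(\cdot)$. I argue by contradiction: assume there exist $\varepsilon>0$, a sequence $s_n\to-\infty$, and points $x_n\in B$ such that $y_n:=S(t,s_n)x_n$ satisfies $\|Py_n\|\leq R$ yet $\mathrm{dist}(y_n,\mathcal{J}(t))\geq\varepsilon$. By the pullback absorption of $Q(\cdot)$ at time $t$, for $n$ large $y_n\in Q(t)$. Using item 1 of $(H2)_{\mathrm{NA}}$ and the fact that $\lim_{R'\to\infty}S(R')=\infty$, I pick $R'\geq R_1$ large enough that $\{\|Px\|\leq R\}\cap Q(t)\subset H_{R'}(t)$, so $y_n\in H_{R'}(t)$.

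Next, for each fixed $k\geq 1$ and all $n$ large enough that $s_n$ lies below the pullback absorption time for $B$ at time $t-k$, I write $y_n=S(t,t-k)z_n^k$ with $z_n^k:=S(t-k,s_n)x_n\in Q(t-k)$. If $z_n^k$ belonged to $Q(t-k)\setminus H_{R'}(t-k)$, item 3 of $(H2)_{\mathrm{NA}}$ would force $y_n\in Q(t)\setminus H_{R'}(t)$, contradicting $y_n\in H_{R'}(t)$. Hence $z_n^k\in H_{R'}(t-k)$ and $y_n\in\widetilde{L}_k:=S(t,t-k)H_{R'}(t-k)\cap H_{R'}(t)$. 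The same item 3 argument gives $\widetilde{L}_k=S(t,t-k)Q(t-k)\cap H_{R'}(t)$, so positive invariance of $Q(\cdot)$ yields $\widetilde{L}_{k+1}\subset\widetilde{L}_k$.

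The crucial step is to place each $\widetilde{L}_k$ in a small neighbourhood of a compact set. By item 2 of $(H2)_{\mathrm{NA}}$ and the uniform bound $Q(\tau)\subset\{\|(I-P)x\|\leq D_2\}$, the union $B':=\bigcup_{k\geq 1}H_{R'}(t-k)$ is bounded. Applying $(H3)_{\mathrm{NA}}$ to this \emph{single} fixed bounded set $B'$ produces compact sets $K_k\subset X$ and $\varepsilon_k\to 0$ with $S(t,t-k)B'\subset\mathcal{O}_{\varepsilon_k}(K_k)$, so $\widetilde{L}_k\subset S(t,t-k)H_{R'}(t-k)\cap H_{R'}(t)\subset\mathcal{O}_{\varepsilon_k}(K_k)$.

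To conclude, a diagonal extraction furnishes indices $n_k\to\infty$ with $y_{n_k}\in\widetilde{L}_k$, and Lemma \ref{lemma:212} yields a further subsequence converging to some $y\in\bigcap_k\overline{\widetilde{L}_k}\subset\bigcap_k\overline{S(t,t-k)Q(t-k)}$. For any $s\leq t$, choosing $k$ with $t-k\leq s$ and invoking positive invariance once more gives $S(t,t-k)Q(t-k)\subset S(t,s)Q(s)$, hence $y\in\overline{S(t,s)Q(s)}$ for every $s\leq t$, i.e.\ $y\in\mathcal{J}(t)$, contradicting $\mathrm{dist}(y_{n_k},\mathcal{J}(t))\geq\varepsilon$. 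The main obstacle I anticipate is exactly the time-dependence of $H_{R'}(\cdot)$, since $(H3)_{\mathrm{NA}}$ is phrased for a single bounded set rather than a time-parametrized family; it is overcome by folding the whole collection $\{H_{R'}(t-k)\}_k$ into the single bounded set $B'$, whose boundedness rests on item 2 of $(H2)_{\mathrm{NA}}$ and the common $E^-$-bound encoded in $(H1)_{\mathrm{NA}}$.
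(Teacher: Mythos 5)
Your proof is correct and follows essentially the same route as the paper's: a contradiction argument that traps the points $S(t,s_n)x_n$ in a nested family of sets of the form $S(t,\cdot)Q(\cdot)\cap H_{R'}(t)$ and extracts a convergent subsequence via Lemma \ref{lemma:212} together with $(H3)_{\mathrm{NA}}$, the limit lying in $\mathcal{J}(t)$ by definition. The one place you go beyond the paper is in justifying the application of $(H3)_{\mathrm{NA}}$: the paper invokes it rather loosely for the $n$-dependent sets $Q(s_n)$ (or $H_{\overline{R}}(s_n)$), whereas you fold the whole family $\{H_{R'}(t-k)\}_{k}$ into the single bounded set $B'$ --- bounded thanks to item 2 of $(H2)_{\mathrm{NA}}$ and the uniform $E^-$-bound from $(H1)_{\mathrm{NA}}$ --- which is a legitimate and welcome tightening of that step rather than a different method.
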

\begin{proof}
 To prove the attraction we proceed in a standard way, i.e. for contradiction we choose the sequences $\{x_n\}\subset B$ and $s_n\rightarrow -\infty$ such that 
	$S(t,s_n)x_n\in S(t,s_n)B\cap \{\|Px\|\leq R\}$
	and
	\\
	$\mathrm{dist}(S(t,s_n)x_n,\mathcal{J}(t))>\varepsilon$
	for every $n$ and for some $\varepsilon>0.$ 
	
	By $(H1)_{\mathrm{NA}}$ we know that for every $n\in \mathbb{N}$ there exists $k(n) \geq n$ such that $S(s_n,s_k)x_k \in Q(s_n)$, then $y_k=S(t,s_k)x_k\in S(t,s_n)Q(s_n)\subset Q(t)$. Since $\|P(S(t,s_k)x_k)\|\leq R$ for every  $k$, by $(H2)_{\mathrm{NA}}$ there exists $\overline{R}\geq R_0$ and such that the sequence $\{y_{k(n)}\}_{n\in \mathbb{N}}\subset H_{\overline{R}}(t)$. We define the sets $L_n=S(t,s_n)Q(s_n)  \cap H_{\overline{R}}(t)$, these sets are nested, and $y_{k(n)}\in L_n$ for every $n$. By Lema \ref{lemma:212} and $(H3)_{\mathrm{NA}}$, we know that for a subsequence, we must have $y_{k(n)}\rightarrow y$, and by definition of $\mathcal{J}(t)$, it must be $y\in \mathcal{J}(t)$, which gives a contradiction. 	
\end{proof}
We end the chapter with the result of minimality of $\{ \mathcal{J}(t) \}_{t\in \mathbb{R}}$, which is the last property of the unbounded pullback attractor which we need to verify. 
\begin{Theorem}\label{thm:pullback}
	Assume $(H1)_{\mathrm{NA}}-(H3)_{\mathrm{NA}}$, then the non-autonomous set  $\{ \mathcal{J}(t) \}_{t\in \mathbb{R}}$ is the unbounded pullback attractor of the process $S(\cdot,\cdot)$. 
\end{Theorem}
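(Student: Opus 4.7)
The plan is to verify the four properties from Definition \ref{def:set} one by one, leveraging the results already established in this subsection and the analogy with the autonomous proof.

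First I would handle nonemptiness and closedness. Each $\mathcal{J}(t)$ is an intersection of closed sets, hence closed. For nonemptiness, the preceding lemma establishes that $P\mathcal{J}(t) = E^+$, so in particular $\mathcal{J}(t) \neq \emptyset$ for every $t\in \mathbb{R}$.

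Next I would establish invariance. This is where Theorem \ref{thm:eqker} does the heavy lifting: since $\mathcal{J}(t) = \mathcal{K}(t)$ for every $t \in \mathbb{R}$, and since the Observation preceding Theorem \ref{thm:eqker} gives $S(t,\tau)\mathcal{K}(\tau) = \mathcal{K}(t)$ for all $t \geq \tau$ by the very definition of the kernel, the identity $S(t,\tau)\mathcal{J}(\tau) = \mathcal{J}(t)$ follows immediately. The pullback attraction on bounded sets in $E^+$ is exactly the content of Theorem \ref{theo:attract}, so there is nothing more to do for property (3).

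The only remaining point is minimality, and I expect this to be the main (though still routine) step. The plan is to take any other family $\{\mathcal{C}(t)\}_{t\in \mathbb{R}}$ of closed sets satisfying the attraction property (3), fix $t\in \mathbb{R}$, and pick an arbitrary $v \in \mathcal{J}(t)$. By Theorem \ref{thm:eqker}, $v = u(t)$ for some $u(\cdot) \in \mathcal{K}$, i.e.\ there is a constant $C_u$ with $\|u(s)\| \leq C_u$ for all $s \leq t$. Let $B = \overline{\{u(s) : s \leq t\}}$, a bounded set in $X$, and pick $R \geq \|Pv\|$ so that $v \in \{\|Px\| \leq R\}$. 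Because $v = S(t,s)u(s)$ for every $s \leq t$, the set $S(t,s)B \cap \{\|Px\| \leq R\}$ is nonempty for all such $s$, triggering the hypothesis on $\mathcal{C}(\cdot)$. Hence
\begin{equation*}
\mathrm{dist}(v,\mathcal{C}(t)) \leq \mathrm{dist}(S(t,s)B \cap \{\|Px\|\leq R\}, \mathcal{C}(t)) \xrightarrow[s\to -\infty]{} 0,
\end{equation*}
and closedness of $\mathcal{C}(t)$ yields $v \in \mathcal{C}(t)$. The main obstacle here is simply making sure the set $B$ chosen produces a legitimate instance of the hypothesis in (3) for every $s \leq t$, but the kernel construction gives this for free because $v$ itself lies in each $S(t,s)B$. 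This establishes $\mathcal{J}(t) \subset \mathcal{C}(t)$ and completes the verification.
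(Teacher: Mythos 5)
Your proposal is correct and follows essentially the same route as the paper: the first three properties are delegated to the preceding lemmas (closedness by construction, nonemptiness from $P\mathcal{J}(t)=E^+$, invariance from Theorem \ref{thm:eqker}, attraction from Theorem \ref{theo:attract}), and minimality is proved exactly as in the paper by taking the backward-bounded kernel solution through a point $v\in\mathcal{J}(t)$, using its orbit as the bounded set $B$ so that $v\in S(t,s)B\cap\{\|Px\|\leq R\}$ for all $s\leq t$, and concluding from the attraction property and closedness of the competitor family. No gaps.
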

\begin{proof}
        It only remains to verify the minimality in Definition \ref{def:set}. To this end, suppose that there exists a non-autonomous closed set $\{ \mathcal{M}(t) \}_{t\in \mathbb{R}}$ which satisfies property (3) of Definition \ref{def:set}, and for a certain $t\in \mathbb{R}$, there exist an $u\in \mathcal{J}(t)$ such that $u\notin\mathcal{M}(t)$. By Theorem \ref{thm:eqker}, we know that the orbit $\bigcup_{s\leq t} \{ u(s) \}$ is bounded. Since $u = u(t) = S(t,r) u(r) \subset S(t,r) (\bigcup_{s\leq t} \{ u(s) \})$ we know that there exists some  $R\geq R_0$ such that $u(t) \in S(t,r) (\bigcup_{s\leq t} \{ u(s) \}) \cap \{ \|Px\| \leq R \}$ and hence the latter sets are nonempty for every $r\leq t$. Then 
        $$\lim_{r\rightarrow-\infty} \text{dist}(S(t,r) u(r),\mathcal{M}(t)) \leq \lim_{r\rightarrow-\infty} \text{dist}\left(S(t,r)\left(\bigcup_{s\leq t} \{u(s)\}\right) \cap \{ \|Px\| \leq R \},\mathcal{M}(t)\right)=0,$$
        whence it must be 
        $\text{dist}(u(t),\mathcal{M}(t))=0$, so $u=u(t)\in \mathcal{M}(t)$, since $\mathcal{M}(t)$ is closed, and the proof of minimality is complete. 
 \end{proof}

We conclude the section withe the result on $\sigma$-compactness of $\mathcal{J}(t)$.

\begin{Lemma}\label{lemma:531}
	Assume $(H1)_{\mathrm{NA}}-(H3)_{\mathrm{NA}}$. For every $t\in \mathbb{R}$ and for every closed and bounded set $B$ the set $B \cap \mathcal{J}(t)$ is compact. 
\end{Lemma}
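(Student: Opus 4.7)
The strategy is to adapt the autonomous proof of Lemma \ref{lem:compact} to the non-autonomous setting. Since $B$ and $\mathcal{J}(t)$ are both closed, their intersection is closed, so it suffices to prove that $B\cap \mathcal{J}(t)$ is relatively compact. First I would observe that $\mathcal{J}(t)\subset Q(t)$, because positive invariance in (H1)$_{\mathrm{NA}}$ gives $S(t,s)Q(s)\subset Q(t)$ for all $s\leq t$, and $Q(t)$ is closed. Since $B$ is bounded, item (1) of (H2)$_{\mathrm{NA}}$ allows us to fix $R\geq R_0$ large enough that $B\cap Q(t)\subset H_R(t)$; in particular every sequence in $B\cap \mathcal{J}(t)$ lies in $H_R(t)$.

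The main step is to take an arbitrary sequence $\{u_n\}\subset B\cap \mathcal{J}(t)\subset H_R(t)$ and, using the invariance $S(t,t-n)\mathcal{J}(t-n)=\mathcal{J}(t)$ (which follows from Theorem \ref{thm:eqker} together with the invariance of the kernel section), to select $x_n\in \mathcal{J}(t-n)\subset Q(t-n)$ with $S(t,t-n)x_n=u_n$. The contrapositive of item (3) of (H2)$_{\mathrm{NA}}$ then forces $x_n\in H_R(t-n)$: were $x_n$ in $Q(t-n)\setminus H_R(t-n)$, one would obtain $u_n=S(t,t-n)x_n\in Q(t)\setminus H_R(t)$, contradicting $u_n\in H_R(t)$. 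Hence
$$
u_n\in S(t,t-n)H_R(t-n)\cap H_R(t)\subset L_n:=S(t,t-n)Q(t-n)\cap H_R(t).
$$

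The family $\{L_n\}$ is a nested decreasing sequence of sets, since the process property and the positive invariance of $Q(\cdot)$ yield $S(t,t-(n+1))Q(t-(n+1))=S(t,t-n)\bigl(S(t-n,t-(n+1))Q(t-(n+1))\bigr)\subset S(t,t-n)Q(t-n)$. The conclusion then follows from (H3)$_{\mathrm{NA}}$, which provides compact sets $K_n$ and $\varepsilon_n\to 0$ with $L_n\subset \mathcal{O}_{\varepsilon_n}(K_n)$, so Lemma \ref{lemma:212} yields a convergent subsequence of $\{u_n\}$. The main obstacle, and the only point where the argument departs meaningfully from the autonomous version, is the correct use of (H2)$_{\mathrm{NA}}$(3) along the entire backward time axis to pin down $x_n\in H_R(t-n)$; once this is secured, the rest is a direct transcription of the autonomous proof.
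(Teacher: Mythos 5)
Your proof is correct and follows essentially the same route as the paper's: pull each $u_n$ back via the invariance of $\mathcal{J}(\cdot)$ to some $x_n\in\mathcal{J}(t-n)$, use (H2)$_{\mathrm{NA}}$(3) to place $x_n$ in $H_R(t-n)$, and conclude with the nested sets $S(t,t-n)H_R(t-n)\cap H_R(t)$ together with (H3)$_{\mathrm{NA}}$ and Lemma \ref{lemma:212}. The paper's own argument is just a terser version of the same steps, so your additional detail (notably the explicit contrapositive of (H2)$_{\mathrm{NA}}$(3)) is a faithful filling-in rather than a departure.
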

\begin{proof}
	Since $B \cap \mathcal{J}(t)\subset H_R(t) \cap \mathcal{J}(t)$ for some $R\geq R_0$ it is enough to prove that $H_R(t) \cap \mathcal{J}(t)$ is relatively compact. The proof follows the lines of the proof of Theroem \ref{lem:compact} - the corresponding result for the autonomous case.   We take a sequence $\{u_n\}\subset H_R(t)\cap \mathcal{J}(t). $ Then we can take a sequence  $t_n\rightarrow-\infty$, and  $x_n\in \mathcal{J}(t_n)$ with $u_n=S(t,t_n)x_n$. Then $x_n\in H_R(t_n)$, so $u_n\in S(t,t_n)H_R(t_n)\cap H_R(t)$. By Lemma $\ref{lemma:212}$ and $(H3)_{\textrm{NA}}$ we obtain the desired result.
\end{proof}

\subsection{Structure of the unbounded pullback attractor} 

We begin with the definition of the kernel section in the non-autonomous case.
$$
\mathcal{I}_b(t) = \{ u(t)\,: \ \textrm{such that}\ \ \sup_{s\in \mathbb{R}}\|u(s)\|\leq C_u\ \ \textrm{and}\ \ S(r,s)u(s) = u(r)\ \ \textrm{for every}\ \ r\geq s\}.
$$
Clearly $\mathcal{I}_b(t) \subset \mathcal{J}(t) = \mathcal{K}(t)$, i.e. the kernel section is the subset of the maximal kernel section (which contains those points, through which there pass {\color{black} solutions} bounded in the past but possibly unbounded in the future). Moreover $S(s,t)\mathcal{I}_b(t) = \mathcal{I}_b(s)$ for every $s\geq t$, i.e. kernel sections are invariant. In the autonomous case the corresponding set,  $\mathcal{I}_b = \mathcal{J}_b$ was constructed as the $\alpha$-limit of bounded sets in the unbounded attractor. Since, as it seems to us, such construction is no longer possible in the non-autonomous case, we provide an alternative way to obtain $\mathcal{I}_b(t)$ from  $\mathcal{J}(t)$. Since the non-autonomous set $\{\mathcal{J}(t)\}_{t\in \mathbb{R}}$ is invariant we can define the inverse mapping $(S(t,s))^{-1}:\mathcal{J}(t)\to 2^{\mathcal{J}(s)}$ for $t>s$. We will use the notation $(S(t,s))^{-1} = S(s,t)$ for $t>s$.  Note that, as we do not assume the backward uniqueness, if we consider $S(s,t)$ on the whole space $X$, its image can go beyond  $ \mathcal{J}(s)$, and, moreover $S(s,t)$ (even as considered as mapping from $\mathcal{J}(t)$ to  $\mathcal{J}(s)$) can be multivalued, From the invariance we have, however, the guarantee that for every $u\in \mathcal{J}(t)$ the set $S(s,t)u$ is nonempty. 
We define the following non-autonomous set.
$$\mathcal{J}_b(t)=\bigcup_{R\geq R_0} \bigcap_{\tau\geq t}S(t,\tau)(H_R(\tau) \cap \mathcal{J}(\tau))$$
In the next results we will prove that $\mathcal{J}_b = \mathcal{I}_b$, and it is nonempty.

\begin{Lemma}
	Assume $(H1)_{\mathrm{NA}}-(H3)_{\mathrm{NA}}$. For every $t\in \mathbb{R}$, the set $\mathcal{J}_b(t)$ is nonempty.
\end{Lemma}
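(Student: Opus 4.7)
The plan is to mimic the autonomous construction of $\mathcal{J}_b$ as an $\alpha$-limit, adapted to the pullback setting. Fix $t\in\mathbb{R}$ and work with the level $R_1$ (any $R\ge R_0$ large enough for the arguments below will do). I would aim to produce a single point $v\in\mathcal{J}(t)$ whose forward orbit $S(\tau,t)v$ stays in $H_{R_1}(\tau)\cap\mathcal{J}(\tau)$ for every $\tau\ge t$, which by definition places it in $\mathcal{J}_b(t)$.

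First, I would show that the sets being intersected are nonempty. By the non-autonomous analog of Lemma \ref{lemma:211} already proved in the excerpt, $P\mathcal{J}(\sigma)=E^+$ for every $\sigma\in\mathbb{R}$; so one can pick $u_\sigma\in\mathcal{J}(\sigma)$ with $Pu_\sigma=0$. Since $\mathcal{J}(\sigma)\subset Q(\sigma)$ and $\|Pu_\sigma\|=0\le S(R_1)$, item 1 of $(H2)_{\mathrm{NA}}$ gives $u_\sigma\in H_{R_1}(\sigma)\cap \mathcal{J}(\sigma)$.

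Now take a sequence $\tau_n\to\infty$ and pick $u_n\in H_{R_1}(\tau_n)\cap\mathcal{J}(\tau_n)$. By invariance of $\mathcal{J}(\cdot)$ under the process there exist $v_n\in\mathcal{J}(t)$ with $S(\tau_n,t)v_n=u_n$. The contrapositive of item 3 of $(H2)_{\mathrm{NA}}$ reads: if $y\in Q(s)$ and $S(r,s)y\in H_R(r)$ with $r\ge s$, then $y\in H_R(s)$. Applying this with $s=t$, $r=\tau_n$, $y=v_n$ and $R=R_1$ gives $v_n\in H_{R_1}(t)\cap\mathcal{J}(t)$. This latter set is compact by Lemma \ref{lemma:531}, so along a subsequence (still denoted $n$) we have $v_n\to v\in H_{R_1}(t)\cap\mathcal{J}(t)$.

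It remains to verify that this $v$ lies in $\bigcap_{\tau\ge t}S(t,\tau)(H_{R_1}(\tau)\cap\mathcal{J}(\tau))$. Fix $\tau\ge t$. For all $n$ with $\tau_n\ge\tau$, applying the same contrapositive of item 3 of $(H2)_{\mathrm{NA}}$ at times $\tau$ and $\tau_n$ to the point $S(\tau,t)v_n\in Q(\tau)$ (noting $S(\tau_n,\tau)S(\tau,t)v_n=u_n\in H_{R_1}(\tau_n)$) yields $S(\tau,t)v_n\in H_{R_1}(\tau)$. By continuity of the process, $S(\tau,t)v_n\to S(\tau,t)v$, and since $H_{R_1}(\tau)$ is closed, $S(\tau,t)v\in H_{R_1}(\tau)$; invariance of $\mathcal{J}$ gives $S(\tau,t)v\in\mathcal{J}(\tau)$ as well. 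Hence $v\in S(t,\tau)(H_{R_1}(\tau)\cap\mathcal{J}(\tau))$ for every $\tau\ge t$, so $v\in\mathcal{J}_b(t)$.

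The only delicate point is keeping the direction of item 3 of $(H2)_{\mathrm{NA}}$ straight: the hypothesis expels orbits from $Q\setminus H_R$, and I use it twice in its contrapositive form to force both the preimages $v_n$ and the intermediate points $S(\tau,t)v_n$ back into $H_{R_1}$. Everything else is a routine compactness extraction combined with the invariance of $\mathcal{J}(\cdot)$ already in hand.
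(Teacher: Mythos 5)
Your proof is correct and is essentially the paper's argument: the paper applies the Cantor intersection theorem to the nested family of compact sets $S(t,\tau)(H_R(\tau)\cap \mathcal{J}(\tau))$, establishing compactness via Lemma \ref{lemma:531} and nestedness via item 3 of $(H2)_{\mathrm{NA}}$ exactly as you do, and your sequential extraction of $v_n\to v$ is just that theorem unwound. The only difference is that you verify explicitly the nonemptiness of each $H_{R_1}(\tau)\cap\mathcal{J}(\tau)$ using $P\mathcal{J}(\tau)=E^+$, a point the paper's proof leaves implicit.
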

\begin{proof}
	We will prove the result using the Cantor intersection theorem. First, we see that the inverse image $S(\tau,t)^{-1}$ of the closed set $H_R(\tau)\cap \mathcal{J}(\tau)$ is closed, since the process $S(\cdot,\cdot)$ is continuous. We also observe that for every $\tau \geq t$,  we have $S(t,\tau)(H_R(\tau)\cap \mathcal{J}(\tau))\subset H_R(t)\cap \mathcal{J}(t)$, a compact set by Lemma \ref{lemma:531}, then so the sets $S(t,\tau)(H_R(\tau)\cap \mathcal{J}(\tau))$ are compact. They are also nested: indeed, if $\tau_2>\tau_1\geq t$ and $x\in S(t,\tau_2)(H_R(\tau_2)\cap \mathcal{J}(\tau_2))$, then $S(\tau_1,t)x\in H_R(\tau_1)$, and, as we are taking inverse image in the set $\mathcal{J}(\cdot)$,  also  $S(\tau_1,t)x\in \mathcal{J}(\tau_1).$ 
\end{proof}

\begin{Lemma}
	Assuming $(H1)_{\mathrm{NA}}-(H3)_{\mathrm{NA}}$, for every $t\in \mathbb{R}$, $\mathcal{J}_b(t)=\mathcal{I}_b(t).$
\end{Lemma}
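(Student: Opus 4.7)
The plan is to first unpack the definition of $\mathcal{J}_b(t)$ into a concrete statement about forward orbits, and then establish the two inclusions by matching past-boundedness with the kernel characterization $\mathcal{J}(t)=\mathcal{K}(t)$ (Theorem \ref{thm:eqker}) and future-boundedness with the containment in some $H_R(\tau)$.

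First I would observe that, since $S(t,\tau)$ for $\tau\geq t$ denotes the multivalued preimage inside $\mathcal{J}(\cdot)$, for any $x\in\mathcal{J}(t)$ one has $x\in S(t,\tau)(H_R(\tau)\cap\mathcal{J}(\tau))$ if and only if $S(\tau,t)x\in H_R(\tau)$. Consequently, $u\in\mathcal{J}_b(t)$ if and only if $u\in\mathcal{J}(t)$ and there exists $R\geq R_0$ such that $S(\tau,t)u\in H_R(\tau)$ for every $\tau\geq t$. Since $H_R(\tau)\subset\{\|Px\|\leq R\}\cap\{\|(I-P)x\|\leq D_2\}$, this is equivalent to saying that the forward orbit $\{S(\tau,t)u:\tau\geq t\}$ is uniformly bounded in $X$.

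For the inclusion $\mathcal{I}_b(t)\subset\mathcal{J}_b(t)$, I would pick $u\in\mathcal{I}_b(t)$ with a global solution $u(\cdot)$ satisfying $\sup_{s\in\mathbb{R}}\|u(s)\|\leq C_u$. In particular $u(\cdot)$ is bounded in the past, hence $u=u(t)\in\mathcal{K}(t)=\mathcal{J}(t)$ by Theorem \ref{thm:eqker}. Next, since the family $Q(\cdot)$ is positively invariant and closed, the same argument used for $\mathcal{J}$ in the autonomous case yields $\mathcal{J}(\tau)\subset Q(\tau)$ for every $\tau$; in particular $u(\tau)\in Q(\tau)$ and $\|Pu(\tau)\|\leq C_u$ uniformly. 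Choosing $R\geq R_1$ large enough that $S(R)\geq C_u$, item~1 of $(H2)_{\mathrm{NA}}$ gives $u(\tau)=S(\tau,t)u\in\{\|Px\|\leq C_u\}\cap Q(\tau)\subset H_R(\tau)$ for all $\tau\geq t$, so $u\in\mathcal{J}_b(t)$.

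For the converse $\mathcal{J}_b(t)\subset\mathcal{I}_b(t)$, I would take $u\in\mathcal{J}_b(t)$ and apply the characterization above to obtain $R$ such that the forward orbit is bounded by a constant depending only on $R$ and $D_2$. Since $u\in\mathcal{J}(t)=\mathcal{K}(t)$, there is a global solution $u(\cdot)$ through $u$ that is bounded on $(-\infty,T]$ for some $T$, and by the continuity remark following the definition of the maximal kernel, bounded on $(-\infty,t]$ by some $C_1$. Concatenating this past bound with the future bound forced by $S(\tau,t)u\in H_R(\tau)$, the solution $u(\cdot)$ is uniformly bounded on all of $\mathbb{R}$, so $u\in\mathcal{I}_b(t)$.

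The main, and only slightly delicate, step is the inclusion $\mathcal{I}_b\subset\mathcal{J}_b$: the subtlety is that being globally bounded in $X$ does not a priori place $u(\tau)$ inside some single $H_R(\tau)$ uniformly in $\tau$; one must use the closedness of $Q(\tau)$ combined with $\mathcal{J}(\tau)\subset Q(\tau)$ and then invoke item~1 of $(H2)_{\mathrm{NA}}$, which converts a uniform bound on $\|Pu(\tau)\|$ inside $Q(\tau)$ into membership in $H_R(\tau)$ with a single $R$. Once this point is handled, both inclusions are routine.
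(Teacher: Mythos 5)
Your proof is correct and follows essentially the same route as the paper: both directions reduce to matching the forward bound $S(\tau,t)u\in H_R(\tau)$ with the backward-bounded solution coming from $\mathcal{J}(t)=\mathcal{K}(t)$. Your argument is in fact slightly more careful than the paper's, which simply asserts that a globally bounded orbit lies in some $H_R(\tau)$ for all $\tau$, whereas you justify this via $\mathcal{J}(\tau)\subset Q(\tau)$ and item 1 of $(H2)_{\mathrm{NA}}$.
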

\begin{proof}
	We take first $u\in \bigcup_{R\geq R_0} \bigcap_{\tau\geq t}S(t,\tau)(H_R(\tau) \cap \mathcal{J}(\tau))$, so there exists $R\geq R_0$ such that for every $\tau \geq t$, $S(\tau,t)u\in H_R(\tau) \cap \mathcal{J}(\tau)$. Since $u\in \mathcal{J}(t)$, then there exists a backwards bounded {\color{black} solution} through $u$ at time $t$. Thus, there exists a global bounded {\color{black} solution} through $u$ at time $t$. 
	
	To prove the opposite inclusion, since $u(t)$ is bounded for every $t\in \mathbb{R}$, there exists  $R\geq R_0$, such that $u(t)\in H_R$ for every $t\in \mathbb{R}$, and $\sup_{\tau\leq t} \|u(\tau)\|\leq C_u$. Since $u$ is an orbit, for $\tau \geq t$, $u(\tau)=S(\tau,t)u(t)\in H_R(\tau)\cap \mathcal{J}(\tau),$ so $u(t)\in S(t,\tau)(H_R(\tau)\cap \mathcal{J}(\tau))$ for every $\tau\geq t$.
\end{proof}

As a consequence of above results we will get the following corollary
\begin{Corollary}
	For any $R\geq R_0$ there holds 
	$$
	\lim_{\tau\to \infty}\mathrm{dist}(S(t,\tau) (H_R(\tau)\cap \mathcal{J}(\tau)), \mathcal{J}_b(t)) = 0
	$$
\end{Corollary}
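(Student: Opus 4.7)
The plan is to reduce this to the standard fact that a decreasing net of nonempty compact sets converges, in the one-sided Hausdorff semidistance, to its intersection. Fix $t\in \mathbb{R}$ and $R\geq R_0$, and set
$$
K_\tau := S(t,\tau)\bigl(H_R(\tau)\cap \mathcal{J}(\tau)\bigr)\quad \text{for}\ \tau\geq t, \qquad K_\infty := \bigcap_{\tau\geq t}K_\tau.
$$
By the definition of $\mathcal{J}_b(t)$ we immediately have $K_\infty \subset \mathcal{J}_b(t)$, so it will suffice to prove that $\mathrm{dist}(K_\tau,K_\infty)\to 0$ as $\tau\to \infty$ and then bound $\mathrm{dist}(K_\tau,\mathcal{J}_b(t))\leq \mathrm{dist}(K_\tau,K_\infty)$.

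First I would record the structural properties of the family $\{K_\tau\}_{\tau\geq t}$ that have already been established. The nesting $K_{\tau_2}\subset K_{\tau_1}$ for $\tau_2\geq \tau_1 \geq t$ was shown in the proof of the lemma asserting non-emptiness of $\mathcal{J}_b(t)$: if $x\in K_{\tau_2}$, then $S(\tau_1,t)x\in H_R(\tau_1)\cap \mathcal{J}(\tau_1)$ by item 3 of (H2)$_{\mathrm{NA}}$ and the invariance of $\mathcal{J}(\cdot)$, whence $x\in K_{\tau_1}$. Moreover, each $K_\tau$ is a closed subset of the compact set $H_R(t)\cap\mathcal{J}(t)$ by Lemma \ref{lemma:531}, hence each $K_\tau$ is compact. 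In particular, $K_\infty$ is itself a nonempty compact set by the finite intersection property.

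Next I would run the standard compactness argument. Suppose, for contradiction, that there exist $\varepsilon>0$, a sequence $\tau_n\to \infty$ and points $x_n\in K_{\tau_n}$ with $\mathrm{dist}(x_n,K_\infty)\geq \varepsilon$. Since $\{x_n\}\subset K_t = H_R(t)\cap \mathcal{J}(t)$, a compact set, we can pass to a subsequence, still denoted $\{x_n\}$, converging to some $x\in K_t$. For any fixed $\sigma\geq t$, eventually $\tau_n\geq \sigma$, and then $x_n\in K_{\tau_n}\subset K_\sigma$ by the nesting; closedness of $K_\sigma$ gives $x\in K_\sigma$. As $\sigma\geq t$ was arbitrary, $x\in K_\infty$, contradicting $\mathrm{dist}(x_n,K_\infty)\geq \varepsilon$. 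Hence $\mathrm{dist}(K_\tau,K_\infty)\to 0$, and the conclusion follows from $K_\infty\subset \mathcal{J}_b(t)$.

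The argument has no genuine difficulty: all the heavy lifting (compactness of $H_R(t)\cap \mathcal{J}(t)$, nesting of the $K_\tau$) has been done in Lemma \ref{lemma:531} and in the non-emptiness lemma for $\mathcal{J}_b(t)$. The only point worth being careful about is that $S(t,\tau)$ on the right-hand side denotes the (possibly multivalued) inverse of $S(\tau,t)$ restricted to $\mathcal{J}(\tau)$, so that $K_\tau\subset \mathcal{J}(t)$; this is exactly the convention fixed just before the definition of $\mathcal{J}_b(t)$, so the nesting argument using item 3 of (H2)$_{\mathrm{NA}}$ applies without change.
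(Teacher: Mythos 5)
Your argument is correct and is exactly the one the paper intends: the paper states this Corollary without proof as a consequence of the preceding two lemmas, and your reduction to the convergence of the nested compact sets $K_\tau=S(t,\tau)(H_R(\tau)\cap\mathcal{J}(\tau))$ to their intersection $K_\infty\subset\mathcal{J}_b(t)$ uses precisely the nesting and compactness facts established there (together with Lemma \ref{lemma:531}). The remark about $S(t,\tau)$ being the multivalued inverse restricted to $\mathcal{J}(\tau)$ is the right point to flag, and you handle it consistently with the paper's convention.
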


As in the autonomous case, we can observe that if $u\in \mathcal{J}(t)\setminus \mathcal{J}_b(t)$ for a certain $t\in \mathbb{R}$, then $\lim_{\tau\to \infty}\|S(\tau,t)u\|=\infty$. The proof is similar to the autonomous case Lemma \ref{lemma:inf}

\begin{Lemma}
	Assume $(H1)_{\mathrm{NA}}-(H3)_{\mathrm{NA}}$, and let $u\in \mathcal{J}(t)\setminus\mathcal{J}_b(t)$ for a certain $t\in \mathbb{R}$. Then $\lim_{\tau\to \infty}\|S(\tau,t)u\|=\infty$.
\end{Lemma}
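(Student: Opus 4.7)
My plan is to mirror the argument of the autonomous Lemma \ref{lemma:inf}, but carefully accounting for the time-dependent sets $Q(\cdot)$ and $H_R(\cdot)$. The key tool is that $u \in \mathcal{J}(t) = \mathcal{K}(t)$ (Theorem \ref{thm:eqker}) provides a global solution $v:\mathbb{R}\to X$ with $v(t)=u$ which is bounded in the past, while the condition $u\notin \mathcal{J}_b(t) = \mathcal{I}_b(t)$ forbids any global solution through $u$ from being bounded on all of $\mathbb{R}$. Since $v$ is past-bounded, it follows that the forward orbit $\{S(\tau,t)u : \tau\geq t\}$ must be unbounded; otherwise extending $v$ forward via the process would yield a globally bounded solution and place $u$ in $\mathcal{I}_b(t) = \mathcal{J}_b(t)$, a contradiction.

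Next, I would argue by contradiction: suppose that $\|S(\tau,t)u\|$ does not tend to infinity. Then there is a sequence $\tau_n\to\infty$ and some constant $R>0$ such that $\|S(\tau_n,t)u\|\leq R$ for all $n$. Since $u\in \mathcal{J}(t)\subset Q(t)$, the positive invariance of $\{Q(t)\}_{t\in\mathbb{R}}$ from (H1)$_{\mathrm{NA}}$ gives $S(\tau_n,t)u\in Q(\tau_n)$. In particular $\|P S(\tau_n,t)u\|\leq R$. Using item~1 of (H2)$_{\mathrm{NA}}$ together with $\lim_{R'\to\infty}S(R')=\infty$, I pick $R'\geq R_1$ large enough that $S(R')\geq R$, and conclude that $S(\tau_n,t)u\in H_{R'}(\tau_n)$.

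The crucial step is then to invoke item~3 of (H2)$_{\mathrm{NA}}$ contrapositively: for each $s\in[t,\tau_n]$, if $S(s,t)u$ lay in $Q(s)\setminus H_{R'}(s)$, then $S(\tau_n,t)u = S(\tau_n,s)S(s,t)u$ would lie in $Q(\tau_n)\setminus H_{R'}(\tau_n)$, contradicting the previous paragraph. Hence $S(s,t)u\in H_{R'}(s)$ for every $s\in[t,\tau_n]$. Letting $n\to\infty$, this yields $S(s,t)u\in H_{R'}(s)$ for every $s\geq t$. By item~2 of (H2)$_{\mathrm{NA}}$ this bounds $\|PS(s,t)u\|\leq R'$, and by (H1)$_{\mathrm{NA}}$ (since $H_{R'}(s)\subset Q(s)\subset\{\|(I-P)x\|\leq D_2\}$) this bounds $\|(I-P)S(s,t)u\|\leq D_2$. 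Therefore the forward orbit is bounded, contradicting what was established at the outset. This closes the argument.

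The main obstacle is the careful bookkeeping in the contrapositive application of item~3 of (H2)$_{\mathrm{NA}}$: one has to verify that for each $s\in[t,\tau_n]$ the point $S(s,t)u$ indeed belongs to $Q(s)$ (provided by positive invariance of $Q(\cdot)$ applied to $u\in Q(t)$), before the dichotomy $H_{R'}(s)$ vs.\ $Q(s)\setminus H_{R'}(s)$ can be used to pull the trapping property back in time. Everything else follows the template of the autonomous case verbatim.
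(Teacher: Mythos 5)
Your proof is correct and is exactly the adaptation of the autonomous Lemma~\ref{lemma:inf} that the paper intends: the paper itself gives no proof for the non-autonomous statement beyond remarking that it is similar to the autonomous case, and your argument (forward orbit unbounded because $u\notin\mathcal{I}_b(t)$, then the trapping property of item~3 of (H2)$_{\mathrm{NA}}$ ruling out any bounded subsequence) is precisely that template. The only cosmetic slip is that $\|S(\tau_n,t)u\|\leq R$ yields $\|PS(\tau_n,t)u\|\leq \|P\|R$ rather than $\leq R$, which is harmless since you may simply choose $R'$ with $S(R')\geq \|P\|R$.
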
 
	
\subsection{Pullback behavior of bounded sets.}
In the autonomous case, in Lemma \ref{lemma:possibilities} we provided the classification of all bounded sets in three classes. Unfortunately, in non-autonomous case such classification in no longer valid in pullback sense. It is very easy to construct the process $S(\cdot,\cdot)$ and a bounded set $B$ such that $S(t,s_n)B\subset H_R(t)$ and $S(t,r_n)B\subset Q(t)\setminus H_R(t)$ for sequences $s_n\to -\infty$ and $r_n\to -\infty$. In the concept of pullback $\omega$ limits it is natural to operate not on single bounded sets but on non-autonomous bounded sets $\{ B(t) \}_{t\leq T} $ defined for every time $t$ less or equal to some given $T$. 

\begin{Definition}
Let $\{B(t)\}_{t\leq T}$ be a non-autonomous bounded set in  $X$ (i.e. $B(t)\in \mathcal{B}(X)$ for every $t\leq T$). We define the unbounded pullback $\omega$-limit set as
     $$\omega(t,B(\cdot)):=\{y\in X \,:\  \textrm{there exist}\   s_n\rightarrow-\infty \ \textrm{and}\ u_n\in B(s_n) \text{ such that } y=\lim_{s_n\rightarrow-\infty}S(t,s_n)u_n\}.$$    
\end{Definition}

\begin{Observation}
Let $\{B_1(t)\}_{t\leq T_1}$ and $\{B_2(t)\}_{t\leq T_2}$ be a two families of subsets of $X$ such that $B_1(t) = B_2(t)$ for every $t\leq T_3,$ where $T_3 \leq \min\{ T_1,T_2\}$. 
Then for every $t\in \mathbb{R}$ the sets $\omega(t,B_1(\cdot))$ and
$\omega(t,B_2(\cdot))$ are equal.
\end{Observation}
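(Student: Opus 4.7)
The plan is to establish the two inclusions $\omega(t, B_1(\cdot)) \subset \omega(t, B_2(\cdot))$ and $\omega(t, B_2(\cdot)) \subset \omega(t, B_1(\cdot))$ by exploiting the fact that the definition of the pullback $\omega$-limit involves only sequences $s_n \to -\infty$, so only the behaviour of the family $B_i(\cdot)$ for arbitrarily negative times matters. Since the two families agree on the tail $(-\infty, T_3]$, the two limits must coincide.

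More concretely, I would fix $t \in \mathbb{R}$ and take any $y \in \omega(t, B_1(\cdot))$. By the definition, there exist sequences $s_n \to -\infty$ and $u_n \in B_1(s_n)$ such that $S(t, s_n) u_n \to y$. Since $s_n \to -\infty$, there exists $n_0$ such that $s_n \leq T_3$ for all $n \geq n_0$. Discarding the finitely many terms with $n < n_0$ does not alter the limit, and for the remaining terms the hypothesis $B_1(s) = B_2(s)$ for $s \leq T_3$ gives $u_n \in B_1(s_n) = B_2(s_n)$. Hence the same sequences witness that $y \in \omega(t, B_2(\cdot))$. The reverse inclusion follows by swapping the roles of $B_1$ and $B_2$.

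I do not expect any genuine obstacle here: the argument is a direct unpacking of the definition together with the elementary observation that $s_n \to -\infty$ implies that $s_n$ is eventually below any fixed threshold. The only point worth stating explicitly is that the $\omega$-limit is insensitive to removing a finite initial segment of the defining sequence, which is immediate. In particular, no use of hypotheses $(H1)_{\mathrm{NA}}$--$(H3)_{\mathrm{NA}}$ is needed; the observation is a purely set-theoretic property of the definition.
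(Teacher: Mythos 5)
Your argument is correct and is exactly the intended one: the paper states this Observation without proof, treating it as an immediate consequence of the definition, and your unpacking (pass to the tail $s_n\leq T_3$, where the two families coincide) is precisely that consequence. Your remark that $(H1)_{\mathrm{NA}}$--$(H3)_{\mathrm{NA}}$ are not needed is also consistent with the paper, which imposes no hypotheses in the statement.
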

The following characterization of $\omega$ limit sets remains valid in unbounded non-autonomous setting. 
\begin{Lemma}
 Let $\{B(t)\}_{t\leq T}$ be a non-autonomous set in $X.$ The following characterization holds
 \begin{equation*}
     \omega(t,B(\cdot))= \bigcap_{s\leq t}\overline{\bigcup_{\tau\leq s}S(t,\tau)B(\tau) .}
 \end{equation*}
\end{Lemma}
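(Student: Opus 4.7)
The plan is to establish the two set inclusions by standard diagonal-sequence arguments, mirroring the classical characterization of $\omega$-limit sets, and to simply carry the argument through in the non-autonomous pullback setting, where the family $\{B(\tau)\}_{\tau \leq T}$ plays the role of the orbit starting set.

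For the inclusion $\omega(t, B(\cdot)) \subset \bigcap_{s\leq t}\overline{\bigcup_{\tau\leq s}S(t,\tau)B(\tau)}$, I would start by picking $y\in \omega(t,B(\cdot))$, so by definition there exist $s_n\to -\infty$ and $u_n\in B(s_n)$ with $S(t,s_n)u_n\to y$. Fix any $s\leq t$; since $s_n\to -\infty$, eventually $s_n\leq s$, so for all sufficiently large $n$ one has $S(t,s_n)u_n \in \bigcup_{\tau\leq s} S(t,\tau)B(\tau)$. Passing to the limit the point $y$ lies in the closure of this set. As $s\leq t$ was arbitrary, $y$ belongs to the intersection over all such $s$.

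For the reverse inclusion, I would take $y\in \bigcap_{s\leq t}\overline{\bigcup_{\tau\leq s}S(t,\tau)B(\tau)}$ and manufacture a realizing sequence by a diagonal procedure. Fixing an auxiliary sequence $s_n\to -\infty$ with $s_n\leq t$, the hypothesis ensures $y\in \overline{\bigcup_{\tau\leq s_n}S(t,\tau)B(\tau)}$ for every $n$. Hence for each $n$ I may select $\tau_n \leq s_n$ and a point $u_n\in B(\tau_n)$ with $\|S(t,\tau_n)u_n - y\|<1/n$. Because $\tau_n\leq s_n\to -\infty$, we have $\tau_n\to -\infty$, and $S(t,\tau_n)u_n\to y$, which is precisely the definition of $y\in \omega(t,B(\cdot))$.

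No step here is really an obstacle; the only slightly subtle point is that we must be careful about using $\tau_n$ rather than $s_n$ as the time parameter in the realizing sequence (one cannot in general arrange $\tau_n = s_n$), but this is already accounted for in the definition of $\omega(t,B(\cdot))$, which only requires \emph{some} sequence going to $-\infty$ in the first time slot. Once both inclusions are in place, the equality follows, and no use of the assumptions (H1)$_{\mathrm{NA}}$--(H3)$_{\mathrm{NA}}$ or of the structural hypotheses on the splitting $X = E^+\oplus E^-$ is required: the statement is purely a reformulation of the definition in terms of nested closures.
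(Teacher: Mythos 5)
Your proof is correct and follows essentially the same route as the paper: the first inclusion by noting that the tail of any realizing sequence lies in $\bigcup_{\tau\leq s}S(t,\tau)B(\tau)$ for each fixed $s$, and the reverse inclusion by extracting, for each $n$, a time $\tau_n\to-\infty$ and a point of $B(\tau_n)$ whose image is within $1/n$ of $y$. Your remark that the argument is purely definitional and uses none of (H1)$_{\mathrm{NA}}$--(H3)$_{\mathrm{NA}}$ is also consistent with the paper, which states the lemma without those hypotheses.
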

\begin{proof}
    In the proof we will denote set 
    $\omega_1(t,B(\cdot)) := \bigcap_{s\leq t}\overline{\bigcup_{\tau\leq s}S(t,\tau)B(\tau) }$. Let $x\in \omega(t,B(\cdot)).$ There exist sequences $s_n\to -\infty$ and $u_n\in B(s_n)$ such that 
    $S(t,s_n)u_n\to x.$ We have the inclusion 
    $\{S(t,s_n)u_n\}_{n\leq n_0}\subset 
    \bigcup_{\tau\leq s}S(t,\tau)B(\tau) $  for $n_0$ such that $s_n\leq s$ for any $n\geq n_0.$ So $x\in \overline{\bigcup_{\tau\leq s}S(t,\tau)B(\tau)}$ for any $s\leq t$ and consequently $x\in \omega_1(t,B(\cdot)).$
    Now suppose that $x\in \omega_1(t,B(\cdot)).$ Then for every $n\in \mathbb{N}$ there exist $s_n\leq n$ and $y_n\in S(t,s_n)B(s_n)$ such that $\norm{x - y_n } \leq \frac{1}{n}.$ As $y_n \in S(t,s_n) B(s_n)$
    we can find $u_n\in B(s_n)$ such that $\norm{x - S(t,s_n)u_n}\leq\frac{1}{n}.$ This implies the existence of sequences $s_n\to -\infty ,$ and $u_n\in B(s_n)$ such that 
    $S(t,s_n)u_n\to x.$ We deduce that $x\in \omega(t,B(\cdot)),$ which ends the proof.
\end{proof}

The following result shows that pullback $\omega$-limit sets are always positively invariant. 

\begin{Lemma}\label{le::fowardInvarianceNA}
    Let $\{B(t)\}_{t\leq T}$ be a non-autonomous set in $X$. We have
    $S(\tau,t)\omega(t,B(\cdot)) \subset \omega(\tau,B(\cdot))$ for every $\tau \geq t$. 
\end{Lemma}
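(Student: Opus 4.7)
The plan is to reduce the statement to the definition of $\omega(\tau, B(\cdot))$ by a direct application of the process properties, namely the cocycle identity $S(\tau,t)\circ S(t,s) = S(\tau,s)$ (valid for $s\leq t \leq \tau$) together with the joint continuity of $(t,s,x)\mapsto S(t,s)x$.

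More precisely, I would start by fixing $\tau \geq t$ and picking an arbitrary point of the form $y = S(\tau,t) x$ with $x \in \omega(t,B(\cdot))$. By the definition of the pullback $\omega$-limit, there exist sequences $s_n \to -\infty$ and $u_n \in B(s_n)$ such that $S(t,s_n)u_n \to x$. Since $s_n \to -\infty$, for all sufficiently large $n$ we have $s_n \leq t$, so the cocycle property gives
\begin{equation*}
S(\tau,s_n)u_n = S(\tau,t) S(t,s_n) u_n.
\end{equation*}
The continuity of $S(\tau,t)$ in its spatial argument then yields $S(\tau,t) S(t,s_n) u_n \to S(\tau,t) x = y$, so $S(\tau,s_n)u_n \to y$ with $u_n \in B(s_n)$ and $s_n \to -\infty$. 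This is exactly the defining condition for $y \in \omega(\tau, B(\cdot))$.

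There is essentially no obstacle here: the argument is a routine transcription of the classical forward invariance of $\omega$-limit sets to the pullback setting, and the only thing to check is that the cocycle identity applies, which just requires waiting until $s_n \leq t$. Note that in the non-autonomous unbounded setting one cannot hope for the stronger equality $S(\tau,t)\omega(t,B(\cdot)) = \omega(\tau,B(\cdot))$ without some pre-compactness assumption on the approximating sequences, since reversing the argument would require extracting convergent subsequences from $\{S(t,s_n)u_n\}$, which need not be available without further hypotheses — hence only the inclusion is claimed.
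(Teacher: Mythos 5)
Your proof is correct and is essentially identical to the paper's: both take an approximating sequence $S(t,s_n)u_n \to x$, apply the cocycle identity and the continuity of $S(\tau,t)$ to conclude $S(\tau,s_n)u_n \to S(\tau,t)x$, which places the image point in $\omega(\tau,B(\cdot))$. Your additional remark on why equality requires a compactness hypothesis is consistent with how the paper later upgrades this to full invariance under (H1)$_{\mathrm{NA}}$--(H3)$_{\mathrm{NA}}$.
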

\begin{proof}
    If $y\in \omega(t,B(\cdot))$ then there exist $s_n\to -\infty$ and $u_n\in B(s_n)$ such that $y = \lim_{n\to\infty}S(t,s_n)u_n$.
 From continuity of $S(\tau,t)$ we see that $S(\tau,t)y =\lim_{n\to\infty}S(\tau,s_n)u_n$ whence the assertion follows. 
\end{proof}

In the next results, which hold for sets which are backward bounded, we establish some properties of unbounded pullback $\omega$-limit sets. To this end we define
$$
\widetilde{\mathcal{B}} = \left\{ \{B(t)\}_{t\leq T}\, :\  B(t) \in \mathcal{B}(X)\ \textrm{for every}\ t\leq T\ \textrm{and}\ \bigcup_{t\leq T}B(t)\in \mathcal{B}(X)\right\}.
$$

\begin{Theorem}
    Let conditions $(H1)_{\mathrm{NA}}-(H3)_{\mathrm{NA}}$ hold. Assume that $\{B(s)\}_{s\leq T} \in \widetilde{\mathcal{B}}$. Then $\omega(t,B(\cdot))\subset \mathcal{J}(t)$ for every $t\in \mathbb{R}$.
\end{Theorem}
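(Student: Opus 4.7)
The plan is to unwind the definitions: the unbounded pullback attractor $\mathcal{J}(t)$ is characterized as $\bigcap_{s\leq t}\overline{S(t,s)Q(s)}$, so to show $\omega(t,B(\cdot))\subset \mathcal{J}(t)$ it suffices to show that every element of $\omega(t,B(\cdot))$ lies in $\overline{S(t,s)Q(s)}$ for each fixed $s\leq t$. The key resource is that $\{B(s)\}_{s\leq T}\in \widetilde{\mathcal{B}}$ means the \emph{single} set $D := \bigcup_{r\leq T} B(r)$ is bounded in $X$, so we can apply the pullback absorption property from $(H1)_{\mathrm{NA}}$ uniformly to all the $B(s_n)$.

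Concretely, I would start by picking $y\in \omega(t,B(\cdot))$ and the associated sequences $s_n\to-\infty$ and $u_n\in B(s_n)$ with $S(t,s_n)u_n\to y$. Then I fix an arbitrary $s\leq t$. Since $D\in \mathcal{B}(X)$, the pullback absorbing property from $(H1)_{\mathrm{NA}}$ applied to the set $Q(s)$ at time $s$ gives a time $T_s\leq s$ such that $S(s,\tau)D\subset Q(s)$ for every $\tau\leq T_s$. For $n$ large enough we have $s_n\leq \min\{T,T_s\}$, so $u_n\in B(s_n)\subset D$ and consequently $S(s,s_n)u_n\in S(s,s_n)D\subset Q(s)$. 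By the process property we then have
\begin{equation*}
S(t,s_n)u_n = S(t,s)S(s,s_n)u_n \in S(t,s)Q(s)
\end{equation*}
for all sufficiently large $n$. Passing to the limit yields $y\in \overline{S(t,s)Q(s)}$. Since $s\leq t$ was arbitrary, $y\in \bigcap_{s\leq t}\overline{S(t,s)Q(s)} = \mathcal{J}(t)$, which finishes the argument.

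There is no real obstacle here; the statement follows essentially from the definition once one recognizes that the hypothesis $\{B(s)\}_{s\leq T}\in \widetilde{\mathcal{B}}$ is exactly what is needed to apply the pullback absorbing property of $Q(\cdot)$ uniformly over the sequence $\{u_n\}$. The only subtlety worth flagging is that one should not try to apply $(H1)_{\mathrm{NA}}$ separately to each set $B(s_n)$ (which would give an absorption time depending on $n$), but rather to the single bounded set $D$, which is where the definition of $\widetilde{\mathcal{B}}$ plays its role.
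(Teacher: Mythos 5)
Your proof is correct and follows essentially the same route as the paper's: take $y=\lim_n S(t,s_n)u_n$, fix $r\leq t$, use pullback absorption of the single bounded set $\bigcup_{s\leq T}B(s)$ by $Q(r)$ to get $S(r,s_n)u_n\in Q(r)$ for large $n$, and conclude $y\in\overline{S(t,r)Q(r)}$ for every $r\leq t$. The point you flag -- applying $(H1)_{\mathrm{NA}}$ to the union rather than to each $B(s_n)$ separately -- is exactly how the paper uses the hypothesis $B(\cdot)\in\widetilde{\mathcal{B}}$.
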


\begin{proof}
    Since $\bigcup_{s\leq T}B(s)$ is bounded then for time $r\in\mathbb{R}$ there exists a time $t_1(r)\leq r$ such that $$S(r,\tau)\left(\bigcup_{s\leq T}B(s)\right)\subset Q(\tau)$$ for every $\tau\leq t_1(r)$.  We take $y\in \omega(t,B(\cdot))$, then there exists a sequence $s_n\rightarrow -\infty$ and $u_n\in B(s_n)$ such that $y=\lim_{s_n\rightarrow -\infty}S(t,s_n)u_n$. For every $r\leq t$ $y=\lim_{s_n\rightarrow -\infty}S(t,r)S(r,s_n)u_n$. There exists $n_0(r)$ such that $S(r,s_n)u_n \in Q(r)$ for $n\geq n_0(r)$. Hence $y\in \overline{S(t,r)Q(r)}$ and the proof is complete.
\end{proof}

As a simple consequence of the above lemma and Lemma \ref{lemma:531} we obtain the following corollary.
\begin{Corollary}
   Let conditions $(H1)_{\mathrm{NA}}-(H3)_{\mathrm{NA}}$ hold. Assume that $\{B(s)\}_{s\leq T} \in \widetilde{\mathcal{B}}$. Then for every $t\in \mathbb{R}$ and $C\in \mathcal{B}(X)$ the set $\omega(t,B(\cdot))\cap C$ is relatively compact.
\end{Corollary}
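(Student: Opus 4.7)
The plan is to observe that this corollary is essentially the conjunction of the two results immediately preceding it. The preceding theorem guarantees, under $(H1)_{\mathrm{NA}}$--$(H3)_{\mathrm{NA}}$ and the hypothesis $\{B(s)\}_{s\leq T}\in\widetilde{\mathcal{B}}$, that $\omega(t,B(\cdot))\subset \mathcal{J}(t)$. Lemma \ref{lemma:531} provides the compactness of $\mathcal{J}(t)$ intersected with any closed bounded set. Combining these two ingredients gives relative compactness of $\omega(t,B(\cdot))\cap C$ immediately.

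More precisely, I would first invoke the theorem to obtain $\omega(t,B(\cdot))\subset \mathcal{J}(t)$, so that
\[
\omega(t,B(\cdot))\cap C \subset \mathcal{J}(t)\cap C \subset \mathcal{J}(t)\cap \overline{C}.
\]
Then, since $C\in \mathcal{B}(X)$, the closure $\overline{C}$ is closed and bounded, so by Lemma \ref{lemma:531} the set $\mathcal{J}(t)\cap \overline{C}$ is compact. Therefore $\omega(t,B(\cdot))\cap C$ is a subset of a compact set, hence relatively compact, which is the desired conclusion.

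There is no substantive obstacle in this argument; the work has already been done in the two preceding results, so the corollary is a direct consequence. The only thing worth being careful about is the passage from $C$ to $\overline{C}$ (so as to meet the closedness hypothesis of Lemma \ref{lemma:531}), which is harmless since the closure of a bounded set in the Banach space $X$ is still bounded.
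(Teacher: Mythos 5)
Your proposal is correct and matches the paper's argument exactly: the paper itself presents this corollary as "a simple consequence" of the preceding theorem (giving $\omega(t,B(\cdot))\subset \mathcal{J}(t)$) together with Lemma \ref{lemma:531}, which is precisely the combination you use. The small care you take in passing from $C$ to $\overline{C}$ to satisfy the closedness hypothesis of Lemma \ref{lemma:531} is appropriate and unproblematic.
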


\begin{Remark}\label{re:nonepty}
Let  $B(\cdot)\in \widetilde{\mathcal{B}}.$ If for some $t\in \mathbb{R}$ and $R>0$ there exist 
$t_1\leq t$ such that for every $s\leq t_1$ the intersection 
$S(t,s)B(s) \cap \{\norm{Px}\leq R \}$ is nonempty, 
then there exist $\widehat{R}\geq R_1$ such that for every $\tau<t$ there exists $\tau_1$ such that, the intersection
$S(\tau,s)B(s) \cap \{\norm{Px}\leq \widehat{R} \}$ is nonempty, for every $s\leq \tau_1.$ If we additionally assume that the image of a bounded set $B \in \mathcal{B}(X)$ through $S(t,s)$ must be bounded then for every $\tau>t$ there exists $\widehat{R}(\tau)$ and $\tau_1$ such that $S(\tau,s)B(s)\cap \{\norm{Px}\leq \widehat{R}\}$ is nonempty for every $s\leq \tau_1$. 
\end{Remark}
\begin{proof}
 Note that for some $\widehat{R}$ and some $t_2\leq t_1$, the set $S(t,s)B(s)\cap H_{\widehat{R}}(t)$ is nonempty for every $s\leq t_2.$ We can deduce that for every 
    $\tau \leq t$ there exists $\tau_1(\tau)$ such that 
    $S(\tau ,s)B(s)\cap H_{\widehat{R}}(\tau)$ is nonempty for every $s\leq \tau_1.$
    Indeed from assumption (H1)$_{\mathrm{NA}}$ it follows that $S(\tau,s)B(s)\subset S(\tau,s)(\cup_{s\leq T}B(s))  \subset Q(\tau)$ for every $s \leq 
    \tau_1$ for some 
    $\tau_1.$ Then if for $s \leq \tau_1$ we would have
    $S(\tau,s)B(s)\subset Q(\tau)\setminus H_{\widehat{R}}(\tau)$ then by assumption 
    (H2)$_{\mathrm{NA}}$ we would also have 
    $S(t,s)B(s)\subset Q(t)\setminus H_{\widehat{R}}(t)$ which would be a  contradiction. 
    By assumption (H2)$_{\mathrm{NA}}$ we have
    $H_{\widehat{R}} (t)\subset \{\norm{Px}\leq \widehat{R} \}$ for every $t\in \mathbb{R}$, so
    $S(\tau,s)B(s) \cap \{\norm{Px}\leq \widehat{R} \}$ is nonempty. We consider the case for $\tau > t$. We know that there exists $\tau_1$ such that for every $s\leq \tau_1$ there exists $u_s\in B(s)$ with $S(t,s)u_s \in Q(t)\cap \{ \|Px\|\leq R\}$, a bounded set. The image of this set via $S(\tau,t)$ is bounded and the assertion follows.
\end{proof}
The next two results concern the non-emptiness and attraction in bounded sets by pullback $\omega$-limit set. They both need that $S(t,s)B(s)$ intersects some ball in $E^+$ at some time $t$ for sufficiently small $s$. Using the above remark we deduce the nonemptiness and attraction of $\omega$-limit sets not only at this time but also at other times, in the past and in the future. Note that for the attraction in the future we need the additional assumption that the image of bounded set via $S(t,s)$ is a bounded set. This assumption is needed to deduce from the nonemptiness of intersection $S(t,s)B(s) \cap H_{R}$ that also the intersection $S(\tau,s)B(s)\cap H_{\widehat{R}}(\tau)$ is nonempty for some $\widehat{R}$. 
\begin{Lemma}\label{lem19}
Assume the conditions $(H1)_{\mathrm{NA}}-(H3)_{\mathrm{NA}}$. 
Let $\{B(s)\}_{s\leq T} \in \widetilde{\mathcal{B}}$ be a non-autonomous set such that there exist $R>0$ and $t\in\mathbb{R}$ and $t_1 < t$ such that the intersection 
$S(t,s)B(s)\cap \{ \|Px\|\leq R \} $ is nonempty for every $s\leq t_1.$
Then $\omega(\tau,B(\cdot))$ is nonempty for every $\tau\in \mathbb{R}.$ 
\end{Lemma}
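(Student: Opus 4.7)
The plan is to handle the case $\tau \leq t$ directly, and then deduce the case $\tau > t$ in one line from the forward invariance of the pullback $\omega$-limit set (Lemma \ref{le::fowardInvarianceNA}).

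Fix $\tau \leq t$. Remark \ref{re:nonepty} produces $\widehat{R} \geq R_1$ and $\tau_1 \leq \tau$ such that $S(\tau, s) B(s) \cap H_{\widehat{R}}(\tau)$ is nonempty for every $s \leq \tau_1$, so one can pick a sequence $s_n \to -\infty$ with $s_n \leq \tau_1$ and choose $u_n \in B(s_n)$ with $y_n := S(\tau, s_n) u_n \in H_{\widehat{R}}(\tau)$. I want to extract a convergent subsequence of $\{y_n\}$ by placing a suitable subsequence inside a nested family amenable to Lemma \ref{lemma:212}. The ambient set $C := \bigcup_{s \leq T} B(s)$ is bounded, so applying the pullback absorption in (H1)$_{\mathrm{NA}}$ at the time $s_n$ yields, for each $n$, an index $k(n) \geq n$ with $S(s_n, s_{k(n)}) C \subset Q(s_n)$; writing
\[
y_{k(n)} \,=\, S(\tau, s_{k(n)}) u_{k(n)} \,=\, S(\tau, s_n)\, S(s_n, s_{k(n)}) u_{k(n)},
\]
we see $y_{k(n)} \in S(\tau, s_n) Q(s_n)$, and hence $y_{k(n)} \in \widetilde{L}_n := S(\tau, s_n) Q(s_n) \cap H_{\widehat{R}}(\tau)$.

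The sets $\widetilde{L}_n$ are nested, because for $m > n$ the positive invariance of $Q(\cdot)$ gives $S(\tau, s_m) Q(s_m) = S(\tau, s_n) S(s_n, s_m) Q(s_m) \subset S(\tau, s_n) Q(s_n)$. Moreover the contrapositive of item~3 of (H2)$_{\mathrm{NA}}$ forces $\widetilde{L}_n \subset S(\tau, s_n) H_{\widehat{R}}(s_n)$, and each $H_{\widehat{R}}(s_n)$ is contained in the fixed bounded set $B^\star := \{\|Px\| \leq \widehat{R}\} \cap \{\|(I-P)x\| \leq D_2\}$ by items~1--2 of (H2)$_{\mathrm{NA}}$ combined with (H1)$_{\mathrm{NA}}$. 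The generalized pullback asymptotic compactness (H3)$_{\mathrm{NA}}$ applied to the fixed set $B^\star$ thus produces compact $K_n$ and $\epsilon_n \to 0$ with $\widetilde{L}_n \subset S(\tau, s_n) B^\star \subset \mathcal{O}_{\epsilon_n}(K_n)$, and Lemma \ref{lemma:212} delivers a convergent subsequence whose limit lies, by definition, in $\omega(\tau, B(\cdot))$.

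For $\tau > t$, the previous step gives $\omega(t, B(\cdot)) \neq \emptyset$, and the forward invariance $S(\tau, t)\,\omega(t, B(\cdot)) \subset \omega(\tau, B(\cdot))$ from Lemma \ref{le::fowardInvarianceNA} immediately yields $\omega(\tau, B(\cdot)) \neq \emptyset$. The main technical obstacle is the bookkeeping with the subsequence $k(n)$: pullback absorption must be invoked separately at each time $s_n$ applied to the ambient bounded set $C$, rather than at the single time $\tau$, so that the points $y_{k(n)}$ land in $Q(s_n)$ and thereby fit into the nested family $\widetilde{L}_n$---this adjustment is needed precisely because the $u_n$'s come from different sets $B(s_n)$ and are not assumed to lie in any $Q(s)$ a priori.
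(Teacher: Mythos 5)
Your proof is correct and follows essentially the same route as the paper's: apply Remark \ref{re:nonepty} to land $y_n=S(\tau,s_n)u_n$ in $H_{\widehat{R}}(\tau)$, use pullback absorption of the ambient bounded set $\bigcup_{s\leq T}B(s)$ to pass to a subsequence $y_{k(n)}$ lying in the nested sets $S(\tau,s_n)Q(s_n)\cap H_{\widehat{R}}(\tau)=S(\tau,s_n)H_{\widehat{R}}(s_n)\cap H_{\widehat{R}}(\tau)$, extract a limit via $(H3)_{\mathrm{NA}}$ and Lemma \ref{lemma:212}, and handle $\tau>t$ by the forward invariance of Lemma \ref{le::fowardInvarianceNA}. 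Your version merely spells out the nestedness and the boundedness justification for invoking $(H3)_{\mathrm{NA}}$ a bit more explicitly than the paper does.
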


\begin{proof}
    Let $\tau\leq t.$ By Remark \ref{re:nonepty} and assumption (H2)$_{\textrm{NA}},$ there exist $\widehat{R}\geq R_1$ and $\tau_1\leq \tau$
    such that, we have $S(\tau,s)B(s)\cap H_{\widehat{R}}(\tau)\neq \emptyset,$
    for every $s \leq \tau_1.$ 
    For any $s_n\leq \tau_1$ and $s_n\to -\infty$  we can find $u_n$ such that
    $u_n\in B(s_n)$ and $y_n = S(\tau,s_n)u_n \in H_{\widehat{R}}(\tau)$. Moreover for every $n$ we can find $k(n) > n$ such that $S(s_n, s_{k(n)})u_{k(n)} \in Q(s_n)$. Now $y_{k(n)} = S(\tau,s_{k(n)})u_{k(n)} \in S(\tau,s_n)Q(s_n)$.
    Now $y_{k(n)} \in L_n,$ where 
    $L_n = S(\tau,s_n)Q \cap H_{\widehat{R}} = S(\tau,s_n)H_{\widehat{R}}(s_n) \cap H_{\widehat{R}}(\tau).$
    From Lemma \ref{lemma:212} and assumption  $(H3)_{\textrm{NA}}$ from $y_{k(n)}$ we can pick a subsequence which converges to some $y\in X.$ So $\omega(\tau,B(\cdot))$ is nonempty for every $\tau\leq t.$
    For $\tau>t$ by Lemma \ref{le::fowardInvarianceNA} we have  $S(\tau,t) \omega(t,B(\cdot))\subset \omega(\tau,B(\cdot)).$ So as $\omega(t,B(\cdot))$ is nonempty the set $\omega(\tau,B(\cdot))$ is also nonempty.
\end{proof}

\begin{Theorem}
    Assume $(H1)_{\mathrm{NA}}-(H3)_{\mathrm{NA}}$ and let $t\in \mathbb{R}$. Suppose that  $\{B(s)\}_{s\leq T} \in \widetilde{\mathcal{B}}$ is such that for some  $R > 0$ there exists $t_1\leq t$ such that $S(t,s)B(s)\cap \{\|Px\|\leq R\}\neq \emptyset$ for every $s\leq t_1$. Then 
    $$\lim_{s\rightarrow -\infty}\mathrm{dist}(S(t,s)B(s)\cap \{\|Px\|\leq R\},\omega(t,B(\cdot)))\rightarrow 0.$$
\end{Theorem}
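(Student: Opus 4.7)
The plan is to argue by contradiction, mirroring the structure of the proofs of Theorem \ref{theo:attract} (attraction to $\mathcal{J}(t)$) and of Lemma \ref{lem19} (nonemptiness of $\omega(t,B(\cdot))$), combining the absorption afforded by (H1)$_{\mathrm{NA}}$, the confinement in $H_{\widehat{R}}(t)$ afforded by (H2)$_{\mathrm{NA}}$, and the generalized pullback asymptotic compactness (H3)$_{\mathrm{NA}}$ together with Lemma \ref{lemma:212}.

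First, I would negate the claim: suppose there exist $\varepsilon>0$, a sequence $s_n\to-\infty$ and points $u_n\in B(s_n)$ with $S(t,s_n)u_n\in\{\|Px\|\leq R\}$ for every $n$, yet
$$\mathrm{dist}(S(t,s_n)u_n,\omega(t,B(\cdot)))\geq\varepsilon.$$
Set $v_n:=S(t,s_n)u_n$. The strategy is to extract a convergent subsequence of $\{v_n\}$ and note that its limit must belong to $\omega(t,B(\cdot))$ by the very definition of the $\omega$-limit set, yielding the required contradiction.

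Next I would use the hypothesis $\{B(s)\}_{s\leq T}\in\widetilde{\mathcal{B}}$ together with (H1)$_{\mathrm{NA}}$: since $\bigcup_{s\leq T}B(s)$ is bounded, there exists $t_2\leq t_1$ such that $S(t,s)\bigl(\bigcup_{s\leq T}B(s)\bigr)\subset Q(t)$ for all $s\leq t_2$, so in particular $v_n\in Q(t)$ for $n$ large enough. Because also $\|Pv_n\|\leq R$, item (1) of (H2)$_{\mathrm{NA}}$ gives some $\widehat{R}\geq R_0$ (depending only on $R$) with $v_n\in H_{\widehat{R}}(t)$ for $n$ large. For each fixed $n$ we may, by the absorption property applied with starting time $s_n$, pick $k(n)>n$ such that $S(s_n,s_{k(n)})u_{k(n)}\in Q(s_n)$, and hence
$$v_{k(n)}=S(t,s_{k(n)})u_{k(n)}=S(t,s_n)S(s_n,s_{k(n)})u_{k(n)}\in S(t,s_n)Q(s_n)\cap H_{\widehat{R}}(t).$$

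Define the nested sequence
$$L_n:=S(t,s_n)Q(s_n)\cap H_{\widehat{R}}(t)=S(t,s_n)H_{\widehat{R}}(s_n)\cap H_{\widehat{R}}(t),$$
where the second equality follows from item 3 of (H2)$_{\mathrm{NA}}$. By (H3)$_{\mathrm{NA}}$ each $L_n$ lies in an $\varepsilon_n$-neighborhood of a compact set with $\varepsilon_n\to 0$, so Lemma \ref{lemma:212} applies to the sequence $v_{k(n)}\in L_n$ and yields, along a subsequence, $v_{k(n)}\to y$ for some $y\in X$. Since $v_{k(n)}=S(t,s_{k(n)})u_{k(n)}$ with $s_{k(n)}\to-\infty$ and $u_{k(n)}\in B(s_{k(n)})$, by definition $y\in\omega(t,B(\cdot))$. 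But the assumption gives $\|v_{k(n)}-y\|\geq\varepsilon$ for all $n$, contradicting $v_{k(n)}\to y$.

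The only genuinely delicate point is the second step: arranging that the chosen points land in a common $H_{\widehat{R}}(t)$ independent of $n$ (so that Lemma \ref{lemma:212} can be invoked on a sequence of nested sets), which is secured by combining the forced boundedness of $\bigcup B(s)$ in $\widetilde{\mathcal{B}}$ with (H1)$_{\mathrm{NA}}$ and item 1 of (H2)$_{\mathrm{NA}}$. Once that uniform confinement is in place, the rest is the standard extraction argument already deployed in Theorems \ref{theo:attract} and in Lemma \ref{lem19}.
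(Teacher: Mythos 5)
Your proof is correct and follows essentially the same route as the paper's: a contradiction argument in which the offending points are confined to a fixed $H_{\widehat{R}}(t)$ via (H1)$_{\mathrm{NA}}$ and item 1 of (H2)$_{\mathrm{NA}}$, placed in the nested sets $L_n=S(t,s_n)Q(s_n)\cap H_{\widehat{R}}(t)$ by re-indexing with $k(n)$, and then shown to have a convergent subsequence by (H3)$_{\mathrm{NA}}$ and Lemma \ref{lemma:212}, whose limit lies in $\omega(t,B(\cdot))$ by definition. Your write-up is in fact slightly more careful than the paper's (e.g.\ in justifying the identification $L_n=S(t,s_n)H_{\widehat{R}}(s_n)\cap H_{\widehat{R}}(t)$ via item 3 of (H2)$_{\mathrm{NA}}$), but the argument is the same.
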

\begin{proof}
    We suppose that there exists a sequence $s_n\rightarrow -\infty$ and $u_n\in B(s_n)$ such that $\|P(S(t,s_n)u_n)\|\leq {R}$ and for some $\varepsilon>0$, we have dist$(S(t,s_n)u_n, \omega(t,B(\cdot)))>\varepsilon$. Since $\bigcup_{s\leq T}B(s)$ is bounded, there exists $k(n)$ such that $S(s_n,s_k)u_k\in Q(s_k)$. By (H2)$_{\mathrm{NA}}$ we can find  $\widehat{R}$ such that $\{S(t,s_k)u_k\}_{k\in \mathbb{N}}\subset H_{\widehat{R}}(t)$, and for every $k$,. Then $y_{k(n)}=S(t,s_n)S(s_n,s_{k(n)})u_{k(n)}\in S(t,s_n)Q(s_n)\cap H_{\widehat{R}}(t)=L_n$, and this is a nested sequence of sets, so by (H3)$_{\mathrm{NA}}$ and Lemma \ref{lemma:212} there exists a subsequence such that $y_k\rightarrow y$ for some $y\in H_{\widehat{R}}(t)$. Since $u_{k(n)}\in B(s_{k(n)})$ and $s_{k(n)}\rightarrow-\infty$ we have that $y\in \omega(t,B(\cdot))$ so we arrive to a contradiction.
\end{proof}

\begin{Lemma}
Under assumptions of the previous lemma there exists $\widehat{R}$ such that for every $\tau< t$ we have 
$$\lim_{s\rightarrow -\infty}\mathrm{dist}(S(\tau,s)B(s)\cap \{\|Px\|\leq \widehat{R}\},\omega(\tau,B(\cdot)))\rightarrow 0.$$ If additionally we assume that $S(t,s)B$ is bounded for every bounded set $B$ then for every $\tau > t$ there exists $\widehat{R}(\tau)$ such that  
$$\lim_{s\rightarrow -\infty}\mathrm{dist}(S(\tau,s)B(s)\cap \{\|Px\|\leq \widehat{R}(\tau)\},\omega(\tau,B(\cdot)))\rightarrow 0.$$
\end{Lemma}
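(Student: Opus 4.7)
The plan is to derive this result as a direct corollary of the preceding theorem, using Remark \ref{re:nonepty} as the bridge that transfers the nonemptiness hypothesis from time $t$ to the new time $\tau$. Once nonemptiness is available at time $\tau$, the attraction is immediate because the hypotheses of the preceding theorem are met verbatim at $\tau$ (with $\widehat{R}$ in place of $R$).

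For the first claim ($\tau < t$), I would first invoke Remark \ref{re:nonepty}: from the assumption that $S(t,s)B(s)\cap\{\|Px\|\leq R\}$ is nonempty for every $s\leq t_1$, the remark produces a single $\widehat{R}\geq R_1$, \emph{independent of} $\tau$, such that for every $\tau< t$ there exists $\tau_1=\tau_1(\tau)\leq \tau$ with
\[
S(\tau,s)B(s)\cap\{\|Px\|\leq \widehat{R}\}\neq\emptyset\qquad\text{for every }s\leq \tau_1.
\]
I would then simply apply the previous theorem at the time $\tau$, with $\widehat{R}$ playing the role of $R$ and $\tau_1$ playing the role of $t_1$. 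The family $\{B(s)\}_{s\leq T}$ still lies in $\widetilde{\mathcal{B}}$ (it is unchanged), so the hypotheses are satisfied and the conclusion gives
\[
\lim_{s\rightarrow -\infty}\mathrm{dist}\bigl(S(\tau,s)B(s)\cap \{\|Px\|\leq \widehat{R}\},\,\omega(\tau,B(\cdot))\bigr)= 0.
\]

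For the second claim ($\tau > t$), the same strategy works, but now I must rely on the additional assumption that $S(\tau,t)$ maps bounded sets to bounded sets. This is exactly the hypothesis that the second part of Remark \ref{re:nonepty} requires: it upgrades the nonemptiness at time $t$ to nonemptiness at any later time $\tau>t$, producing a constant $\widehat{R}(\tau)$ (now $\tau$-dependent, because the boundedness bound after applying $S(\tau,t)$ typically grows with $\tau$) and some $\tau_1\leq \tau$ with
\[
S(\tau,s)B(s)\cap\{\|Px\|\leq \widehat{R}(\tau)\}\neq\emptyset\qquad\text{for every }s\leq \tau_1.
\]
Applying the previous theorem at time $\tau$ with $\widehat{R}(\tau)$ then yields the desired convergence.

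There is no substantive obstacle here: the work has already been done in Remark \ref{re:nonepty} (which carries the nonemptiness across times) and in the preceding theorem (which converts nonemptiness into pullback attraction). The only point where care is needed is in observing that the previous theorem is stated at an arbitrary time, so nothing prevents invoking it at $\tau\neq t$, and that in the forward case the dependence $\widehat{R}=\widehat{R}(\tau)$ cannot be removed because the radius provided by the boundedness of $S(\tau,t)B$ generally depends on $\tau$.
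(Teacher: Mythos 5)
Your proposal is correct and follows essentially the same route as the paper: the paper's own proof likewise consists of invoking Remark \ref{re:nonepty} to obtain the nonemptiness of the intersections at the new time $\tau$ (so that the Hausdorff semidistance is well defined) and then rerunning the argument of the preceding theorem at that time. Your observation that the preceding theorem is stated at an arbitrary time, and that $\widehat{R}$ must depend on $\tau$ in the forward case, matches the paper's intent exactly.
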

\begin{proof}
    The proof follows from the same argument as the previous lemma, which is possible by Remark \ref{re:nonepty} as intersection in the statement of the Lemma are nonempty and Hausdorff semidistance is well defined.  
\end{proof}

\begin{Lemma}
    Assume (H1)$_{\mathrm{NA}}$-(H3)$_{\mathrm{NA}}.$ 
    For 
    $\{B(s)\}_{s\leq T} \in \widetilde{\mathcal{B}},$ the unbounded pullback $\omega$-limit sets are invariant, that is $S(\tau,t) \omega(t,B(\cdot))= \omega(\tau,B(\cdot)).$
\end{Lemma}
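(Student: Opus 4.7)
The forward inclusion $S(\tau,t)\omega(t,B(\cdot)) \subset \omega(\tau,B(\cdot))$ is already Lemma \ref{le::fowardInvarianceNA}, so the plan reduces to establishing $\omega(\tau,B(\cdot)) \subset S(\tau,t)\omega(t,B(\cdot))$ for $\tau \geq t$.

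The strategy is as follows. Pick $y \in \omega(\tau,B(\cdot))$, so there exist $s_n \to -\infty$ and $u_n \in B(s_n)$ with $S(\tau,s_n)u_n \to y$. I will factor $S(\tau,s_n) = S(\tau,t)\circ S(t,s_n)$ (valid for $s_n \leq t$) and produce a subsequence along which $S(t,s_n)u_n$ converges to some $z$; by continuity of $S(\tau,t)$ I will then get $S(\tau,t)z = y$, and the nontrivial remaining task is to verify $z \in \omega(t,B(\cdot))$, which is automatic from the definition once the convergence is secured along the subsequence $s_{k(n)}\to -\infty$ with $u_{k(n)} \in B(s_{k(n)})$.

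The core of the argument is the compactness extraction. Since $\{S(\tau,s_n)u_n\}$ converges it is bounded, so by item 1 of (H2)$_{\mathrm{NA}}$ there is $\widehat{R}\geq R_0$ with $S(\tau,s_n)u_n \in H_{\widehat{R}}(\tau)$ for all $n$. Using that $\bigcup_{s\leq T}B(s)$ is bounded (the defining property of $\widetilde{\mathcal{B}}$), (H1)$_{\mathrm{NA}}$ gives $t_2\leq t$ with $S(t,s)\bigl(\bigcup_{r\leq T}B(r)\bigr)\subset Q(t)$ for $s\leq t_2$; in particular $S(t,s_n)u_n \in Q(t)$ for large $n$. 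The contrapositive of item 3 of (H2)$_{\mathrm{NA}}$ then forces $S(t,s_n)u_n \in H_{\widehat{R}}(t)$: otherwise $S(t,s_n)u_n \in Q(t)\setminus H_{\widehat{R}}(t)$ would propagate forward to $S(\tau,s_n)u_n \in Q(\tau)\setminus H_{\widehat{R}}(\tau)$, contradicting the choice of $\widehat{R}$. Next, to apply (H3)$_{\mathrm{NA}}$ via Lemma \ref{lemma:212} I need nested sets; for each $n$ I pick $k(n) > n$ large enough that $S(s_n,s_{k(n)})u_{k(n)} \in Q(s_n)$ (possible by pullback absorbtion of $Q$ at time $s_n$ applied to the bounded set $\bigcup B(s)$). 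Then
$$
S(t,s_{k(n)})u_{k(n)} = S(t,s_n)S(s_n,s_{k(n)})u_{k(n)} \in S(t,s_n)Q(s_n) \cap H_{\widehat{R}}(t) =: L_n,
$$
and positive invariance of $Q(\cdot)$ shows $L_{n+1} \subset L_n$. Generalized pullback asymptotic compactness (H3)$_{\mathrm{NA}}$ places each $S(t,s_n)Q(s_n)$ in an $\varepsilon_n$-neighbourhood of a compact set with $\varepsilon_n \to 0$, so Lemma \ref{lemma:212} yields a subsequence along which $S(t,s_{k(n)})u_{k(n)} \to z$ for some $z \in X$.

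The main obstacle, which the above manoeuvre handles, is simply that generalized asymptotic compactness is formulated for a \emph{fixed} bounded set evolved forward, whereas here the starting points $u_n$ lie in different sets $B(s_n)$ that vary with $n$. The trick of composing with $S(s_n,s_{k(n)})$ to push the trajectories into $Q(s_n)$ before evaluating at time $t$ is what reduces the non-autonomous situation to an application of (H3)$_{\mathrm{NA}}$ on the single set $Q(s_n)$. Once $z$ is obtained, $u_{k(n)} \in B(s_{k(n)})$ and $s_{k(n)} \to -\infty$ give $z \in \omega(t,B(\cdot))$, and continuity of $S(\tau,t)$ together with the fact that $S(\tau,s_{k(n)})u_{k(n)}$ is a subsequence of the original convergent sequence yields $S(\tau,t)z = y$, completing the proof.
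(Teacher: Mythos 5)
Your proof is correct and follows essentially the same route as the paper, which handles the forward inclusion by the earlier positive-invariance lemma and, for the reverse inclusion, extracts a convergent subsequence of $S(t,s_n)u_n$ by "arguing as in the proof of Lemma \ref{lem19}" — exactly the $k(n)$-reindexing into $Q(s_n)$ and the nested sets $L_n$ that you spell out. The only difference is that you write out in full the compactness extraction that the paper delegates to that earlier lemma.
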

\begin{proof}
    Inclusion $S(\tau,t) \omega(t,B(\cdot))\subset \omega(\tau,B(\cdot))$ is asserted in Lemma \ref{le::fowardInvarianceNA}. Let $y\in \omega(\tau,B(\cdot)).$ 
    There exist sequences $s_n \to -\infty$ and $u_n\in B(s_n)$ such that $\lim_{n\to\infty} S(\tau,t ) S(t,s_n)u_n = y.$ Arguing as in the proof of Lemma \ref{lem19}, for a subsequence we have $S(t,s_n)u_n \to x$ with some $x\in \omega(t,B(\cdot))$ and it must be that $S(\tau,t)x = y$ which ends the proof.
\end{proof}

In order to relate the pullback behavior of bounded sets with $\mathcal{J}_b(t)$ we need to define the second universe of non-autonomous bounded sets. This time the sets do not have to be backward bounded, but their evolution must stay in $H_R(\cdot)$ at every time and for sufficiently small initial time.    
	     \begin{align*}
	        & \widehat{\mathcal{B}}=\{\{B(s)\}_{s\leq T} \, : \ B(s)\in \mathcal{B}(X)\ \textrm{and there exists} \  R\geq R_0 \ \textrm{such that for every}\ t\in \mathbb{R}\\
	         & \qquad \qquad \ \textrm{there exists}\  t_1(t)\leq T \ \textrm{ such that } \ S(t,s)B(s)\subset H_R(t) \ \textrm{for every}\ s\leq t_1\}
	     \end{align*}

\begin{Theorem}
	Assume $(H1)_{\mathrm{NA}}-(H3)_{\mathrm{NA}}$. For every $B(\cdot)\in\widehat{\mathcal{B}} $ we have the following inclusion:
	$$\omega(t,B(\cdot))\subset \mathcal{J}_b(t)$$
\end{Theorem}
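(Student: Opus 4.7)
The plan is to fix $y \in \omega(t, B(\cdot))$ and show that $y$ lies in $\bigcap_{\tau\geq t} S(t,\tau)(H_R(\tau)\cap \mathcal{J}(\tau))$ for the single value $R\geq R_0$ supplied by the definition of $\widehat{\mathcal{B}}$; this immediately places $y$ in $\mathcal{J}_b(t)$. By definition of the pullback $\omega$-limit, I pick sequences $s_n\to -\infty$ and $u_n\in B(s_n)$ with $S(t,s_n)u_n\to y$. The membership $B(\cdot)\in \widehat{\mathcal{B}}$ gives $R\geq R_0$ and, for each $\sigma\in\mathbb{R}$, a threshold $t_1(\sigma)$ such that $S(\sigma,s)B(s)\subset H_R(\sigma)$ for all $s\leq t_1(\sigma)$. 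This single $R$ will be the one witnessing the union in the definition of $\mathcal{J}_b(t)$.

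First I verify that $y\in \mathcal{J}(t)$. For an arbitrary $\sigma\leq t$, all sufficiently large $n$ satisfy $s_n\leq t_1(\sigma)$, so $S(\sigma,s_n)u_n\in H_R(\sigma)\subset Q(\sigma)$. Writing $S(t,s_n)u_n=S(t,\sigma)S(\sigma,s_n)u_n\in S(t,\sigma)Q(\sigma)$ and passing to the limit shows $y\in \overline{S(t,\sigma)Q(\sigma)}$. As $\sigma\leq t$ was arbitrary, $y\in \bigcap_{\sigma\leq t}\overline{S(t,\sigma)Q(\sigma)}=\mathcal{J}(t)$. In particular, the inverse $S(\tau,t)y$ is well defined (and single-valued forward in time) for every $\tau\geq t$ and lies in $\mathcal{J}(\tau)$ by invariance of the family $\{\mathcal{J}(\cdot)\}$ established in Theorem \ref{thm:eqker}.

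Next I show $S(\tau,t)y\in H_R(\tau)$ for every $\tau\geq t$. Using continuity of the process,
\begin{equation*}
S(\tau,t)y=S(\tau,t)\lim_{n\to\infty}S(t,s_n)u_n=\lim_{n\to\infty}S(\tau,s_n)u_n.
\end{equation*}
For $n$ large enough one has $s_n\leq t_1(\tau)$, hence $S(\tau,s_n)u_n\in H_R(\tau)$; since $H_R(\tau)$ is closed, the limit also lies in $H_R(\tau)$. Combined with $S(\tau,t)y\in \mathcal{J}(\tau)$, this means $y\in S(t,\tau)(H_R(\tau)\cap \mathcal{J}(\tau))$ in the sense of the multivalued inverse introduced before the definition of $\mathcal{J}_b(t)$. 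Taking the intersection over all $\tau\geq t$ and then the union over $R$ yields $y\in \mathcal{J}_b(t)$, concluding the proof.

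There is no real obstacle here: the uniformity of $R$ in the definition of $\widehat{\mathcal{B}}$ is designed precisely to feed the single-$R$ union in the definition of $\mathcal{J}_b(t)$. The only point requiring a bit of care is the correct interpretation of $S(t,\tau)$ for $\tau>t$ as the (possibly multivalued) inverse restricted to $\mathcal{J}(\tau)$, so that the backward boundedness of the global solution through $y$ is obtained from $y\in \mathcal{J}(t)=\mathcal{K}(t)$ while the forward boundedness comes from $S(\tau,t)y\in H_R(\tau)\subset\{\|Px\|\leq R\}$ together with $S(\tau,t)y\in Q(\tau)\subset\{\|(I-P)x\|\leq D_2\}$.
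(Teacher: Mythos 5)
Your proof is correct and follows essentially the same route as the paper's: both arguments first show $y\in\mathcal{J}(t)$ by pushing $S(\sigma,s_n)u_n\in H_R(\sigma)\subset Q(\sigma)$ forward and passing to the limit, and then show $S(\tau,t)y=\lim_n S(\tau,s_n)u_n\in H_R(\tau)$ for all $\tau\geq t$ using continuity of the process and closedness of $H_R(\tau)$. The only cosmetic difference is that you verify the union--intersection definition of $\mathcal{J}_b(t)$ directly (invoking invariance of $\mathcal{J}(\cdot)$), whereas the paper phrases the same computation as forward boundedness of the solution through $y$; the content is identical.
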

\begin{proof}
   Take $y\in \omega(t, B(\cdot))$, then there exists a sequence $s_n\rightarrow-\infty$ and $u_n\in B(s_n)$ such that  $\lim_{s_n\rightarrow-\infty}S(t,s_n)u_n=y$. Pick $s\leq t$. There exists $n_0(s)$ such that for every $n\leq n_0$ we have $S(s,s_n)u_n\in H_R(s)\subset Q(s)$. We deduce that $u\in \mathcal{J}(t)$. It remains to show that the {\color{black} solution} starting from $y$ at time $t$ is forward bounded. We take $r\geq t$,
	$$S(r,t)u=\lim_{s_n\rightarrow-\infty}S(r,t)S(t,s_n)u_n=\lim_{s_n\rightarrow-\infty}S(r,s_n)u_n$$
	and $S(r,s_n)u_n\in H_R(r)$ for every sufficiently large $n$, so the limit also belongs to $H_R(r)$ and the proof is complete. 
\end{proof}

\begin{Lemma}
	Under assumptions of previous lemma $\{\omega(r,B(\cdot))\}_{r\in \mathbb{R}}$ are nonempty, compact and invariant sets which pullback attract $B(\cdot)$ at the time $r$.  
\end{Lemma}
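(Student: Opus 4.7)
\medskip

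\noindent\textbf{Proof proposal.} The plan is to establish the four properties — nonemptiness, compactness, invariance, and pullback attraction — in sequence, exploiting the fact that $B(\cdot)\in\widehat{\mathcal{B}}$ forces the orbits $S(r,s)B(s)$ to eventually lie inside a single bounded set $H_R(r)$. This bounded-confinement is the crucial feature that makes the arguments previously used for $\widetilde{\mathcal{B}}$ go through without the $\{\|Px\|\leq R\}$-restriction.

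First I would address nonemptiness. Fix $r\in\mathbb{R}$, and pick any sequence $s_n\to-\infty$ and $u_n\in B(s_n)$; for $n$ large enough, $S(r,s_n)u_n\in H_R(r)$ by the definition of $\widehat{\mathcal{B}}$. Following the nested-set construction from the proof of Lemma \ref{lem19}, choose $k(n)>n$ with $S(s_n,s_{k(n)})u_{k(n)}\in Q(s_n)\cap H_R(s_n)$ (possible by (H1)$_\mathrm{NA}$ applied to $\bigcup_{m\geq n}S(s_n,s_m)B(s_m)$ and by positive invariance of $H_R(\cdot)$ in item 3 of (H2)$_\mathrm{NA}$). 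Then $y_{k(n)}:=S(r,s_{k(n)})u_{k(n)}\in S(r,s_n)H_R(s_n)\cap H_R(r)$, and these are nested sets; (H3)$_\mathrm{NA}$ together with Lemma \ref{lemma:212} give a convergent subsequence, whose limit lies in $\omega(r,B(\cdot))$.

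Next, compactness. Using the characterization $\omega(r,B(\cdot))=\bigcap_{s\leq r}\overline{\bigcup_{\tau\leq s}S(r,\tau)B(\tau)}$, the set is automatically closed. For relative compactness, observe that for sufficiently small $s$ the tails $\bigcup_{\tau\leq s}S(r,\tau)B(\tau)$ lie in $H_R(r)$, and, arguing as above, each such tail is contained in $S(r,s)H_R(s)\cap H_R(r)$ (modulo an initial portion that does not affect the intersection). Applying Lemma \ref{lemma:212} once more yields that $\omega(r,B(\cdot))$ is contained in the compact intersection guaranteed by that lemma.

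For invariance, the forward inclusion $S(r_2,r_1)\omega(r_1,B(\cdot))\subset\omega(r_2,B(\cdot))$ is Lemma \ref{le::fowardInvarianceNA}. For the reverse inclusion, take $y\in\omega(r_2,B(\cdot))$ approximated by $S(r_2,s_n)u_n$ with $u_n\in B(s_n)$, $s_n\to-\infty$, and factor $S(r_2,s_n)u_n=S(r_2,r_1)S(r_1,s_n)u_n$. The sequence $S(r_1,s_n)u_n$ eventually sits in $H_R(r_1)$, and by the same nested-set/(H3)$_\mathrm{NA}$ extraction as above a subsequence converges to some $x\in\omega(r_1,B(\cdot))$; continuity of $S(r_2,r_1)$ forces $S(r_2,r_1)x=y$. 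Finally, pullback attraction is a textbook contradiction argument: if $\mathrm{dist}(S(r,s_n)u_n,\omega(r,B(\cdot)))\geq\varepsilon$ for some $u_n\in B(s_n)$ and $s_n\to-\infty$, confinement to $H_R(r)$ and the nested-set compactness yield a convergent subsequence whose limit lies in $\omega(r,B(\cdot))$, contradicting the lower bound.

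The main obstacle is the negative half of invariance, where one must show the pre-image of a limit point of $S(r_2,s_n)u_n$ is itself a limit point at time $r_1$; this works precisely because $\widehat{\mathcal{B}}$ keeps $S(r_1,s_n)u_n$ in $H_R(r_1)$ for large $n$, so the compactness machinery of Lemma \ref{lemma:212} applies at the intermediate time $r_1$ without any a priori boundedness of $\bigcup_{s}B(s)$.
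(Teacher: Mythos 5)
Your proposal is correct and follows essentially the same route as the paper: nonemptiness, attraction, and negative invariance all via the nested-set construction $y_{k(n)}\in S(r,s_n)H_R(s_n)\cap H_R(r)$ combined with (H3)$_{\mathrm{NA}}$ and Lemma \ref{lemma:212}, and positive invariance from Lemma \ref{le::fowardInvarianceNA}. The only (harmless) divergence is in the compactness step, where you argue directly from the intersection characterization of $\omega(r,B(\cdot))$ and the nested sets, whereas the paper simply invokes the inclusion $\omega(t,B(\cdot))\subset\mathcal{J}_b(t)$ from the preceding theorem together with compactness of $\mathcal{J}_b(t)$; your version is slightly more self-contained.
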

\begin{proof}
    The compactness follows from the compactness of $\mathcal{J}_b(t)$ and closedness of pullback $\omega$-limits. To prove the nonemptiness we take a sequence of times $\{s_n\}_{n\in \mathbb{N}}$ going to $-\infty$ and $u_n\in B(s_n)$, such that $y_n=S(t, s_n)u_n\in H_R(t)$. We will prove that $y_n$ is relatively compact. Indeed, for every $n\in \mathbb{N}$, there exists $k(n)>n$ such that $S(s_n,s_{k(n)})B(s_{k(n)})\subset H_R(s_n)$, whence $y_{k(n)}=S(t,s_{k(n)})u_{k(n)}\in S(t,s_n)H_R(s-n)\cap H_R(t)=L_n$. This is a nested sequence of sets, so by Lemma \ref{lemma:212} and (H3)$_{\mathrm{NA}}$, we obtain a subsequence convergent to some $y\in \omega(t,B(\cdot))$.   The attraction follows in a standard way. 
    For contradiction take a sequence $u_n\in B(s_n)$, $s_n\rightarrow -\infty$ such that dist$(S(t,s_n)u_n,\omega(t,B(\cdot))>\varepsilon$. With the same method as to see the non-emptiness we obtain that there exists a subsequence of $\{S(t,s_n)u_n\}_{n\in \mathbb{N}}$ such that converges to $y$, so this $y\in \omega(t,B(\cdot))$ and we arrive to a contradiction. 
    From Lemma \ref{le::fowardInvarianceNA} we have the positive invariance. For the negative invariance, we take $y\in \omega(t,B(\cdot))$, so there exists a sequence $\{s_n\}_{n\in \mathbb{N}}$ and $u_n\in B(s_n)$ such that $y=\lim_{s_n\rightarrow-\infty}S(t,s_n)u_n$. We take $\tau <t$, and we write $y=\lim_{s_n\rightarrow -\infty}S(t,\tau)S(\tau, s_n)u_n$. With the same method as in the proof of nonemptiness we see that there exists a subsequence $\{S(\tau,s_{\nu})u_{\nu}\}\subset \{S(\tau, s_n)u_n\}_{n\in\mathbb{N}}$ which converges to some $x\in X$, and $x\in \omega(\tau, B(\cdot))$. Then $S(t,\tau)x=\lim_{s_{\nu}\rightarrow-\infty}S(t,\tau)S(\tau,s_{\nu})u_{\nu}=\lim_{s_{\nu}\to -\infty}S(t,s_{\nu})u_{\nu} = y$ and the proof is complete. 
\end{proof}

\section{Unbounded attractors for problems governed by PDEs}\label{PDES}
\subsection{Problem setting and variation of constant formulas.} \label{sec:setup}We present here the framework which fits in the formalism presented in previous Section. While it is not in the highest possible generality, its trait of novelty with respect to \cite{chepyzhov, bengal} relies in the fact that its estimates are based on the Duhamel formula rather than in the energy inequalities which follow from testing the equation of the problem by appropriate functions. Let $X$ be a Banach space with a norm $\|\cdot\|$ and let $A:X\supset D(A) \to X$ be a linear, closed and densely defined operator. Assume moreover, that the operator $-A$ is sectorial and has compact resolvent. Then, each point in $\sigma(A)$, the spectrum of $A$, is the eigenvalue, and this set of eigenvalues is discrete. Assume that $\sigma(A) \cap \{ \lambda\in \mathbb{C}\,:\ \text{Re}\, \lambda= 0 \}  = \emptyset$. Then, denote by $P$ by the spectral projection associated with the part of the spectrum $\sigma(A)\cap \{ \lambda\in \mathbb{C}\,:\ \text{Re}\, \lambda > 0 \}$. The range of $P$ is finite dimensional, this will be the space $E^+$, and the range of $I-P$ will be $E^-$, a closed subspace of $X$.   The restriction of $A$ to the range of $P$ is a bounded linear operator. Moreover, for some positive numbers $\gamma_0,\gamma_1,\gamma_2$, the following inequalities hold
\begin{equation}\label{eq:hyper}
\begin{split}
&\|e^{A(t-s)}\| \leq M e^{\gamma_0(t-s)}\quad\textrm{for}\ \ \ t\geq s,\\
&\|e^{A(t-s)}(I-P)\|\leq M e^{-\gamma_2(t-s)}\quad\textrm{for}\ \ \ \ t\geq s,\\
&\|e^{A(t-s)}P\|\leq M e^{\gamma_1(t-s)}\quad\textrm{for}\ \ \ \ t\leq s,
\end{split}
\end{equation}
cf. \cite[page 147]{Carvalho-Langa-Robinson-13}. 
We will consider the following operator equation
\begin{equation}\label{pde}
u'(t) = Au(t) + f(t,u),
\end{equation}
with the initial data $u(t_0) = u_0 \in X$. 
We assume that $f$ is bounded, that is, $\|f(t,u)\|\leq C_f$ for every $u\in X$ and $t\in \mathbb{R}$.  We also need to make the assumptions on $f$ which guarantee that the above problem has the unique {\color{black} solution} $u:[t_0,\infty) \to X$ given by the following variation of constants formula
\begin{equation}\label{def:fvc}
u(t)=e^{A(t-\tau)}u(\tau) +\int_\tau^t e^{A(t-s)}f(s,u(s))ds\ \ \textrm{for} \ t\geqslant \tau,\\
\end{equation}
such that $u\in C([t_0,T];X)$ for every $T\geq t_0$. 
We stress again, that while we do not pursue the higest generality here, it is enough to assume that $f:\mathbb{R}\times X\to X$ is continuous and $f(t,\cdot):X\to X$ is Lipschitz continuous on bounded sets of $X$ with the constant independent on $t$, that is if only $\|v\|,\|w\| \leq R$, then 
$$
\|f(t,v)-f(t,w)\| \leq C(R) \|v-w\|.
$$
For the details of the proof of the {\color{black} solution} existence, uniqueness, and its globality in time see for example \cite[Chapter 6.7]{Carvalho-Langa-Robinson-13} or \cite[Chapters 2 and 3]{Cholewa-Dlotko}. Moreover, the {\color{black} solutions} constitute the continuous process $S(t,\tau)$ in $X$, such that for a bounded set $B\in \mathcal{B}(X)$ the sets $S(t,\tau)B$ are bounded in the space which is compactly embedded in $X$, and hence relatively compact in $X$ for $t>\tau$, c.f. for example \cite[Chapter 3.3]{Cholewa-Dlotko}, where the proof of compactness is done for the autonomous case. Thus, assumption (H3)$_{\mathrm{NA}}$, and for autonomous case (H3), is satisfied. 

If $u(s)$ is a {\color{black} solution} to the above problem on some interval $[\tau,t]$ then we denote $p(s)=Pu(s)$ and $q(s)=(I-P)u(s)$. Thus, $q$ and $p$ satisfy
\begin{subequations}\label{def:pq}
	\begin{align}
	p(t)&=e^{A(t-\tau)}p(\tau) +\int_\tau^t e^{A(t-s)}Pf(s,p(s)+q(s))ds\ \ \textrm{for}\ \ t\geqslant \tau,\label{solp(t)}\\
	q(t)&=e^{A(t-\tau)}q(\tau) +\int_{\tau}^te^{A(t-s)}(I-P)f(s,p(s)+q(s))ds\ \ \textrm{for}\ \ t\geqslant \tau. \label{solq(t)}
	\end{align}
\end{subequations}
 
\subsection{Estimates which follow from boundedness of $f$ and their consequences.}\label{def:fbound}
We begin with the estimate for $q$. From \eqref{solq(t)} we deduce in a straightforward way that 
\begin{equation}\label{eq:attrbound}
\|q(t)\| \leq M e^{-\gamma_2(t-\tau)}\|q(\tau)\| + \frac{M C_f}{\gamma_2} (1-e^{-\gamma_2 (t-\tau)}), 
\end{equation}
for $t\geq \tau \geq t_0$. This is enough to guarantee (H1) and its non-autonomous version (H1)$_{\text{NA}}$. Indeed, one can take, in autonomous case 
$$
Q = \overline{\bigcup_{t\geq 0}S(t)\left\{ \|(I-P)u\|\leq \frac{MC_f}{\gamma_2}+1 \right\}},
$$
and in non-autonomous case
$$
Q(t) = \overline{\bigcup_{s\leq t}S(t,s)\left\{ \|(I-P)u\|\leq \frac{MC_f}{\gamma_2}+1 \right\}}.
$$
Then (H1) and (H1)$_{\textrm{NA}}$ hold with $D_1 = \frac{MC_f}{\gamma_2}+1$ and $D_2 = M\left(\frac{MC_f}{\gamma_2}+1\right)$. 
We continue with the estimates for $p$. First, we have the following upper bound
\begin{equation}\label{pupper}
\|p(t)\| \leq Me^{\gamma_0(t-\tau)}\|p(\tau)\| + \frac{MC_f}{\gamma_0} (e^{\gamma_0(t-\tau)}-1), 
\end{equation}
valid for $t\geq \tau \geq t_0$.
Moreover \eqref{solp(t)} implies that 
\begin{equation}
p(\tau)=e^{A(\tau-t)}p(t) +\int_t^\tau e^{A(\tau-s)}Pf(s,p(s)+q(s))ds,
\end{equation}
for $t\geq \tau \geq t_0$, whence 
$$
\|p(\tau)\|\leq Me^{\gamma_1 (\tau-t)}\|p(t)\| + \frac{MC_f}{\gamma_1}.
$$
This means that 
\begin{equation}\label{eq:pgrow}
\|p(t)\| \geq e^{\gamma_1(t-\tau)} \left(\frac{1}{M}\|p(\tau)\| - \frac{C_f}{\gamma_1}\right).
\end{equation}
Clearly, if the projection $p$ of the initial data is sufficiently large, namely $\|p(t_0)\| > \frac{MC_f}{\gamma_1}$, then $\|p(t)\|$ tends exponentially to infinity as $t\to \infty$. This is not enough, however, to ensure (H2) and its non-autonomous counterpart (H2)$_{\text{NA}}$. The technical difficulty lies in the fact that, although real parts of all eigenvalues of $A$ on $E^+$ are positive, this does not have to hold for $A+A^\intercal$, the symmetric part of $A$. For simplicity we associate $A$ on $E^+$ with the matrix being its representation in some basis of $E^+$ and vectors $p\in E^+$ with their representations in this basis. Denoting the Euclidean norm and the associated matrix norm by $|\cdot|$, for $p\in E^+$, the finite dimensional space, one has the equivalence $C_1\|p\|\leq |p|\leq C_2\|p\|$. Now one can rewrite the variation of constants formula \eqref{solp(t)} as the following non-autonomous ODE
$$
p'(t) = Ap(t) + P f(t,p(t)+q(t)).
$$  
Then consider $N$, a symmetric and positive definite matrix which is the unique {\color{black} solution} of the Lyapunov equation
$
A^\intercal N + NA = I.
$ 
This $N$ is given by the formula $$N = \int_0^\infty e^{-t(A+A^\intercal)}\, dt.$$ Now
\begin{align*}
& \frac{d}{dt}(p^\intercal(t) N p(t)) = p^\intercal(t) (A^\intercal N + NA)p(t) + 2(P f(t,p(t)+q(t)))^\intercal N p(t)\\
& \ \  = |p(t)|^2 + 2(P f(t,p(t)+q(t)))^\intercal N p(t).
\end{align*}
We deduce that 
$$
\frac{d}{dt}(p^\intercal(t) N p(t))\geq C_1^2\|p(t)\|^2 - 2C_2^2 C_f |N| \|p(t)\| = \|p(t)\| (C_1^2\|p(t)\| - 2C_2^2 C_f |N|).
$$
This means that if only $$\|p(t)\| > \frac{2C_2^2 C_f |N|}{C_1^2},$$ then the quadratic form $p^\intercal(t) N p(t)$ is strictly increasing. 
Now, as $N$ is positively defined, there exist positive constants $D_1, D_2$ such that $D_1^2\|p\|^2 \leq p^\intercal N p \leq D_2^2\|p\|^2$. It follows that (H2) and its nonautonomous version (H2)$_{\text{NA}}$ hold with
$$
R_0 > \frac{2C_2^2 C_f |N|}{C_1^2}, \quad R_1 = \frac{D_2}{D_1}R_0,\quad S(R) = \frac{D_1}{D_2}R,\quad H_R = Q \cap \{ u \in X\, :\  (Pu)^\intercal N (Pu) \leq D_1^2 R^2\}.   
$$
Observe, that \eqref{eq:pgrow} imply that if the problem is autonomous also (H4) holds. We have thus verified (H1)-(H4) of Section \ref{sec:autonomous} and (H1)$_{\text{NA}}$-(H3)$_{\text{NA}}$ of Section \ref{nonauto}, which implies by Theorem \ref{thm:pullback} the existence of unbounded pullback attractor, and, for the autonomous problem, the existence of the unbounded attractor by Theorem \ref{th:autonomus1}.  
\subsection{Thickness of unbounded attractor.} In this section we derive the estimates which guarantee that the thickness in $E^-$ of the unbounded attractor tends to zero as $p\in E^+$ tends to infinity, or in the language of the multivalued inertial manifold, that $\lim_{\|p\|\to \infty}\textrm{diam}\, \Phi(p) = 0$. For simplicity we deal only with the autonomous case. The fact that such thickness tends to zero guarantees,  by Theorem \ref{attracts:all} that the unbounded attractor $\mathcal{J}$ attracts the bounded sets not only in the sense
 \begin{align*}
& \lim_{t\to \infty} \mathrm{dist}(S(t)B \cap \{ \|Pu\|\leq R\},\mathcal{J} ) = 0\ \ \textrm{if only}\ B,R\ \  \textrm{are such that}\\
& \ \ \qquad \qquad \textrm{the sets}\ \  S(t)B\cap \{ \|Pu\|\leq R\}\ \ \text{are nonempty for every sufficiently large}\ t, 
\end{align*}
but also in the sense 
$$
\lim_{t\to \infty} \mathrm{dist}(S(t)B ,\mathcal{J} ) = 0.
$$ 
In order to obtain the desired result we need to assume that the projection $I-P$ of the nonlinearity $f$ decays to zero as the projection $P$ or the argument tends to infinity. We make the following assumption which makes precise what rate of this decay is needed.   
\begin{itemize}
	\item [(Hf1)] The function $f:X\to X$ has the form $f(u) = f_0 + f_1(u)$, where $f_0\in X$ and there exists $K>0$ such that if $\|Pu\| \geq  K$, then   
	$$
	\|(I-P)f_1(u)\| \leq H(\|Pu\|)
	$$
	where $H:[K,\infty)\to (0,\infty)$ is nonincreasing, $\lim_{r\to \infty} H(r) = 0$ and either 
	\begin{equation}\label{as1}
	H(r) \leq \frac{D}{r^\alpha }\ \ \textrm{with}\ \ \alpha>0\ \ \textrm{and}\ \ D>0\ \ \textrm{for}\ \ r\geq K, 
	\end{equation}
	or 
\begin{equation}\label{as2}
	[K,\infty)\ni r \mapsto r^{\frac{\gamma_2}{\gamma_1}-\varepsilon}H(r)\in \mathbb{R}\ \ \textrm{is nondecreasing for} \ \ \textrm{for some}\ \ \varepsilon \in \left(0,\frac{\gamma_2}{\gamma_1}\right).  
	\end{equation}
\end{itemize}
\begin{Theorem}\label{thm:decay}
	If, in addition to the assumptions of Section \ref{sec:setup} we assume (Hf1), then 
	$$
	\lim_{t\to \infty} \mathrm{dist}(S(t)B ,\mathcal{J} ) = 0\ \ \textrm{for every}\ \ B\in \mathcal{B}(X).
	$$
	and
	$$
	\lim_{\|p\|\to \infty}\mathrm{diam}\, \Phi(p) = 0,
	$$ 
	more precisely if \eqref{as1} holds then there exists constants $\beta>0$ (depending on $\gamma_0, \gamma_1, \gamma_1, \alpha$) and $M_1 > 0$ such that for sufficiently large $\|p\|$
	$$
	\mathrm{diam}\, \Phi(p) \leq M_1 \frac{1}{\|p\|^\beta},
	$$
	and if \eqref{as2} holds, there exists constants $M_2, M_3 > 0$ such that 
	$$
	\mathrm{diam}\, \Phi(p) \leq  M_2 H\left(M_3 \|p\|{^\frac{\gamma_1}{\gamma_0}}\right),
	$$
\end{Theorem}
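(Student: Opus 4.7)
The plan is to establish the diameter estimate first, and then recover the attraction of every bounded set from it via Theorem \ref{attracts:all}.

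I would first take two complete trajectories $u_1,u_2\in\mathcal{J}$, both bounded in the past (existence by Theorem \ref{coincide} since $\mathcal{J}=\mathcal{I}$), with $Pu_i(0)=p$. Writing $q_i(s)=(I-P)u_i(s)$ and applying the Duhamel formula \eqref{solq(t)} from $-T$ to $0$, I let $T\to\infty$: since $u_i\in Q$ gives $\|q_i(-T)\|\leq D_2$ while $\|e^{AT}(I-P)\|\leq Me^{-\gamma_2 T}$, the boundary term vanishes and
\begin{equation*}
q_i(0)=\int_{-\infty}^0 e^{-As}(I-P)f(u_i(s))\,ds.
\end{equation*}
Subtracting cancels the constant part $f_0$, leaving an integral of $f_1(u_1(s))-f_1(u_2(s))$ against a kernel of norm at most $Me^{\gamma_2 s}$ for $s\leq 0$.

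Next I would split this integral at $s=-T^*$ where $T^*:=\gamma_0^{-1}\ln(\|p\|/(2M(K+C_f/\gamma_0)))$. Rearranging \eqref{pupper} backwards gives the lower bound $\|Pu_i(s)\|\geq M^{-1}e^{\gamma_0 s}\|p\|-C_f/\gamma_0$, which on $[-T^*,0]$ exceeds $K$ for $\|p\|$ large, so (Hf1) controls $\|(I-P)f_1(u_i(s))\|$ by $H((2M)^{-1}e^{\gamma_0 s}\|p\|)$; on the tail $(-\infty,-T^*)$ the uniform bound $\|f_1\|\leq C_f+\|f_0\|$ suffices. This yields
\begin{equation*}
\|q_1(0)-q_2(0)\|\leq 2M\int_{-T^*}^0 e^{\gamma_2 s}H\Bigl(\tfrac{1}{2M}e^{\gamma_0 s}\|p\|\Bigr)ds+\tfrac{2M(C_f+\|f_0\|)}{\gamma_2}e^{-\gamma_2 T^*},
\end{equation*}
with boundary term of order $\|p\|^{-\gamma_2/\gamma_0}$. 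Under \eqref{as1} the integrand becomes proportional to $\|p\|^{-\alpha}e^{(\gamma_2-\alpha\gamma_0)s}$, and the three cases according to the sign of $\gamma_2-\alpha\gamma_0$ all lead to a rate $\|p\|^{-\beta}$ with $\beta=\min(\alpha,\gamma_2/\gamma_0)$ (absorbing any borderline logarithm into a slightly smaller $\beta$). Under \eqref{as2}, the monotonicity of $r\mapsto r^{\gamma_2/\gamma_1-\varepsilon}H(r)$ gives $H((2M)^{-1}e^{\gamma_0 s}\|p\|)\leq e^{-\gamma_0 s(\gamma_2/\gamma_1-\varepsilon)}H(\|p\|/(2M))$, converting the integral into $H(\|p\|/(2M))$ times a purely exponential integral in $s$; since $H$ is non-increasing and $\|p\|/(2M)\geq M_3\|p\|^{\gamma_1/\gamma_0}$ for large $\|p\|$, the output fits into $M_2 H(M_3\|p\|^{\gamma_1/\gamma_0})$, and the polynomial boundary term is absorbed using the lower bound $H(r)\geq cr^{-(\gamma_2/\gamma_1-\varepsilon)}$ that \eqref{as2} implies.

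For the attraction of every bounded set, I would invoke Theorem \ref{attracts:all} with $Q_1$ taken as the closure of all $u\in Q$ admitting a preimage $u=S(T)\tilde u_0$ with $\tilde u_0\in Q$ and $T\geq T^*(\|Pu\|)$: part (a) of (A1) follows by rerunning the Duhamel estimate forward (the boundary term now being controlled by $T$ instead of sent to infinity), while part (b) follows from $Q$ being absorbing and positively invariant. The main obstacle I anticipate is the careful handling in case \eqref{as2} of the exponent $\mu:=\gamma_2-\gamma_0(\gamma_2/\gamma_1-\varepsilon)$ in the exponential integral: when $\mu\leq 0$ the integral grows polynomially in $\|p\|$ and the theorem's stated form is recovered only after exploiting the full structure of the assumption on $\varepsilon\in(0,\gamma_2/\gamma_1)$ together with the gap $\gamma_1\leq\gamma_0$ in \eqref{eq:hyper}.
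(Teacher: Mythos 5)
Your backward-in-time route for the diameter is a genuinely different decomposition from the paper's: the paper never integrates the Duhamel formula over $(-\infty,0]$; it instead runs the estimate \emph{forward} from a point $v$ with moderate $\|Pv\|$, using the lower bound $\|Pu(s)\|\geq Ke^{\gamma_1 s}$, and then converts time into $\|p\|$ via $e^{-\gamma_0 t}\leq C/\|Pu\|$. Your representation $q_i(0)=\int_{-\infty}^0 e^{-As}(I-P)f(u_i(s))\,ds$ (boundary term killed by $\mathcal{J}\subset Q$ and $\|e^{AT}(I-P)\|\leq Me^{-\gamma_2 T}$), the cancellation of $f_0$, the split at $T^*\sim\gamma_0^{-1}\ln\|p\|$, and the backward lower bound on $\|Pu_i(s)\|$ are all correct, and in case \eqref{as1} your bookkeeping actually yields a better exponent, $\beta=\min(\alpha,\gamma_2/\gamma_0)$, than the paper's $\beta=\min(\gamma_1\alpha,\gamma_2)/\gamma_0$.

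The genuine gap is in case \eqref{as2}, exactly where you flag "the main obstacle" — but the resolution you sketch does not exist. The hypothesis that $r\mapsto r^{\gamma_2/\gamma_1-\varepsilon}H(r)$ is nondecreasing is calibrated to the \emph{forward} growth rate $\gamma_1$; monotonicity with exponent $a$ implies it for all \emph{larger} exponents, never smaller, so you cannot upgrade it to nondecreasingness of $r^{\gamma_2/\gamma_0-\varepsilon'}H(r)$, which is what your backward rate $\gamma_0$ would need. Consequently $\mu=\gamma_2-\gamma_0(\gamma_2/\gamma_1-\varepsilon)$ can genuinely be negative when $\gamma_0>\gamma_1$ (e.g.\ $H$ behaving like $r^{-a}$ near $K$ with $a$ close to $\gamma_2/\gamma_1$, forcing $\varepsilon$ small), and then your comparison $H((2M)^{-1}e^{\gamma_0 s}\|p\|)\leq e^{-a\gamma_0 s}H(\|p\|/(2M))$ produces the factor $\|p\|^{|\mu|/\gamma_0}$ multiplying $H(\|p\|/(2M))$ \emph{from above}; the only tool you invoke, the lower bound $H(r)\geq cr^{-(\gamma_2/\gamma_1-\varepsilon)}$, points the wrong way and cannot absorb it (one can build $H$, power-like near $K$ and much flatter at infinity, for which your displayed bound even grows in $\|p\|$ while the theorem's conclusion still holds). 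The clean fix is the paper's: estimate forward in time, where $\|p(s)+P\overline{u}\|\geq Ke^{\gamma_1 s}$ makes $e^{\gamma_2 s}(Ke^{\gamma_1 s})^{-\gamma_2/\gamma_1+\varepsilon}=K^{-\gamma_2/\gamma_1+\varepsilon}e^{\varepsilon\gamma_1 s}$, so the integral telescopes lossily-free to $\frac{1}{\varepsilon\gamma_1}H(Ke^{\gamma_1 t})$, and $e^{\gamma_1 t}\gtrsim\|p\|^{\gamma_1/\gamma_0}$ gives the stated form. Since you already need this forward estimate to verify (A1)(a) for your set $Q_1$ (your own plan says so), the pragmatic conclusion is that the backward computation is dispensable in case \eqref{as2} and should be replaced, not patched. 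Your remaining ingredients — absorbing the tail term $\|p\|^{-\gamma_2/\gamma_0}$ into $H(M_3\|p\|^{\gamma_1/\gamma_0})$ via $H(r)\geq cr^{-(\gamma_2/\gamma_1-\varepsilon)}$, and deducing full attraction from Theorem \ref{attracts:all} — are sound.
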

\begin{Remark}
	The proof for the case \eqref{as1} with  the use of the energy estimates instead of the Duhamel formula and $f_0 = 0$  appears in \cite[Theorem 6.2]{chepyzhov}. The main new contribution of the above result is allowing also for the case \eqref{as2} which makes it possible to consider very slow decay of $H$, such as $H(s) \leq  \frac{C}{\ln\, s}$.  
	\end{Remark}
\begin{Remark}
	From the course of the proof it is possible to derive the exact values of the constants $M_1, M_2, M_3$, and $\beta$. Particular interest lies in the decay rate $\beta$. Namely, if   $\gamma_2 > \gamma_1\alpha$, then $\beta = \frac{\gamma_1 \alpha}{\gamma_0}$, if $\gamma_2 < \gamma_1\alpha$, then $\beta = \frac{\gamma_2}{\gamma_0}$, and if $\gamma_2 < \gamma_1\alpha$ then $\beta$ could be taken as any positive number less than $\frac{\gamma_2}{\gamma_0}$. Moreover, an obvious modification of the proof leads in this last case to the decay rate $\beta=\frac{\gamma_2}{\gamma_0}$ with a logarithmic correction.  
\end{Remark}
\begin{proof}
	As $0$ is in the resolvent of $A$ we can find $\overline{u}\in X$ the unique {\color{black} solution} of $A \overline{u} = f_0$. 
	Let $u$ be the {\color{black} solution} of \eqref{pde} with the initial data $u_0$ such that
	\begin{equation}\label{ball}
	\|Pu_0\|\geq \|P\overline{u}\| + M \left(K+\frac{C_f+\|f_0\|}{\gamma_1}\right)
		\end{equation}
		 Denote $v = u-\overline{u}$. Then $v$ solves the equation
$$
	v'(t) = Av(t) + f_1(v(t)+\overline{u}). 
	$$ 
	The problem governed by this equation has the unbounded attractor $\mathcal{J}$ and then $\overline{u} + \mathcal{J}$ is the unbounded attractor for the original problem. We denote $Pv(t) = p(t)$ and $(I-P)v(t) = q(t)$. These functions satisfy the equations 
	\begin{subequations}\label{def:pq1}
		\begin{align}
		p(t)&=e^{A(t-\tau)}p(\tau) +\int_\tau^t e^{A(t-s)}Pf_1(p(s)+q(s)+\overline{u})ds\ \ \text{for}\ \ t\geqslant \tau\ \ \label{solp(t)1}\\
		q(t)&=e^{A(t-\tau)}q(\tau) +\int_{\tau}^te^{A(t-s)}(I-P)f_1(p(s)+q(s)+\overline{u})ds\ \ \text{for}\ \ t\geqslant \tau. \label{solq(t)1}
		\end{align}
	\end{subequations}
As $\|f_1(u)\| \leq C_f + \|f_0\|$, analogously as in Section \ref{def:fbound} from \eqref{solp(t)1} we obtain
$$
e^{\gamma_1 t} \left(\frac{1}{M}\|P(u_0-\overline{u})\| - \frac{C_f+\|f_0\|}{\gamma_1}\right) \leq   \|p(t)\|\ \ \textrm{for}\ \ t\geq 0. 
$$
Then $\|p(t)+P\overline{u}\| = \| P(p(t)+q(t)+\overline{u})\| \geq K e^{\gamma_1 t} \geq K$ for $t\geq 0$. Now, from \eqref{solq(t)1} it follows that
\begin{equation}\label{qdecr}
\|q(t)\| \leq Me^{-\gamma_2 (t-\tau)}\|q(\tau)\| + Me^{-\gamma_2 t}\int_\tau^t e^{\gamma_2 s} H(\|p(s) + P\overline{u}\|)\, ds.
\end{equation}
We proceed separately for \eqref{as1} and \eqref{as2}. If \eqref{as1} holds then 
$$
\|q(t)\| \leq Me^{-\gamma_2 (t-\tau)}\|q(\tau)\| + MDe^{-\gamma_2 t}\int_\tau^t e^{\gamma_2 s} \frac{1}{\|p(s) + P\overline{u}\|^\alpha }\, ds.
$$
But since
$$
\frac{1}{\|p(s) + P\overline{u}\|^\alpha} \leq \frac{e^{-\gamma_1\alpha s}}{K^\alpha},
$$
we deduce that
$$
\|q(t)\| \leq Me^{-\gamma_2 (t-\tau)}\|q(\tau)\| + \frac{MD}{K^\alpha}e^{-\gamma_2 t}\int_\tau^t e^{(\gamma_2-\gamma_1\alpha) s}\, ds.
$$
Taking $\tau = 0$ we obtain 
$$
\|q(t)\| \leq Me^{-\gamma_2 t}\|q(0)\| + \frac{MD}{K^\alpha}e^{-\gamma_2 t}\int_0^t e^{(\gamma_2-\gamma_1\alpha) s}\, ds.
$$
Now if $\gamma_2 > \gamma_1\alpha$ then 
$$
\|q(t)\| \leq Me^{-\gamma_2 t}\|q(0)\| + \frac{MD}{K^\alpha(\gamma_2-\gamma_1\alpha)}e^{-\gamma_1\alpha t} \leq M e^{-\gamma_1\alpha t}\left( \frac{D}{K^\alpha(\gamma_2-\gamma_1\alpha)}+\|q(0)\| \right).
$$
If $\gamma_2 = \gamma_1\alpha$, then 
\begin{align*}
& \|q(t)\| \leq Me^{-\gamma_2 t}\|q(0)\| + \frac{MD}{K^\alpha}e^{-\gamma_2 t}t \leq M\max\left\{ \|q(0)\|, \frac{D}{K^\alpha}\right\}e^{-\gamma_2t}(1+t) \\
& \ \ \leq M\max\left\{ \|q(0)\|, \frac{D}{K^\alpha}\right\}\frac{e^\varepsilon}{\varepsilon e} e^{-(\gamma_2-\varepsilon)t}\ \ \textrm{for small}\ \ \varepsilon>0.
\end{align*}
Finally, if $\gamma_2 < \gamma_1\alpha$, then 
$$
\|q(t)\| \leq Me^{-\gamma_2 t}\left(\|q(0)\| + \frac{D}{K^\alpha(\gamma_1\alpha-\gamma_2)}\right).
$$
That shows the exponential attraction as $t\to \infty$ towards the finite dimensional linear manifold $\overline{u}+R(P) = \overline{u}+E^+$ for the projection $P$ of the initial data outside a large ball given by \eqref{ball}, for $f$ that satisfies (Hf) with \eqref{as1}. 
Now consider \eqref{as2}. As $H$ is nonincreasing, we deduce, for $\tau=0$ in \eqref{qdecr}
$$
\|q(t)\| \leq Me^{-\gamma_2 t}\|q(0)\| + Me^{-\gamma_2 t}\int_0^t e^{\gamma_2 s} H(Ke^{\gamma_1 s})\, ds.
$$ 
It follows that
\begin{align*}
& \|q(t)\| \leq Me^{-\gamma_2 t}\|q(0)\| + Me^{-\gamma_2 t}\int_0^t e^{\gamma_2 s} (Ke^{\gamma_1 s})^{-\frac{\gamma_2}{\gamma_1}+\varepsilon}(Ke^{\gamma_1 s})^{\frac{\gamma_2}{\gamma_1}-\varepsilon}H(Ke^{\gamma_1 s})\, ds\\
& \ \ \ \leq Me^{-\gamma_2 t}\|q(0)\| + Me^{-\gamma_2 t}(Ke^{\gamma_1 t})^{\frac{\gamma_2}{\gamma_1}-\varepsilon}H(Ke^{\gamma_1 t})\int_0^t e^{\gamma_2 s} (Ke^{\gamma_1 s})^{-\frac{\gamma_2}{\gamma_1}+\varepsilon}\, ds\\
& \ \ \ =  Me^{-\gamma_2 t}\|q(0)\| + Me^{-\gamma_2 t}e^{\gamma_2t}e^{-\varepsilon\gamma_1 t}H(Ke^{\gamma_1 t})\int_0^t e^{\varepsilon \gamma_1 s} \, ds \\
& \ \ \ \leq   M\left(e^{-\gamma_2 t}\|q(0)\| + \frac{1}{\varepsilon \gamma_1}H(Ke^{\gamma_1 t})\right).
\end{align*}
Hence, for the case \eqref{as2} we get the attraction towards the finite dimensional linear manifold $\overline{u}+R(P) = \overline{u}+E^+$ as $t\to \infty)$ this time with the rate given by  $H(Ke^{\gamma_1 t})$. 

Now let $\{u(t)\}_{t\in \mathbb{R}}$ be the {\color{black} solution} in the unbounded attractor and let $u(t) = p(t) + q(t)$. Then, by \eqref{eq:pgrow} either there exists $\tau \in \mathbb{R}$ such that $\|p(\tau)\| > \frac{MC_f}{\gamma_1}$ and then $\lim_{t\to\infty}\|p(t)\| = \infty$ or  we must have $\|p(t)\| \leq \frac{MC_f}{\gamma_1}$ for every $t \in \mathbb{R}$.  Moreover, also from \eqref{eq:pgrow}, we deduce that for every {\color{black} solution} in the unbounded attractor it must be 
$$\limsup_{\tau \to -\infty}\|p(\tau )\| \leq \frac{MC_f}{\gamma_1}. 
$$
This means that if only $u\in \mathcal{J}$ and $\|Pu\| > \|P\overline{u}\| + M \left(K+\frac{C_f+\|f_0\|}{\gamma_1}\right) + 1$ then there exists $v\in \mathcal{J}$ with  $\|Pv\| = \|P\overline{u}\| + M \left(K+\frac{C_f+\|f_0\|}{\gamma_1}\right) + 1$ and $t>0$ such that $v = S(t)u$.
From \eqref{pupper} we deduce that 
\begin{equation}\label{14}
\|Pu\| \leq Me^{\gamma_0 t}\left(\|P\overline{u}\| + M \left(K+\frac{C_f+\|f_0\|}{\gamma_1}\right) + 1 + \frac{C_f}{\gamma_0}\right),
\end{equation}
whence
$$
e^{-\gamma_0 t} \leq M\left(\|P\overline{u}\| + M \left(K+\frac{C_f+\|f_0\|}{\gamma_1}\right) + 1 + \frac{C_f}{\gamma_0}\right) \frac{1}{\|P u\|}.
$$
We have proved that in the case  \eqref{as1} there exist constants $C, \kappa>0$ such that
$$
\| (I-P) (u - \overline{u})\| \leq C e^{-\kappa t}.
$$
 Combining the two above estimates leads us to 
$$
\| (I-P) (u - \overline{u})\| \leq C M^{\frac{\kappa}{\gamma_0}}\left(\|P\overline{u}\| + M \left(K+\frac{C_f+\|f_0\|}{\gamma_1}\right) + 1 + \frac{C_f}{\gamma_0}\right)^{\frac{\kappa}{\gamma_0}} \frac{1}{\|P u\|^{\frac{\kappa}{\gamma_0}}}.
$$
On the other hand, if \eqref{as2} holds then we have proved that there exist constants $C_1, C_2 > 0 $ such that
$$
 \|(I-P) (u - \overline{u})\| \leq  C_1 e^{-\gamma_2 t} +  C_2 H(Ke^{\gamma_1 t}).
$$
Combining this bound with \eqref{14} leads to
\begin{align*}
& \|(I-P) (u - \overline{u})\| \leq  C_1 M^{\frac{\gamma_2}{\gamma_0}}\left(\|P\overline{u}\| + M \left(K+\frac{C_f+\|f_0\|}{\gamma_1}\right) + 1 + \frac{C_f}{\gamma_0}\right)^{\frac{\gamma_2}{\gamma_0}} \frac{1}{\|P u\|^{\frac{\gamma_2}{\gamma_0}}}\\
& \qquad +  C_2 H\left(KM^{-\frac{\gamma_1}{\gamma_0}}\left(\|P\overline{u}\| + M \left(K+\frac{C_f+\|f_0\|}{\gamma_1}\right) + 1 + \frac{C_f}{\gamma_0}\right)^{{-\frac{\gamma_1}{\gamma_0}}} \|Pu\|{^\frac{\gamma_1}{\gamma_0}}\right).
\end{align*}
It is not hard to check that for sufficiently large $\|Pu\|$ the second term in the above estimate dominates over the first term, which completes the proof.
\end{proof}

\subsection{The case of small Lipshitz constant. Unbounded attractors and inertial manifolds.}
In the previous section we have proved, that, provided the $(I-P)$ projection of the nonlinearity $f(u)$ tends to zero with appropriate rate as long as $\|Pu\|$ tends to infinity, the thickness of the unbounded attractor also tends to zero and it attracts bounded sets not only in bounded sets, but also on the whole space. 

In this section we consider another situation when  the attraction holds on the whole space, namely the nonlinearity $f$ is assumed to be Lipschitz with sufficiently small  constant. We will also assume that $f$ does not depend on time directly, leaving the non-autonomous case, for now, open. 

We will make the following assumption: one for the graph transform method and one for the Lyapunov--Perron method

\begin{itemize}
		\item [(Hf2)$_{\textrm{GT}\bigcup \textrm{LP}}$] The function $f:X\to X$ satisfies
	$$
	\| f(u) - f(v) \| \leq L_f \|u-v\|\ \ \textrm{for}\ \ u,v\in X,
	$$ 
	with
	\item [(Hf2)$_{\textrm{GT}}$] either $M=1$ and $L_f < \begin{cases}&  \frac{\gamma_1\gamma_2}{\gamma_1+\gamma_2} \ \ \textrm{if}\ \ \gamma_1 \geq \gamma_2,\\
		&\frac{\gamma_1+\gamma_2}{4}\ \  \textrm{if}\ \ \gamma_1 \leq \gamma_2,\end{cases}$ for the graph transform method,
	\item [(Hf2)$_{\textrm{LP}}$] or $M\geq 1$ is arbitrary and the following three inequalities hold with some $\kappa > 0$ \begin{align}
		&  L_f \leq \frac{\gamma_1+\gamma_2}{M} \frac{\kappa}{(M+\kappa)(1+\kappa)},\qquad  L_f < \frac{\gamma_1+\gamma_2}{M} \frac{1}{2(1+\kappa)}, \label{first}\\
	& \frac{M^2L_f}{\gamma_2+\gamma_1 - 2ML_f(1+\kappa)} + \frac{M^2L_f}{\gamma_2+\gamma_1 - ML_f(1+\kappa)} < 1,\label{second} \\
	& ML_f+\frac{M^2L_f^2 (1+\kappa)(1+M)}{\gamma_1 + \gamma_2 - ML_f(1+\kappa)} < \gamma_2.\label{third}
	\end{align}for the Lyapunov--Perron method.
\end{itemize}
	
We will prove that under one of the above assumptions, i.e. if the Lipschitz constant $L_f$ is sufficiently small, then all {\color{black} solutions} are exponentially attracted to a graph of a certain function $\Sigma:E^+\to E^-$, the inertial manifold. We will construct it independently by the two methods: the geometric approach by means of the graph transform (see, for example \cite{mallet}, or the monograph \cite{robinson} for its realization for the dissipative case) and the analytic approach by the Lyapunov--Perron method (see \cite{Henry, car-lan,CLMO-S}). 
  Although the graph transform method works only if $M=1$ in \eqref{eq:hyper}, and the Lyapunov--Perron method works for arbitrary $M \geq 1$ we present both of them, due to the intuitive geometric construction and higher bound present in the graph transform approach.  
\begin{Remark}
	In (Hf2)$_{GT}$ we have presented the explicit bound for $L_f$, while in (Hf2)$_{LP}$ it is given implicitly, and the restrictions involve $\kappa$, the Lipschitz constant of $\Sigma$. Clearly for a given fixed $\kappa > 0$ the value $L_f = 0$ satisfies all bounds, so one can find maximal $L_{f}(\kappa, \gamma_1, \gamma_2, M)$ such that all three bounds hold for $L_f \in [0,L_{f}(\kappa, \gamma_1, \gamma_2, M))$. The next goal is to find $\kappa = \kappa(\gamma_1, \gamma_2, M)$ such that $L_{f}(\kappa(\gamma_1, \gamma_2, M), \gamma_1, \gamma_2, M) = \max_{\kappa > 0} L_{f}(\kappa, \gamma_1, \gamma_2, M)$. performed this optimization for $M=1$ for several values of $\gamma_1, \gamma_2$. The obtained bound for $L_f$ is presented in the following table.   
	\begin{table}[h!]
		\begin{center}
			\begin{tabular}{ |c| c| c| c|}
				\hline
				& $\gamma_1 = 0.1$ & $\gamma_1 = 1$  & $\gamma_1= 10$ \\
				\hline
				$\gamma_2 = 0.1$ & $L_f^{LP} = 0.044, L_f^{GT} = 0.050$ & $L_f^{LP} = 0.084, L_f^{GT} = 0.090$ & $L_f^{LP} = 0.098, L_f^{GT} = 0.099$\\ 
				$\gamma_2 = 1$ &$L_f^{LP} = 0.242, L_f^{GT} = 0.275$ & $L_f^{LP} = 0.441, L_f^{GT} = 0.500$ & $L_f^{LP} = 0.845, L_f^{GT} = 0.909$ \\  
				$\gamma_2 = 10$ & $L_f^{LP} = 2.228, L_f^{GT} = 2.525$ & $L_f^{LP} = 2.427, L_f^{GT} = 2.750$ & $L_f^{LP} = 4.413, L_f^{GT} = 5.000$\\
				\hline    
			\end{tabular}
		\end{center}
	\end{table}

Visibly, the bound of the graph transform method is always higher, which means that it allows to choose the Lipschitz constant with more freedom.  We stress, however, that we expect that all the obtained bounds are still not sharp: according to  \cite{Zelik, romanov} the sharp bound cannot be higher than $\frac{\gamma_1+\gamma_2}{2}$, and this value can be obtained with the use of the energy inequalities. 
	\end{Remark}
We begin with a lemma. 
\begin{Lemma}
	Assume that there exists a Lipschitz function $\Sigma:E^+\to E^-$ and the constant $\delta>0$ such that for every bounded set $B\in \mathcal{B}(X)$ we can find a constant $C(B) > 0$ with 
	\begin{equation}\label{eq:attr_inert}
	\mathrm{dist} (S(t)B, \mathrm{graph}\, \Sigma) \leq C(B) e^{-\delta t}\ \ \textrm{for every}\ \ t\geq 0.
	\end{equation}
	Then $\mathcal{J} =  \mathrm{graph}\, \Sigma$, and the attraction by the unbounded attractor is exponential 
	$$
		\mathrm{dist} (S(t)B,\mathcal{J}) \leq C(B) e^{-\delta t}\ \ \textrm{for every}\ \ t\geq 0.
	$$
\end{Lemma}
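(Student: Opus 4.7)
The plan is to exploit the characterization of $\mathcal{J}$ as the maximal invariant set (Theorem \ref{coincide}) together with the fact that $P\mathcal{J}=E^+$ (Lemma \ref{lemma:211}), which are already available under the standing assumptions (H1)--(H3) verified in Section \ref{sec:setup}. The proof splits naturally into two inclusions plus a straightforward conclusion.

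First, I would note that $\mathrm{graph}\,\Sigma$ is closed in $X$: if $p_n+\Sigma(p_n)\to p+q$ in $X=E^+\oplus E^-$, then $p_n\to p$ in $E^+$ by continuity of $P$, and $\Sigma(p_n)\to \Sigma(p)$ by continuity of $\Sigma$, whence $q=\Sigma(p)$. Then I would prove the inclusion $\mathcal{J}\subset \mathrm{graph}\,\Sigma$ as follows. Pick $u\in\mathcal{J}=\mathcal{I}$, and let $\gamma:\R\to X$ be a global solution through $u$ with $\gamma(0)=u$ and $B_0:=\gamma((-\infty,0])$ bounded. For every $t\geq 0$ we have $u=S(t)\gamma(-t)\in S(t)B_0$, so by the hypothesis
$$
\mathrm{dist}(u,\mathrm{graph}\,\Sigma) = \mathrm{dist}(S(t)\gamma(-t),\mathrm{graph}\,\Sigma)\leq C(B_0)\,e^{-\delta t}\to 0,
$$
and closedness of $\mathrm{graph}\,\Sigma$ yields $u\in \mathrm{graph}\,\Sigma$.

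Next, I would handle the reverse inclusion $\mathrm{graph}\,\Sigma\subset \mathcal{J}$ using Lemma \ref{lemma:211}. For any $p\in E^+$, that lemma provides some $q\in E^-$ with $p+q\in\mathcal{J}$. By the first inclusion $p+q\in\mathrm{graph}\,\Sigma$, so $q=\Sigma(p)$. Therefore $p+\Sigma(p)\in\mathcal{J}$ for every $p\in E^+$, which is exactly $\mathrm{graph}\,\Sigma\subset\mathcal{J}$. Combining both inclusions gives $\mathcal{J}=\mathrm{graph}\,\Sigma$, and substituting this equality into the hypothesis \eqref{eq:attr_inert} yields the asserted exponential attraction
$$
\mathrm{dist}(S(t)B,\mathcal{J})\leq C(B)\,e^{-\delta t} \quad \text{for every } t\geq 0.
$$

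I do not expect any genuine obstacle: once the auxiliary facts that $P\mathcal{J}=E^+$ and that $\mathrm{graph}\,\Sigma$ is closed are in place, both inclusions are one-line arguments. The only subtlety worth mentioning is that the first inclusion requires applying the attraction hypothesis to the particular bounded set $B_0=\gamma((-\infty,0])$, which is legitimate because the hypothesis is stated for all bounded sets $B\in\mathcal{B}(X)$ without extra qualifiers, and such an orbit segment is bounded by definition of $\mathcal{I}$.
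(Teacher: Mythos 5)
Your proof is correct and follows essentially the same route as the paper: the paper obtains $\mathcal{J}\subset\mathrm{graph}\,\Sigma$ by citing the minimality clause of Theorem \ref{th:autonomus1} (whose own proof is exactly your backward-orbit argument through a point of $\mathcal{I}=\mathcal{J}$), and then upgrades the inclusion to equality using the nonemptiness of the fibres $\Phi(p)$, i.e.\ Lemma \ref{lemma:211}, just as you do. The only difference is that you inline the minimality argument and explicitly record the closedness of $\mathrm{graph}\,\Sigma$, which the paper leaves implicit.
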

\begin{proof}
	By the minimality in Theorem \ref{th:autonomus1} we must have
	$$
	\mathcal{J} \subset \textrm{graph} \Sigma.
	$$
	But, as by Lemma \ref{multi1} the set $\mathcal{J}$ coincides with a graph of a multifunction $\Psi$ function over $E^+$ with nonempty values, it follows that for every $p$ there exists exactly one $q$ such that $p+q = \mathcal{J}$. Hence it must be $\mathcal{J} = \textrm{graph}\, \Sigma$, the unbounded attractor coincides with the graph of the Lipshitz function over $E^+$. In other words, multivalued inertial manifold is single-valued and Lipschitz.   
\end{proof}
\begin{Remark}\label{rem:graph}
	In fact in the next two subsections we prove more than attraction of $S(t)B$ by the graph of $\Sigma$, i.e. we demonstrate, using the two methods, that if $u_0\in B \in \mathcal{B}(X)$, then 
	$$
	\| \Sigma (PS(t)u_0) - (I-P)S(t)u_0 \| \leq C(B)e^{-\delta t}.
	$$
	Clearly, this estimate implies \eqref{eq:attr_inert}. 
\end{Remark}

\begin{Remark}
	We actually do not need $f$ to be Lipschitz with the constant satisfying the bound on given in (Hf2) on the whole space. Indeed, the proof  that uses the graph transform method in Subsection \ref{graph} uses only the lipschitzness of $f$ on $Q$, the bounded in $E^-$ absorbing set (which is included in an infinite strip in $E^+$ given by $\{ \|(I-P)x\|\leq D_2 \}$) and the proof that uses the Lyapunov--Perron method in Subsection \ref{perron} needs only the lipschitzness of $f$ on a certain infinite in $E^+$ strip $\{ \|(I-P)x \| \leq \textrm{CONST} \}$. Hence for both methods it is sufficient if $f$ is Lipschitz with sufficiently small constant on a certain infinite strip in $E^+$ which is bounded in $E^-$.        
\end{Remark}
\subsubsection{Graph transform: case of $M=1$.}\label{graph} The key concept behind the graph transform are cones: we will say that points $u_1 = p_1 + q_1$ and $u_2 = p_2 + q_2$ are in the positive cone with respect to each other if $\|q_1-q_2\| \leq \kappa\|p_1-p_2\|$. If the opposite inequality $\|q_1-q_2\| > \kappa\|p_1-p_2\|$ holds, then the points $u_1, u_2$  are in the negative cone with respect to each other.

\begin{Lemma}\label{lem:cones}
	Assume, in addition to assumptions of Section \ref{sec:setup} that (Hf2)$_{\textrm{GT}\cup \textrm{LP}}$ holds with $$L_f < \frac{\kappa}{(1+\kappa)^2}(\gamma_1 + \gamma_2) \ \ \textrm{with a constant}\ \ \kappa > 0.$$ If $u_1, u_2 \in Q$ are in the positive cone with respect to each other, then $S(t)u_1, S(t)u_2$ are also in the positive cone with respect to each other. 
	\end{Lemma}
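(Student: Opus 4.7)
The plan is a first-exit-time argument at the heart of the graph-transform method, combined with one-sided derivative bounds read off from the Duhamel formulas. Set $U(t)=S(t)u_1-S(t)u_2$, $\alpha(t)=\|PU(t)\|$, $\beta(t)=\|(I-P)U(t)\|$, and assume toward contradiction that the cone is exited at some positive time. Since $\phi:=\beta-\kappa\alpha$ is continuous with $\phi(0)\leq 0$, the infimum $t_*=\inf\{t\geq 0:\phi(t)>0\}$ satisfies $\phi(t_*)=0$ together with the existence of a sequence $s_n\downarrow t_*$ along which $\phi(s_n)>0$. The degenerate possibility $\alpha(t_*)=0$ forces $\beta(t_*)=0$ and hence $u_1\equiv u_2$ by forward uniqueness for \eqref{pde}; so I may assume $\alpha(t_*)>0$.

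The core step is to compare $\alpha$ and $\beta$ just past $t_*$ through the Duhamel formulas \eqref{def:pq}, exploiting $M=1$. The bound for $\beta$ is standard: applying $I-P$ to the difference of the two Duhamel representations starting at $t_*$, and using $\|e^{Ar}(I-P)\|\leq e^{-\gamma_2 r}$ together with $\|f(u_1(s))-f(u_2(s))\|\leq L_f\|U(s)\|\leq L_f(\alpha(s)+\beta(s))$, yields
\[
\beta(t_*+h)\leq e^{-\gamma_2 h}\beta(t_*)+L_f(1+\kappa)\alpha(t_*)\,h+o(h).
\]
The matching \emph{lower} bound for $\alpha$ is the main obstacle, since Duhamel formulas typically produce only upper bounds. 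I factor $e^{Ah}$ out of the mild formula, writing
\[
PU(t_*+h)=e^{Ah}\Bigl[PU(t_*)+\int_{t_*}^{t_*+h}e^{A(t_*-s)}P\bigl(f(u_1(s))-f(u_2(s))\bigr)\,ds\Bigr],
\]
estimate the integrand via the third inequality of \eqref{eq:hyper_0} (applicable because $t_*-s\leq 0$), and then use invertibility of $e^{Ar}$ on the finite-dimensional $E^+$: applying the same third inequality backwards gives $\|v\|=\|e^{-Ar}e^{Ar}v\|\leq e^{-\gamma_1 r}\|e^{Ar}v\|$ for $v\in E^+$, $r\geq 0$, i.e. the sharp lower bound $\|e^{Ar}v\|\geq e^{\gamma_1 r}\|v\|$. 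Combining these gives
\[
\alpha(t_*+h)\geq e^{\gamma_1 h}\alpha(t_*)-L_f(1+\kappa)\alpha(t_*)\,h+o(h).
\]

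Substituting $\beta(t_*)=\kappa\alpha(t_*)$ and subtracting $\kappa$ times the second estimate from the first, the leading constants cancel and one obtains
\[
\beta(t_*+h)-\kappa\alpha(t_*+h)\leq h\,\alpha(t_*)\bigl[L_f(1+\kappa)^2-\kappa(\gamma_1+\gamma_2)\bigr]+o(h),
\]
where the bracket is strictly negative by the hypothesis $L_f<\kappa(\gamma_1+\gamma_2)/(1+\kappa)^2$. Hence $\phi(t_*+h)<0$ for all sufficiently small $h>0$, contradicting the existence of $s_n\downarrow t_*$ with $\phi(s_n)>0$. The delicate point is precisely the lower estimate for $\alpha$: only the sharp exponent $\gamma_1$ (rather than the crude $\gamma_0$ available from the first inequality of \eqref{eq:hyper_0}) yields the symmetric threshold $\kappa(\gamma_1+\gamma_2)/(1+\kappa)^2$, and extracting it uses both $M=1$ and the finite-dimensionality of the unstable space $E^+$.
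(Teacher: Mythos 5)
Your proof is correct and follows essentially the same route as the paper's: a first-crossing-time argument at the cone boundary, an upper Duhamel bound on the $E^-$ difference with rate $-\gamma_2$, a lower bound on the $E^+$ difference with rate $\gamma_1$ (crucially using $M=1$), and an infinitesimal comparison at the crossing time producing the threshold $\kappa(\gamma_1+\gamma_2)/(1+\kappa)^2$. The only cosmetic differences are that the paper obtains the lower bound on $\|p_1-p_2\|$ by running the Duhamel formula backwards and rearranging (rather than factoring out $e^{Ah}$ and inverting the group on the finite-dimensional $E^+$), and it takes the last crossing before the assumed exit rather than the first crossing, concluding via the mean value theorem for integrals instead of a first-order expansion in $h$.
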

\begin{proof}
	We will denote $p_1(s) = PS(s)u_1$, $p_2(s) = PS(s)u_2$, $q_1(s) = (I-P)S(s)u_1$, $q_2(s) = (I-P)S(s)u_2$
	Assume, for contradiction, that $S(t)u_1, S(t)u_2$ are in the negative cone with respect to each other. Then there exists $\tau \in [0,t)$ such that $\|q_1(\tau) - q_2(\tau)\| = \kappa\|p_1(\tau) - p_2(\tau)\|$ and $\|q_1(s) - q_2(s)\| >  \kappa \|p_1(s) - p_2(s)\|$ for $s\in (\tau,t]$. We deduce from \eqref{solp(t)} and \eqref{solq(t)} that 	
	\begin{subequations}\label{def:pq2a}
		\begin{align}
			p_1(s)-p_2(s)&=e^{A(s-\tau)}(p_1(\tau)-p_2(\tau)) +\int_\tau^s e^{A(s-r)}P(f(p_1(r)+q_1(r))-f(p_2(r)+q_2(r)))dr,\label{15a}\\
			q_1(s)-q_2(s)&=e^{A(s-\tau)}(q_1(\tau)-q_2(\tau)) +\int_{\tau}^se^{A(s-r)}(I-P)(f(p_1(r)+q_1(r))-f(p_2(r)+q_2(r)))dr. 
		\end{align}
	\end{subequations}
for every $s\in (\tau,t]$. We need to estimate the difference of $q$ from above and difference of $p$ from below. Using \eqref{eq:hyper} we directly obtain
$$
\|q_1(s)-q_2(s)\|\leq e^{-\gamma_2(s-\tau)}\|q_1(\tau)-q_2(\tau)\| +L_f\int_{\tau}^s e^{-\gamma_2(s-r)}(\|p_1(r)-p_2(r)\| + \|q_1(r)-q_2(r)\|)dr.
$$
Moreover, \eqref{15a} implies that
$$	p_1(\tau)-p_2(\tau)=e^{A(\tau-s)}(p_1(s)-p_2(s)) +\int_s^\tau e^{A(\tau-r)}P(f(p_1(r)+q_1(r))-f(p_2(r)+q_2(r)))dr,
$$
whence, by \eqref{eq:hyper},
$$	\|p_1(\tau)-p_2(\tau)\|\leq e^{\gamma_1(\tau-s)}\|p_1(s)-p_2(s)\| +L_f\int_\tau^s e^{\gamma_1(\tau-r)}(\|p_1(r)-p_2(r)\|+\|q_1(r)-q_2(r)\|)dr.
$$
This means that
$$	e^{\gamma_1(s-\tau)}\|p_1(\tau)-p_2(\tau)\|-L_f\int_\tau^s e^{\gamma_1(s-r)}(\|p_1(r)-p_2(r)\|+\|q_1(r)-q_2(r)\|)dr\leq \|p_1(s)-p_2(s)\| .
$$
Combining the two bounds yields
\begin{align*}
& e^{\gamma_1(s-\tau)}\kappa \|p_1(\tau)-p_2(\tau)\|-L_f\kappa \int_\tau^s e^{\gamma_1(s-r)}(\|p_1(r)-p_2(r)\|+\|q_1(r)-q_2(r)\|dr\\
& < e^{-\gamma_2(s-\tau)}\|q_1(\tau)-q_2(\tau)\| +L_f\int_{\tau}^s e^{-\gamma_2(s-r)}(\|p_1(r)-p_2(r)\| + \|q_1(r)-q_2(r)\|)dr.
\end{align*}
Using the fact that $\|q_1(\tau) - q_2(\tau)\| = \kappa \|p_1(\tau) - p_2(\tau)\|$ and $\|q_1(s) - q_2(s)\| > \kappa \|p_1(s) - p_2(s)\|$ for $s\in (\tau,t]$ we deduce
$$
\left(e^{\gamma_1(s-\tau)}-e^{-\gamma_2(s-\tau)}\right) \|q_1(\tau)-q_2(\tau)\| <  L_f\int_\tau^s (\kappa e^{\gamma_1(s-r)}+e^{-\gamma_2(s-r)})\left(1+\frac{1}{\kappa}\right)\|q_1(r)-q_2(r)\|dr.
$$
Let $s=\tau+h$ for $h\in [0,t-\tau)$. Then
$$
\left(e^{\gamma_1h}-e^{-\gamma_2h}\right) \|q_1(\tau)-q_2(\tau)\| <  \left(1+\frac{1}{\kappa}\right)L_f\int_\tau^{\tau+h} (\kappa e^{\gamma_1(\tau+h-r)}+e^{-\gamma_2(\tau+h-r)})\|q_1(r)-q_2(r)\|dr.
$$
Using the mean value theorem for integrals we deduce that there exists $\theta(h) \in (0,1)$ such that
$$
\frac{e^{\gamma_1h}-e^{-\gamma_2h}}{h} \|q_1(\tau)-q_2(\tau)\| <  \left(1+\frac{1}{\kappa}\right)L_f   (\kappa e^{\gamma_1((1-\theta)h)}+e^{-\gamma_2((1-\theta)h)})\|q_1(\tau+\theta h)-q_2(\tau+\theta h)\|.
$$
Passing with $h$ to zero yields
$$
(\gamma_1 + \gamma_2) \|q_1(\tau)-q_2(\tau)\| \leq  \frac{(1+\kappa)^2}{\kappa}L_f   \|q_1(\tau)-q_2(\tau)\|,
$$
a contradiction since we must have $q_1(\tau) \neq q_2(\tau)$. 
\end{proof}
\begin{Remark}
	The fact that $M=1$ is required only in the above lemma which establishes the invariance of the positive cones. The rest of the argument on this section does not require this restriction on $M$. Hence, if it would be possible to obtain the invariance of the positive cone for $M>1$, the graph trasform argument would work for the general $M$. For an ODE case this could be obtain by introducing the appropriate change of coordinates both in (then finite dimensional) spaces $E^+$ and $E^-$. For our case such change of coordinates is possible only in the finite dimensional space $E^+$. Hence we leave the question if it is possible to adapt the graph transform argument to the case of general $M$, for now, open. 
\end{Remark}

Now, choose $\Sigma_0:E^+\to E^-$, the lipshitz function with the constant $\kappa$ and denote the graph of $\Sigma_0$ as $X\supset \textrm{graph}\, \Sigma_0=\{ p+\Sigma_0(p)\,:\  p\in E^+ \}$.

\begin{Lemma}
	Assume that $\Sigma_0:E^+\to E^-$ is a continuous function with $\mathrm{graph}\, \Sigma_0 \subset Q$, the absorbing and positively invariant set bounded in $E^-$. For every $t>0$ and for every $p\in E^+$ there exists $q\in E^-$ such that $p+q \in S(t)(\mathrm{graph}\, \Sigma_0)$.
	\end{Lemma}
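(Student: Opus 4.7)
The plan is to follow the Brouwer degree argument used in Lemma \ref{lemma:211}, adapted to the graph setting. For fixed $t > 0$ and an arbitrary $p \in E^+$, I would reduce the problem to showing that the continuous map
$$
\Phi : E^+ \to E^+,\qquad \Phi(p_0) = P S(t)(p_0 + \Sigma_0(p_0)),
$$
is surjective. This map is well defined and continuous because $p_0 \mapsto p_0 + \Sigma_0(p_0)$ is continuous into $X$, $S(t):X\to X$ is continuous, and so is $P$. Finding a $q\in E^-$ with $p+q\in S(t)(\mathrm{graph}\,\Sigma_0)$ is equivalent to finding $p_0\in E^+$ with $\Phi(p_0)=p$, and then taking $q = (I-P)S(t)(p_0+\Sigma_0(p_0))$.

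To control the target value $p$, I use (H2). By item 1 of (H2) choose $R \geq R_1$ large enough that $\|p\| < S(R)$ (so in particular $\|p\| < R+1$). Define the open ball $B = \{ x \in E^+ : \|x\| < R+1\}$ in $E^+$, so that $p\in B$ and $\deg(I,B,p) = 1$ by \cite[Theorem 1.2.6 (1)]{Agarwal}. I then define the homotopy
$$
H : [0,1]\times \overline{B} \to E^+,\qquad H(\theta,p_0) = P S(\theta t)(p_0 + \Sigma_0(p_0)),
$$
which is continuous and satisfies $H(0,\cdot) = I$ and $H(1,\cdot) = \Phi$.

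The key step is to verify that $p \notin H(\theta,\partial B)$ for every $\theta \in [0,1]$, so that the homotopy invariance of the Brouwer degree (\cite[Theorem 1.2.6 (3)]{Agarwal}) applies. For $p_0 \in \partial B$ we have $\|p_0\|=R+1>R$, hence by item 2 of (H2) the point $p_0 + \Sigma_0(p_0) \notin H_R$. Since $\mathrm{graph}\,\Sigma_0 \subset Q$ by hypothesis, we have $p_0+\Sigma_0(p_0)\in Q\setminus H_R$. Item 3 of (H2) then yields $S(\theta t)(p_0+\Sigma_0(p_0))\in Q\setminus H_R$ for every $\theta\in[0,1]$, and item 1 of (H2) (read contrapositively) forces
$$
\|H(\theta,p_0)\| = \|P S(\theta t)(p_0+\Sigma_0(p_0))\| > S(R) > \|p\|,
$$
so indeed $H(\theta,p_0)\neq p$. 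The homotopy invariance of the degree then gives $\deg(\Phi,B,p) = \deg(I,B,p) = 1$, and by \cite[Theorem 1.2.6 (2)]{Agarwal} there exists $p_0 \in B$ with $\Phi(p_0) = p$. Setting $q = (I-P) S(t)(p_0 + \Sigma_0(p_0))$ completes the proof. The only potential subtlety is making sure the positive invariance of $Q\setminus H_R$ in (H2) is applicable to the point $p_0 + \Sigma_0(p_0)$, which is precisely where the assumption $\mathrm{graph}\,\Sigma_0\subset Q$ is used; no Lipschitz or further regularity properties of $\Sigma_0$ are required, only its continuity.
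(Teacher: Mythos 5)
Your proof is correct and follows essentially the same route as the paper: the degree-theoretic homotopy $H(\theta,p_0)=PS(\theta t)(p_0+\Sigma_0(p_0))$ combined with (H2) to exclude the boundary, exactly as in the paper's adaptation of Lemma \ref{lemma:211}. If anything, your verification that $p\notin H(\theta,\partial B)$ (via $p_0+\Sigma_0(p_0)\in Q\setminus H_R$ and the contrapositive of item 1 of (H2)) is spelled out more explicitly than in the paper, which is a welcome clarification rather than a deviation.
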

	\begin{proof}
		The proof follows the lines of the argument in Lemma \ref{lemma:211}. Choose $p\in E^+$ and $t>0$ and consider the continuous mapping $H(\theta,p) = PS(\theta t)(p+\Sigma_0(p))$. Then $H(0,p) = p$. Now we will construct the appropriate set on which we will consider the degree. Since  (H2) holds, there exists $R\geq R_0$ such that $\{x\in E^+ \,:\ \|x\|\leq \|p\|+1 \} \subset H_R$. Define $B = \{ x\in E^+\, :\ \|x\| < R + 1\}$, and since, by (H2) again, $p \in B$ it follows that $\textrm{deg}(H(0,\cdot),B,p) = 1$. As, by (H2), $\partial B = \{ x\in E^+\, :\ \|x\| = R + 1 \} \subset Q\setminus H_R$ we deduce that $p \not\in H(\theta,\partial B)$ for every $\theta \in [0,1]$. Hence, by the homotopy invariance of the Brouwer degree,  we deduce that $\textrm{deg}(PS( t)(p+\Sigma _0(p)),B,p) = 1$, whence there exists $ \overline{p}\in B$ such that $PS(t)(\overline{p}+\Sigma_0(\overline{p})) = p$. It is enough to take $q = (I-P))PS(t)(\overline{p}+\Sigma_0(\overline{p}))$ to get the assertion of the lemma. 
	\end{proof}

 Previous two lemmas imply that if $\Sigma_0$ is a $\kappa$-lipschitz function with a graph in $Q$, then for every $t$ there exists the $\kappa$-lipschitz function $\Sigma_t:E^+\to E^-$ such that
$$
S(t)(\textrm{graph}\, \Sigma_0) = \{ p+\Sigma_t(p)\,:\ p\in E^+ \} = \textrm{graph}\, \Sigma_t. 
$$

\begin{Lemma}
	Assume that $\Sigma_0:E^+\to E^-$ is $\kappa$-Lipschitz with $\textrm{graph}\, \Sigma_0 \subset Q$, the absorbing and positively invariant set bounded in $E^-$. Moreover assume that 
	$$
	L_f\left(1+\frac{1}{\kappa}\right) < \gamma_2. 
	$$ Then there exist constants $\delta>0$ and $C>0$ such that for every $p\in E^+$ and every $t_1 > t_2 >0$ we have
	 $$
	\|\Sigma_{t_1}(p) - \Sigma_{t_2}(p) \| \leq Ce^{-\delta t_2}.
	$$
	In consequence there exists a $\kappa$-Lipschitz function $\Sigma:E^+\to E^-$ such that $\Sigma_t(p) \to \Sigma(p)$ as $t\to \infty$ for every $p\in E^+$. 
	\end{Lemma}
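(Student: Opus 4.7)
The plan is to compare the two graphs by exploiting the semigroup identity $S(t_1)=S(t_2)S(t_1-t_2)$, which gives
$$
\mathrm{graph}\,\Sigma_{t_1} \;=\; S(t_1)(\mathrm{graph}\,\Sigma_0) \;=\; S(t_2)(\mathrm{graph}\,\Sigma_{t_1-t_2}).
$$
Since $Q$ is positively invariant, $\mathrm{graph}\,\Sigma_t\subset Q$ for every $t\geq 0$, so in particular $\|\Sigma_t(\cdot)\|\leq D_2$. Fix $p\in E^+$. By the previous lemma there exist unique $\bar p,\hat p\in E^+$ with $u_1:=\bar p+\Sigma_{t_1-t_2}(\bar p)\in \mathrm{graph}\,\Sigma_{t_1-t_2}$ and $u_2:=\hat p+\Sigma_0(\hat p)\in \mathrm{graph}\,\Sigma_0$ such that $S(t_2)u_1 = p+\Sigma_{t_1}(p)$ and $S(t_2)u_2 = p+\Sigma_{t_2}(p)$. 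Both preimages lie in $Q$, so $\|(I-P)(u_1-u_2)\|\leq 2D_2$.

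The crucial step is that at time $t_2$ both evolved points have the same $P$-projection $p$, so if they do not coincide they lie in the negative cone with respect to each other at time $t_2$. Applying Lemma~\ref{lem:cones} (in its contrapositive form, with initial time $s\in[0,t_2]$ and final time $t_2$), since $S(s)u_1,S(s)u_2\in Q$, we conclude that they are in the negative cone with respect to each other for \emph{every} $s\in[0,t_2]$, that is
$$
\|P(S(s)u_1-S(s)u_2)\|\leq \frac{1}{\kappa}\,\|(I-P)(S(s)u_1-S(s)u_2)\|,\qquad s\in[0,t_2].
$$
Setting $\phi(s)=\|(I-P)(S(s)u_1-S(s)u_2)\|$ this gives $\|S(s)u_1-S(s)u_2\|\leq (1+\tfrac{1}{\kappa})\phi(s)$.

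Subtracting the Duhamel formulas \eqref{solq(t)} for $u_1$ and $u_2$, using $M=1$ in \eqref{eq:hyper} and the Lipschitz bound on $f$, yields
$$
\phi(t)\leq e^{-\gamma_2 t}\phi(0)+L_f\!\left(1+\tfrac{1}{\kappa}\right)\!\int_0^t e^{-\gamma_2(t-s)}\phi(s)\,ds.
$$
Multiplying by $e^{\gamma_2 t}$ and applying Gronwall gives $\phi(t)\leq \phi(0)\,e^{-\delta t}$ with $\delta:=\gamma_2 - L_f(1+\tfrac{1}{\kappa})>0$, which is positive by the standing assumption. Combined with $\phi(0)\leq 2D_2$ this gives
$$
\|\Sigma_{t_1}(p)-\Sigma_{t_2}(p)\|=\phi(t_2)\leq 2D_2\,e^{-\delta t_2},
$$
uniformly in $p\in E^+$ and $t_1>t_2$, which is the desired Cauchy-type estimate with $C=2D_2$.

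Uniform convergence then produces a function $\Sigma:E^+\to E^-$ with $\Sigma_t\to\Sigma$ as $t\to\infty$. Since each $\Sigma_t$ is $\kappa$-Lipschitz, the pointwise limit $\Sigma$ is also $\kappa$-Lipschitz. The main technical point is the second paragraph: correctly passing from ``same $P$-component at time $t_2$'' to ``negative cone throughout $[0,t_2]$'' via the contrapositive of Lemma~\ref{lem:cones}, applied at every intermediate initial time with the positive invariance of $Q$ ensuring the hypothesis of the cone lemma remains satisfied. The rest is a standard Gronwall estimate.
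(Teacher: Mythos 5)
Your proposal is correct and follows essentially the same route as the paper: identify preimages of the two graph points under $S(t_2)$, note that the images share the $P$-component so are in the negative cone, propagate the negative cone backwards via the contrapositive of the cone-invariance lemma, and close with the Duhamel formula for the $(I-P)$-component plus Gronwall, yielding the decay rate $\delta=\gamma_2-L_f(1+\tfrac1\kappa)$. Your explicit bound $\phi(0)\leq 2D_2$ (giving $C=2D_2$) just makes precise what the paper leaves implicit.
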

\begin{proof}
	There exists $p_1, p_2 \in E^+$  such that
	$(p+\Sigma_{t_1}(p)) = S(t_2)(p_1+\Sigma_{t_1-t_2}(p_1))$ and $(p+\Sigma_{t_2}(p)) = S(t_2)(p_2+\Sigma_{0}(p_2))$.
	Assume that $\Sigma_{t_1}(p) \neq \Sigma_{t_2}(p)$. This means that points $(p,\Sigma_{t_1}(p))$ and $(p,\Sigma_{t_2}(p))$ are in the negative cone with respect to each other and hence, by Lemma \ref{lem:cones} for every $t\in[0,t_2]$ we have
	$$
	\| (I-P)(S(t)(p_1+\Sigma_{t_1-t_2}(p_1)) - S(t)(p_2+\Sigma_{0}(p_2))) \| > \kappa \|P(S(t)(p_1+\Sigma_{t_1-t_2}(p_1)) - S(t)(p_2+\Sigma_{0}(p_2)))\|.
	$$ 
	From the Duhamel formula \eqref{solq(t)} we obtain 
	\begin{align*}
& 	(I-P)(S(t)(p_1+\Sigma_{t_1-t_2}(p_1))-(S(t)(p_2+\Sigma_{0}(p_2)))) =e^{At}(\Sigma_{t_1-t_2}(p_1)-\Sigma_0(p_2)) \\
& \ \ \  +\int_{0}^{t}e^{A(t-r)}(I-P)(f(S(t)(p_1+\Sigma_{t_1-t_2}(p_1))-f(S(t)(p_2+\Sigma_{0}(p_2))))dr
	\end{align*}
	It follows that
	\begin{align*}
	& 	\|(I-P)(S(t)(p_1+\Sigma_{t_1-t_2}(p_1))-(S(t)(p_2+\Sigma_{0}(p_2))))\|\leq e^{-\gamma_2 t_2}\|\Sigma_{t_1-t_2}(p_1)-\Sigma_0(p_2)\| \\
	& \ \ \  +L_f\left(1+\frac{1}{\kappa}\right) \int_{0}^{t_2}e^{-\gamma_2 (t_2-r)}\|(I-P)(S(t)(p_1+\Sigma_{t_1-t_2}(p_1))-(S(t)(p_2+\Sigma_{0}(p_2))))\|dr.
	\end{align*}
	We are in position to use the Gronwall lemma, whence
	 $$
	 \|(I-P)(S(t)(p_1+\Sigma_{t_1-t_2}(p_1))-(S(t)(p_2+\Sigma_{0}(p_2))))\| \leq \|\Sigma_{t_1-t_2}(p_1)-\Sigma_0(p_2)\|e^{L_f\left(1+\frac{1}{\kappa}\right) t},
	 $$
	 for $t\in [0,t_2]$.
	 In consequence 
	 $$
	 \|\Sigma_{t_1}(p) - \Sigma_{t_2}(p) \| \leq \|\Sigma_{t_1-t_2}(p_1)-\Sigma_0(p_2)\|e^{\left(-\gamma_2 + L_f\left(1+\frac{1}{\kappa}\right)\right) t_2},
	 $$
	 which yields the assertion. 
\end{proof}
We denote the graph of $\Sigma$ by $\textrm{graph}\, \Sigma = \{ p+\Sigma(p)\, :\ p\in E^+ \}$. 

\begin{Lemma}
	Let $B\in \mathcal{B}(X)$ be a bounded set. There exists a constant $C(B)$ and $\delta>0$ such that
	$$
	\textrm{dist}(S(t)B, \mathrm{graph}\, \Sigma) \leq C(B) e^{-\delta t}.
	$$
	\end{Lemma}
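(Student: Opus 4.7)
The plan is to compare, for each fixed $t>0$, the trajectory $u(s):=S(s)u_0 = p(s)+q(s)$ with an auxiliary trajectory $v(s):=S(s)v_0$ starting on $\mathrm{graph}\,\Sigma_0$, where $v_0 = \overline{p}_0 + \Sigma_0(\overline{p}_0)$ is chosen so that $Pv(t)=Pu(t)=:p(t)$; such $v_0$ exists by the Brouwer-degree argument of the previous lemma, and automatically $\overline{q}(t):=(I-P)v(t)=\Sigma_t(p(t))$. The target quantity is then $\|(I-P)(u(t)-v(t))\| = \|q(t)-\Sigma_t(p(t))\|$. First I would reduce to the case $u_0\in Q$ using the absorption property (H1), at the price of shifting $t$ by $t_0(B)$ and absorbing this into $C(B)$.

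The core of the argument is a dichotomy driven by Lemma \ref{lem:cones}. Either the pair $(u(t),v(t))$ lies in the positive cone, in which case $\|q(t)-\Sigma_t(p(t))\| \leq \kappa\|p(t)-\overline{p}(t)\| = 0$, so $S(t)u_0$ already lies on $\mathrm{graph}\,\Sigma_t$; or $(u(t),v(t))$ lies in the negative cone, and by the contrapositive of cone invariance (positive cone propagates forward, so negative at $t$ forces negative for all earlier $s\in[0,t]$) one has
$$
\|u(s)-v(s)\| \leq \left(1+\tfrac{1}{\kappa}\right)\|q(s)-\overline{q}(s)\| \quad \text{for all } s\in[0,t].
$$
In this second case I would write the Duhamel formula \eqref{solq(t)} for $q-\overline{q}$, bound the integrand by $L_f\,\|u(s)-v(s)\|$, and apply Gronwall to obtain
$$
\|q(t)-\Sigma_t(p(t))\| \leq \|q_0-\overline{q}_0\|\, e^{-(\gamma_2 - L_f(1+1/\kappa))\,t}.
$$
Since $u_0\in Q$ and $v_0\in\mathrm{graph}\,\Sigma_0\subset Q$ both satisfy $\|(I-P)\cdot\|\leq D_2$, we have $\|q_0-\overline{q}_0\|\leq 2D_2$, and the standing hypothesis $L_f(1+1/\kappa)<\gamma_2$ makes the exponent strictly negative.

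To close the argument, I would combine the previous estimate with the exponential convergence $\|\Sigma_t(p)-\Sigma(p)\|\leq Ce^{-\delta' t}$ for every $p\in E^+$ (obtained by letting $t_1\to\infty$ in the preceding lemma's bound on $\|\Sigma_{t_1}(p)-\Sigma_{t_2}(p)\|$) via the triangle inequality, yielding
$$
\mathrm{dist}(S(t)u_0,\mathrm{graph}\,\Sigma) \leq \|q(t)-\Sigma(p(t))\| \leq C(B)\,e^{-\delta t}
$$
with $\delta:=\min\{\gamma_2-L_f(1+1/\kappa),\,\delta'\}>0$. The principal obstacle I anticipate is the cone bookkeeping, but the unilateral nature of Lemma \ref{lem:cones} rules out any mixed regime and renders the dichotomy particularly clean; once this is in place, the rest is a standard Duhamel/Gronwall calculation followed by a one-line triangle inequality, matching also the stronger pointwise bound anticipated in Remark \ref{rem:graph}.
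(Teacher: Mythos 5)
Your proposal is correct and follows essentially the same route as the paper: reduce to $u_0\in Q$, use the degree lemma to find a trajectory starting on an earlier graph whose $P$-projection matches $p(t)$ at time $t$, invoke the negative-cone dichotomy from the cone-invariance lemma, and close with the Duhamel/Gronwall estimate on the $(I-P)$-difference. The only (immaterial) difference is that you compare against $\Sigma_t$ and then pass to $\Sigma$ via the Cauchy-type bound of the preceding lemma, whereas the paper compares directly against $\Sigma_s$ for $s\geq t$ with a bound uniform in $s$ and lets $s\to\infty$.
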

\begin{proof}
	Without loss of generality we may assume that $B\subset Q$. Choose $u_0 = p_0+q_0 \in B$. Denote $p(r) = PS(r)u_0$ and $q(r) = (I-P)S(r)u_0$ for $r\geq 0$.  Now fix $t\geq 0$ and let $s\geq t$. We need to estimate the difference
	$
	\Sigma_s(p(t)) - q(t). 
	$
	To this end observe that there exists a point $p_1 + \Sigma_{s-t}(p_1) \in \textrm{graph}\, \Sigma_{s-t}$ such that $S(t)(p_1 + \Sigma_{s-t}(p_1)) = p(t) + \Sigma_s(p(t))$. Then either $\Sigma_s(p(t)) = q(t)$, or, if it is not the case, then points $p(t) + \Sigma_s(p(t))$ and $p(t) + q(t)$ are at their negative cones and hence
	$$
	\| (I-P)S(r)(p_1 + \Sigma_{s-t}(p_1)) - q(r)\| > \kappa \| P S(r)(p_1 + \Sigma_{s-t}(p_1)) - p(r) \|\ \ \textrm{for}\ \ r\in [0,t].
	$$
	We use the Duhamel formula \eqref{solq(t)}, whence, proceeding analogously to the argument in the proof of the previous Lemma, we obtain
\begin{align*}
	&\|(I-P)S(r)(p_1 + \Sigma_{s-t}(p_1)) - q(r)\|\\ & 
\ \ \leq e^{-\gamma_2 r} \|\Sigma_{s-t}(p_1) - q_0\| + L_f\left(1+\frac{1}{\kappa}\right)\int_0^r e^{-\gamma_2 (r-\tau)}\|(I-P)S(\tau)(p_1 + \Sigma_{s-t}(p_1)) - q(\tau)\|\, d\tau. 
	\end{align*}
	Now the Gronwall lemma implies that 
	$$
	\|(I-P)S(r)(p_1 + \Sigma_{s-t}(p_1)) - q(r)\| \leq e^{\left(-\gamma_2 + L_f\left(1+\frac{1}{\kappa}\right)\right)r}\|\Sigma_{s-t}(p_1) - q_0\|\ \ \textrm{for every}\ \ r\in [0,t].
	$$
	In particular 
	$$
	\|\Sigma_s(p(t)) - q(r)\| \leq e^{\left(-\gamma_2 + L_f\left(1+\frac{1}{\kappa}\right)\right)t}\|\Sigma_{s-t}(p_1) - q_0\| \leq C(B) e^{-\delta t}.
	$$
	We can pass with $s$ to infinity, whence
	$$
	\|\Sigma(p(t)) - q(r)\| \leq  C(B) e^{-\delta t},
	$$
	which implies the assertion. 
\end{proof}

\begin{Remark}
We have two bounds on $L_f$. One that follows from the condition on the invariance of the cones
$$
L_f < \frac{\kappa}{(1+\kappa)^2}(\gamma_1+\gamma_2),
$$ 
and the second one from the condition of the attraction of bounded sets by the limit graph
$$
L_f < \frac{\kappa}{1+\kappa}\gamma_2. 
$$
Hence we need
$$
L_f < \min\left\{ \frac{\kappa}{1+\kappa}\gamma_2,  \frac{\kappa}{(1+\kappa)^2}(\gamma_1+\gamma_2) \right\}.
$$
A simple calculation shows, that if $\kappa \geq \frac{\gamma_1}{\gamma_2}$ then $\frac{\kappa}{1+\kappa}\gamma_2 \geq \frac{\kappa}{(1+\kappa)^2}(\gamma_1+\gamma_2)$ and if $\kappa \leq \frac{\gamma_1}{\gamma_2}$ then $\frac{\kappa}{1+\kappa}\gamma_2 \leq \frac{\kappa}{(1+\kappa)^2}(\gamma_1+\gamma_2)$. It is then enough to maximize $\frac{\kappa}{(1+\kappa)^2}(\gamma_1+\gamma_2)$ over the interval $\kappa \in [\frac{\gamma_1}{\gamma_2},\infty)$ and
 $\frac{\kappa}{1+\kappa}\gamma_2$ over the interval $\kappa \in [0,\frac{\gamma_1}{\gamma_2}]$. Now
 $$
 \max_{\kappa \in [0,\frac{\gamma_1}{\gamma_2}]} \frac{\kappa}{1+\kappa}\gamma_2 = \frac{\gamma_1\gamma_2}{\gamma_1+\gamma_2}, 
 $$
 and 
 $$
 \max_{\kappa \in [\frac{\gamma_1}{\gamma_2},\infty)} \frac{\kappa}{(1+\kappa)^2}(\gamma_1+\gamma_2) = \begin{cases}& \frac{\gamma_1\gamma_2}{\gamma_1+\gamma_2} \ \ \textrm{if}\ \ \frac{\gamma_1}{\gamma_2} \geq 1,\\
 &\frac{\gamma_1+\gamma_2}{4}\ \  \textrm{if}\ \ \frac{\gamma_1}{\gamma_2} \leq 1.
 \end{cases} 
 $$
 This means that
 $$
 \max_{\kappa\in [0,\infty)} \min\left\{ \frac{\kappa}{1+\kappa}\gamma_2,  \frac{\kappa}{(1+\kappa)^2}(\gamma_1+\gamma_2) \right\} = \begin{cases}& \frac{\gamma_1\gamma_2}{\gamma_1+\gamma_2} \ \ \textrm{if}\ \ \frac{\gamma_1}{\gamma_2} \geq 1,\\
 &\frac{\gamma_1+\gamma_2}{4}\ \  \textrm{if}\ \ \frac{\gamma_1}{\gamma_2} \leq 1.
 \end{cases} 
 $$
 This motivates (Hf2)$_{\textrm{GT}}$. In other words, if $\gamma_1\geq \gamma_2$, the optimal upper bound on $L_f$ is given by the half of the harmonic mean of these numbers, and in the opposite case it is given by the half of their arithmetic mean.  
\end{Remark}
\subsubsection{Lyapunov--Perron method: case of general $M\geq 1$.}\label{perron}
The arguments to obtain the results of this subsection very closely follow the lines of the proof of \cite[Theorem 2.1]{CLMO-S} where they are done in a genaral, non-autonomous   setup for the case or arbitrary gap, not necessarily the gap at which the real parts of the eigenvalues change sign. We repeat some parts of the arguments after \cite[Theorem 2.1]{CLMO-S} only for the sake of exposition completeness, and to make clear how the limitations on $L_f$ in (Hf2)$_{\textrm{LP}}$ enter the argument.  We will consider the 
metric space 
\begin{align*}
& \mathcal{LB}(\kappa) = \left\{ \Sigma\in C(\mathbb{R}\times E^+; E^-)\,:\ \sup_{t\in \mathbb{R}}\|\Sigma(t,p_1)-\Sigma(t,p_2)\|\leq \kappa \|p_1-p_2\|,\right.\\
& \qquad  \left.\Sigma(t,0) = 0\ \textrm{for every}\ t\in \mathbb{R}\ \ \textrm{and}\ \ \|\Sigma(t,p)\|\leq \frac{2MC_f}{\gamma_2}\ \textrm{for every}\ t\in \R, p\in E^+ \right\},
\end{align*}
equipped with the metric $\|\Sigma_1-\Sigma_2\|_{\mathcal{LB}} = \sup_{t\in\mathbb{R}}\sup_{p\in P, p\neq 0}\frac{\|\Sigma_1(t,p) - \Sigma_2(t,p)\|}{\|p\|}$.  

We need to translate the {\color{black} solution} of the problem in order to guarantee that the nonlinearity is zero for a zero argument. We know that $\mathcal{J}_b$, the family of global and bounded {\color{black} solutions} in the unbounded attractor is nonempty, let $\{ \overline{u}(t)\}_{t\in \mathbb{R}} \in \mathcal{J}_b$. Then, if the function $u(t)$ is the {\color{black} solution} of the original problem, the function $v(t) = u(t)-\overline{u}(t)$ solves the translated problem
governed by the equation
\begin{align}\label{eq:v}
v'(t) = Av(t) + f(v(t) + \overline{u}(t))-f(\overline{u}(t)).
\end{align}
Denoting $g(t,v) = f(v + \overline{u}(t))-f(\overline{u}(t))$ observe that $g(t,0) = 0$ and $g(t,\cdot)$ is Lipschitz with the same constant as $f$. Note that the problem is now non-autonomous, if we assume that the equation $-A\overline{u} = f(\overline{u})$ has a {\color{black} solution}, then we could translate $u(t)$ by this {\color{black} solution} and obtain the autonomous translated problem, which would make the argument simpler. We proceed without this assumption. Using the representation $Pv(t) = p(t)$ and $(I-P)v(t) = q(t)$ we have the following Duhamel formulas 
\begin{subequations}\label{def:pq2}
	\begin{align}
		p(t)&=e^{A(t-\tau)}p(\tau) +\int_\tau^t e^{A(t-s)}Pg(s,p(s)+q(s))ds\ \ \textrm{for}\ \ t\geqslant \tau,\label{solp(t)2}\\
		q(t)&=e^{A(t-\tau)}q(\tau) +\int_{\tau}^te^{A(t-s)}(I-P)g(s,p(s)+q(s))ds\ \ \textrm{for}\ \ t\geqslant \tau. \label{solq(t)2}
	\end{align}
\end{subequations}
For a given $\Sigma \in \mathcal{LB}(\kappa)$ and $p_0\in E^+$ it is possible to solve backwards in time the following ODE
\begin{equation}\label{eq:backward}
p'(t) = Ap(t) + g(t,p(t)+\Sigma(t,p(t)))\ \textrm{for}\ t\in (-\infty,\tau]\ \textrm{with}\ p(\tau) = p_0, 
\end{equation}
and the {\color{black} solution} satisfies \eqref{solp(t)2} with $q(s) = \Sigma(s,p(s))$. We define the following Lyapunov--Perron function, $G$, which, as we will show in the next lemma, is a contraction on $\mathcal{LB}(\kappa)$ for sufficiently small $L_f$.
$$
G(\Sigma)(\tau,p_0) = \int_{-\infty}^\tau e^{A(\tau-s)} (I-P)g(s,p(s)+\Sigma(s,p(s))\, ds.
$$
\begin{Lemma}\label{lem:25}
		Assume (Hf2)$_{\textrm{GT}}$ in addition to assumptions of Section \ref{sec:setup}. The mapping $G:\mathcal{LB}(\kappa)\to \mathcal{LB}(\kappa)$ is a contraction and hence has a unique fixed point $\Sigma^*\in \mathcal{LB}(\kappa)$.
\end{Lemma}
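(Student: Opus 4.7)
The plan is to verify in turn that $G$ is well-defined, that $G(\mathcal{LB}(\kappa))\subset\mathcal{LB}(\kappa)$, and finally that $G$ is a strict contraction in the $\|\cdot\|_{\mathcal{LB}}$ metric; the unique fixed point then comes from the Banach contraction principle. The preliminary step is to establish that for any $\Sigma\in\mathcal{LB}(\kappa)$, $\tau\in\R$ and $p_0\in E^+$, the backward ODE \eqref{eq:backward} admits a unique solution $p:(-\infty,\tau]\to E^+$. Since $A$ restricted to the finite dimensional $E^+$ is bounded and $g(s,v)$ together with $\Sigma(s,\cdot)$ are globally Lipschitz, the Picard theorem yields local existence, and since $g(s,0)=0$ and $\Sigma(s,0)=0$, applying $\|\cdot\|$ to the integral form and using $\|g(s,p+\Sigma(s,p))\|\le L_f(1+\kappa)\|p\|$ produces, via backward Gronwall, the a priori bound
\[
\|p(s)\|\le M\|p_0\|\,e^{(\gamma_1-ML_f(1+\kappa))(s-\tau)},\qquad s\le\tau,
\]
which, under condition \eqref{first} (guaranteeing positivity of the exponent in the non-reversed time), precludes blow-up and gives a globally defined backward trajectory. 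The same bound shows that the integrand of $G$ is integrable and that $G(\Sigma)$ is well-defined.

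To verify $G(\Sigma)\in\mathcal{LB}(\kappa)$, I will check the four defining properties. Continuity of $G(\Sigma)$ in $(\tau,p_0)$ follows from continuous dependence of the backward solution on initial data and dominated convergence under the exponential weight just obtained. The identity $G(\Sigma)(\tau,0)=0$ is immediate: by uniqueness, $p_0=0$ forces $p\equiv 0$, so the integrand vanishes because $g(s,0)=0$. Boundedness uses $\|g(s,\cdot)\|\le 2C_f$ (from $\|f\|\le C_f$) and \eqref{eq:hyper}:
\[
\|G(\Sigma)(\tau,p_0)\|\le \int_{-\infty}^{\tau}Me^{-\gamma_2(\tau-s)}\cdot 2C_f\,ds=\frac{2MC_f}{\gamma_2}.
\]
For the Lipschitz bound, fix $p_0^1,p_0^2\in E^+$ and let $p^1,p^2$ denote the two backward trajectories. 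Subtracting their integral equations and applying the backward Gronwall argument sketched above yields $\|p^1(s)-p^2(s)\|\le M\|p_0^1-p_0^2\|e^{(\gamma_1-ML_f(1+\kappa))(s-\tau)}$; then estimating the difference of the $G$-integrals produces
\[
\|G(\Sigma)(\tau,p_0^1)-G(\Sigma)(\tau,p_0^2)\|\le \frac{M^2L_f(1+\kappa)}{\gamma_1+\gamma_2-ML_f(1+\kappa)}\|p_0^1-p_0^2\|,
\]
and the first inequality in \eqref{first} was arranged precisely so that the coefficient is $\le\kappa$.

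For the contraction step, take $\Sigma_1,\Sigma_2\in\mathcal{LB}(\kappa)$ and, with the same $p_0$ and $\tau$, denote by $p_1,p_2$ the corresponding backward solutions. The crucial decomposition is
\[
\|\Sigma_1(s,p_1(s))-\Sigma_2(s,p_2(s))\|\le \kappa\|p_1(s)-p_2(s)\|+\|\Sigma_1-\Sigma_2\|_{\mathcal{LB}}\|p_2(s)\|,
\]
which, after plugging into the integral equation for $p_1-p_2$ and applying backward Gronwall (using the a priori bound on $\|p_2(s)\|$), yields an estimate of the form
\[
\|p_1(s)-p_2(s)\|\le C_1\|\Sigma_1-\Sigma_2\|_{\mathcal{LB}}\|p_0\|\,e^{(\gamma_1-2ML_f(1+\kappa))(s-\tau)}.
\]
Inserting this and the decomposition above into the difference $G(\Sigma_1)(\tau,p_0)-G(\Sigma_2)(\tau,p_0)$, using $\|e^{A(\tau-s)}(I-P)\|\le Me^{-\gamma_2(\tau-s)}$ and splitting into two exponential integrals, one obtains after computing both integrals an estimate
\[
\frac{\|G(\Sigma_1)(\tau,p_0)-G(\Sigma_2)(\tau,p_0)\|}{\|p_0\|}\le \left(\frac{M^2L_f}{\gamma_1+\gamma_2-2ML_f(1+\kappa)}+\frac{M^2L_f}{\gamma_1+\gamma_2-ML_f(1+\kappa)}\right)\|\Sigma_1-\Sigma_2\|_{\mathcal{LB}},
\]
and condition \eqref{second} is exactly what is needed to ensure the bracketed coefficient is strictly less than $1$, giving the required contraction. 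The main technical obstacle is precisely this last step: tracking the two distinct Lipschitz mechanisms (perturbation of the trajectory versus perturbation of the graph function) and combining them into a single Gronwall estimate without losing the exponential decay backward in time; conditions \eqref{first} and \eqref{second} are tailored to keep both of these exponents strictly positive.
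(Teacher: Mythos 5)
Your proposal is correct and follows essentially the same route as the paper: the same backward Gronwall a priori bounds on $p$ and on differences of trajectories, the same decomposition $\|\Sigma_1(s,p^1)-\Sigma_2(s,p^2)\|\leq\kappa\|p^1-p^2\|+\|\Sigma_1-\Sigma_2\|_{\mathcal{LB}}\|p^2\|$, and the same two coefficients controlled by \eqref{first} and \eqref{second}. The only differences are organizational (the paper runs a single combined estimate with both $p_0^1\neq p_0^2$ and $\Sigma_1\neq\Sigma_2$, which you split into a Lipschitz step and a contraction step) and your added routine checks of backward well-posedness, $G(\Sigma)(\tau,0)=0$, and continuity, which the paper leaves implicit.
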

\begin{proof}
For a given $p_0\in E^+$ and $\Sigma\in \mathcal{LB}(\kappa)$  we observe, that since  $\|g(s,u)\| \leq 2 C_f$ for every $s\in \R$ and $u\in X$, then 
	$$
	\|G(\Sigma)(\tau,p_0)\| \leq  \frac{2M C_f}{\gamma_2}\ \ \textrm{for every}\ \ \tau\in \R.
	$$
We now estimate the growth of $p$ in \eqref{eq:backward}. Fix $\tau \in \R$.  From \eqref{solp(t)2} and from \eqref{eq:hyper} we obtain for $t\leq \tau$
$$
\|p(t)\| \leq Me^{\gamma_1(t-\tau)}\|p_0\| + ML_f(1+\kappa)\int_{t}^\tau e^{\gamma_1(t-s)} \|p(s)\|\, ds.  
$$
The Gronwall lemma implies that
\begin{equation}\label{pgrowth}
\|p(t)\| \leq M  e^{(ML_f(1+\kappa)-\gamma_1)(\tau-t)} \|p_0\|.
\end{equation}
Take $p_0^1, p_0^2 \in E^+$ and $\Sigma_1, \Sigma_2 \in \mathcal{LB}(\kappa)$. We denote the {\color{black} solution} of \eqref{eq:backward} corresponding to $p_0^1$ taken at time $\tau$ and $\Sigma_1$ by $p^1$, and the one corresponding to $p_0^2$ taken at time $\tau$ and $\Sigma_2$ by $p^2$. Then 
\begin{align*}
	& \|G(\Sigma_1)(\tau,p_0^1) - G(\Sigma_2)(\tau,p_0^2)\| \leq M\int_{-\infty}^\tau e^{\gamma_2 (s-\tau)}L_f (\|p^1(s)-p^2(s)\| + \|\Sigma_1(p^1(s))-\Sigma_2(p^2(s)\|)\, ds \\
	& \ \ \ \leq M L_f \left((1+\kappa)\int_{-\infty}^\tau e^{\gamma_2 (s-\tau)} \|p^1(s)-p^2(s)\|\, ds\right.  +\\
	& \qquad \qquad \left.M \|\Sigma_1 - \Sigma_2\|_{\mathcal{LB}} \|p_0^2\| \int_{-\infty}^\tau  e^{(ML_f(1+\kappa)-\gamma_1-\gamma_2)(\tau-s)} \, ds\right) \\
	& \ \ \ \leq M^2 L_f\|\Sigma_1 - \Sigma_2\|_{\mathcal{LB}}\|p_0^2\| \int_{-\infty}^\tau  e^{(ML_f(1+\kappa)-\gamma_1-\gamma_2)(\tau-s)} \, ds\\
	& \qquad \qquad  + ML_f(1+\kappa) \int_{-\infty}^\tau e^{\gamma_2 (s-\tau)} \|p^1(s)-p^2(s)\| \, ds . 
\end{align*}
Note from the second bound in \eqref{first} we have $2ML_f(1+\kappa) < \gamma_1+\gamma_2$. It follows that 
\begin{align}
	& \|G(\Sigma_1)(\tau,p_0^1) - G(\Sigma_2)(\tau,p_0^2)\|\label{eq:g} \\
	& \ \ \ \leq \frac{M^2 L_f \|p_0^2\|}{\gamma_1+\gamma_2-ML_f(1+\kappa)}\|\Sigma_1 - \Sigma_2\|_{\mathcal{LB}} + ML_f(1+\kappa) \int_{-\infty}^\tau e^{\gamma_2 (s-\tau)} \|p^1(s)-p^2(s)\| \, ds .\nonumber 
\end{align}
On the other hand, from \eqref{solp(t)2} and \eqref{eq:hyper} we deduce that 
\begin{align*}
	& \|p^1(s)-p^2(s)\|\leq M e^{\gamma_1(s-\tau)}\|p^1_0-p_0^2\| \\ & \qquad +M\int_s^\tau e^{\gamma_1(s-r)}(\|g(r,p^1(r)+\Sigma_1(r,p^1(r)))-g(r,p^2(r)+\Sigma_2(r,p^2(r)))\|dr,
\end{align*}
whence
\begin{align*}
& 	\|p^1(s)-p^2(s)\|\leq M e^{\gamma_1(s-\tau)}\|p^1_0-p_0^2\| +ML_f\int_s^\tau e^{\gamma_1(s-r)}((1+\kappa)\|p^1(r)-p^2(r)\|\\
& \qquad +\|\Sigma_1-\Sigma_2\|_{\mathcal{LB}(\kappa)}\|p^2(r)\|)dr.
\end{align*}
After a straightforward calculation which uses \eqref{pgrowth} it follows that
\begin{align*}
	& \|p^1(s)-p^2(s)\|\leq M e^{\gamma_1(s-\tau)}\|p^1_0-p_0^2\| +\frac{M}{1+\kappa}\|\Sigma_1-\Sigma_2\|_{\mathcal{LB} }\|p_0^2\| e^{(ML_f(1+\kappa)-\gamma_1)(\tau-s)} \\
	& \ \ \ +  ML_f(1+\kappa)\int_s^\tau  e^{\gamma_1(s-r)}\|p^1(r)-p^2(r)\|dr.
\end{align*}	
The Gronwall inequality implies 
\begin{align*}
	& \|p^1(s)-p^2(s)\|\leq M e^{(ML_f(1+\kappa)-\gamma_1)(\tau-s)}\|p^1_0-p_0^2\| +\frac{M}{1+\kappa}\|\Sigma_1-\Sigma_2\|_{\mathcal{LB} }\|p_0^2\| e^{(2ML_f(1+\kappa)-\gamma_1)(\tau-s)}.
\end{align*}	
Substituting this last inequality to \eqref{eq:g} we deduce
\begin{align*}
	& \|G(\Sigma_1)(\tau,p_0^1) - G(\Sigma_2)(\tau,p_0^2)\| \leq \frac{M^2L_f(1+\kappa)}{\gamma_1+\gamma_2 - ML_f(1+\kappa)} \|p^1_0-p_0^2\|\\
	& \ \ \  +  \left(\frac{M^2L_f}{\gamma_2+\gamma_1 - 2ML_f(1+\kappa)} + \frac{M^2L_f}{\gamma_2+\gamma_1 - ML_f(1+\kappa)}\right)\|\Sigma_1-\Sigma_2\|_{\mathcal{LB} }\|p_0^2\|.\nonumber 
\end{align*}
Hence, we need 
$$\frac{M^2L_f(1+\kappa)}{\gamma_1+\gamma_2 - ML_f(1+\kappa)} \leq \kappa, \frac{M^2L_f}{\gamma_2+\gamma_1 - 2ML_f(1+\kappa)} + \frac{M^2L_f}{\gamma_2+\gamma_1 - ML_f(1+\kappa)} < 1,
$$
the first inequality is needed for the Lipschitz condition on $G(\Sigma)$ with a constant $\kappa$ and the second one for the contraction on $G$. The second inequality is exactly \eqref{second}. We can rewrite the first condition as 
$$
L_f \leq \frac{\gamma_1+\gamma_2}{M} \frac{\kappa}{(M+\kappa)(1+\kappa)},
$$
the first bound in \eqref{first}, and the proof is complete. 
\end{proof}

\begin{Remark}\label{rem:kappa}
	Different to the framework of unbounded attractors, in the general framework of inertial manifolds, where we have freedom to choose the gap, the last restriction in  (Hf2)$_{\textrm{LP}}$, namely \eqref{third}, needed for the exponential attraction by the Lipschitz graph, in contrast to \eqref{first} and \eqref{second}, is usually easy to obtain by taking the sufficiently high eigenvalue at the gap. Hence it makes sense to compare the bound on $L_f$ which follows only from \eqref{first} and \eqref{second} between the graph transform and Lyapunov--Perron methods. As in \cite{CLMO-S}, the quantity on the left hand side of \eqref{second}  is estimated as follows
\begin{equation}\label{eq:estnonsharp}
 \frac{M^2L_f}{\gamma_2+\gamma_1 - 2ML_f(1+\kappa)} + \frac{M^2L_f}{\gamma_2+\gamma_1 - ML_f(1+\kappa)} \leq \frac{2M^2L_f}{\gamma_2+\gamma_1 - 2ML_f(1+\kappa)}.
\end{equation}
 Hence, the sufficient condition for \eqref{second} is
 $$
 {2M^2L_f} < \gamma_2+\gamma_1 - 2ML_f(1+\kappa),
 $$
 which is equivalent to
 $$
  L_f < \frac{\gamma_1+\gamma_2}{M}\frac{1}{2(M+1+\kappa)}.
 $$
 We need to take the lowest of this bound and the ones that follow from \eqref{first}, i.e. 
 \begin{align*}
 & L_f < \frac{\gamma_1+\gamma_2}{M} \min\left\{\frac{1}{2(M+1+\kappa)},  \frac{\kappa}{(M+\kappa)(1+\kappa)},  \frac{1}{2(1+\kappa)}\right\} \\
 & \qquad = \frac{\gamma_1+\gamma_2}{M} \min\left\{\frac{1}{2(M+1+\kappa)},  \frac{\kappa}{(M+\kappa)(1+\kappa)}\right\}.
 \end{align*}
 Comparison of the two functions of $\kappa$ reveals that there exists $\kappa_0(M)$ such that for $\kappa\in (0,\kappa_0)$ the second bound is sharper, while for $\kappa>\kappa_0$ the first one is sharper. Moreover the first bound is a strictly decreasing function of $\kappa$. If $\kappa=\kappa_0$ both bounds coincide. Moreover, the $\kappa$ at which the second bound achieves maximum is always larger than $\kappa_0$. Hence we need to take $\kappa = \kappa_0$ for the optimal bound. This means that the optimal choice of $\kappa$ from the point of view of \eqref{first} and \eqref{second} (with the non-sharp bound \eqref{eq:estnonsharp})) is
 $$
 \kappa = \frac{2M}{M+1+\sqrt{(M+1)^2+4M}},
 $$

	We have obtained the restriction on $L_f$ of type $G(M) L_f < \gamma_1 + \gamma_2$. The similar restriction $\max\{M^2+2M+\sqrt{8M^3},3M^2+2M\} L_f < \gamma_1+\gamma_2$  is obtained in \cite{CLMO-S} but our bound, although the path to obtain it is exactly the same as in  \cite{CLMO-S}, is a little bit sharper due to the optimization with respect to $\kappa$, as the following table shows (note, however, that improved bounds are given in \cite[Remark 2.4]{CLMO-S}). The numbers in the table are the lower bounds for $\frac{\gamma_1+\gamma_2}{L_f}$. Note that for the graph transform, the best possible bound in Lemma \ref{lem:cones} is obtained for $\kappa=1$. 
	We expect that all presented bounds are still not sharp: the last column gives the sharp bound for $M=1$ obtained in \cite{Zelik, romanov} with the use of the energy inequalities.  
	\begin{table}[h!]
		\begin{center}
			\begin{tabular}{ |c| c| c| c|c|}
				\hline
				 & Bound of \cite{CLMO-S} & Remark \ref{rem:kappa}  & Graph transform & Sharp bound of \cite{Zelik, romanov}\\
				\hline
				$M=1$ & $5.829 $ & $4.829$ & $4 $ & $2 $\\ 
				$M=2$ & $16 $ & $14.247$ & - & -\\  
				$M=4$ & $56 $ & $45.613 $ & - & -\\
				large $M$ & $(3M^2 + o(M^2)) $ & $(2M^2 + o(M^2)) $ & - & -\\
				\hline    
			\end{tabular}
		\end{center}
	\end{table}

This reveals, that while Lyapunov--Perron method with the chosen metrics $\mathcal{LB}(\kappa)$ can be used for any $M\geq 1$ it gives the bounds which are tentatively nonoptimal, and the graph transform method, which needs the invariance of the cones and hence can be used  only for $M=1$, gives the better bound. Note that in one step of the proof, namely in \eqref{eq:estnonsharp} we used the rough estimate $\gamma_2+\gamma_1 - ML_f(1+\kappa) > \gamma_2+\gamma_1 - 2ML_f(1+\kappa)$. The  numerical calculation of the bound for $M=1$ without
 this estimate leads to the optimal value $\kappa = 1/2$ and the bound $4.5 L_f < \gamma_1+\gamma_2$ which is better than the one in the above table but still more rough than the one from the graph transform.
	\end{Remark}

Now each point $w = Pw + (I-P)w$ in the phase space lies 'over' some point in the graph of $\Sigma^*(t,\cdot)$, this point is given by $Pw + \Sigma^*(t,Pw)$. So, for the {\color{black} solution} $v(t)$ (we denote $p(t) = Pv(t)$ and $q(t) = (I-P)v(t)$) one can define its vertical distance in $E^-$ from $\Sigma^*(t,\cdot)$ as 
$\|\xi(t)\|$ where
\begin{equation}\label{eq:xit}
\xi(t) = (I-P)v(t) - \Sigma^*(t,Pv(t)).
\end{equation}
We prove that this distance exponentially tends to zero as the difference between the initial and final time tends to infty, uniformly on bounded sets of initial data. Here, since the argument formula by formula exactly follows the lines of the corresponding proof of \cite[Theorem 2.1]{CLMO-S}, we only state the result.
\begin{Lemma}\label{lem:26}
	Assume (Hf2)$_{\textrm{GT}}$ in addition to assumptions of Section \ref{sec:setup} and let $v:[t_0,\infty)\to X$ be the {\color{black} solution} of \eqref{eq:v}. 
	If $\xi(t)$ is given by \eqref{eq:xit}, then 
	$$
	\|\xi(t)\| \leq M\|\xi(\tau)\| e^{(t-\tau)\left( -\gamma_2 + ML_f+\frac{M^2L_f^2 (1+\kappa)(1+M)}{\gamma_1 + \gamma_2 - ML_f(1+\kappa)}\right)} \ \ \textrm{for}\ \ t\geq \tau \geq t_0.
	$$
\end{Lemma}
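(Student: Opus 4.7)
The plan is to follow the blueprint of \cite{CLMO-S}: I would compare the actual trajectory $v(s)=p(s)+q(s)$ of \eqref{eq:v} with an auxiliary trajectory $\tilde v(s)$ lying entirely on the graph of the inertial manifold $\Sigma^*$ and meeting the vertical line through $p(t)$ exactly at time $t$. The merit of this construction is that the invariance of $\mathrm{graph}\,\Sigma^*$ built into the fixed-point equation $\Sigma^*=G(\Sigma^*)$ from Lemma \ref{lem:25} makes $\tilde v$ a genuine mild solution of \eqref{eq:v}, so subtracting the Duhamel formulas for $v$ and $\tilde v$ will yield a closed integral identity for $\xi(t)$.

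Concretely, I would first construct $\tilde p:(-\infty,t]\to E^+$ as the unique backward solution of
\begin{equation*}
\tilde p'(s)=A\tilde p(s)+Pg(s,\tilde p(s)+\Sigma^*(s,\tilde p(s))),\qquad \tilde p(t)=p(t),
\end{equation*}
whose existence is guaranteed by the Lipschitz continuity of $\Sigma^*$ and $g$, and set $\tilde v(s)=\tilde p(s)+\Sigma^*(s,\tilde p(s))$. Subtracting the Duhamel formulas between $\tau$ and $t$ and projecting onto $E^-$, together with $\tilde p(t)=p(t)$, produces
\begin{equation*}
\xi(t)=e^{A(t-\tau)}\bigl(q(\tau)-\Sigma^*(\tau,\tilde p(\tau))\bigr)+\int_\tau^t e^{A(t-\sigma)}(I-P)\bigl(g(\sigma,v(\sigma))-g(\sigma,\tilde v(\sigma))\bigr)\,d\sigma.
\end{equation*}
Applying $\|e^{A(t-s)}(I-P)\|\le Me^{-\gamma_2(t-s)}$, the Lipschitz bounds on $g$ and $\Sigma^*$, and the decomposition $\|v(\sigma)-\tilde v(\sigma)\|\le(1+\kappa)\psi(\sigma)+\|\xi(\sigma)\|$ with $\psi(\sigma):=\|p(\sigma)-\tilde p(\sigma)\|$ gives
\begin{equation*}
\|\xi(t)\|\le Me^{-\gamma_2(t-\tau)}\bigl(\|\xi(\tau)\|+\kappa\psi(\tau)\bigr)+ML_f\int_\tau^t e^{-\gamma_2(t-\sigma)}\bigl((1+\kappa)\psi(\sigma)+\|\xi(\sigma)\|\bigr)d\sigma.
\end{equation*}
The analogous projection of the difference onto $E^+$, using $\tilde p(t)=p(t)$ and $\|e^{A(\sigma-r)}P\|\le Me^{-\gamma_1(r-\sigma)}$ for $\sigma\le r$, produces the Volterra inequality
\begin{equation*}
\psi(\sigma)\le ML_f\int_\sigma^t e^{-\gamma_1(r-\sigma)}\bigl((1+\kappa)\psi(r)+\|\xi(r)\|\bigr)dr.
\end{equation*}

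The remaining and most delicate step will be to eliminate $\psi$ from the inequality for $\|\xi\|$. The resolvent of the homogeneous Volterra kernel $ML_f(1+\kappa)e^{-\gamma_1(r-\sigma)}$ sums explicitly to $ML_f(1+\kappa)e^{(ML_f(1+\kappa)-\gamma_1)(r-\sigma)}$, allowing me to bound $\psi$ linearly in $\|\xi\|$. Substituting the resulting expression back and swapping orders of integration, the crucial computation is the identity
\begin{equation*}
\int_\tau^r e^{-\gamma_2(t-\sigma)-\gamma_1(r-\sigma)}\,d\sigma \le \frac{e^{-\gamma_2(t-r)}}{\gamma_1+\gamma_2},
\end{equation*}
or more precisely its refined analogue with the extra $e^{(ML_f(1+\kappa)-\gamma_1)(r-\sigma)}$ weight, which replaces the denominator $\gamma_1+\gamma_2$ by the sharper $\gamma_1+\gamma_2-ML_f(1+\kappa)$. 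The end result is a scalar inequality of the form
\begin{equation*}
\|\xi(t)\|\le Me^{-\gamma_2(t-\tau)}\|\xi(\tau)\|+K\int_\tau^t e^{-\gamma_2(t-\sigma)}\|\xi(\sigma)\|\,d\sigma
\end{equation*}
with $K=ML_f+\tfrac{M^2L_f^2(1+\kappa)(1+M)}{\gamma_1+\gamma_2-ML_f(1+\kappa)}$, and a standard Gronwall inequality applied after multiplication by $e^{\gamma_2(t-\tau)}$ delivers the claim. The main technical obstacle, which is precisely where conditions \eqref{first} and \eqref{second} are used, lies in ensuring that the nested Volterra iterations converge: the combined exponent $ML_f(1+\kappa)-\gamma_1-\gamma_2$ must remain strictly negative, so that the resolvent series is summable and the integrated version produces the denominator $\gamma_1+\gamma_2-ML_f(1+\kappa)$ rather than the insufficient $\gamma_1-ML_f(1+\kappa)$.
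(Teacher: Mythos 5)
Your overall strategy---compare $v$ with a backward reference trajectory lying on $\mathrm{graph}\,\Sigma^*$, derive coupled Volterra inequalities for $\xi$ and $\psi$, eliminate $\psi$ via the exponential resolvent, and close with Gronwall after multiplying by $e^{\gamma_2 t}$---is the right one, and it is the one the paper itself relies on (the paper gives no proof, deferring entirely to \cite[Theorem 2.1]{CLMO-S}). Your identity for $\xi(t)$, the bound $\|v-\tilde v\|\le(1+\kappa)\psi+\|\xi\|$, the backward Volterra inequality for $\psi$ with resolvent exponent $ML_f(1+\kappa)-\gamma_1$, and the Fubini step producing the denominator $\gamma_1+\gamma_2-ML_f(1+\kappa)$ are all correct.

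The gap is in the initial term. With a single reference trajectory $\tilde p$ anchored at $(t,p(t))$, your inhomogeneous term is $Me^{-\gamma_2(t-\tau)}\bigl(\|\xi(\tau)\|+\kappa\psi(\tau)\bigr)$, and the only way to absorb $M\kappa e^{-\gamma_2(t-\tau)}\psi(\tau)$ into the Volterra kernel is via $\psi(\tau)\le ML_f\int_\tau^te^{(ML_f(1+\kappa)-\gamma_1)(\rho-\tau)}\|\xi(\rho)\|\,d\rho$ together with $e^{(ML_f(1+\kappa)-\gamma_1-\gamma_2)(\rho-\tau)}\le1$; this contributes $\kappa M^2L_f$ to $K$, not $\tfrac{M^3L_f^2(1+\kappa)}{\gamma_1+\gamma_2-ML_f(1+\kappa)}$. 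Since the first inequality in \eqref{first} gives $\tfrac{M^2L_f(1+\kappa)}{\gamma_1+\gamma_2-ML_f(1+\kappa)}\le\kappa$, your contribution is in general strictly larger, so you obtain a weaker decay rate than stated, and \eqref{third} no longer guarantees that the resulting exponent is negative. To get the stated constant one needs \emph{two} reference trajectories: the backward solution through $(t,p(t))$ is used to represent $\Sigma^*(t,p(t))$ on $[\tau,t]$, while on $(-\infty,\tau]$ one inserts $\pm e^{A(t-\tau)}\Sigma^*(\tau,p(\tau))$ and represents $\Sigma^*(\tau,p(\tau))$ by the backward solution through $(\tau,p(\tau))$; the inhomogeneous term is then exactly $e^{A(t-\tau)}\xi(\tau)$, and the mismatch of the two reference orbits on $(-\infty,\tau]$, estimated through the fixed-point formula and the tail integral $\int_{-\infty}^\tau e^{-(\gamma_1+\gamma_2-ML_f(1+\kappa))(\tau-s)}\,ds$, is what produces the extra summand $\tfrac{M^3L_f^2(1+\kappa)}{\gamma_1+\gamma_2-ML_f(1+\kappa)}$, i.e.\ the factor $(1+M)$ in the numerator. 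A minor further point: what your resolvent argument actually needs is $ML_f(1+\kappa)<\gamma_1+\gamma_2$, which follows from the second inequality in \eqref{first}; condition \eqref{second} is the contraction condition for $G$ in Lemma \ref{lem:25} and plays no role in this lemma.
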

We summarize the results of this section in a theorem
\begin{Theorem}
		Assume (Hf2)$_{\textrm{GT}}$ in addition to assumptions of Section \ref{sec:setup}. There exists a constant $\delta > 0$ and a Lipschitz fuction $\Sigma:E^+ \to E^-$ such that for every $u_0 \in X$ 
		$$
		\|(I-P)S(s)u_0 -\Sigma(PS(s)u_0)\| \leq M\left(\|u_0\|+ \frac{3MC_f}{\gamma_2}\right) e^{-\delta s}\ \ \textrm{for}\ \ s\geq 0,
		$$
		and in consequence for every $B\in \mathcal{B}(X)$ there exists a constant $C(B)>0$ such that
		$$
		\textrm{dist}\, (S(s)B,\textrm{graph}\, \Sigma) \leq C(B)e^{-\delta s}.
		$$ 
\end{Theorem}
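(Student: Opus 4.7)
The plan is to combine Lemmas \ref{lem:25} and \ref{lem:26} and then translate the resulting non-autonomous inertial manifold back from the shifted coordinates $v=u-\overline{u}(\cdot)$ to the original coordinates of \eqref{pde}. Lemma \ref{lem:25} provides the fixed point $\Sigma^*\in\mathcal{LB}(\kappa)$, and Lemma \ref{lem:26} yields the exponential decay of the vertical distance $\|\xi(t)\|$ with rate
$$
\delta=\gamma_2-ML_f-\frac{M^2L_f^2(1+\kappa)(1+M)}{\gamma_1+\gamma_2-ML_f(1+\kappa)},
$$
which is positive precisely by \eqref{third}.

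The key step is to manufacture a time-independent $\Sigma:E^+\to E^-$ out of $\Sigma^*(t,\cdot)$. I would set
$$
\Sigma(p):=\Sigma^*(t,p-P\overline{u}(t))+(I-P)\overline{u}(t)
$$
for any $t\in\mathbb{R}$, and verify that the right-hand side is independent of $t$. The argument exploits autonomy of \eqref{pde}: the graph of $\Sigma$ in $u$-coordinates is the image of $\mathrm{graph}\,\Sigma^*(t,\cdot)$ under the affine translation by $\overline{u}(t)$, and since the latter is invariant under the non-autonomous process for $v$, the former is invariant under the autonomous semigroup $S(\cdot)$. Time-translation symmetry of $S(\cdot)$ combined with the uniqueness of the fixed point $\Sigma^*$ in $\mathcal{LB}(\kappa)$ then forces the shifted manifold to coincide at all times, making $\Sigma$ well defined. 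The Lipschitz constant $\kappa$ transfers directly because the translation is affine.

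For the quantitative estimate I would apply Lemma \ref{lem:26} with $\tau=0$ to the {\color{black} solution} $v(s)=S(s)u_0-\overline{u}(s)$ of \eqref{eq:v}. By the time-independence of $\Sigma$,
$$
\xi(s)=(I-P)v(s)-\Sigma^*(s,Pv(s))=(I-P)S(s)u_0-\Sigma(PS(s)u_0),
$$
so it only remains to bound $\|\xi(0)\|$. Since $\overline{u}\in\mathcal{J}_b$, the a priori bound \eqref{eq:attrbound} in the limit as the initial time tends to $-\infty$ gives $\|(I-P)\overline{u}(0)\|\leq MC_f/\gamma_2$, while the definition of $\mathcal{LB}(\kappa)$ yields $\|\Sigma^*(0,\cdot)\|\leq 2MC_f/\gamma_2$. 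Summing, $\|\xi(0)\|\leq\|u_0\|+3MC_f/\gamma_2$, and the stated bound follows. The corollary about $\mathrm{dist}(S(s)B,\mathrm{graph}\,\Sigma)$ is then immediate.

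The main obstacle is the rigorous justification of time-independence of $\Sigma$. Although intuitively transparent from autonomy, it requires careful bookkeeping on how invariance of $\Sigma^*(t,\cdot)$ under the non-autonomous translated process interacts with time-translation symmetry of the original autonomous semigroup, to exclude the possibility that different choices of $t$ in the defining formula produce distinct Lipschitz graphs through the same point. An alternative route that sidesteps this subtlety would be to rerun the Lyapunov--Perron contraction directly in a space of time-independent Lipschitz maps, exploiting autonomy of \eqref{pde} from the outset; the virtue of the plan above is that it recycles Lemmas \ref{lem:25} and \ref{lem:26} verbatim.
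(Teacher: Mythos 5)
Your ingredients and bookkeeping match the paper's: Lemmas \ref{lem:25} and \ref{lem:26}, the translation by $\overline{u}(\cdot)\in\mathcal{J}_b$, the bound on $\|\xi\|$ via \eqref{eq:attrbound} and the $\frac{2MC_f}{\gamma_2}$ bound built into $\mathcal{LB}(\kappa)$, and the rate $\delta$ coming from \eqref{third}. However, the step you yourself flag as the main obstacle — time-independence of $\Sigma(p):=\Sigma^*(t,p-P\overline{u}(t))+(I-P)\overline{u}(t)$ — is a genuine gap, and the fix you sketch does not close it. Uniqueness of the fixed point of the Lyapunov--Perron map holds within $\mathcal{LB}(\kappa)$ \emph{for the fixed nonlinearity} $g(t,v)=f(v+\overline{u}(t))-f(\overline{u}(t))$. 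Comparing the shifted manifold at times $t$ and $t+h$ amounts to comparing the manifolds obtained by translating along $\overline{u}(\cdot)$ versus along $\overline{u}(\cdot+h)$, i.e.\ the fixed points for the two different nonlinearities $g$ and $g(\cdot+h,\cdot)$; moreover, the time-shifted candidate rewritten as a graph relative to $\overline{u}(t)$ violates the normalization $\Sigma(t,0)=0$ required for membership in $\mathcal{LB}(\kappa)$, so the uniqueness statement simply does not apply to it. One could still prove the identity by first establishing that each time-slice is an invariant Lipschitz graph that exponentially attracts bounded sets and then invoking the minimality/coincidence lemma ($\mathcal{J}=\mathrm{graph}\,\Sigma$), but that attraction toward a \emph{fixed} slice is essentially the statement being proved, so this route is circular as written.

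The paper avoids the issue entirely by a change of viewpoint you should adopt: instead of applying Lemma \ref{lem:26} forward from $\tau=0$ to $t=s$ (which forces the moving fiber $\Sigma^*(s,\cdot)$ into the conclusion), apply it with final time $t=0$ and initial time $\tau=-s$. The solution $u$ is started at time $-s$ from $u_0$, so $v(0)=S(s)u_0-\overline{u}(0)$, and the estimate reads
$$
\bigl\|(I-P)S(s)u_0-(I-P)\overline{u}-\Sigma^*\bigl(0,P(S(s)u_0-\overline{u})\bigr)\bigr\|\leq M\left(\|u_0\|+\tfrac{3MC_f}{\gamma_2}\right)e^{-\delta s},
$$
so only the single slice $\Sigma^*(0,\cdot)$ ever appears and $\Sigma(p):=(I-P)\overline{u}+\Sigma^*(0,p-P\overline{u})$ is a time-independent $\kappa$-Lipschitz function by construction. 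With that substitution the rest of your argument, including the constant $\|u_0\|+\frac{3MC_f}{\gamma_2}$ and the passage to $\mathrm{dist}(S(s)B,\mathrm{graph}\,\Sigma)$, goes through verbatim.
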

\begin{proof}
	By \eqref{third} then there exists 
	$\delta > 0$ such that
	$$
	\|\xi(t)\| \leq M\|\xi(\tau)\|e^{-\delta(t-\tau)}\ \ \textrm{for}\ \ t\geq \tau. 
	$$
	By the definition of $\xi(\cdot)$ we deduce
	$$
	\|	(I-P)v(t) - \Sigma^*(t,Pv(t))\| \leq M\|(I-P)v(\tau) - \Sigma^*(\tau,Pv(\tau))\|e^{-\delta(t-\tau)}.
	$$
	But, exploring the relation between the function $v$, the {\color{black} solution} of \eqref{eq:v}, and the {\color{black} solution} of the original problem, we obtain
	$$
	\|	(I-P)u(t) - (I-P)\overline{u}(t) - \Sigma^*(t,Pv(t))\| \leq M\|(I-P)u(\tau) - (I-P)\overline{u}(\tau) -  \Sigma^*(\tau,Pv(\tau))\| e^{-\delta(t-\tau)}.
	$$
	From \eqref{eq:attrbound} and the bound in the definition of $\mathcal{LB}(\kappa)$ we deduce
	$$
	\|	(I-P)S(t-\tau)u(\tau) - (I-P)\overline{u}(t) - \Sigma^*(t,Pv(t))\| \leq M\left(\|u(\tau)\|+ \frac{3MC_f}{\gamma_2}\right) e^{-\delta(t-\tau)}
	$$
	Take $t=0$ and $\tau=-s$ for $s\geq 0$. Then, for any initial data $u_0\in X$, remembering that $\overline{u}(0) = \overline{u}$ is any chosen point in $\mathcal{J}_b$, we deduce
	$$
	\|	(I-P)S(s)u_0 - (I-P)\overline{u} - \Sigma^*(0,P(S(s)u_0-\overline{u}))\| \leq M\left(\|u_0\|+ \frac{3MC_f}{\gamma_2}\right) e^{-\delta s}
	$$
	Defining $\Sigma:E^+\to E^-$ as $\Sigma(p) = (I-P)\overline{u}+\Sigma^*(0,p-P\overline{u})$, and observing that this is a $\kappa$-Lipschitz function we obtain the assertion. Note that $\textrm{graph}\, \Sigma = \overline{u} + \textrm{graph}\, \Sigma^*(0,\cdot)$.  
	
\end{proof}

\subsubsection{Lipschitz constant outside a cyllinder.} \label{sec:outside}
%
%
Before we pass to the results of this section, we remind a useful norm inequality \cite{Maligranda, Dunkl}. Namely, the following estimate is valid in Banach spaces  for nonzero $x,y$
\begin{equation}\label{eq:banach}
\left\| \frac{x}{\|x\|} - \frac{y}{\|y\|}\right\| \leq \frac{2\|x-y\|}{\max\{\|x\|,\|y\|\}}. 
\end{equation}
%

In this section we will need the assumptions on $f$ as in Section \ref{sec:setup}, that is, in addition to the assumptions which guarantee the {\color{black} solution} existence and uniqueness, we require that $\|f(u)\|\leq C_f$ for every $u\in X$. This guarantees the unbounded attractor existence and characterization by $\mathcal{J} = \bigcap_{t\geq 0}\overline{S(t)Q}$ where $Q$ is positively invariant absorbing set such that $Q\subset \{ \|(I-P)x\| \leq D_2 \}$. We reinforce these assumptions with the lipschitzness of $f$ for a large projection $P$ of an argument, that is we require that 
$$
\|f(u)-f(v)\|\leq L_f \|u-v\|\ \ \textrm{for}\ \ \|Pu\|, \|Pv\|\geq R_{cut},
$$
for some $R_{cut}>0$ with $L_f$ sufficiently small, such that the requirement for the existence of the  inertial manifold which follows from the Lyapunov--Perron method or graph transform method  holds for the nonlinearity with the Lipshitz constant $5L_f$.  We remark here that in fact it is sufficient to assume 
\begin{equation}\label{eq:lfbounded}
\|f(u)-f(v)\|\leq L_f \|u-v\|\ \ \textrm{for}\ \ \|Pu\|, \|Pv\|\geq R_{cut}\ \ \textrm{and}\ \ \|(I-P)u\|, \|(I-P)v\|\leq A, 
\end{equation}
where $A$ is the constant (different for the graph transform and Lyapunov--Perron methods) such that if $f$ is Lipschitz with sufficiently small Lipschitz constant on a strip $\{\|(I-P)x\|\leq A\}$, then the inertial manifold exists and coincides with the unbounded attractor. 
Define
$$
\widetilde{f}(u) = \begin{cases}
f(u)\ \ \textrm{if}\ \ \|Pu\| \geq R_{cut},\\
\frac{\|Pu\|}{R_{cut}}f\left(\frac{Pu}{\|Pu\|}R_{cut}+(I-P)u\right)\ \ \textrm{otherwise}. 
\end{cases}
$$
\begin{Lemma}
	If $\|f(u)\|\leq C_f$ for every $u\in X$ then also $\|\widetilde{f}(u)\|\leq C_f$ for every $u\in X$.
	If for every $u,v\in X$ with $\|Pu\|, \|Pv\| \geq R_{cut}$ we have $\|f(u)-f(v)\|\leq L_f \|u-v\|$ for every $u,v \in \{ \|(I-P)x\| \leq A \}$ then, possibly by increasing $R_{cut}$ in the definition of $\widetilde{f}$, we have $\|\widetilde{f}(u)-\widetilde{f}(v)\|\leq 5L_f \|u-v\|$ for every $u,v \in \{ \|(I-P)x\| \leq A \}$.  
	\end{Lemma}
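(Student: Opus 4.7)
The plan is to verify both assertions by a direct case analysis according to whether $\|Pu\|$ and $\|Pv\|$ exceed $R_{cut}$. For the boundedness claim, if $\|Pu\|\geq R_{cut}$ then $\widetilde f(u)=f(u)$ and the bound is inherited; if $\|Pu\|<R_{cut}$ then $\|\widetilde f(u)\|\leq \frac{\|Pu\|}{R_{cut}}\,\|f(\,\cdot\,)\|\leq C_f$, with the value at $Pu=0$ set to $0$ by continuity. This part is essentially immediate.

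For the Lipschitz estimate the key object is the radial retraction $u\mapsto u':=\frac{Pu}{\|Pu\|}R_{cut}+(I-P)u$, which satisfies $\|Pu'\|=R_{cut}$ and preserves the $(I-P)$-component; hence if $u\in\{\|(I-P)x\|\leq A\}$ so does $u'$, and the Lipschitz hypothesis on $f$ applies at $u'$ and $v'$. I would split into three subcases. Subcase (i), $\|Pu\|,\|Pv\|\geq R_{cut}$, is immediate since $\widetilde f=f$ at both points. In subcase (ii), $\|Pu\|,\|Pv\|<R_{cut}$, I would use the algebraic identity
\[
\widetilde f(u)-\widetilde f(v)=\tfrac{\|Pu\|}{R_{cut}}\bigl(f(u')-f(v')\bigr)+\tfrac{\|Pu\|-\|Pv\|}{R_{cut}}\,f(v').
\]
In subcase (iii), $\|Pu\|\geq R_{cut}>\|Pv\|$, the analogous decomposition $f(u)-\widetilde f(v)=(f(u)-f(v'))+\tfrac{R_{cut}-\|Pv\|}{R_{cut}}f(v')$ plays the same role (with $u'=u$ here).

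The main technical input is to bound $\|u'-v'\|$ in terms of $\|u-v\|$. Inequality \eqref{eq:banach} gives $\big\|\tfrac{Pu}{\|Pu\|}-\tfrac{Pv}{\|Pv\|}\big\|\leq \frac{2\|Pu-Pv\|}{\max\{\|Pu\|,\|Pv\|\}}$, whence $\|Pu\|\big\|\tfrac{Pu}{\|Pu\|}-\tfrac{Pv}{\|Pv\|}\big\|\leq 2\|Pu-Pv\|$; in subcase (iii) the parallel bound $\big\|Pu-\tfrac{Pv}{\|Pv\|}R_{cut}\big\|\leq 2\|Pu-Pv\|$ follows by writing $Pu-\tfrac{Pv}{\|Pv\|}R_{cut}=(Pu-Pv)+\tfrac{Pv}{\|Pv\|}(\|Pv\|-R_{cut})$ and using $R_{cut}-\|Pv\|\leq \|Pu\|-\|Pv\|\leq \|Pu-Pv\|$. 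Combined with $|\,\|Pu\|-\|Pv\|\,|\leq \|Pu-Pv\|$ and $\|f\|\leq C_f$, each of subcases (ii) and (iii) yields
\[
\|\widetilde f(u)-\widetilde f(v)\|\leq \Bigl(3L_f+\tfrac{C_f}{R_{cut}}\Bigr)\|u-v\|,
\]
after equipping $X$ with an equivalent norm under which $\|P\|,\|I-P\|\leq 1$ (such a norm exists thanks to the direct-sum decomposition $X=E^+\oplus E^-$).

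The final step is to enlarge $R_{cut}$ in the definition of $\widetilde f$ so that $\tfrac{C_f}{R_{cut}}\leq 2L_f$, making the prefactor equal to $3L_f+2L_f=5L_f$. Enlarging $R_{cut}$ is legitimate because it only shrinks the set $\{\|Pu\|,\|Pv\|\geq R_{cut}\}$ on which the Lipschitz hypothesis on $f$ must be invoked, and the retracted points $u',v'$ still satisfy $\|Pu'\|=\|Pv'\|=R_{cut}$, hence lie in the retained Lipschitz region. The trivial corner case $Pu=0$ is settled directly by $\|\widetilde f(u)-\widetilde f(v)\|=\|\widetilde f(v)\|\leq \tfrac{C_f}{R_{cut}}\|P(u-v)\|\leq 2L_f\|u-v\|$. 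I do not anticipate any substantive obstacle here: the whole argument is bookkeeping of constants across the three subcases and uses nothing beyond \eqref{eq:banach} and the triangle inequality.
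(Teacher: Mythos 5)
Your proposal is correct and follows essentially the same route as the paper: a case analysis on whether $\|Pu\|,\|Pv\|$ exceed $R_{cut}$, comparison with the radially retracted points via inequality \eqref{eq:banach}, and finally enlarging $R_{cut}$ so that the $\tfrac{C_f}{R_{cut}}$ contribution is absorbed into the Lipschitz constant to reach $5L_f$. Your two-term decompositions in the mixed and interior cases are marginally tidier than the paper's (yielding $3L_f+\tfrac{C_f}{R_{cut}}$ where the paper gets $4L_f+\tfrac{C_f}{R_{cut}}$ in the mixed case), and you make explicit the normalization $\|P\|,\|I-P\|\leq 1$ and the $Pu=0$ corner case that the paper leaves implicit, but these are refinements of the same argument, not a different one.
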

\begin{proof}
It is enough to prove the Lipschitzness. take $u,v \in X$. If $\|Pu\|\geq R_{cut}$ and $\|Pv\|\geq R_{cut}$ then the assertion is clear. If $\|Pu\|\geq R_{cut}$ and $\|Pv\| < R_{cut}$ then
$$
\|\widetilde{f}(u) - \widetilde{f}(v)\| \leq \left\|f(u) - \frac{\|Pv\|}{R_{cut}}f\left(\frac{Pv}{\|Pv\|}R_{cut}+(I-P)v\right)\right\|.   
$$
We estimate
\begin{align*}
& \|\widetilde{f}(u) - \widetilde{f}(v)\| \leq \left\|f(u) - \frac{\|Pv\|}{R_{cut}}f\left(\frac{Pu}{\|Pu\|}R_{cut}+(I-P)u\right)\right\| \\
& \ \ +  \left\|\frac{\|Pv\|}{R_{cut}}f\left(\frac{Pu}{\|Pu\|}R_{cut}+(I-P)u\right)-\frac{\|Pv\|}{R_{cut}}f\left(\frac{Pv}{\|Pv\|}R_{cut}+(I-P)v\right)\right\|.   
\end{align*}
It follows that
\begin{align*}
& \|\widetilde{f}(u) - \widetilde{f}(v)\| \leq \left\|f(u) - f\left(\frac{Pu}{\|Pu\|}R_{cut}+(I-P)u\right)\right\| \\
& \ \ + \left\|f\left(\frac{Pu}{\|Pu\|}R_{cut}+(I-P)u\right) -  \frac{\|Pv\|}{R_{cut}}f\left(\frac{Pu}{\|Pu\|}R_{cut}+(I-P)u\right)\right\|\\
& \ \  +  \frac{\|Pv\|}{R_{cut}} \left\|f\left(\frac{Pu}{\|Pu\|}R_{cut}+(I-P)u\right)-f\left(\frac{Pv}{\|Pv\|}R_{cut}+(I-P)v\right)\right\|.   
\end{align*}
Continuing, we obtain
\begin{align*}
& \|\widetilde{f}(u) - \widetilde{f}(v)\| \leq L_f\left\|Pu - \frac{Pu}{\|Pu\|}R_{cut}\right\| + \left\|f\left(\frac{u}{\|u\|}R_{cut}+(I-P)u\right)\right\|\left(1-  \frac{\|Pv\|}{R_{cut}}\right) \\
& \ \ +  L_f\frac{\|Pv\|}{R_{cut}} \left(\|(I-P)(u-v)\| + R_{cut}\left\|\frac{Pu}{\|Pu\|}-\frac{Pv}{\|Pv\|}\right\|\right).   
\end{align*}
Hence, after calculations
\begin{align*}
& \|\widetilde{f}(u) - \widetilde{f}(v)\| \leq L_f \left(\|Pu\| - R_{cut}\right)  + L_f\left(R_{cut}-  \|Pv\|\right) + \frac{\left\|f\left((I-P)u\right)\right\|}{R_{cut}}\left(R_{cut}-  \|Pv\|\right)\\
& \ \ + L_f\|u-v\| +  L_f\|Pv\| \left\|\frac{Pu}{\|Pu\|}-\frac{Pv}{\|Pv\|}\right\|.   
\end{align*}
Using \eqref{eq:banach} we obtain
\begin{align*}
& \|\widetilde{f}(u) - \widetilde{f}(v)\| \leq 2L_f \|u-v\| + \frac{C_f}{R_{cut}}\|u-v\| +  2L_f\frac{\|Pv\|}{\|Pu\|} \left\|u-v\right\| \leq \left(4L_f + \frac{C_f}{R_{cut}}\right)\|u-v\|.   
\end{align*}
Now consider the case $\|Pu\| \leq R$ and $\|Pv\|\leq R$. Then, assuming that $\|Pu\| \geq \|Pv\|$
\begin{align*}
 & \|\widetilde{f}(u) - \widetilde{f}(v)\| \leq \left\|\frac{\|Pu\|}{R_{cut}}f\left(\frac{Pu}{\|Pu\|}R_{cut}+(I-P)u\right)- \frac{\|Pv\|}{R_{cut}}f\left(\frac{Pv}{\|Pv\|}R_{cut}+(I-P)v\right)\right\| \\
 & \ \ \leq \left\|\frac{\|Pu\|}{R_{cut}} f\left(\frac{Pu}{\|Pu\|}R_{cut}+(I-P)u\right) - \frac{\|Pv\|}{R_{cut}}f\left(\frac{Pu}{\|Pu\|}R_{cut}+(I-P)u\right)\right\|\\
 & \qquad \qquad  + \left\|\frac{\|Pv\|}{R_{cut}} f\left(\frac{Pu}{\|Pu\|}R_{cut}+(I-P)u\right) - \frac{\|Pv\|}{R_{cut}}f\left(\frac{Pv}{\|Pv\|}R_{cut}+(I-P)v\right)\right\|\\
  & \ \ \leq \left\|f\left(\frac{Pu}{\|Pu\|}R_{cut}+(I-P)u\right)\right\| \frac{\|Pu\|-\|Pv\|}{R_{cut}} \\
  & \qquad \qquad + L_f \frac{\|Pv\|}{R_{cut}} \left(\|(I-P)(u-v)\| + R_{cut}\left\| \frac{Pu}{\|Pu\|}- \frac{Pv}{\|Pv\|}\right\|\right). 
 \end{align*}
 Proceeding in a similar way as for the previous case we obtain
 $$
 \|\widetilde{f}(u) - \widetilde{f}(v)\| \leq \left(L_f + \frac{C_f}{R_{cut}}\right)\|u-v\|+ 2L_f \frac{\|Pv\|}{\|Pu\|} \|P(u-v)\|, 
 $$
and the assertion follows.
\end{proof}
We denote by $\{S^*(t)\}_{t\geq 0}$ the semiflow governed by the equation
\begin{equation}\label{tildeproblem}
u'(t) = Au(t) + \widetilde{f}(u(t)).
\end{equation}
We observe that with our assumptions on $f$ the problem governed by this equation  has the unbounded attractor. Moreover, since the constant $C_f$ is the same for $f$ and $\widetilde{f}$ we can take the same $D_1$ and $D_2$ in (H1) both for the problem with $f$ and with $\widetilde{f}$ (although the absorbing and positively invariant sets $Q$ may, in general, differ for both problems). If $L_f$ is sufficiently small then, by arguments of Section \ref{graph} or \ref{perron} the problem governed by \eqref{tildeproblem} has the inertial manifold which attracts exponentially all bounded sets. We denote this manifold by $\Sigma:E^+\to E^-$. 
\begin{Lemma}
	For every $R_{cut}>0$ there exists $R > 0$ 
	such that if $\|Pu_0\|\geq R$ then $\|S(t)u_0\| \geq \|P S(t) u_0\| \geq R_{cut}$ for every $t\geq 0$, and hence we have the equality $S(t)u_0 = S^*(t)u_0$ for every $t\geq 0$.
	\end{Lemma}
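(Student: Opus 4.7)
The plan is to reduce everything to the lower-bound estimate \eqref{eq:pgrow} that was already established in Section \ref{def:fbound} for the projection $p(t)=PS(t)u_0$. That bound relied only on the hyperbolic splitting \eqref{eq:hyper} together with the uniform bound $\|f\|\leq C_f$; since the preceding lemma yields exactly the same pointwise bound $\|\widetilde f\|\leq C_f$, the inequality \eqref{eq:pgrow} applies verbatim both to $S(t)$ and to $S^*(t)$. Taking $\tau=0$ one gets
$$
\|PS(t)u_0\|\;\geq\;e^{\gamma_1 t}\Bigl(\tfrac{1}{M}\|Pu_0\|-\tfrac{C_f}{\gamma_1}\Bigr),\qquad t\geq 0.
$$
Therefore, as soon as $\tfrac{1}{M}\|Pu_0\|-\tfrac{C_f}{\gamma_1}\geq R_{cut}$, one has $\|PS(t)u_0\|\geq R_{cut}$ for all $t\geq 0$ (the exponential factor $e^{\gamma_1 t}\geq 1$ only helps). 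This dictates the choice
$$
R\;:=\;MR_{cut}+\tfrac{MC_f}{\gamma_1}.
$$

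Once this lower bound is in force, I would observe that on the region $\{\|Px\|\geq R_{cut}\}$ the modified nonlinearity coincides with $f$, so the curve $t\mapsto S(t)u_0$ is itself a mild solution of \eqref{tildeproblem} with the same initial datum $u_0$. Forward uniqueness for the modified problem -- which is guaranteed by the Lipschitz assumption \eqref{eq:lfbounded} together with the extension to $\widetilde f$ from the preceding lemma, applied on the strip $\{\|(I-P)x\|\leq D_2\}$ to which $S(t)u_0$ belongs after a short absorption time via \eqref{eq:attrbound} -- then forces $S(t)u_0=S^*(t)u_0$ for every $t\geq 0$. The trivial inequality $\|S(t)u_0\|\geq\|PS(t)u_0\|$ is read off after, if necessary, replacing the ambient norm by the equivalent one $\|u\|_*:=\|Pu\|+\|(I-P)u\|$, with respect to which the projection $P$ has norm one.

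The whole argument is essentially a one-line invocation of \eqref{eq:pgrow}, so I do not expect any real obstacle. The only point I would be careful about is the forward-uniqueness step: one needs the common trajectory to remain in a region on which $\widetilde f$ is Lipschitz, not merely continuous. Since \eqref{eq:attrbound} confines the $(I-P)$-component of any trajectory to a strip $\{\|(I-P)x\|\leq A\}$ (possibly enlarging $A$ to absorb the transient), and the $P$-component stays above $R_{cut}$ by construction, the orbit lives exactly in the region \eqref{eq:lfbounded} is tailored for; this closes the argument.
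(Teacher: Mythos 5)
Your proposal is correct and follows essentially the same route as the paper: both apply \eqref{eq:pgrow} with $\tau=0$ and take $R = MR_{cut}+\tfrac{MC_f}{\gamma_1}$. The extra care you take over the forward-uniqueness step and over $\|S(t)u_0\|\geq\|PS(t)u_0\|$ (via an equivalent norm in which $\|P\|=1$) is left implicit in the paper, which simply notes in addition that $R$ may be enlarged so that $S(R)\geq R_{cut}$ for later use.
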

\begin{proof}
	From \eqref{eq:pgrow} it follows that if $\|Pu_0\| \geq R$ then 
$$
\|PS(t)u_0\|\geq e^{\gamma_1 t}\left(\frac{R}{M} - \frac{C_f}{\gamma_1}\right).
$$
Hence we need
$\frac{R}{M} - \frac{C_f}{\gamma_1} \geq R_{cut},
$
i.e. it suffices to take any $R \geq MR_{cut} + \frac{MC_f}{\gamma_1}$. By increasing $R$ if necessary we enforce that $S(R) \geq R_{cut}$.   
\end{proof}

We define two sets $B_0 = \{ v\in X:\ \|(I-P)v\|\leq D_2, \|Pv\|< R\}$ and $B_1 = \{ v\in X:\ \|(I-P)v\|\leq D_2, \|Pv\|= R\}$ and consider the set $B_0 \cup \bigcup_{t\geq 0}S(t)B_1$. We will prove that the thickness in $E^-$ of this set tends to zero as $\|Pu\|$ tends to infinity and that this set is attracting. Hence, it satisfies the assumption (A1) needed for Theorem \ref{attracts:all} to hold. We begin from the result which states that the thickness of this set tends to zero.
\begin{Lemma}
	Under all assumptions of this section we have
	$$
	\lim_{\|p\|\to \infty} \textrm{diam}\left( \left(\bigcup_{t\geq 0}S(t)B_1\right) \cap\{x\in X: Px=p\} \right) = 0
	 $$
	 \end{Lemma}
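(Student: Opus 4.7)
The plan is to reduce this to the exponential attraction property of the inertial manifold $\Sigma:E^+\to E^-$ constructed for the modified problem \eqref{tildeproblem}. Since $\tilde{f}$ is globally Lipschitz with constant $5L_f$ (small by assumption), Remark \ref{rem:graph} gives a single function $\Sigma$ and constants $\delta>0$, $C=C(B_1)$ such that for every $u_0\in B_1$,
\begin{equation*}
\|(I-P)S^*(t)u_0 - \Sigma(PS^*(t)u_0)\| \le C\, e^{-\delta t} \qquad \textrm{for every } t\ge 0.
\end{equation*}
By the previous lemma (applied with $R$ chosen as there, so that $B_1\subset \{\|Pu\|\ge R_{cut}\}$ and this inclusion is preserved by $S$), the two semigroups agree on forward trajectories starting in $B_1$: $S(t)u_0 = S^*(t)u_0$ for all $t\ge 0$ and $u_0\in B_1$. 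Hence the attraction estimate above holds verbatim with $S$ in place of $S^*$.

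Now fix a slice level $p\in E^+$ and take any $x$ in $\bigcup_{t\ge 0}S(t)B_1$ with $Px=p$. Write $x=S(t)u_0$ for some $u_0\in B_1$ and $t\ge 0$; the previous display yields
\begin{equation*}
\|(I-P)x - \Sigma(p)\| \le C\, e^{-\delta t}.
\end{equation*}
The point is that $t$ itself is forced to be large when $\|p\|$ is large. Indeed, the forward upper bound \eqref{pupper} applied with $\tau=0$ gives $\|p\|=\|PS(t)u_0\|\le K_1\, e^{\gamma_0 t}$ where $K_1$ depends only on $R$, $D_2$, $M$, $C_f$, $\gamma_0$. Inverting, $e^{-\delta t}\le (K_1/\|p\|)^{\delta/\gamma_0}$, so
\begin{equation*}
\|(I-P)x - \Sigma(p)\| \le C\left(\frac{K_1}{\|p\|}\right)^{\delta/\gamma_0}.
\end{equation*}

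Finally, if $x,y\in \bigcup_{t\ge 0}S(t)B_1$ both satisfy $Px=Py=p$, then $\|x-y\|=\|(I-P)(x-y)\|$, and the triangle inequality against $\Sigma(p)$ gives a diameter bound that is uniform in the choice of preimages and tends to zero as $\|p\|\to\infty$ at the explicit polynomial rate $\|p\|^{-\delta/\gamma_0}$.

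I expect the only delicate point to be verifying that the constant $C=C(B_1)$ can indeed be taken uniform over the bounded set $B_1$ (this is precisely what Remark \ref{rem:graph} asserts, provided one checks that both the Lyapunov--Perron and graph transform constructions of Section \ref{perron}/\ref{graph} produce such a uniform bound on bounded sets of initial data, which they do since the bound depends on $u_0$ only through $\|u_0\|$). Everything else is a straightforward combination of the uniform exponential attraction by $\Sigma$ with the growth estimate \eqref{pupper}; no further input about the structure of $\mathcal{J}$ is needed for this lemma.
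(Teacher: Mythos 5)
Your proposal is correct and follows essentially the same route as the paper: identify $S(t)u_0$ with $S^*(t)u_0$ on forward orbits from $B_1$, invoke the exponential attraction to $\mathrm{graph}\,\Sigma$ from Remark \ref{rem:graph}, use the upper growth bound \eqref{pupper} to force $t\geq T_1(p)\sim \frac{1}{\gamma_0}\ln\|p\|$ on the slice $\{Px=p\}$, and conclude by the triangle inequality with the same polynomial rate $\|p\|^{-\delta/\gamma_0}$. No gaps.
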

\begin{proof}
First we observe that for every $u_0 \in B_1$ and every $t\geq 0$ we have $\|PS(t)u_0\| \geq R_{cut}$ and hence $S(t)u_0 = S^*(t)u_0$. 
Now if $v = S(t)u_0$ with $u_0 \in B_1$, then using Remark \ref{rem:graph} we deduce 
$$
\| \Sigma(Pv) - (I-P)v\| \leq C(B_1)e^{-\delta t}
$$
We know that for $u_0 \in B_1$ we have $ \|Pu_0\| = R$. Hence from \eqref{pupper} it follows that
$$
 \|P S(t) u_0\| \leq e^{\gamma_0 t}M\left( R + \frac{C_f}{\gamma_0} \right).
$$
Hence, for a given value of $\|p\|\geq R_{cut}$ the necessary condition for $PS(t)u_0$ to be equal to $p$ is 
$$
\ln \|p\| \leq \gamma_0 t + \ln \left(M\left( R + \frac{C_f}{\gamma_0} \right)\right),
$$
or, equivalently,
$$
t \geq \frac{1}{\gamma_0}\left(\ln \|p\| - \ln \left(M\left( R + \frac{C_f}{\gamma_0} \right)\right) \right) := T_1(p),   
$$
We calculate
\begin{align}
& \mathrm{diam }\left( \left(\bigcup_{t\geq 0}S(t)B_1\right) \cap\{x\in X: Px=p\} \right) = \mathrm{diam }\left( \left(\bigcup_{t \geq T_1(p)}S(t)B_1 \right) \cap\{x\in X: Px=p\} \right) \nonumber \\
& \ \ \leq 2 \sup \left\{ \|\Sigma(p) - (I-P)v\| : v\in \bigcup_{t \geq T_1(p)}S(t)B_1, Pv=p \right\} \leq 2 C(B_1)e^{-\delta T_1(p)}\nonumber\\
& \ \ \ = 2 C(B_1)\|p\|^{- \frac{\delta}{\gamma_0}} \left(M\left( R + \frac{C_f}{\gamma_0} \right)\right)^\frac{\delta}{\gamma_0},\label{dist:inertial}
\end{align}
and the proof is complete. 
\end{proof}
In the next result we establish that the graph of the inertial manifold for the modified problem is included in the set  $B_0 \cup \bigcup_{t\geq 0}S(t)B_1$. This is needed as a preparatory step to demonstrate that this set is attracting.
\begin{Lemma}
	Under all assumptions of this section the following inclusion holds  
	$$
	\textrm{graph}\, \Sigma \subset   B_0 \cup \bigcup_{t\geq 0}S(t)B_1.
	$$
	\end{Lemma}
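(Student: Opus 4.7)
The plan is to split the argument according to whether $\|p\|<R$ or $\|p\|\geq R$, and to exploit that $\mathrm{graph}\,\Sigma$ is the unbounded attractor of the modified semigroup $\{S^*(t)\}_{t\geq 0}$ together with the fact, established in the preceding lemma, that $S(t)$ and $S^*(t)$ agree along trajectories whose $P$-projection stays above $R_{cut}$.

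For the easy case $\|p\|<R$, I would first observe that because $\widetilde f$ inherits the uniform bound $C_f$ from $f$, the analysis of Section \ref{def:fbound} applied to \eqref{tildeproblem} yields an absorbing, positively invariant set $Q^*$ for $\{S^*(t)\}_{t\geq 0}$ satisfying $Q^*\subset\{\|(I-P)x\|\leq D_2\}$ with the \emph{same} constant $D_2$. Since $\mathrm{graph}\,\Sigma$ is the unbounded attractor of the modified problem, it is contained in $Q^*$, so $\|\Sigma(p)\|\leq D_2$. Combined with $\|p\|<R$ this gives $(p,\Sigma(p))\in B_0$.

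For the main case $\|p\|\geq R$, I would run the trajectory on $\mathrm{graph}\,\Sigma$ backward in time until its $P$-projection first has norm exactly $R$, and check that the resulting point lies in $B_1$. The reduced dynamics on the manifold is the finite-dimensional ODE $p'(t)=Ap(t)+P\widetilde f(p(t)+\Sigma(p(t)))$; its right-hand side is Lipschitz in $p$ (on the strip $\{\|(I-P)x\|\leq D_2\}$ where the manifold lives) and uniformly bounded, so global backward solutions exist. Applied to the backward trajectory starting at $p(0)=p$, the bound \eqref{eq:pgrow} gives $\|p(-T)\|\leq M(e^{-\gamma_1 T}\|p\|+C_f/\gamma_1)$, so $\limsup_{T\to\infty}\|p(-T)\|\leq MC_f/\gamma_1<R$ by the lower bound $R\geq M R_{cut}+MC_f/\gamma_1$ proved in the previous lemma. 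By continuity there is a smallest $T_0\geq 0$ with $\|p(-T_0)\|=R$, and by the minimality of $T_0$ one has $\|p(s)\|\geq R\geq R_{cut}$ for every $s\in[-T_0,0]$.

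It remains to pass from $S^*$ to $S$. Since $\|Pp(s)\|\geq R_{cut}$ along this segment, $\widetilde f=f$ on the corresponding points of the manifold, so $s\mapsto p(s)+\Sigma(p(s))$ is simultaneously an orbit of the original and the modified semigroups. Setting $w:=p(-T_0)+\Sigma(p(-T_0))$, we get $\|Pw\|=R$ and, by the easy case applied at $p(-T_0)$, $\|(I-P)w\|=\|\Sigma(p(-T_0))\|\leq D_2$, so $w\in B_1$ and $S(T_0)w=p+\Sigma(p)$. The only delicate step is producing the time $T_0$; the lower bound on $R$ was chosen precisely to force the backward $p$-norm to drop below $R$, after which continuity closes the argument.
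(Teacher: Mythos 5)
Your proposal is correct and follows essentially the same route as the paper: the case $\|p\|<R$ is handled identically via $\mathrm{graph}\,\Sigma\subset Q^*\subset\{\|(I-P)x\|\leq D_2\}$, and for $\|p\|\geq R$ both arguments run a backward trajectory on the manifold, use \eqref{eq:pgrow} to force the $P$-norm down to $R$, and then identify $S^*$ with $S$ on the forward segment where $\|Px\|\geq R_{cut}$. Your choice of the \emph{first} backward crossing time $T_0$ (guaranteeing $\|p(s)\|\geq R$ on the whole segment) is a slightly more explicit justification of the $S=S^*$ identification than the paper's appeal to the preceding lemma, but it is not a different argument.
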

\begin{proof} Denote by $Q^*$ the positively invariant and absorbing set for the modified semiflow $S^*$. Since $\textrm{graph}\, \Sigma \subset Q^* \subset \{ \|(I-P)v\|\leq D_2 \}$ we have $\|\Sigma (p)\| \leq D_2$ for every $p$. Let  $p \in E^+$. If $\|p\| < R$ then $p+\Sigma(p) \in B_0$ and the assertion holds. Alternatively consider the global {\color{black} solution} of $S^*$ passing through $p+\Sigma(p)$ at some time $t > 0$. Denote this {\color{black} solution} by $v(s)$, clearly $\|(I-P)v(s)\|\leq D_2$ for every $s \in \mathbb{R}$. For this {\color{black} solution} \eqref{eq:pgrow} implies that
$$
\|Pv(0)\| \leq Me^{-\gamma_1 t} \|p\| + \frac{MC_f}{\gamma_1}.
$$
So, for sufficiently large $t$ we deduce that
$ \|Pv(0)\| \leq \frac{MC_f}{\gamma_1} + 1 \leq R $. From continuity of the mapping $t\mapsto \|Pv(t)\|$ we can choose $t^*$ such that $\|Pv(t^*0)\| = R$. Hence the function $v:[t^*,t] \to X$ is the {\color{black} solution} of the original $S$ with $v(t^*) \in B_1$. This means that $v(t) \in S(t-t^*)B_1$ and the proof is complete.   
\end{proof}

We are ready to prove that the set  $B_0 \cup \bigcup_{t\geq 0}S(t)B_1$ is attracting, whence it satisfies all assumptions of Theorem \ref{attracts:all}. 

\begin{Lemma}
	Under all assumptions of this section we have 
	$$
	\lim_{t\to \infty}\textrm{dist}\, \left(S(t) B, B_0 \cup \bigcup_{t\geq 0}S(t)B_1\right)  = 0\ \ \textrm{for every}\ \ B\in \mathcal{B}(X).
	$$
\end{Lemma}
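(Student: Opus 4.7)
The plan is to reduce the limit claim to the exponential attraction of $\mathrm{graph}\,\Sigma$ for the modified semigroup $\{S^*(t)\}_{t\geq 0}$, combined with the preceding lemma's inclusion $\mathrm{graph}\,\Sigma\subset B_0\cup\bigcup_{s\geq 0}S(s)B_1$. By assumption (H1) I would pick $t_0=t_0(B)\geq 0$ such that $S(t)B\subset Q$, and in particular $\|(I-P)S(t)u_0\|\leq D_2$, for every $u_0\in B$ and $t\geq t_0$. The bounded set $V:=S(t_0)B$ serves as the reference set on which the inertial manifold attraction for $S^*$ will be invoked.

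For fixed $u_0\in B$ and $t\geq t_0$ I would run a case analysis according to the behavior of the continuous function $s\mapsto \|PS(s)u_0\|$ on $[t_0,t]$. If $\|PS(t)u_0\|\leq R$, then $S(t)u_0\in B_0\cup B_1$ and the distance to the target set is zero. If $\|PS(t)u_0\|>R$ and the set $\{s\in[t_0,t]:\|PS(s)u_0\|\leq R\}$ is nonempty, let $\tau$ be its supremum; by continuity $\|PS(\tau)u_0\|=R$, so $S(\tau)u_0\in B_1$ and $S(t)u_0=S(t-\tau)S(\tau)u_0\in\bigcup_{s\geq 0}S(s)B_1$, again yielding zero distance.

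The only nontrivial case is $\|PS(s)u_0\|>R\geq R_{cut}$ for every $s\in[t_0,t]$. On this interval $\widetilde{f}(S(s)u_0)=f(S(s)u_0)$, so by uniqueness of mild solutions $S(s)u_0=S^*(s-t_0)v_0$ with $v_0:=S(t_0)u_0\in V$. Applying Remark \ref{rem:graph} to the semigroup $\{S^*(t)\}_{t\geq 0}$ on the bounded set $V$ produces a constant $C=C(V)>0$, independent of $u_0\in B$, for which
\begin{equation*}
\|\Sigma(PS(t)u_0)-(I-P)S(t)u_0\|=\|\Sigma(PS^*(t-t_0)v_0)-(I-P)S^*(t-t_0)v_0\|\leq Ce^{-\delta(t-t_0)}.
\end{equation*}
Since $PS(t)u_0+\Sigma(PS(t)u_0)\in\mathrm{graph}\,\Sigma\subset B_0\cup\bigcup_{s\geq 0}S(s)B_1$, the same exponential bound controls the distance from $S(t)u_0$ to the target set. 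Taking the supremum over $u_0\in B$ and letting $t\to\infty$ finishes the argument.

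The main obstacle is the case distinction: one must identify the genuinely nontrivial branch as the one in which the trajectory stays strictly above level $R$ in the $P$-direction throughout $[t_0,t]$, since this is precisely the branch along which the semiflows $S$ and $S^*$ coincide, enabling the transfer of the inertial manifold attraction estimate for $S^*$ to $S$.
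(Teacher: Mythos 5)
Your proof is correct and follows essentially the same route as the paper's: both arguments split trajectories according to whether $\|PS(\cdot)u_0\|$ has reached level $R$ (in which case $S(t)u_0$ lands exactly in $B_0\cup\bigcup_{s\geq 0}S(s)B_1$ by passing through $B_1$), and otherwise identify the orbit with an $S^*$-orbit and combine the exponential attraction of $\mathrm{graph}\,\Sigma$ with the inclusion $\mathrm{graph}\,\Sigma\subset B_0\cup\bigcup_{s\geq 0}S(s)B_1$. The only cosmetic difference is that the paper performs the split once, at the absorption time, on the level of sets ($S(t_1)B\cap\{\|Px\|<R\}$ versus $S(t_1)B\cap\{\|Px\|\geq R\}$), whereas you run a per-trajectory case analysis over the whole interval $[t_0,t]$.
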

\begin{proof}
Take $B \in \mathcal{B}(X)$. Then there exists $t_1>0$ such that $S(t_1)B \subset Q$. If $t\geq t_1$ then
\begin{align*}
& S(t)B = S(t-t_1)[ [(S(t_1)B)\cap \{\|Px\|<R\}] \cup [(S(t_1)B)\cap \{ \|Px\|\geq R \}] \\
& \ \ = S(t-t_1)((S(t_1)B)\cap \{\|Px\|<R\}) \cup S(t-t_1)((S(t_1)B)\cap \{\|Px\|\geq R\})\\
& \ \ \subset S(t-t_1)(\{\|Px\|<R\}\cap Q) \cup S(t-t_1)((S(t_1)B)\cap \{\|Px\|\geq R\}).
\end{align*} 
Now because for the initial data $v_0$ such that $\|Pv_0\|\geq R$ we have $\|Pv(t)\|\geq R_{cut}$ for every $t\geq 0$, we can write
$$
S(t)B \subset S(t-t_1)(\{\|Px\|<R\}\cap Q) \cup S^*(t-t_1)((S(t_1)B)\cap \{\|Px\|\geq R\}).
$$ 
Hence
\begin{align*}
& \textrm{dist}\, \left(S(t) B, B_0 \cup \bigcup_{t\geq 0}S(t)B_1\right)\\
&\  \leq \max \left\{ \textrm{dist}\, \left(S(t-t_1)(\{\|Px\|<R\}\cap Q), B_0 \cup \bigcup_{t\geq 0}S(t)B_1\right), \right.\\
& \qquad \qquad \left.\textrm{dist}\, \left(S^*(t-t_1)((S(t_1)B)\cap \{\|Px\|\geq R\}), B_0 \cup \bigcup_{t\geq 0}S(t)B_1\right)\right\}.
\end{align*}
Now assume that $v_0 \in \{\|Px\|<R\}\cap Q$. It is clear that for every $s\geq 0$ we have $S(s)v_0 \in Q$ and hence  $\|(I-P)S(s)v_0\|\leq D_1$. If $\|PS(s)v_0\|< R$ then $S(s)v_0 \in B_0$. Otherwise $\|PS(s)v_0\| \geq R$ and there exists $t^* \in [0,s]$ such that $\|PS(t^*)v_0\| = R$. This means that $S(t^*)v_0 \in B_1$ and $S(s)v_0 = S(s-t^*)S(t^*)v_0 \in S(s-t^*)B_1$.  We have proved that $S(t-t_1)(\{\|Px\|<R\}\cap Q) \subset B_0 \cup \bigcup_{t\geq 0}S(t)B_1$. Hence
\begin{align*}
& \textrm{dist}\, \left(S(t) B, B_0 \cup \bigcup_{t\geq 0}S(t)B_1\right) \leq \textrm{dist}\, \left(S^*(t-t_1)((S(t_1)B)\cap \{\|Px\|\geq R\}), B_0 \cup \bigcup_{t\geq 0}S(t)B_1\right)\\
& \ \ \leq \textrm{dist}\, \left(S^*(t-t_1)((S(t_1)B)\cap \{\|Px\|\geq R\}), \textrm{graph}\, \Sigma \right) \leq C((S(t_1)B)\cap \{\|Px\|\geq R\})e^{\delta t_1} e^{-\delta t}, 
\end{align*}
where the last two estimates follow from the facts that $\textrm{graph}\, \Sigma \subset B_0 \cup \bigcup_{t\geq 0}S(t)B_1$ and $(S(t_1)B)\cap \{\|Px\|\geq R\}$ is a bounded set. The proof is complete. 
\end{proof}
Hence $B_0 \cup \bigcup_{t\geq 0}S(t)B_1$ satisfies all assumptions of Remark \ref{rem:1} and Theorem \ref{rem:1}. We can use this last result to deduce the main theorem of this section.
\begin{Theorem}
	Under assumptions of this section, in particular if in addition to the assumptions which guarantee the existence of the unbounded attractor $\mathcal{J}$ the function $f$ is Lipshitz outside a ball in $E^+$ (i.e. \eqref{eq:lfbounded} holds) with sufficiently small Lipschitz constant, then $\mathcal{J} \subset B_0 \cup \bigcup_{t\geq 0}S(t)B_1$, and 
	$$
	\lim_{t\to \infty} \textrm{dist}\, (S(t)B,\mathcal{J}) = 0\ \ \textrm{for every}\ \ B\in \mathcal{B}(X). 
	$$
	Moreover 
	$$
	\lim_{\|p\|\to \infty} \textrm{diam}\, (\mathcal{J}\cap \{x\in X\,:\ Px=p\} ) = 0.
	$$
	\end{Theorem}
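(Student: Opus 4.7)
The plan is to recognize that this final theorem is essentially a corollary of Remark \ref{rem:1} and Theorem \ref{attracts:all}, once condition (A1) is verified with the explicit candidate $Q_1 := \overline{B_0 \cup \bigcup_{t\geq 0}S(t)B_1}$. All the nontrivial work has been done in the three preceding lemmas of this subsection, which together prepare precisely the two ingredients of (A1).

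First I would check that $Q_1$ (or a suitable closure of it) can be chosen inside $Q$, modulo a harmless enlargement of the absorbing set from (H1). Since $B_0$ already lies in the strip $\{\|(I-P)x\|\leq D_2\}$ by construction, and $S(t)B_1\subset Q$ for all $t\geq 0$ by positive invariance, we may without loss of generality take the absorbing set $Q$ in (H1) large enough to contain $B_0$ as well, which does not affect any previous estimate. The closure operation does not spoil either the thickness property or the attraction property, because closures preserve both distance-to-a-set inequalities and diameters.

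With $Q_1$ in place, condition (A1)(a) is supplied by the lemma showing $\lim_{\|p\|\to\infty}\mathrm{diam}(Q_1\cap\{Px=p\})=0$ with the quantitative rate $O(\|p\|^{-\delta/\gamma_0})$ from \eqref{dist:inertial}, and (A1)(b) is the content of the last lemma of the subsection, asserting attraction of bounded sets by $Q_1$. Remark \ref{rem:1} then immediately yields $\mathcal{J}\subset Q_1$, which is the first assertion $\mathcal{J}\subset B_0\cup\bigcup_{t\geq 0}S(t)B_1$; and Theorem \ref{attracts:all} delivers $\lim_{t\to\infty}\mathrm{dist}(S(t)B,\mathcal{J})=0$ for every $B\in\mathcal{B}(X)$.

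The final thickness statement for $\mathcal{J}$ follows by monotonicity of diameter under inclusion: from $\mathcal{J}\cap\{Px=p\}\subset Q_1\cap\{Px=p\}$ we inherit $\mathrm{diam}(\mathcal{J}\cap\{Px=p\})\to 0$ as $\|p\|\to\infty$, indeed with the same explicit rate. I do not expect any genuine obstacle; the only point that requires a moment of care is the book-keeping around $Q_1\subset Q$, i.e.\ that the absorbing set in (H1) may be chosen large enough so that the construction of this subsection fits cleanly into the abstract framework of Section \ref{attraction_all}. Everything else is a direct citation of the previously established machinery.
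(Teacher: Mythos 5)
Your proposal is correct and follows essentially the same route as the paper, which likewise deduces the theorem by verifying (A1) for $Q_1 = B_0 \cup \bigcup_{t\geq 0}S(t)B_1$ via the two preceding lemmas and then invoking Remark \ref{rem:1} and Theorem \ref{attracts:all}. Your explicit attention to taking the closure and to the bookkeeping $Q_1\subset Q$ is a reasonable tightening of details the paper leaves implicit, but it is not a different argument.
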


Using the estimate \eqref{dist:inertial} we can provide the exact bound on the unbounded attractor thickness, namely  
$$
\textrm{dist}(\Phi(p),\{\Sigma(p)\}) \leq \frac{C}{\|p\|^{\frac{\delta}{\gamma_0}}}\ \ \textrm{as}\ \ \|p\|\geq R,
$$
i.e. the (maximal) distance between the multivalued inertial manifold of the nonmodified (original) problem and the (Lipschitz) inertial manifold of the modified problem tends to zero polynomially as $\|p\|$ tends to infinity. 
 \subsection{Dynamics at infinity.}
In this section we give a few comments on the dynamics of \eqref{eq:pgrow} at infinity. The assumptions of this section are that of Section \ref{sec:setup}. If we take the  set $B\in \mathcal{B}(X)$ such that $\inf_{u\in B}\|Pu\| > \frac{MC_f}{\gamma_1}$. Then, by  \eqref{eq:pgrow} it follows that  
\begin{equation}\label{growth}
\|S(t)u_0\| \geq C(B)e^{\gamma_1 t} \ \ \textrm{for every}\ \  u_0\in B. 
\end{equation}
This means that 
the set $B$ satisfies assertion (1) of Lemma \ref{lemma:possibilities}   for every $R > 0$. Hence, one can define its $\omega$-limit set at infinity $\omega_\infty(B)$, and this $\omega$-limit set is attracting at infinity in the sense of Lemma \ref{lem:infty}. We will construct the problem for which the set $\omega_\infty(B)$ is invariant. To this end, define $\frac{Pu(t)}{\|Pu(t)\|}:=x(t)$. With this definition the set $\omega_\infty(B)$ has the form
$$
\omega_\infty(B) = \left\{ x \in E^+\,:\ \ \|x\| = 1, x_n(t_n) \to x, x_n(0) = \frac{Pu^n_0}{\|Pu^n_0\|}, u^n_0\in B \right\}
$$ 
Since $u(t)$ satisfies \eqref{pde}, its projection $Pu(t)$ satisfies the ODE 
$$(Pu(t))' = A(Pu(t)) + Pf(u(t)),$$
which is non-autonomous due do the presence of the term $P(u(t))$. It is straightforward to verify that the function $x(t)$ satisfies the following ODE un the unit sphere in $E^+$
$$
x'(t) = Ax(t) - (Ax(t),x(t))x(t)+ \frac{Pf(u(t))}{\|Pu(t)\|} - \left(\frac{Pf(u(t))}{\|Pu(t)\|},x(t)\right) x(t).
$$
Since $\|f(u)\|\leq C_f$ and by the estimate \eqref{growth} we deduce
$$
\left|\frac{Pf(u(t))}{\|Pu(t)\|} - \left(\frac{Pf(u(t))}{\|Pu(t)\|},x(t)\right) x(t)\right| \leq \frac{2C_f}{C(B)}e^{-\gamma_1 t}.
$$
So we can define the following asymptotically autonomous problem on the unit sphere in $E^+$
$$
x'(t) = Ax(t) - (Ax(t),x(t))x(t)+ g(t),
$$
with $\|g(t)\| \leq \frac{2C_f}{C(B)}e^{-\gamma_1 t}$. 
Using the results on the asymptotically autonomous problems \cite{opial, markus} we will prove that the set $\omega_{\infty}(B)$, a compact set in the unit sphere in $E^+$ is invariant with respect to the limit autonomous system 
\begin{equation}\label{infty}
y'(t) = Ay(t) - (Ay(t),y(t))y(t),
\end{equation}
and hence the dynamics at infinity can be described by means of the properties of the operator $A$ only. In fact, this dynamics can be described by representing $A$ in the Jordan form. This is done by the change of basis  matrix $P$ and we obtain the new system
$$
z'(t) = P^{-1}APz(t) - (P^{-1}APz(t),z(t))z(t).
$$
This system is the projection on the unit sphere of the ODE 
$$
z'(t) = P^{-1}APz(t),
$$
the {\color{black} solutions} of which can be found explicitly: each Jordan block gives rise to an invariant set, and all the invariant sets with non-equal values of $\textrm{Re}\, \lambda$ are connected with the connections ordered in the direction of increasing $\textrm{Re}\, \lambda$. The dynamics on the invariant sets related with given value of $\textrm{Re}\, \lambda$ depends on the structure of Jordan blocks associated with $\textrm{Re}\, \lambda$ and can be possibly recurrent. We continue with the lemma on the invariance of $\omega_{\infty}(B)$.

\begin{Lemma}
	The set $\omega_{\infty}(B)$ is invariant with respect to the system \eqref{infty}.
	\end{Lemma}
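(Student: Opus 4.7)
The plan is to use the standard Markus-type argument for asymptotically autonomous systems: passing to a time-shifted sequence of solutions along the sequence realizing a given point of $\omega_\infty(B)$, extracting a subsequence that converges locally uniformly on $\mathbb{R}$ to a solution of the limit equation~\eqref{infty}, and then reading off both forward and backward invariance.

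More precisely, fix $x\in \omega_\infty(B)$, and take the sequences $t_n\to\infty$ and $u^n_0\in B$ with $x_n(t_n)\to x$, where each $x_n(\cdot)$ is the solution of the nonautonomous equation $x'=F(x)+g(t)$ with $F(y)=Ay-(Ay,y)y$, $x_n(0)=\frac{Pu^n_0}{\|Pu^n_0\|}$, and $\|g(t)\|\leq \frac{2C_f}{C(B)}e^{-\gamma_1 t}$. For $\sigma$ in any compact interval $[-T,T]\subset\mathbb{R}$, define the shifted solutions $z_n(\sigma):=x_n(t_n+\sigma)$, defined for all sufficiently large $n$ (so that $t_n-T\geq 0$). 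They satisfy the integral relation
\begin{equation*}
z_n(\sigma)=z_n(0)+\int_0^\sigma F(z_n(r))\,dr+\int_0^\sigma g(t_n+r)\,dr.
\end{equation*}
Since the $z_n$ live on the unit sphere $\mathbb{S}_{E^+}$ and $F$ is continuous on this compact set, $\{z_n\}$ is equibounded and equicontinuous on $[-T,T]$. By Arzel\`a--Ascoli, a subsequence (not relabeled) converges uniformly on $[-T,T]$ to some continuous $z:[-T,T]\to \mathbb{S}_{E^+}$. The perturbation term vanishes in the limit because $\left\|\int_0^\sigma g(t_n+r)\,dr\right\|\leq \frac{2C_f}{C(B)\gamma_1}e^{-\gamma_1(t_n-T)}\to 0$. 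Passing to the limit in the integral equation, $z$ solves \eqref{infty} on $[-T,T]$ with $z(0)=x$; by uniqueness $z=y$, the orbit of the autonomous flow through $x$, and a diagonal argument extends this to all of $\mathbb{R}$.

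For forward invariance, note that for every $\tau\geq 0$ one has $y(\tau)=\lim z_n(\tau)=\lim x_n(t_n+\tau)$ with $t_n+\tau\to\infty$ and $x_n(\cdot)$ starting from $\frac{Pu^n_0}{\|Pu^n_0\|}$, $u^n_0\in B$; hence $y(\tau)\in\omega_\infty(B)$ directly from the definition. For backward invariance, given $y\in\omega_\infty(B)$ realized by $x_n(t_n)\to y$ and given $t>0$, extract (by compactness of $\mathbb{S}_{E^+}$) a subsequence with $x_n(t_n-t)\to x_*$; since $t_n-t\to\infty$, $x_*\in\omega_\infty(B)$. Applying the argument above with $t_n$ replaced by $t_n-t$, the shifted solutions $x_n(t_n-t+\sigma)$ converge locally uniformly to the autonomous orbit $\phi_\sigma(x_*)$ through $x_*$; evaluating at $\sigma=t$ gives $\phi_t(x_*)=\lim x_n(t_n)=y$. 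Thus $y\in\phi_t(\omega_\infty(B))$, and $\omega_\infty(B)$ is both forward and backward invariant under~\eqref{infty}.

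The only genuinely technical point is the passage to the limit in the integral equation, i.e.\ the Kamke-style continuous dependence on the vector field: this is where the exponential decay of $g$ (inherited from the grow-up rate in~\eqref{eq:pgrow}) is used in an essential way, together with the boundedness and continuity of $F$ on the compact sphere. The remaining steps (compactness extractions, diagonal procedure to go from compact intervals to all of $\mathbb{R}$, identification of $\phi_t(x_*)$ with $y$) are standard once this convergence is in place.
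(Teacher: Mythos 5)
Your proof is correct and follows essentially the same route as the paper's: shift the solutions along the sequence $t_n$, use equiboundedness and equicontinuity on the compact sphere to extract a locally uniformly convergent subsequence, kill the perturbation $g$ via its exponential decay when passing to the limit in the integral equation, and read off forward invariance from the definition of $\omega_\infty(B)$ and backward invariance by first extracting a limit of $x_n(t_n-t)$. The only cosmetic difference is that the paper treats $t>0$ and $t<0$ in one unified computation on $[0,|t|]$ rather than splitting into a forward and a backward step.
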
 
\begin{proof}
	Consider $x\in \omega_{\infty(B)}$. This means that $x_n(t_n)\to x$ for some $t_n\to \infty$. Take $t\in \mathbb{R}$, $t\neq 0$, either positive or negative number. Then, as $x_n(t_n+t)$ (which is always well defined for $n$ large enough) is always on the unit sphere, for a subsequence we have $x_n(t_n+t)\to x_1$ with $x_1\in \omega_\infty(B)$. Define functions $f_n:[0,|t|]\to E^+$ as 
	$$
	f_n(s) = \begin{cases}
	x_n(t_n+s) \ \ \textrm{if}\ \ t> 0,\\
	x_n(t_n+t+s)\ \ \textrm{if}\ \ t< 0.
	\end{cases}
	$$
	These functions are equibounded and equicontinuous, so, for a subsequence they converge uniformly to some function $f(s)$.
	If $t>0$, then
	\begin{align*}
	& x_n(t_n+t) = x_n(t_n) + \int_{t_n}^{t_n+t} Ax_n(s) - (Ax_n(s),x_n(s))x_n(s)+ g(s)\, ds \\
	& \ \ = x_n(t_n) + \int_{0}^{t} A f_n(s) - (Af_n(s),f_n(s))f_n(s)+ g(s+t_n)\, ds. 
	\end{align*}
	Passing to the limit from the Lebesgue dominated convergence theorem we obtain
	$$
	x_1 =  x + \int_{0}^{t} A f(s) - (Af(s),f(s))f(s)\, ds, 
	$$
	with $f(0) = x$ and $f(t) = x_1$. As $f$ solves \eqref{infty} the assertion for $t>0$ is proved. If $t<0$ then  
	\begin{align*}
	& x_n(t_n) = x_n(t_n+t) + \int_{t_n+t}^{t_n} Ax_n(s) - (Ax_n(s),x_n(s))x_n(s)+ g(s)\, ds \\
	& \ \ = x_n(t_n+t) + \int_{0}^{-t} A f_n(s) - (Af_n(s),f_n(s))f_n(s)+ g(s+t_n+t)\, ds. 
	\end{align*}
Again, by the Lebesgue dominated convergence theorem 
	$$
x =  x_1 + \int_{0}^{-t} A f(s) - (Af(s),f(s))f(s)\, ds, 
	$$ with $f(0) = x_1$ and $f(-t) = x$. Again $f$ solves \eqref{infty} whence we have the assertion for $t<0$ and the proof is complete. 
\end{proof}

\section*{Acknowledgements} The authors would like to thank Jos\'{e} A. Langa and Daniel Wilczak for inspiring discussions. Work of JGF has been partially supported by the Spanish Ministerio de Ciencia,
Innovaci\'{o}n y Universidades (MCIU), Agencia Estatal de Investigaci\'{o}n (AEI), Fondo Europeo de
Desarrollo Regional (FEDER) under grant PRE2019-087385 and project PID2021-122991NB-C21. Work of PK has been supported by Polish National Agency for Academic Exchange (NAWA) within the Bekker Programme under Project No. PPN/BEK/2020/1/00265/U/00001, and by National Science Center (NCN) of Poland under Project No.
 UMO-2016/22/A/ST1/00077. 
 Work of PK and JB has been moreover supported by National Science Center (NCN) of Poland under Project No.
DEC-2017/25/B/ST1/00302. The research of JB has been supported by a grant from the Priority Research Area Anthropocene under the Strategic Programme Excellence Initiative at Jagiellonian University. The work of ANC has been supported by Grants FAPESP 2020/14075-6  and CNPq 306213/2019-2.

\bibliographystyle{alpha}
\bibliography{sample}

\end{document}